\renewcommand\qed{\hfill \ensuremath{\Box}}
\numberwithin{equation}{section}
\newcommand*\xbar[1]{%
	\hbox{%
		\vbox{%
			\hrule height 0.5pt 
			\kern0.5ex
			\hbox{%
				\kern-0.1em
				\ensuremath{#1}%
				\kern-0.1em
			}%
		}%
	}%
}
\newcommand{\VERTiii}[1]{{\left\vert\kern-0.3ex\left\vert\kern-0.3ex\left\vert #1
		\right\vert\kern-0.3ex\right\vert\kern-0.3ex\right\vert}}
\newcommand{\VERT}{\vert\kern-0.3ex\vert\kern-0.3ex\vert}
\newcommand{\VERTl}{\left\vert\kern-0.3ex\left\vert\kern-0.3ex\left\vert}
\newcommand{\VERTr}{\right\vert\kern-0.3ex\right\vert\kern-0.3ex\right\vert}
\newcommand{\VERTbig}{\big\vert\kern-0.3ex\big\vert\kern-0.3ex\big\vert}
\newcommand{\VERTBig}{\Big\vert\kern-0.3ex\Big\vert\kern-0.3ex\Big\vert}
\DeclareFontFamily{OMX}{MnSymbolE}{}
\DeclareSymbolFont{MnLargeSymbols}{OMX}{MnSymbolE}{m}{n}
\DeclareFontShape{OMX}{MnSymbolE}{m}{n}{
	<-6>  MnSymbolE5 <6-7>  MnSymbolE6 <7-8>  MnSymbolE7 <8-9>  MnSymbolE8 <9-10> MnSymbolE9 <10-12> MnSymbolE10 <12->   MnSymbolE12
}{}
\DeclareFontShape{OMX}{MnSymbolE}{b}{n}{
	<-6>  MnSymbolE-Bold5 <6-7>  MnSymbolE-Bold6 <7-8>  MnSymbolE-Bold7 <8-9>  MnSymbolE-Bold8 <9-10> MnSymbolE-Bold9 <10-12> MnSymbolE-Bold10 <12->   MnSymbolE-Bold12
}{}
\let\llangle\@undefined
\let\rrangle\@undefined
\DeclareMathDelimiter{\llangle}{\mathopen}%
{MnLargeSymbols}{'164}{MnLargeSymbols}{'164}
\DeclareMathDelimiter{\rrangle}{\mathclose}%
{MnLargeSymbols}{'171}{MnLargeSymbols}{'171}
\newcommand{\R}{\mathbb{R}}
\newcommand{\Fc}{\mathcal{F}}
 \newcommand{\GG}{\mathcal{G}}
\newcommand{\ve}{\varepsilon}
 \renewcommand{\t}{\theta}
 \newcommand{\sgn}{\text{\rm sgn}}
 \newcommand{\supp}{\text{\rm supp}\,}
 \renewcommand{\supp}{\text{\rm supp}\,}
\newcommand{\Lip}{\text{\rm Lip}}
\newcommand{\ban}[1]{\left\langle  #1 \right\rangle}  
 \newcommand{\F}{\mathcal{ F}}
\newcommand{\norm}[1]{\left\|#1\right\|}
\newcommand{\FF}{{\boldsymbol F}}
\renewcommand{\GG}{{\boldsymbol G}}
\newcommand{\ep}{{\varepsilon}}
\newcommand{\vv}{{\boldsymbol v}}
\newcommand{\tildef}[1]{\widetilde{ #1}} 
\newcommand{\field}[1]{\mathbb{#1}}
\newcommand{\N}{\field{N}}
\newcommand{\Sph}{\mathbb{S}} 
\newcommand{\DM}{\mathcal D\mathcal M} 
\newcommand{\redb }{\partial^{*}} 
\newcommand{\eps}{\varepsilon}
\renewcommand{\div}{\text{\sl div}} 
\newcommand{\dist}{\mathrm{dist}} 
\newcommand{\weakto}{\rightharpoonup} 
\newcommand{\weakstarto}{\stackrel{*}{\rightharpoonup}} 
\newcommand{\dd}{{\rm d\hspace{0.1mm}}}
\newcommand{\dr}{{\rm d}}
\newcommand{\res}{\mathop{\hbox{\vrule height 7pt width .5pt depth 0pt
\vrule height .5pt width 6pt depth 0pt}}\nolimits}
\newcommand{\Haus}[1]{{\mathscr H}^{#1}} 
\newcommand{\Leb}[1]{{\mathscr L}^{#1}} 
\def\intave#1{\int_{#1}\hbox{\llap{$\raise2.3pt\hbox{\vrule
height.9pt width7pt}\phantom{\scriptstyle{#1}}\mkern-2mu$}}}
\newcommand{\ii}{\rm i}
\newcommand{\ee}{\rm e}
\newcommand{\divmeasp}{\mathcal{DM}^{p}}
\numberwithin{equation}{section}
\numberwithin{theorem}{section}
\numberwithin{lemma}{section}
\numberwithin{proposition}{section}
\numberwithin{corollary}{section}
\numberwithin{definition}{section}
\numberwithin{remark}{section}
\numberwithin{example}{section}
\numberwithin{figure}{section}
\begin{document}

\title{Cauchy Fluxes and Gauss-Green Formulas for Divergence-Measure Fields over General Open Sets}
	
	
	\titlerunning{Cauchy Fluxes and Gauss-Green Formulas for
$\protect \divmeasp$--Fields over Open Sets}
	
	\author{Gui-Qiang G. Chen  \and \\
		Giovanni E. Comi \and\\
		Monica Torres}
	
	\authorrunning{G.-Q.~Chen \and G.~E.~Comi \and M.~Torres} 
	
	\institute{Gui-Qiang G. Chen  \at
		Mathematical Institute, University of Oxford, Oxford, OX2 6GG, UK\\
		\email{chengq@maths.ox.ac.uk}           
		\and
	     Giovanni E. Comi \at
         Scuola Normale Superiore,
         56126 Pisa, Italy\\
           \email{giovanni.comi@sns.it}
         \and
      Monica Torres \at
      Department of Mathematics, Purdue University,
 West Lafayette, IN 47907-2067, USA\\
 \email{torres@math.purdue.edu}
}

	\date{{\bf \large In memoriam William P. Ziemer}\\ {} \\
      Received: September 4, 2018 / Accepted: December 8, 2018}
	 \maketitle
	
\begin{abstract}
\, We establish the interior and exterior Gauss-Green formulas for divergence-measure fields in $L^p$ over general
open sets,
motivated by the rigorous mathematical formulation of the physical principle of balance law via the Cauchy flux
in the axiomatic foundation, for continuum mechanics allowing discontinuities and singularities.
The method, based on a distance function,
allows to give a representation of the interior (resp. exterior) normal trace of the field on the boundary of any given open set
as the limit of classical normal traces over the boundaries of interior (resp. exterior) smooth approximations of the open set.
In the particular case of open sets with continuous boundary,
the approximating smooth sets
can explicitly be characterized by using a regularized distance.
We also show that any open set with Lipschitz boundary
has a regular Lipschitz deformable boundary from the interior.
In addition, some new product rules for divergence-measure fields and suitable scalar functions are presented,
and the connection between these product rules and the representation of the normal trace of the field as a Radon measure
is explored.
With these formulas at hand, we introduce the notion of Cauchy fluxes as functionals defined on the boundaries of general bounded
open sets for the rigorous mathematical formulation of the physical principle of balance law,
and show that the Cauchy fluxes can be represented by corresponding
divergence-measure fields.

\keywords{\, Divergence-measure fields \and Cauchy fluxes \and Gauss-Green formula \and normal traces \and general open sets
 \and approximation of sets \and distance function \and product rules \and Green's identities \and balance law
 \and axiomatic foundation \and continuum mechanics \and discontinuity \and singularity}
\subclass{\, Primary: 28C05 \and 26B20 \and 28A05 \and 26B12 \and 35L65 \and 35L67 \and 76A02;
\, Secondary: 28A75 \and 28A25 \and 26B05 \and 26B30 \and 26B40 \and 35D30}
	\end{abstract}
	
	\tableofcontents
\section{\, Introduction}

We are concerned with the interior and exterior Gauss-Green formulas for unbounded divergence-measure fields over general open sets,
motivated by the rigorous mathematical formulation of the physical principle of balance law via the Cauchy flux
in the axiomatic foundation, for continuum mechanics allowing discontinuities and singularities.
The divergence-measure fields are vector fields $\FF \in L^p$ for $1 \le p \le \infty$, whose distributional divergences are Radon measures.
These vector fields form a Banach space that is denoted by $\DM^{p}$.
Even though the definitions of normal traces for unbounded divergence-measure fields have been given in
Chen-Frid \cite{CF2} and \v{S}ilhav\'y \cite{Silhavy2} (see also \cite{Frid2}),
the objective of this paper is to give a representation of the {\it interior} (resp. {\it exterior}) normal trace
on the boundary of any given open set and to prove that these normal traces can be computed
as the limit of classical normal traces over the boundaries of {\it interior} (resp. {\it exterior}) smooth approximations of the open set.
In particular, this implies analogous results on general domains (that is,  open connected sets).

The approximation of domains is a fundamental problem that has many applications in
several fields of analysis.
The answer to this question depends on both the regularity of the domain
and the type of approximation that is needed.
Our interest in this problem is motivated
from the field of hyperbolic conservation laws.
It is important to approximate
the surface of a {\it discontinuity wave} (such as a shock wave, vortex sheet, and entropy wave)
by smooth surfaces from {\it one side} of the surface
so that the {\it interior} and {\it exterior} traces of the solutions
can be defined on such a {\it discontinuity wave} as the limit of classical traces
on the smooth approximating surfaces.
Furthermore, the physically meaningful notion of Cauchy fluxes
as functionals defined on the boundaries of general bounded
open sets requires the understanding of the flow behavior
in both the interior and exterior neighborhoods of each boundary.

In this paper, we consider arbitrary open sets,
which especially include domains with finite perimeter.
The sets of finite perimeter are relevant in the field of hyperbolic conservation laws,
since the reduced boundaries of sets of finite perimeter are rectifiable sets, while
the shock surfaces are often rectifiable, at least for multidimensional scalar conservation
laws ({\it cf.} De Lellis-Otto-Westickenberg \cite{Camillo}).
Moreover, one advantage for the sets of finite perimeter is that the normal to these sets
can be well defined almost everywhere on the boundaries.

A first natural approach to produce a smooth approximation of a domain is via the convolution with some mollifiers $\eta_{\eps}$.
Indeed, it is a classical result in geometric measure theory (see the classical monographs of Ambrosio-Fusco-Pallara \cite[Theorem 3.42]{afp} and Maggi \cite[Theorem 13.8]{Maggi}) that
any set of finite perimeter $E$ can be approximated with a suitable family of smooth sets $E_k$ such that
\begin{equation}
\label{main one sided}
\mathcal{L}^{n}(E_k \Delta E) \to 0, \quad  \Haus{n - 1}(\partial^* E_k) \to \Haus{n-1}(\partial^* E) \qquad\,\,\mbox{as $k\to \infty$},
\end{equation}
where $\mathcal{L}^{n}$ is the Lebesgue measure in $\R^n$,  $\partial^{*} E$ is the reduced boundary of $E$,
and $\Delta$ denotes the symmetric difference of sets (that is, $A\Delta B:=(A\setminus B)\cup (B\setminus A))$.

The approximating smooth sets $E_k$ are the superlevel sets $A_{k;t}:= \{u_k > t\}$, for {\it a.e.} $t \in (0,1)$,
of the convolutions $u_k:=\chi_E * \eta_{\eps_{k}}$, for some suitable subsequence $\eps_{k} \to 0$ as $k\to \infty$.
The main difficulty with the convolution approach is that the approximating surfaces $u_k^{-1}(t)$ do not
provide an {\it interior} approximation in general, since portions of $u_k^{-1}(t)$
might intersect the exterior of the set.
This problem was solved by Chen-Torres-Ziemer \cite{ctz} and Comi-Torres \cite{ComiTorres}
by improving the classical result and proving an almost {\it one-sided} approximation that distinguishes the superlevel
sets for {\it a.e.} $t \in (\frac{1}{2}, 1)$ from the ones corresponding to {\it a.e.} $t \in (0,\frac{1}{2})$,
thus providing an {\it interior} and an {\it exterior} approximation of the set with
\begin{align*}
& \Haus{n-1}(u_k^{-1}(t)\cap E^0) \to 0 \qquad\, \mbox{for {\it a.e.} $t \in (\frac{1}{2}, 1)$},\\
& \Haus{n-1}(u_k^{-1}(t)\cap E^1) \to 0 \qquad\, \mbox{for {\it a.e.} $t \in (0,\frac{1}{2})$},
\end{align*}
where $E^0$  and $E^1$ are the measure-theoretic exterior and interior of the set, respectively.
Moreover, for any measure $|\mu| \ll \Haus{n-1}$, the classical result \eqref{main one sided} was improved to
\begin{align*}
&|\mu|(A_{k;t} \Delta E^{1}) \to 0, \,\, \Haus{n-1}(\partial A_{k;t}) \to \Haus{n-1}(\partial^* E)
  \qquad\qquad\,\,\,\,\,\text{for {\it a.e.} $t \in (\frac{1}{2}, 1)$}, \\
&|\mu|(A_{k;t} \Delta (E^1 \cup \partial^{*} E)) \to 0, \,\, \Haus{n-1}(\partial A_{k;t}) \to \Haus{n-1}(\partial^* E)
 \quad \text{for {\it a.e.} $t \in (0, \frac{1}{2})$}.
\end{align*}

This new one-sided approximation for sets of finite perimeter is sufficient
to obtain the Gauss-Green formula for vector fields $\FF \in \DM^{\infty}_{\rm loc}$.
Indeed, we have
$$
|\div \FF| \ll \Haus{n-1},
$$
as first observed by Chen-Frid \cite{CF1} (also see \cite{ctz,Silhavy1}),
which implies
\begin{align*}
&\div \FF  (A_{k;t}) \to \div \FF
(E^{1})    \qquad\qquad\quad\,\mbox{for a.e. $t \in (\frac{1}{2}, 1)$},\\
&\div \FF  (A_{k;t}) \to \div \FF (E^{1} \cup \partial^{*} E) \qquad
  \mbox{for a.e. $t \in (0, \frac{1}{2})$}.
\end{align*}
This allows us to obtain the interior and exterior Gauss-Green formulas
over sets of finite perimeters
(see \cite[Theorem 5.2]{ctz}).

Our focus in this paper is on the Gauss-Green formulas for $\DM^p$ fields, {\it i.e.}, unbounded
weakly differentiable vector fields in $L^p$ whose distributional divergences are Radon measures.
It has been shown that, for $\FF \in \DM^{p}$ with $1 \le p < \infty$,  the Radon measure $\div \FF$ is no longer absolutely
continuous with respect to $\Haus{n-1}$ in general. Indeed, it is absolutely continuous with respect to the Sobolev
and relative $p'$--capacities if $p \ge \frac{n}{n - 1}$,
and can be even a Dirac measure if $1 \le p < \frac{n}{n - 1}$
(see \cite[Theorem 3.2, Example 3.3]{Silhavy1}, \cite[Lemma 2.25]{ctz}, and \cite[Theorem 2.8]{phuc2008characterizations}).
Thus, a new way of approximating the integration domains entirely from the interior and the exterior separately is required,
since we cannot rely anymore on the approximation described above, as in \cite{ctz}.

\smallskip
A second approach to approximate a domain $U$ is to employ the standard distance function and define
$$
U^{\ve} := \{x \in U: \dist(x, \partial U)> \ve \};
$$
see \cite[Theorem 2.4]{Silhavy2}.
In this case, since $\dist(x, \partial U)$ is only Lipschitz continuous
for the domains with less than the $C^2$--regularity,
the coarea formula implies that $\{x \in U: \dist(x, \partial U)= \ve \}$
is just a set of finite perimeter,
for almost every $\ve >0$; see \S 5.
In \S 7, we also use a regularized distance $\rho$, which is $C^\infty$,
introduced by Lieberman \cite{L} for the Lipschitz domains
and developed further
to the $C^0$--domains by Ball-Zarnescu \cite{BallZarnescu}.
For these domains, smooth approximations are obtained, since $\rho^{-1}(\ve)$ is smooth for any $\ve > 0$.
Thus, the use of the distance functions provides an interior smooth approximation
satisfying $\div \FF  (U^{\ve}) \to \div \FF (U)$ even for unbounded divergence-measure fields.

As for the exterior approximation, we consider the sets:
$$
U_{\eps} := \{ x \in \R^{n} : \dist(x, U) < \eps \},
$$
which clearly satisfy similar properties as $U^{\eps}$.
Indeed, we will unify the exposition by defining the signed distance $d$
from $\partial U$ and its regularized version analogously in \S 5.

Another motivation of this paper is from a result of Schuricht \cite[Theorem 5.20]{Sch},
where it is proved that, for any $\FF \in \DM^{1}_{\rm loc}(\Omega)$
and any compact set $K \Subset \Omega$,
the normal trace functional can be represented as an average on the one-sided tubular neighborhoods
of $\partial K$ in the sense that
\begin{equation} \label{Normal trace average Schuricht}
\div \FF(K) = \lim_{\ve \to 0} \frac{1}{\ve} \int_{K_{\ve} \setminus K} \FF \cdot \nu_{K}^{d} \, \dr x,
\end{equation}
where $K_{\eps} = \{ x \in \Omega : \dist(x, K) \le \eps \}$,
and $\nu_{K}^{d}(x) = \nabla_{x} \dist(x, K)$ is a unit vector for $\Leb{n}$--{\it a.e.} $x \in \Omega$
such that $\dist(x, K) > 0$.
This last property says that $\nu_{K}^{d}$ is a sort of generalization of the exterior normal.
It is clear that $K_{\eps}\subset K_{\eps '}$ if $\eps < \eps '$ and
that $\bigcap_{\eps > 0} K_{\eps} = K$, which implies that $\div \FF(K_{\eps}) \to \div \FF(K)$.
Therefore, this approach is similar to the one of the exterior approximation $U_{\eps}$
of a bounded open set $U$.
In \S 5, we
use this approach as a starting point
by differentiating under the integral sign before passing to the limit in $\eps$,
so that we can obtain a boundary integral on the right-hand side.

\smallskip
The classical Gauss-Green formula for Lipschitz vector fields $\FF$ over sets of finite perimeter
was proved first by De Giorgi  \cite{degiorgi1961complementi, degiorgi1961frontiere} and Federer \cite{Federer1, Federer2},
and by Burago-Maz'ya \cite{BM2,maz2013sobolev} and Vol'pert \cite{Volpert, VH} for $\FF$ in the class of functions of bounded variation ($BV$).
The Gauss-Green formula for vector fields $\FF \in L^{\infty}$ with $\div \FF \in \mathcal{M}$ was first investigated
by Anzellotti in \cite[Theorem 1.9]{Anzellotti_1983} and \cite{anzellotti1983traces} on bounded Lipschitz domains,
and his methods were then exploited by Ambrosio-Crippa-Maniglia \cite{ACM}, Kawohl-Schuricht \cite{kawohl2007dirichlet},
Leondardi-Saracco \cite{LeoSaracco}, and Scheven-Schmidt \cite{scheven2016dual, scheven2016bv, scheven2017anzellotti}.
Independently, motivated by the problems arising from the theory of hyperbolic conservation laws,
Chen-Frid \cite{CF1} first introduced the approach of defining the interior normal traces
on the boundary of a Lipschitz deformable set as the limits of the classical normal
traces over the boundaries of the interior approximations of the set, in which the Gauss-Green formulas hold.
One of the main objectives of this paper is to develop this approach further for unbounded vector fields to
understand the interior normal traces of divergence-measure fields on the boundary of general open sets,
and to show the existence of regular Lipschitz deformations introduced in \cite{CF1}.
Even though, locally, we always have the natural regular Lipschitz
deformation: $\Psi(\hat{y}, t)=(\hat{y}, \gamma(\hat{y})+t)$ for $\gamma$ as in Definition \ref{aquiaqui} and $\hat{y}=(y_1, \cdots, y_{n-1})$,
it may not be possible to extend this deformation globally to $\partial U$ in such
a way to satisfy Definition \ref{aquiaqui} in general.

Later, the Gauss-Green formulas over sets of finite perimeter for $\DM^{\infty}$--fields
were proved in Chen-Torres \cite{ChenTorres}, \v{S}ilhav\'y \cite{Silhavy1}, and Chen-Torres-Ziemer \cite{ctz}.
Subsequent generalizations of these formulas were given by Comi-Payne \cite{comi2017locally},
Comi-Magnani \cite{comimagnani}, and Crasta-De Cicco \cite{crasta2017anzellotti, crastadecicco2}.
We refer \cite{comi2017locally,dafermos2010hyperbolic} for a more detailed exposition of the history of Gauss-Green formulas.

The case of divergence-measure vector fields in $L^p$, $p \neq \infty$, has been studied in Chen-Frid \cite{CF2}
over Lipschitz deformable boundaries and in \v{S}ilhav\'y \cite{Silhavy2}
for open sets.
The main focus of this paper is to obtain the Gauss-Green formulas by using the limit of the classical traces
over appropriate approximations of the domain,
instead of representing it as the averaging over neighborhoods of the boundaries of the domain
as in Chen-Frid \cite{CF2} and \v{S}ilhav\'y \cite{Silhavy2}.
Even though a representation of the normal trace similar to the one in this paper can also be found in Frid \cite{Frid1},
it is required in \cite{Frid1} that the boundary of the domain is Lipschitz deformable.
In \S 8, we show that this last condition can actually be removed.

Degiovanni-Marzocchi-Musesti in \cite{degiovanni1999cauchy} and later Schuricht in \cite{Sch}
sought to prove the existence of normal traces under weak regularity hypotheses
in order to achieve a representation formula for Cauchy fluxes,
contact interactions, and forces in the context of the foundation of continuum physics.
The Gauss-Green formulas obtained in \cite{degiovanni1999cauchy,Sch} are valid for $\FF \in \DM^{p}(\Omega)$ for
any $p \ge 1$,
but are applicable only to sets $U\subset \Omega$ which lie in a suitable subalgebra of sets
of finite perimeter related to the particular representative of $\FF$.
One of our objectives in this paper is to use the representation of the normal traces
as the limits of classical normal traces on smooth boundaries to obtain an analogous representation
for the contact interactions and the Cauchy fluxes on the boundaries of any general open set.

This paper is organized in the following way:
In \S 2, some basic notions and facts on the $BV$ theory and $\DM^p$--fields are recalled.
In \S 3, we establish some product rules between $\DM^{p}$--fields and suitable scalar functions, including
continuous bounded scalar functions with gradient in $L^{p'}$ for any $1 \le p \le \infty$,
which has not been stated explicitly in the literature to the best of our knowledge.
In \S 4, we investigate the distributional definition of the normal trace functional
and its relation with the product rule between $\DM^{p}$--fields and characteristic functions of Borel measurable sets.
We also provide some necessary and sufficient conditions under which the normal trace
of a $\DM^{p}$--field can be represented by a Radon measure.
In \S 5, we describe the properties of the level sets of the signed distance function from a closed set
and their applications in the proof of the Gauss-Green formulas for general open sets.
As a byproduct, we obtain generalized Green's identities and other sufficient conditions under which the normal trace
of a divergence-measure field can be represented by a Radon measure on the boundary of an open set in \S 5--6.
In \S 7, we show the existence of interior and exterior smooth approximations
for $U$ and $\overline{U}$ respectively, where $U$ is a general open set,
together with their corresponding Gauss-Green formulas.
In the case of $C^0$ domains $U$,
we employ the results of Ball-Zarnescu \cite{BallZarnescu}
to find smooth interior and exterior
approximations of $U$ and $\overline{U}$
in an explicit way.
Indeed, we are able to write the interior and exterior normal traces as the limits of the classical normal traces
on the superlevel sets of a regularized distance introduced in \cite{BallZarnescu,L}.
In \S 8, we employ Ball-Zarnescu's theorem \cite[Theorem 5.1]{BallZarnescu} to show that
any Lipschitz domain $U$ is actually Lipschitz deformable in the sense of Chen-Frid ({\it cf.} Definition \ref{aquiaqui}).
In addition, we recall the previous approximation theory for open sets with Lipschitz boundary developed
by Ne\v{c}as \cite{nevcas1962, nevcas1964equations} and Verchota \cite{Verchota_1984,Verchota_2005}
to give a more explicit representation of a particular bi-Lipschitz deformation $\Psi(x, t)$,
which is also regular in the sense that
$$
\lim_{t \to 0^{+}} J^{\partial U} \Psi_{t} = 1 \qquad \text{in $L^{1}(\partial U; \Haus{n - 1})$},
$$
where $\Psi_{t}(x) = \Psi(x, t)$, and $J^{\partial U}$ denotes the tangential Jacobian.
Finally, in \S 9,
based on the theory of normal traces for $\DM^p$--fields obtained as the limit of classical normal traces on smooth
approximations or deformations,
we introduce the notion of Cauchy fluxes as functionals defined on the boundaries of general bounded
open sets for the rigorous mathematical formulation of the physical principle of balance law involving discontinuities and singularities,
and show that the Cauchy fluxes can be represented by corresponding
divergence-measure fields.

\section{\, Basic Notations and Divergence-Measure Fields}

In this section, for self-containedness, we first present some basic notations and known facts
in geometric measure theory and elementary properties of divergence-measure fields.

In what follows, $\Omega$ is an open set in $\R^{n}$, which is called a {\em domain} if it is also connected,
and  $\mathcal{M}(\Omega)$ is the space of all Radon measures in $\Omega$.
Unless otherwise stated, $\subset$ and $\subseteq$ are equivalent.
We denote by $E \Subset \Omega$ a set $E$ whose closure $\overline{E}$ is a compact set inside $\Omega$,
by $\mathring{E}$ the topological interior of $E$, and by $\partial E$ its topological boundary.

To establish the interior and exterior  normal traces in \S 5 later,
we need to use the following classical coarea formula ({\it cf.} \cite[\S 3.4, Theorem 1 and Proposition 3]{eg}):

\begin{theorem}
Let $u : \mathbb{R}^{n} \to \mathbb{R}$ be Lipschitz. Then
\begin{equation} \label{coarea}
\int_{A} |\nabla u| \, \dr x = \int_{\mathbb{R}} \Haus{n - 1}(A \cap u^{-1}(t)) \, \dr t
\qquad \mbox{for any $\mathcal{L}^{n}$--measurable set $A$}.
\end{equation}
In addition, if $\mathrm{ess inf}|\nabla u| > 0$, and $g : \R^{n} \to \R$ is $\Leb{n}$--summable,
then $g |_{u^{-1}(t)}$ is $\Haus{n-1}$--summable for $\Leb{1}$--{\it a.e.} $t \in \R$ and
\begin{equation*} \label{coarea superlevel}
\int_{\{u > t \}} g \, \dr x = \int_{t}^{\infty} \int_{\{u = s\}} \frac{g}{|\nabla u|} \, \dr \Haus{n - 1} \, \dr s
\qquad \mbox{for any $t \in \R$}.
\end{equation*}
In particular, for any $t \in \R$ and $h \ge 0$ such that set $\{ u = t + h \}$ is negligible with respect
to the measure $g \, \dr x$,
\begin{equation} \label{coarea super-sublevel}
\int_{\{ t < u < t + h \}} g \, \dr x = \int_{t}^{t + h} \int_{\{u = s\}} \frac{g}{|\nabla u|} \, \dr \Haus{n - 1} \, \dr s.
\end{equation}
In the case that $g : \R^{n} \to \R^{n}$ is $\Leb{n}$--summable,
the same results follow for each component $g_{i}$, $i = 1, \dots, n$.
\end{theorem}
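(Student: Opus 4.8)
The plan is as follows. Identity \eqref{coarea} is the classical coarea formula for Lipschitz functions, and I would take it as given, referring to \cite[\S 3.4]{eg}: the argument there shows that the set function $A\mapsto\int_{\R}\Haus{n-1}(A\cap u^{-1}(t))\,\dr t$ is a Borel measure that is absolutely continuous with respect to $\Leb n$ (via the Besicovitch differentiation theorem), computes its density at a point of differentiability of $u$ where $\nabla u\neq 0$ by reducing, after a rotation, to the affine case where the identity is just Fubini's theorem, and shows that the critical set $\{\nabla u=0\}$ contributes nothing by a Sard-type estimate based on the area formula. Granting \eqref{coarea}, I would first upgrade it to the coarea formula with an integrand: for every Borel $f:\R^n\to[0,+\infty]$ and every $\Leb n$-measurable set $A$,
\begin{equation}\label{pp-coarea-integrand}
\int_{A} f\,|\nabla u|\,\dr x=\int_{\R}\int_{A\cap u^{-1}(t)} f\,\dr\Haus{n-1}\,\dr t .
\end{equation}
Indeed, for $f=\chi_B$ with $B$ Borel this is precisely \eqref{coarea} applied to $A\cap B$; by linearity it then holds for nonnegative simple functions, and for arbitrary nonnegative Borel $f$ by monotone convergence.

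For the remaining assertions I would argue as follows. Assume $\mathrm{ess\,inf}\,|\nabla u|\ge c>0$, let $g\in L^1(\R^n)$, and let $L$ be the Lipschitz constant, so $|\nabla u|\le L$ wherever $u$ is differentiable. Applying \eqref{coarea} to the $\Leb n$-null set where $u$ is not differentiable or $|\nabla u|<c$ shows that, for $\Leb 1$-a.e.\ $t$, one has $c\le|\nabla u|\le L$ at $\Haus{n-1}$-a.e.\ point of $u^{-1}(t)$. Taking $f=|g|/|\nabla u|$ and $A=\R^n$ in \eqref{pp-coarea-integrand} gives $\int_{\R}\big(\int_{u^{-1}(t)}\tfrac{|g|}{|\nabla u|}\,\dr\Haus{n-1}\big)\,\dr t=\int_{\R^n}|g|\,\dr x<+\infty$, so the inner integral is finite for $\Leb 1$-a.e.\ $t$, and since $|g|\le L\,|g|/|\nabla u|$ there, $g|_{u^{-1}(t)}$ is $\Haus{n-1}$-summable for $\Leb 1$-a.e.\ $t$. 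For the superlevel identity I would apply \eqref{pp-coarea-integrand} with $f=g^{\pm}/|\nabla u|$ and $A=\{u>t\}$, observing that $\{u>t\}\cap u^{-1}(s)$ equals $u^{-1}(s)$ for $s>t$ and is empty for $s\le t$; subtracting the two resulting identities—legitimate since all integrals in play are bounded by $\int_{\R^n}|g|\,\dr x$—yields
\begin{equation*}
\int_{\{u>t\}} g\,\dr x=\int_{t}^{\infty}\int_{\{u=s\}}\frac{g}{|\nabla u|}\,\dr\Haus{n-1}\,\dr s\qquad\text{for every }t\in\R .
\end{equation*}
Then \eqref{coarea super-sublevel} follows by writing $\{t<u<t+h\}=\{u>t\}\setminus\{u\ge t+h\}$ and noting that $\{u\ge t+h\}$ and $\{u>t+h\}$ differ only by $\{u=t+h\}$, which is negligible for $g\,\dr x$ by hypothesis, so subtracting the superlevel identity at the levels $t$ and $t+h$ gives the claim. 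The vector-valued case is immediate by applying the scalar statements componentwise.

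The only substantial step is the classical identity \eqref{coarea} for a merely Lipschitz $u$—in particular, identifying the density of $A\mapsto\int_{\R}\Haus{n-1}(A\cap u^{-1}(t))\,\dr t$ as $|\nabla u|$ and disposing of the critical set where $\nabla u$ vanishes—so that is where I expect the main difficulty to lie. Once \eqref{pp-coarea-integrand} is available, everything else reduces to elementary measure theory as sketched above.
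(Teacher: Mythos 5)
The paper does not give a proof of this theorem; it is a recalled classical result, cited as ``({\it cf.} \cite[\S 3.4, Theorem 1 and Proposition 3]{eg})''. Your argument correctly reconstructs what the cited reference does: you take the basic coarea formula \eqref{coarea} as given, upgrade it to the version with a nonnegative Borel integrand by the standard simple-function/monotone-convergence argument, apply \eqref{coarea} to the $\Leb{n}$-null set where $u$ fails to be differentiable or $|\nabla u| < c$ to conclude that $c \le |\nabla u| \le L$ at $\Haus{n - 1}$-a.e.\ point of $u^{-1}(t)$ for $\Leb{1}$-a.e.\ $t$, and from this deduce the $\Haus{n-1}$-summability of $g|_{u^{-1}(t)}$ and the superlevel identity by splitting $g = g^{+} - g^{-}$ (both identities being finite, subtraction is legitimate). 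The super-sublevel formula then follows by subtracting the superlevel identities at levels $t$ and $t + h$ and using that $\{ u = t + h \}$ is $g\,\dr x$-negligible, and the vector case is componentwise. Since the paper supplies no argument, there is nothing to contrast; your derivation is correct and is exactly the content of the cited \cite[\S 3.4, Proposition 3]{eg}, reproved from Theorem 1.
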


The notions of functions of bounded variation ($BV$) and sets of finite perimeter will also be used.

\begin{definition} \label{def_BV}
A function $u\in L^{1}(\Omega)$ is a {\em function of bounded variation in $\Omega$}, written as $u \in BV(\Omega)$,
if its distributional gradient $Du$ is a finite $\R^{n}$--vector valued Radon measure on $\Omega$.
We say that $u$ is of {\em locally bounded variation} in $\Omega$, written as $u \in BV_{\rm loc}(\Omega)$,
if the restriction of $u$ to every open set $U \Subset \Omega$ is in $BV(U)$.
A measurable set $E \subset \Omega$ is said to be a {\em set of finite perimeter in $\Omega$}
if $\chi_{E} \in BV(\Omega)$ and said to be of {\em locally finite perimeter} in $\Omega$ if $\chi_{E} \in BV_{\rm loc}(\Omega)$.
\end{definition}

It is well known that the topological boundary of a set of finite perimeter $E$ can be very irregular,
since it may even have positive Lebesgue measure.
On the other hand, De Giorgi \cite{degiorgi1961frontiere} discovered a suitable subset of $\partial E$
of finite $\Haus{n - 1}$--measure on which $|D \chi_{E}|$ is concentrated.

\begin{definition} \label{reducedboundary}
Let $E$ be a set of locally finite perimeter in $\Omega$.
The {\em reduced boundary} of $E$, denoted by $\redb E$,
is defined as the set of all $x \in \mathrm{supp}(|D\chi_E|) \cap \Omega$ such that the limit{\rm :}
\begin{equation*}
	\nu_{E} (x):= \lim_{r \to 0} \frac{D\chi_{E}(B(x,r))}{|D\chi_{E}|(B(x,r))}
\end{equation*}
exists in $\R^n$ and satisfies
\begin{equation*}
|\nu_{E}(x)| = 1.
\end{equation*}
The function $\nu_E: \redb E \to \Sph^{n-1}$ is called the {\em measure-theoretic unit interior normal} to $E$.
\end{definition}

The reason for which $\nu_E$ is seen as a generalized interior normal lies in the approximate tangential properties
of the reduced boundary ({\it cf.}  \cite[Theorem 3.59]{afp}).
Indeed,
$E \cap B(x, \eps)$ is asymptotically close to the half ball
$\{ y : (y - x) \cdot \nu_{E}(x) \ge 0\} \cap B(x, \eps)$ as $\eps \to 0$,
and
\begin{equation}\label{redb_concentration}
    |D\chi_{E}| = \Haus{n - 1} \res \redb E.
\end{equation}

It is a well-known result from the $BV$ theory ({\it cf.}  \cite[Corollary 3.80]{afp})
that every function $u$ of bounded variation admits a representative that is the pointwise limit $\Haus{n - 1}$--{\it a.e.}
of any mollification of $u$ and coincides $\Haus{n - 1}$--{\it a.e.} with the precise representative $u^{*}$:
\[
u^{*}(x) :=
\begin{cases}
\displaystyle \lim_{r \to 0} \frac{1}{|B(x,r)|} \int_{B(x,r)} u(y) \, \dr y & \mbox{ if this limit exists, } \\
0 & \mbox{ otherwise.}
\end{cases}
\]
In particular, if $u = \chi_{E}$ for some set of finite perimeter $E$,
then $\chi_{E}^{*} = \frac{1}{2}$ on $\redb E$ $\Haus{n - 1}$--{\it a.e.}

We state now the generalization of the coarea formula for functions of bounded variation,
which indeed shows an important connection between $BV$ functions and sets of finite perimeter;
see \cite[Theorem 3.40]{afp} for a more detailed statement and proof.

\begin{theorem} \label{Federer-Fleming coarea}
If $u \in BV(\Omega)$, then, for $\Leb{1}$--{\it a.e.} $s \in \R$,
set $\{ u > s \}$ is of finite perimeter in $\Omega$ and
\[
|Du|(\Omega) = \int_{-\infty}^{\infty} |D\chi_{\{u > s\}}|(\Omega) \dr s.
\]
\end{theorem}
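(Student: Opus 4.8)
\emph{Proof strategy.} The plan is to establish the two inequalities
\[
|Du|(\Omega)\ \le\ \int_{-\infty}^{\infty} |D\chi_{\{u>s\}}|(\Omega)\,\dr s\ \le\ |Du|(\Omega),
\]
since the right-hand one already forces $|D\chi_{\{u>s\}}|(\Omega)<\infty$, hence $\{u>s\}$ of finite perimeter in $\Omega$, for $\Leb{1}$--a.e. $s$. Before anything else I would record that $s\mapsto |D\chi_{\{u>s\}}|(\Omega)$ is $\Leb{1}$--measurable: writing $|D\chi_E|(\Omega)=\sup\bigl\{\int_E \div\varphi\,\dr x : \varphi\in C^1_c(\Omega;\R^n),\ |\varphi|\le 1\bigr\}\in[0,\infty]$ and restricting the supremum to a fixed countable $C^0$--dense subfamily, it suffices that $s\mapsto \int_\Omega \chi_{\{u>s\}}(x)\,\div\varphi(x)\,\dr x$ be measurable for each fixed $\varphi$, which follows from Fubini's theorem applied to the measurable set $\{(x,s):u(x)>s\}\subset\Omega\times\R$. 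This makes the middle integral meaningful.

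For the lower bound on $|Du|(\Omega)$, I would use the Cavalieri (layer-cake) representation $u(x)=\int_0^\infty \chi_{\{u>s\}}(x)\,\dr s-\int_{-\infty}^0\bigl(1-\chi_{\{u>s\}}(x)\bigr)\,\dr s$, valid for $u\in L^1(\Omega)$, where the split at $s=0$ is what makes both inner integrals dominated by $|u(x)|$ so that Fubini applies. For any admissible $\varphi$, using $\int_\Omega \div\varphi\,\dr x=0$ to recombine the two pieces, I would obtain
\[
\int_\Omega u\,\div\varphi\,\dr x=\int_{-\infty}^{\infty}\Bigl(\int_{\{u>s\}}\div\varphi\,\dr x\Bigr)\dr s\ \le\ \int_{-\infty}^{\infty}|D\chi_{\{u>s\}}|(\Omega)\,\dr s,
\]
and taking the supremum over $\varphi$ would give $|Du|(\Omega)\le \int_{\R}|D\chi_{\{u>s\}}|(\Omega)\,\dr s$.

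For the reverse inequality I would argue by smooth approximation. Pick $u_k\in C^\infty(\Omega)$ with $u_k\to u$ in $L^1(\Omega)$ and $\int_\Omega|\nabla u_k|\,\dr x\to|Du|(\Omega)$ (the standard strict approximation of $BV$ functions). For fixed $k$, Sard's theorem gives that $\Leb{1}$--a.e. $s$ is a regular value of $u_k$, so $\{u_k>s\}$ has smooth boundary $u_k^{-1}(s)$ inside $\Omega$ and $|D\chi_{\{u_k>s\}}|(\Omega)=\Haus{n-1}(\Omega\cap u_k^{-1}(s))$; then the classical coarea formula \eqref{coarea} (applied on an exhaustion of $\Omega$ by sets compactly contained in $\Omega$) yields $\int_\Omega|\nabla u_k|\,\dr x=\int_{-\infty}^{\infty}|D\chi_{\{u_k>s\}}|(\Omega)\,\dr s$. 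Next, from the pointwise identity $\int_{\R}\bigl|\chi_{\{u_k>s\}}(x)-\chi_{\{u>s\}}(x)\bigr|\,\dr s=|u_k(x)-u(x)|$ and Tonelli's theorem, $\int_{\R}\|\chi_{\{u_k>s\}}-\chi_{\{u>s\}}\|_{L^1(\Omega)}\,\dr s=\|u_k-u\|_{L^1(\Omega)}\to 0$, so along a subsequence $\chi_{\{u_k>s\}}\to\chi_{\{u>s\}}$ in $L^1(\Omega)$ for $\Leb{1}$--a.e. $s$. By lower semicontinuity of the total variation under $L^1$--convergence, $|D\chi_{\{u>s\}}|(\Omega)\le\liminf_k|D\chi_{\{u_k>s\}}|(\Omega)$ for a.e. $s$, and Fatou's lemma would give
\[
\int_{-\infty}^{\infty}|D\chi_{\{u>s\}}|(\Omega)\,\dr s\ \le\ \liminf_k\int_{-\infty}^{\infty}|D\chi_{\{u_k>s\}}|(\Omega)\,\dr s=\lim_k\int_\Omega|\nabla u_k|\,\dr x=|Du|(\Omega).
\]
Combining the two inequalities gives the asserted equality, and, the common value being finite, $|D\chi_{\{u>s\}}|(\Omega)<\infty$ for $\Leb{1}$--a.e. $s$.

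The main obstacle is the smooth-approximation step with \emph{strict} convergence of total variations on the full (possibly unbounded, irregular) open set $\Omega$: mollification only controls the variation on compactly contained subsets, so one must patch local mollifications with a locally finite partition of unity and a diagonal choice of radii; closely related is the verification that $|D\chi_{\{u_k>s\}}|(\Omega)=\Haus{n-1}(\Omega\cap u_k^{-1}(s))$ for a.e. regular value $s$. Everything else reduces to Fubini/Tonelli, lower semicontinuity of the total variation, and Fatou's lemma.
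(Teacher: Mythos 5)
The paper does not supply its own proof of this statement; it refers the reader to \cite[Theorem 3.40]{afp} for ``a more detailed statement and proof.'' Your argument is correct and is, in essence, the standard proof found in that reference: the Cavalieri/layer-cake identity, combined with Fubini after using $\int_\Omega\div\varphi\,\dr x=0$ to merge the positive and negative parts of $u$, gives $|Du|(\Omega)\le\int_{\R}|D\chi_{\{u>s\}}|(\Omega)\,\dr s$; while strict smooth approximation $u_k\to u$, the classical coarea formula \eqref{coarea} applied to each $u_k$, the $L^1$--convergence $\chi_{\{u_k>s\}}\to\chi_{\{u>s\}}$ for $\Leb{1}$--a.e.\ $s$ along a subsequence, lower semicontinuity of perimeter, and Fatou's lemma give the converse inequality; finiteness of the common value then forces $\{u>s\}$ to have finite perimeter for a.e.\ $s$. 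One small technical remark: to reduce the supremum defining $|D\chi_E|(\Omega)$ to a countable family (so that $s\mapsto|D\chi_{\{u>s\}}|(\Omega)$ is measurable), you need a countable subfamily that is $C^1$--dense on a compact exhaustion $\Omega_m\Subset\Omega$, not merely $C^0$--dense: $C^0$--closeness of $\varphi_j$ to $\varphi$ does not control $\div\varphi_j$, whereas $C^1$--closeness with supports in a fixed $\Omega_m$ makes $\int_E\div\varphi_j\,\dr x\to\int_E\div\varphi\,\dr x$ for any measurable $E$. Since $C^1_c(\Omega_m;\R^n)$ is separable in the $C^1$ norm, such a family exists, and the rest of your argument stands unchanged.
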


We recall now the definition of divergence-measure fields, the main object of study of this paper.

\begin{definition} \label{DMdef}
A vector field $\FF \in L^{p}(\Omega; \R^{n})$ for some $1 \le p \le \infty$ is called a {\em divergence-measure field}, denoted as $\FF \in \DM^{p}(\Omega)$,
if its distributional divergence $\div \FF$ is a real finite Radon measure on $\Omega$.
A vector field $\FF$ is a {\em locally divergence-measure field}, denoted as ${\FF \in \DM^{p}_{\rm loc}(\Omega)}$,
if the restriction of $\FF$ to $U$ is in $\DM^{p}(U)$ for any $U \Subset \Omega$ open.
\end{definition}

These vector fields have been widely studied in the last two decades; for a general theory,
we refer mainly to \cite{ACM,CF1,CF2,ChenTorres,ctz,comi2017locally,Frid1,Frid2,Sch,Silhavy1,Silhavy2}
and the references cited therein.

We recall that  Lipschitz functions with compact support can be used as test functions
in the definition of distributional divergence, since $C^{\infty}_{c}(\Omega)$ functions
are dense in $\Lip_{c}(\Omega)$, the space of Lipschitz functions with compact support
in $\Omega$.

Finally, we introduce two definitions, which are required in \S 8,
in order that the results on the smooth approximation of domains of class $C^{0}$ by Ball-Zarnescu \cite{BallZarnescu}
can be employed to show that the boundary of any bounded Lipschitz domain is Lipschitz deformable
in the sense of Chen-Frid \cite{CF1,CF2}.

\begin{definition}
\label{gooddirection}
Let $\Omega \subset \mathbb{R}^n$ be a domain of class $C^0$.
For a point $P \in \mathbb{R}^n$, define a good direction at $P$, with respect to a ball $B(P,\delta)$ with $\delta > 0$
and $B(P,\delta) \cap \partial \Omega \neq \emptyset$, to be a vector $\nu \in \Sph^{n-1}$
such that there is an orthonormal coordinate system $Y=(y',y_n)=(y_1,y_2,... y_{n-1},y_n)$ with origin at point $P$
so that $\nu = e_n$ is the unit vector in the $y_n$--direction which,
together with a continuous function $f : \mathbb{R}^{n-1} \to \mathbb{R}$ {\rm (}depending on $P$, $\nu$, and $\delta${\rm )},
satisfies
\begin{equation*}
\Omega \cap B(P,\delta) =\{y \in \mathbb{R}^n : y_n > f(y'), |y| < \delta \}.
\end{equation*}
We say that $\nu$ is a good direction at $P$ if it is a good direction with respect to some ball $B(P,\delta)$ with $B(P,\delta) \cap \partial \Omega \neq 0$.
If $P \in \partial \Omega$, then a good direction $\nu$ at $P$ is called a pseudonormal at $P$.
\end{definition}

\begin{definition}
\label{aquiaqui}
Let $\Omega$ be an open subset in $\mathbb{R}^n$.
We say that $\partial \Omega$ is a deformable Lipschitz boundary, provided that the following hold{\rm :}
\begin{itemize}
\item[(i).] For each $x \in \partial \Omega$,
there exist $r>0$ and a Lipschitz mapping $\gamma: \mathbb{R}^{n-1} \to \mathbb{R}$ such that,
upon rotating and relabeling the coordinate axis if necessary,
\[
\Omega \cap Q(x,r)=\{y \in \mathbb{R}^n : y_n> \gamma(y_1,...,y_{n-1}) \} \cap Q(x,r),
\]
where $Q(x,r)=\{y \in \mathbb{R}^n :|y_i -x_i|\leq r, i=1,...,n\}$.

\item[(ii).] There exists a map $\Psi: \partial \Omega \times [0,1] \to \overline{\Omega}$
such that $\Psi$ is a bi-Lipschitz homeomorphism over its image and
$\Psi(\cdot,0) \equiv {\rm Id}$,
where ${\rm Id}$ is the identity map over $\partial\Omega$.
Denote $\partial \Omega_{\tau}= \Psi ( \partial \Omega \times\{\tau\})$ for $\tau \in (0,1]$,
and denote $\Omega_{\tau}$ the open subset of $\Omega$ whose boundary is $\partial \Omega_{\tau}$.
We call $\Psi$ a Lipschitz deformation of $\partial \Omega$.
\end{itemize}
The Lipschitz deformation is {\em regular} if
\begin{equation} \label{regular_deformation}
\lim_{\tau \to 0^{+}} J^{\partial \Omega} \Psi_{\tau} = 1 \qquad \text{in $L^{1}(\partial \Omega; \Haus{n - 1})$},
\end{equation}
where $\Psi_{\tau}(x) = \Psi(x, \tau)$, and $J^{\partial \Omega}$ denotes the tangential Jacobian.
\end{definition}

\section{\, Product Rules between Divergence-Measure Fields and Suitable Scalar Functions}

In this section, we give some new product rules between $\DM^p$--fields and suitable scalar functions.
We start by proving a product rule for vector fields in $\DM^{p}$ for any $1 \le p \le \infty$,
which is the explicit formulation of a particular case of
the product rule for $\DM^p$--fields stated in \cite[Theorem 3.2]{CF2}.

From now on, as customary, we always use a standard mollifier:
\begin{equation}\label{mollifier-1}
\eta \in C^{\infty}_{c}(B(0, 1)) \,\, \mbox{radially symmetric, with $\,\eta \ge 0$ and $\int_{B(0, 1)} \eta(x) \, \dr x = 1$},
\end{equation}
and
\begin{equation}\label{mollifier-2}
\eta_{\eps}(x) := \frac{1}{\eps^{n}} \eta(\frac{x}{\eps}).
\end{equation}

\smallskip
\begin{proposition} \label{product rule p}
If $\FF \in \DM^{p}(\Omega)$ for $1 \le p \le \infty$, and $\phi \in C^{0}(\Omega) \cap L^{\infty}(\Omega)$
with $\nabla \phi \in L^{p'}(\Omega; \R^{n})$ for $p' = \frac{p}{p - 1}$, then
$$
\phi \FF \in \DM^{p}(\Omega),
$$
and
\begin{equation} \label{product rule}
\div (\phi \FF) = \phi\,\div \FF + \FF \cdot \nabla \phi.
\end{equation}
\end{proposition}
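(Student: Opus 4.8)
The plan is to establish the product rule by mollification, following the standard scheme but being careful about which factor is smoothed and in which topology the limits are taken, since $\FF$ is only in $L^p$ and $\phi$ only continuous with gradient in $L^{p'}$. First I would reduce to a local statement: it suffices to verify \eqref{product rule} as an identity of distributions on an arbitrary open set $U \Subset \Omega$, so I may assume all relevant quantities are integrable on $U$. Fix a test function $\varphi \in C^\infty_c(U)$. The identity to prove, after unwinding the definition of distributional divergence, is
\[
-\int_U \phi\,\FF \cdot \nabla \varphi \, \dr x = \int_U \varphi\,\phi \,\dr(\div \FF) + \int_U \varphi\, \FF \cdot \nabla \phi \, \dr x,
\]
and the right-hand side already makes sense: $\varphi \phi$ is a bounded continuous compactly supported function integrable against the measure $\div \FF$, and $\FF \cdot \nabla\phi \in L^1(U)$ by H\"older since $\FF \in L^p$ and $\nabla\phi \in L^{p'}$.

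The key step is to mollify $\phi$ rather than $\FF$. Set $\phi_\eps := \phi * \eta_\eps$, which is smooth on a slightly smaller open set containing $\supp\varphi$ for $\eps$ small. Then $\phi_\eps \FF \in \DM^p$ trivially with the classical product rule $\div(\phi_\eps \FF) = \phi_\eps \div \FF + \FF\cdot\nabla\phi_\eps$ holding in the sense of distributions (this is the elementary case where one factor is $C^\infty$, proved by a direct integration-by-parts against $\varphi$). Testing this identity against $\varphi$ gives
\[
-\int_U \phi_\eps \,\FF \cdot \nabla \varphi \, \dr x = \int_U \varphi\, \phi_\eps \,\dr(\div \FF) + \int_U \varphi\, \FF \cdot \nabla \phi_\eps \, \dr x.
\]
Now I pass to the limit $\eps \to 0$ term by term. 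For the left-hand side: $\phi_\eps \to \phi$ uniformly on $\supp\varphi$ (continuity of $\phi$) while $\|\phi_\eps\|_{L^\infty} \le \|\phi\|_{L^\infty}$, so $\phi_\eps \FF \cdot \nabla\varphi \to \phi\,\FF\cdot\nabla\varphi$ in $L^1(U)$ by dominated convergence. For the first term on the right: $\varphi\phi_\eps \to \varphi\phi$ uniformly with uniform sup-bound, and since $|\div \FF|$ is a finite measure on $\supp\varphi$, the integral converges. For the second term on the right: here I use that $\nabla\phi_\eps = (\nabla\phi) * \eta_\eps \to \nabla\phi$ in $L^{p'}_{\loc}$ when $p' < \infty$, hence $\FF\cdot\nabla\phi_\eps \to \FF\cdot\nabla\phi$ in $L^1(U)$ by H\"older. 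The limiting identity is exactly the desired \eqref{product rule} tested against $\varphi$; since $\varphi$ was arbitrary, $\div(\phi\FF) = \phi\div\FF + \FF\cdot\nabla\phi$ as distributions, and the right-hand side is a finite Radon measure on $U$ (finite because $\phi$ is bounded and $\FF\cdot\nabla\phi \in L^1$), so $\phi\FF \in \DM^p(\Omega)$.

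The main obstacle is the endpoint case $p = 1$, $p' = \infty$: then $\nabla\phi \in L^\infty$ but $\nabla\phi_\eps \to \nabla\phi$ only weakly-$*$ in $L^\infty_{\loc}$, not strongly, so the convergence $\FF\cdot\nabla\phi_\eps \to \FF\cdot\nabla\phi$ in $L^1$ is not immediate. The fix is to instead mollify $\FF$ in this case: with $\FF_\eps := \FF * \eta_\eps$ one has $\FF_\eps \to \FF$ in $L^1_{\loc}$ and, by a standard commutator estimate (Friedrichs' lemma, as in Chen-Frid \cite{CF1}), $\div(\phi\FF_\eps) - [\phi\div\FF_\eps + \FF_\eps\cdot\nabla\phi] \to 0$; since $\nabla\phi \in L^\infty$ and $\FF_\eps \to \FF$ in $L^1_{\loc}$, the term $\FF_\eps \cdot \nabla\phi \to \FF\cdot\nabla\phi$ in $L^1_{\loc}$, and $\phi \div \FF_\eps \to \phi\div\FF$ against test functions, yielding the identity. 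Alternatively, and more cleanly, one can symmetrize: since $\phi \in C^0 \cap L^\infty$ with $\nabla\phi \in L^\infty$ means $\phi$ is (locally) Lipschitz, the case $p=1$ is covered by the classical product rule for Lipschitz scalars times $\DM^1$ fields already recorded in \cite[Theorem 3.2]{CF2}. Either way, combining the two regimes $1 < p \le \infty$ and $p = 1$ completes the proof.
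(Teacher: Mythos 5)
Your proof is correct and follows essentially the same route as the paper: for $1 < p \le \infty$ mollify $\phi$ and pass to the limit using uniform convergence of $\phi_\eps$ and $L^{p'}_{\rm loc}$ convergence of $\nabla\phi_\eps$, and for $p=1$ mollify $\FF$ instead, exploiting that $\phi$ is locally Lipschitz. (One small remark: for the $p=1$ case no commutator estimate is needed --- once $\FF_\eps$ is smooth and $\phi$ Lipschitz the product rule $\div(\phi\FF_\eps)=\phi\,\div\FF_\eps+\FF_\eps\cdot\nabla\phi$ holds exactly, and one simply passes to the limit as you and the paper both do.)
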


\begin{proof}
\, It is clear that $\phi \FF \in L^{p}(\Omega; \R^{n})$.

We first consider the case $1 < p \le \infty$.
Take $\phi_{\eps} := \phi \ast \eta_{\eps}$, where $\eta_{\eps}$ is defined in \eqref{mollifier-2}.
Then $\phi_{\eps} \to \phi$ uniformly
on compact subsets of $\Omega$, and $\nabla \phi_{\eps} \to \nabla \phi$ in $L^{p'}_{\rm loc}(\Omega; \R^{n})$.

For any test function $\psi \in C^{1}_{c}(\Omega)$,  we have
\begin{align} \label{product rule eq}
\int_{\Omega} \phi_{\eps} \FF \cdot \nabla \psi \, \dr x
& = \int_{\Omega} \FF \cdot \nabla (\phi_{\eps} \psi) \, \dr x - \int_{\Omega} \psi \FF \cdot \nabla \phi_{\eps} \, \dr x \\
& = - \int_{\Omega} \psi \phi_{\eps} \, \dd\div \FF - \int_{\Omega} \psi \FF \cdot \nabla \phi_{\eps} \, \dr x. \nonumber
\end{align}
We can now pass to the limit as $\eps \to 0$ to obtain \eqref{product rule} in the sense of distributions.
On the other hand, it follows that
\begin{equation*}
\left | \int_{\Omega} \phi \FF \cdot \nabla \psi \, \dr x \right |
\le \big(\|\phi\|_{L^\infty(\Omega)} |\div \FF|(\Omega)
    + \|\FF\|_{L^{p}(\Omega; \R^{n})} \|\nabla \phi\|_{L^{p'}(\Omega; \R^{n})}\big)  \|\psi\|_{L^\infty(\Omega)}.
\end{equation*}
This shows that $\div (\phi \FF)$ is a finite Radon measure on $\Omega$,
by the density of $C_{c}^{1}(\Omega)$ in $C_{c}(\Omega)$ with respect to the sup norm,
and that \eqref{product rule} holds in the sense of Radon measures.

\medskip
For the case $p = 1$,  we mollify $\FF$ instead, since $\phi \in W^{1, \infty}(\Omega) \subset \Lip_{\rm loc}(\Omega)$.
For any $\psi \in C^{1}_{c}(\Omega)$, we obtain
\begin{equation*}
\int_{\Omega} \phi \, \FF_{\eps} \cdot \nabla \psi \, \dr x
= \int_{\Omega} \FF_{\eps} \cdot \nabla (\phi \psi) \, \dr x
 - \int_{\Omega} \psi \, \FF_{\eps} \cdot \nabla \phi \, \dr x.
\end{equation*}
By passing to the limit as $\eps \to 0$, the $L^{1}$--convergence of $\FF_{\eps}$ to $\FF$ implies
\begin{align*}
\int_{\Omega} \phi \, \FF \cdot \nabla \psi \, \dr x
& = \int_{\Omega} \FF \cdot \nabla (\phi \psi) \, \dr x - \int_{\Omega} \psi \, \FF \cdot \nabla \phi \, \dr x \\
& = - \int_{\Omega} \psi \phi \, \dd \div \FF - \int_{\Omega} \psi \, \FF \cdot \nabla \phi \, \dr x.
\end{align*}
This shows \eqref{product rule} in the sense of distributions for $p = 1$. Then we can conclude by arguing as before.
\end{proof}

\begin{remark} \, Notice that, if $\phi \in L^{\infty}(\Omega)$ and $\nabla \phi \in L^{p'}(\Omega; \R^{n})$,
then $\phi \in W^{1, p'}_{\rm loc}(\Omega)$. Thus, if $p' > n$ {\rm (}that is, $\, 1 \le p < \frac{n}{n - 1}${\rm )},
we do not have to require that  $\phi \in C^{0}(\Omega)$ in Proposition {\rm \ref{product rule p}},
since this follows by Morrey's inequality {\rm (}see \cite[Theorem 3, \S 4.5.3]{eg}{\rm )}.
\end{remark}

Proposition \ref{product rule p} can be extended to the case $p = \infty$, by taking $g \in BV(\Omega) \cap L^{\infty}(\Omega)$.
Indeed, a product rule between essentially bounded divergence-measure fields and scalar functions
of bounded variation was first proved by Chen-Frid \cite[Theorem 3.1]{CF1}
(also see  \cite{Frid1}).

\begin{theorem}[Chen-Frid \cite{CF1}] \label{productruleinfty}
\, Let $g \in BV(\Omega) \cap L^{\infty}(\Omega)$ and $\FF \in \DM^{\infty}(\Omega)$. Then $g \FF \in \DM^{\infty}(\Omega)$ and
\begin{equation}\label{PRDMF}
\mathrm{div}(g \FF) = g^{*} \mathrm{div}\FF + \overline{\FF \cdot Dg}
\end{equation}
in the sense of Radon measures on $\Omega$,
where $g^{*}$ is the precise representative of $g$,
and $\overline{\FF \cdot Dg}$ is a Radon measure, which is the weak-star limit of $\FF \cdot \nabla g_{\eps}$
for the mollification $g_{\eps}:=g \ast \eta_{\eps}$,
and is absolutely continuous with respect to $|Dg|$. In addition,
$$
|\overline{\FF \cdot D g}| \le \|\FF\|_{L^{\infty}(\Omega; \R^{n})} |D g|.
$$
\end{theorem}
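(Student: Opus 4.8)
The plan is to run the classical mollification argument: move the derivative off $g$ onto the smooth mollifications $g_{\eps} := g \ast \eta_{\eps}$, produce the product‑rule identity at the level of the mollifications, and then pass to the limit. Since $g \in L^{\infty}(\Omega)$ and $\FF \in L^{\infty}(\Omega;\R^{n})$, it is immediate that $g\FF \in L^{\infty}(\Omega;\R^{n})$, so only the identity \eqref{PRDMF} and the quantitative claims need to be established. For $\psi \in C^{1}_{c}(\Omega)$ and $\eps$ small enough that $g_{\eps}$ is defined on a neighbourhood of $\supp\psi$, the ordinary Leibniz rule for the smooth function $g_{\eps}$ and the definition of $\div\FF$ give
\begin{equation*}
\int_{\Omega} g_{\eps}\,\FF\cdot\nabla\psi \,\dr x
= \int_{\Omega} \FF\cdot\nabla(g_{\eps}\psi)\,\dr x - \int_{\Omega}\psi\,\FF\cdot\nabla g_{\eps}\,\dr x
= -\int_{\Omega}\psi\,g_{\eps}\,\dd\div\FF - \int_{\Omega}\psi\,\FF\cdot\nabla g_{\eps}\,\dr x .
\end{equation*}

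Next I would pass to the limit $\eps\to0$ term by term. On the left, $g_{\eps}\to g$ in $L^{1}_{\rm loc}(\Omega)$ with $\|g_{\eps}\|_{L^{\infty}}\le\|g\|_{L^{\infty}}$, so dominated convergence yields $\int g_{\eps}\FF\cdot\nabla\psi\,\dr x\to\int g\FF\cdot\nabla\psi\,\dr x$. For the first term on the right I would use the $BV$ fact recalled above that mollifications converge to the precise representative $\Haus{n-1}$–a.e., \emph{together with} the key property $|\div\FF|\ll\Haus{n-1}$ valid for $\FF\in\DM^{\infty}$; this upgrades $g_{\eps}\to g^{*}$ to $|\div\FF|$–a.e. convergence, and bounded convergence gives $\int\psi g_{\eps}\,\dd\div\FF\to\int\psi g^{*}\,\dd\div\FF$. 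For the last term I would first record the uniform local bound $\int_{A}|\FF\cdot\nabla g_{\eps}|\,\dr x\le\|\FF\|_{L^{\infty}}|Dg_{\eps}|(A)\le\|\FF\|_{L^{\infty}}|Dg|(A_{\eps})$ for $A\Subset\Omega$, where $A_{\eps}:=\{x:\dist(x,A)<\eps\}$, using the pointwise estimate $|\nabla g_{\eps}|\le|Dg|\ast\eta_{\eps}$. Hence the measures $\FF\cdot\nabla g_{\eps}\,\Leb{n}$ have locally uniformly bounded total variation, so some subsequence converges weakly‑$*$ in $\M_{\rm loc}(\Omega)$; but since all other terms in the identity already converge along the full sequence, $\lim_{\eps}\int\psi\,\FF\cdot\nabla g_{\eps}\,\dr x$ exists for every $\psi\in C^{1}_{c}(\Omega)$, so the whole family converges weakly‑$*$ to a single Radon measure, which is then \emph{defined} to be $\overline{\FF\cdot Dg}$. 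The identity passes to the limit as
$\int_{\Omega} g\FF\cdot\nabla\psi\,\dr x = -\int_{\Omega}\psi g^{*}\,\dd\div\FF - \int_{\Omega}\psi\,\dd\,\overline{\FF\cdot Dg}$,
i.e.\ \eqref{PRDMF} holds in $\mathcal{D}'(\Omega)$.

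It remains to prove the quantitative claims and finiteness. Lower semicontinuity of the total variation under weak‑$*$ convergence combined with the bound above gives, for every open $A\Subset\Omega$,
\begin{equation*}
|\overline{\FF\cdot Dg}|(A)\le\liminf_{\eps\to0}\int_{A}|\FF\cdot\nabla g_{\eps}|\,\dr x
\le\|\FF\|_{L^{\infty}}\limsup_{\eps\to0}|Dg|(A_{\eps})=\|\FF\|_{L^{\infty}}|Dg|(\overline{A}),
\end{equation*}
using $A_{\eps}\downarrow\overline{A}$ and finiteness of $|Dg|$. Exhausting an arbitrary open $U\subseteq\Omega$ by sets $A\Subset U$ and then invoking outer regularity of both measures yields $|\overline{\FF\cdot Dg}|\le\|\FF\|_{L^{\infty}}|Dg|$ on all Borel sets; this shows simultaneously that $\overline{\FF\cdot Dg}$ is a finite Radon measure and that $\overline{\FF\cdot Dg}\ll|Dg|$. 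Since $g^{*}$ is bounded and defined $|\div\FF|$–a.e., $g^{*}\div\FF$ is a finite Radon measure with $|g^{*}\div\FF|\le\|g\|_{L^{\infty}}|\div\FF|$, so $\div(g\FF)=g^{*}\div\FF+\overline{\FF\cdot Dg}$ is a finite Radon measure, and therefore $g\FF\in\DM^{\infty}(\Omega)$ with \eqref{PRDMF} holding in the sense of Radon measures.

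I expect the main obstacle to be the passage to the limit in $\int\psi g_{\eps}\,\dd\div\FF$: here one cannot get away with $L^{1}_{\rm loc}$ convergence of $g_{\eps}$, and the argument genuinely needs that the (possibly singular) measure $\div\FF$ charges no set of $\sigma$‑finite $\Haus{n-1}$–measure, so that $\Haus{n-1}$–a.e.\ convergence $g_{\eps}\to g^{*}$ transfers to $|\div\FF|$–a.e.\ convergence. The secondary subtlety is the bookkeeping that makes $\overline{\FF\cdot Dg}$ independent of the extracted subsequence, which is resolved by reading its action on each test function off from the already‑convergent remaining terms of the identity.
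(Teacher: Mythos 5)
Your proof is correct, and it follows the standard mollification argument—essentially the one in Chen-Frid \cite{CF1}, to which the paper itself defers rather than supplying a proof. You correctly identified the two load-bearing facts: (a) mollifications of a $BV$ function converge $\Haus{n-1}$-a.e.\ to the precise representative, and (b) $|\div\FF|\ll\Haus{n-1}$ for $\FF\in\DM^{\infty}$, which upgrades (a) to $|\div\FF|$-a.e.\ convergence so that bounded convergence applies to $\int\psi\,g_{\eps}\,\dd\div\FF$; and you closed the argument by noting that the weak-$*$ limit $\overline{\FF\cdot Dg}$ is forced to exist along the full family because its action on each $\psi\in C^{1}_{c}$ is determined by the already-convergent remaining terms, with $C^{1}_{c}$ dense in $C_{c}$. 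The total-variation estimate via $|\nabla g_{\eps}|\le|Dg|\ast\eta_{\eps}$ and lower semicontinuity is also the standard route to $|\overline{\FF\cdot Dg}|\le\|\FF\|_{L^{\infty}}|Dg|$. It is worth noting that the paper's own generalization to $1\le p<\infty$ (Theorem \ref{prodruleBVgeneral}) uses a related but more delicate scheme—rewriting $\int\phi\,\FF\cdot\nabla g_{\eps}\,\dr x$ as $\int(\phi\FF)_{\eps}\cdot\,\dd Dg$ via Fubini and hypothesizing weak convergence of $\FF_{\eps}\cdot Dg$—precisely because absolute continuity of $\div\FF$ with respect to $\Haus{n-1}$ is lost when $p<\infty$; your argument is the clean $p=\infty$ specialization where those complications disappear.
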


One could ask whether it would be possible to obtain a similar result also for $\FF \in \DM^{p}(\Omega)$, $1 \le p < \infty$,
by imposing some other assumptions on $\FF$ weaker than the essential boundedness.
It is obvious that $g \FF \in L^{p}(\Omega)$ and that $\div(g \FF)$ is a distribution of order $1$,
by definition; hence one should look for conditions under which it can be extended to
a linear continuous functional on $C_{c}(\Omega)$.

Our investigation is motivated by the following example,
where $g$ is the characteristic function of a set of finite perimeter $E$,
and $\FF$ is a vector field in $\DM^p_{\rm loc}, 1\le p< 2$,
which is unbounded on $\redb E$.

\begin{example} \label{prodruleWhitney}
\, Let $n = 2$, $g = \chi_{(0, 1)^{2}}$, and
$$
\FF(x_1, x_2) = \frac{1}{2 \pi} \frac{(x_1, x_2)}{x_1^{2} + x_2^{2}},
$$
which implies that $\div \FF = \delta_{(0, 0)}$.
Then $g \FF \in \DM^{p}_{\rm loc}(\R^{2})$ for any $1 \le p < 2$ with
\begin{equation} \label{divergence_prod_1}
\div (g \FF) = \frac{1}{4} \delta_{(0, 0)} + (\FF, Dg), \end{equation}
where
\begin{equation} \label{pairing_Whitney_boundary}
(\FF, Dg)(\phi) := - \frac{1}{2 \pi} \left ( \int_{0}^{1} \frac{\phi(x_{1}, 1)}{1 + x_{1}^{2}} \, \dr x_{1}
+ \int_{0}^{1} \frac{\phi(1, x_{2})}{1 + x_{2}^{2}} \, \dr x_{2} \right )
\,\,\,\mbox{for any $\phi \in C_{c}(\R^{2})$}.
\end{equation}
This pairing functional can also be regarded as a principal value in the sense that
\begin{equation*}
(\FF, Dg)(\phi) = \lim_{\eps \to 0} \int_{\partial (0, 1)^{2} \setminus B(0, \eps)} \phi \, \FF \cdot \nu_{(0, 1)^{2}} \, \dr \Haus{1},
\end{equation*}
so that $|(\FF, Dg)| \ll |D g| = \Haus{1} \res \redb (0, 1)^{2}$.
Moreover,
term $\frac{1}{4} \delta_{(0, 0)}$ comes from
the fact that $g^{*}(0)=\frac{1}{4}$ {\rm (}{\it i.e.}  the value of the precise representative of $g$ at the origin{\rm )}.

In order to prove these claims, we take $\phi \in C^{1}_{c}(\R^{2})$ to see
\begin{align*}
&\int_{\R^{2}} g \FF \cdot \nabla \phi \, \dr x
= \lim_{\eps \to 0} \int_{(0, 1)^{2} \setminus B(0, \eps)} \FF \cdot \nabla \phi \, \dr x \\
& = \lim_{\eps \to 0} \bigg( - \int_{\partial (0, 1)^{2} \setminus B(0, \eps)} \phi \FF \cdot \nu_{(0, 1)^{2}} \, \dr \Haus{1}\\
&\qquad\qquad
- \int_{\partial B(0, \eps) \cap \{ x_1 > 0, x_2> 0 \}}
  \phi(x_1, x_2) \frac{1}{2 \pi} \frac{(x_1, x_2)}{x_1^{2} + x_2^{2}}\cdot\frac{(x_1, x_2)}{\sqrt{x_1^{2} + x_2^{2}}} \, \dr \Haus{1} \bigg) \\
& = - \frac{1}{2 \pi} \lim_{\eps \to 0} \bigg( \int_{\eps}^{1} \phi(x_1, 0) \frac{(x_1, 0)}{x_1^{2}} \cdot (0, 1) \, \dr x_1
  + \int_{\eps}^{1} \phi(0, x_2) \frac{(0, x_2)}{x_2^{2}} \cdot (1, 0) \, \dr x_2 \\
  &\qquad\qquad\quad\,\,\,
  + \int_{0}^{1} \phi(1, x_2) \frac{(1, x_2)}{1 + x_2^{2}} \cdot (-1, 0) \, \dr  x_2
  + \int_{0}^{1} \phi(x_1, 1) \frac{(x_1, 1)}{x_1^{2} + 1} \cdot (0, - 1) \, \dr x_1 \\
&\qquad\qquad\quad\,\,\,
+ \int_{0}^{\frac{\pi}{2}} \phi(\eps \cos{\theta}, \eps \sin{\theta}) \, \dr \theta \bigg) \\
& = \frac{1}{2 \pi} \bigg( \int_{0}^{1} \frac{\phi(x_{1}, 1)}{1 + x_{1}^{2}} \, \dr x_{1}
+ \int_{0}^{1} \frac{\phi(1, x_{2})}{1 + x_{2}^{2}} \, dx_{2} \bigg) - \frac{1}{4} \phi(0, 0).
\end{align*}
This shows that $\div (g \FF)$ is a distribution of order $0$, so that it is a measure, since it can be uniquely extended
to a functional on $C_{c}(\R^{2})$ by density.

In addition, for any $\phi \in C^{0}([0, 1]^{2})$ with $\nabla \phi \in L^{p'}((0, 1)^{2})$ for some $p \in [1, 2)$,
the following integration by parts formula holds{\rm :}
\begin{align}
&\int_{(0, 1)^{2}} \frac{(x_1, x_2)}{x_1^{2} + x_2^{2}} \cdot \nabla \phi(x_1, x_2) \, \dr x_1 \, \dr x_2
+ \frac{\pi}{2} \phi(0, 0)\nonumber \\
&= \int_{0}^{1} \frac{\phi(x_{1}, 1)}{1 + x_{1}^{2}} \, \dr x_{1} + \int_{0}^{1} \frac{\phi(1, x_{2})}{1 + x_{2}^{2}} \, \dr x_{2}.
 \label{IBP_Whitney}
\end{align}
Indeed, since $\chi_{(0, 1)^{2}} \FF \in \DM^{p}(\R^{2})$ for any $p \in [1, 2)$, then \eqref{product rule} yields
\begin{equation*}
\div(\phi \chi_{(0, 1)^{2}} \FF) = \phi\, \div ( \chi_{(0, 1)^{2}} \FF) + \chi_{(0, 1)^{2}} \FF \cdot \nabla \phi,
\end{equation*}
which, by \eqref{divergence_prod_1}, implies
\begin{equation} \label{divergence_prod_phi}
\div(\phi \chi_{(0, 1)^{2}} \FF)
=  \frac{1}{4} \phi(0, 0) \delta_{(0, 0)} + \phi (\FF, D \chi_{(0, 1)^{2}}) + \chi_{(0, 1)^{2}} \FF \cdot \nabla \phi.
\end{equation}
Finally, \eqref{IBP_Whitney} follows by evaluating \eqref{divergence_prod_phi} over $\R^{2}$,
using the fact that $\phi \chi_{(0, 1)^{2}} \FF$ has compact support
to obtain $\div(\phi \chi_{(0, 1)^{2}} \FF)(\R^{2}) = 0$ by \cite[Lemma {\rm 3.1}]{comi2017locally},
and employing \eqref{pairing_Whitney_boundary}.
\end{example}

In this example,
the cancellations between $\FF$ and $\nu_{(0, 1)^{2}}$ play a crucial role in order to ensure
the existence of a measure given by the pairing of $\FF$ and $Dg$.

Indeed, we can impose the existence of such a measure in order to achieve a more general product rule.

\begin{theorem} \label{prodruleBVgeneral}
Let $\FF \in \DM^{p}(\Omega)$ for $1 \le p \le \infty$, and let $g \in L^{\infty}(\Omega) \cap BV(\Omega)$.
Assume that there exists a measure $(\FF, D g) \in \mathcal{M}(\Omega)$ such that $\FF_{\eps} \cdot Dg \weakto (\FF, D g)$
for any mollification $\FF_{\eps}$ of $\FF$.
Then
$$
g \FF \in \DM^{p}(\Omega),
$$
and
\begin{equation} \label{prod rule sigma}
\div(g \FF) = \tildef{g}\, \div \FF + (\FF, D g),
\end{equation}
where $\tildef{g} \in L^{\infty}(\Omega, |\div \FF|)$ is the weak$^{*}$--limit of a suitable subsequence of
mollified functions $g_{\eps}$ of $g$, which satisfies $\tildef{g}(x) = g^{*}(x)$ whenever $g^{*}$ is well defined.
In addition,
$$
|(\FF, D g)| \ll \Haus{n - 1} \,\qquad \mbox{if $p = \infty$},
$$
and, if $p \in \big [\frac{n}{n - 1}, \infty \big )$,
$$
|(\FF, D g)|(B) = 0
\qquad\mbox{for any Borel set $B$ with $\sigma$--finite $\Haus{n - p'}$ measure}.
$$
\end{theorem}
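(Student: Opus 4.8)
The plan is to prove the identity by mollifying $\FF$ — so that the standing hypothesis on $\FF_\eps\cdot Dg$ enters directly — and then passing to the limit in the elementary Leibniz rule for a smooth vector field tested against a $BV$ scalar. Since $g\in L^\infty(\Omega)$, it is immediate that $g\FF\in L^p(\Omega;\R^n)$, so only \eqref{prod rule sigma} and the fact that $\div(g\FF)$ is a finite Radon measure require work. Set $\FF_\eps:=\FF\ast\eta_\eps$, which is smooth on $\Omega_\eps:=\{x\in\Omega:\dist(x,\partial\Omega)>\eps\}$; testing the definition of $Dg$ against $\psi\FF_\eps\in C^1_c(\Omega)$ (legitimate once $\eps$ is small enough that $\supp\psi\subset\Omega_\eps$) and using $\FF_\eps\cdot\nabla\psi=\div(\psi\FF_\eps)-\psi\,\div\FF_\eps$ pointwise, one gets, for every $\psi\in C^1_c(\Omega)$ and every such $\eps$,
\begin{equation*}
\int_\Omega g\,\FF_\eps\cdot\nabla\psi\,\dr x = -\int_\Omega \psi\,g\,\div\FF_\eps\,\dr x - \int_\Omega \psi\,\dd(\FF_\eps\cdot Dg),
\end{equation*}
where $\div\FF_\eps=(\div\FF)\ast\eta_\eps$ is smooth and $\FF_\eps\cdot Dg$ is a Radon measure on $\Omega_\eps$.

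Next I would let $\eps\to0$ term by term. The left-hand side tends to $\int_\Omega g\,\FF\cdot\nabla\psi\,\dr x$ since $\FF_\eps\to\FF$ in $L^p_{\rm loc}(\Omega;\R^n)$ for $p<\infty$ (weakly$^{*}$ in $L^\infty_{\rm loc}$ for $p=\infty$) and $g\,\nabla\psi\in L^{p'}(\Omega;\R^n)$ has compact support. The last term tends to $\int_\Omega\psi\,\dd(\FF,Dg)$ exactly by the hypothesis $\FF_\eps\cdot Dg\weakto(\FF,Dg)$, as $\psi\in C_c(\Omega)$. For the first term on the right, move the mollifier onto $\psi g$ (Fubini plus evenness of $\eta$): $\int_\Omega\psi g\,\div\FF_\eps\,\dr x=\int_\Omega\big((\psi g)\ast\eta_\eps\big)\,\dd\div\FF$; then the commutator estimate $\big\|(\psi g)\ast\eta_\eps-\psi\,(g\ast\eta_\eps)\big\|_{L^\infty}\le\Lip(\psi)\,\|g\|_{L^\infty}\,\eps$ (valid since $\psi$ is Lipschitz and $g$ bounded), together with $\|g\ast\eta_\eps\|_{L^\infty}\le\|g\|_{L^\infty}$ and Banach--Alaoglu, yields a subsequence $\eps_k\to0$ with $g\ast\eta_{\eps_k}\weakstarto\tildef{g}$ in $L^\infty(\Omega,|\div\FF|)$, along which $\int_\Omega\psi g\,\div\FF_{\eps_k}\,\dr x\to\int_\Omega\psi\,\tildef{g}\,\dd\div\FF$ (using $\psi\in L^1(\Omega,|\div\FF|)$). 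Hence, for every $\psi\in C^1_c(\Omega)$,
\begin{equation*}
\int_\Omega g\,\FF\cdot\nabla\psi\,\dr x = -\int_\Omega\psi\,\tildef{g}\,\dd\div\FF - \int_\Omega\psi\,\dd(\FF,Dg),
\end{equation*}
i.e. \eqref{prod rule sigma} holds in $\mathcal{D}'(\Omega)$; since its right-hand side is a finite Radon measure and $C^1_c(\Omega)$ is dense in $C_c(\Omega)$ in the sup norm, this shows $g\FF\in\DM^p(\Omega)$ and that \eqref{prod rule sigma} holds in $\mathcal{M}(\Omega)$.

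To identify $\tildef{g}$, I would use that, wherever $g^{*}(x)$ is defined, $(g\ast\eta_\eps)(x)\to g^{*}(x)$ — a standard consequence of the radial symmetry and compact support of $\eta$, seen by writing $(g\ast\eta_\eps)(x)$ as a weighted average of $r\mapsto\int_{B(x,r)}g\,\dr y$ and integrating by parts in $r$; combined with the weak$^{*}$ convergence above and dominated convergence against $|\div\FF|$, this gives $\tildef{g}=g^{*}$ $|\div\FF|$-a.e. on $\{g^{*}\text{ defined}\}$, so the representative may be chosen with $\tildef{g}=g^{*}$ there. For the absolute continuity: when $p=\infty$, $|\FF_\eps\cdot Dg|\le\|\FF\|_{L^\infty}|Dg|$ for every $\eps$, and this bound survives the weak$^{*}$ limit, so $|(\FF,Dg)|\le\|\FF\|_{L^\infty}|Dg|\ll\Haus{n-1}$, using the classical fact that $|Dg|\ll\Haus{n-1}$ for $g\in BV(\Omega)$ (see \cite{afp}). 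When $p\in[\tfrac{n}{n-1},\infty)$, so $p'\le n$, every Borel set $B$ of $\sigma$-finite $\Haus{n-p'}$ measure has zero $p'$-capacity, whence $|\div\GG|(B)=0$ for every $\GG\in\DM^p(\Omega)$ by \cite[Theorem 3.2]{Silhavy1}; applying this to $\GG=\FF$ and to $\GG=g\FF\in\DM^p(\Omega)$ (from the first part) and using that $\tildef{g}$ is bounded, $|(\FF,Dg)|(B)=|\div(g\FF)-\tildef{g}\,\div\FF|(B)=0$.

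The main obstacle is precisely the first term on the right-hand side above: because $g$ is only essentially bounded, the product $g\,\div\FF$ has no a priori meaning, so one must pass through the mollifications $g\ast\eta_\eps$, extract an abstract weak$^{*}$ limit $\tildef{g}$ in $L^\infty(\Omega,|\div\FF|)$, and only afterwards recognize it as the concrete precise representative $g^{*}$ where the latter exists. The commutator estimate that lets us transfer the mollifier from $\div\FF$ onto $\psi g$, and the pointwise convergence $(g\ast\eta_\eps)(x)\to g^{*}(x)$, are the two ingredients that make this step rigorous; the rest is routine.
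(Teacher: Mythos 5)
Your argument is correct, and it is a genuinely different — indeed the mirror image of — the paper's proof. The paper mollifies $g$ and writes $\int_\Omega g_\eps\,\FF\cdot\nabla\phi\,\dr x = -\int_\Omega\phi\,g_\eps\,\dd\div\FF - \int_\Omega\phi\,\FF\cdot\nabla g_\eps\,\dr x$; there the second term on the right is handled immediately by the weak$^*$ compactness of $\{g_\eps\}$ in $L^\infty(\Omega,|\div\FF|)$, and the commutator estimate $(\phi\FF)_\eps\approx\phi\,\FF_\eps$ is needed to turn $\int\phi\,\FF\cdot\nabla g_\eps\,\dr x = \int(\phi\FF)_\eps\cdot\,\dd Dg$ into a term to which the hypothesis $\FF_\eps\cdot Dg\weakto(\FF,Dg)$ applies. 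You instead mollify $\FF$, so the hypothesis applies literally to $\int\psi\,\dd(\FF_\eps\cdot Dg)$ with no commutator at all, and the commutator burden shifts to the other term, where $(\psi g)\ast\eta_\eps\approx\psi\,(g\ast\eta_\eps)$ lets you transfer the mollifier from $\div\FF$ onto $g$ and extract $\tildef{g}$; the two arguments thus use the same three ingredients (commutator, Banach--Alaoglu in $L^\infty(\Omega,|\div\FF|)$, the standing hypothesis on $\FF_\eps\cdot Dg$) but in exchanged positions, and yours makes the role of the hypothesis slightly more transparent. Two smaller points where you go beyond the written proof: you actually justify $(g\ast\eta_\eps)(x)\to g^*(x)$ wherever $g^*$ exists (via the decomposition $(g\ast\eta_\eps)(x)=\omega_n\int_0^1(-\zeta'(s))s^n\,\fint_{B(x,\eps s)}g\,\dr s$ and dominated convergence), whereas the paper merely asserts the identification; and for $p=\infty$ your argument yields the stronger bound $|(\FF,Dg)|\le\|\FF\|_{L^\infty}|Dg|\ll\Haus{n-1}$ directly from $|\FF_\eps\cdot Dg|\le\|\FF\|_{L^\infty}|Dg|$ and lower semicontinuity of total variation under weak$^*$ convergence, while the paper derives only $\ll\Haus{n-1}$ by appealing to $|\div\FF|,|\div(g\FF)|\ll\Haus{n-1}$ through the decomposition $(\FF,Dg)=\div(g\FF)-\tildef{g}\,\div\FF$. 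For $p\in[\tfrac{n}{n-1},\infty)$ your argument coincides with the paper's.
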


\begin{proof} \, It is clear that $g \FF \in L^{p}(\Omega; \R^{n})$.
We now divide the remaining proof into two steps.

\smallskip
1. In order to show \eqref{prod rule sigma}, we take any mollification $g_{\eps} = g \ast \eta_{\eps}$
with $\eta_{\eps}$ defined in \eqref{mollifier-2}.
Then we select $\phi \in \Lip_{c}(\Omega)$ to obtain
\begin{equation}\label{3.5.1}
\int_{\Omega} g_{\eps} \FF \cdot \nabla \phi \, \dr x
= - \int_{\Omega} \phi g_{\eps} \, \dd \div \FF - \int_{\Omega} \phi \FF \cdot \nabla g_{\eps} \, \dr x.
\end{equation}
Since $g_{\eps} \to g$ in $L^{p'}_{\rm loc}(\Omega)$, we have
\begin{equation}\label{3.5.2}
\int_{\Omega} g_{\eps} \FF \cdot \nabla \phi \, \dr x \to \int_{\Omega} g \FF \cdot \nabla \phi \, \dr x \qquad \mbox{as $\eps\to 0$}.
\end{equation}
Notice that $|g_{\eps}(x)| \le \|g\|_{L^{\infty}(\Omega)}$ for any $x \in \Omega$.
Then there exists a weak$^{*}$--limit $\tildef{g} \in L^{\infty}(\Omega, |\div \FF|)$ for a suitable subsequence $\{g_{\eps_{k}}\}$
so that $\tildef{g}$ coincides with the precise representative $g^{*}$ whenever this is well defined. Therefore, we obtain
\begin{equation}\label{3.5.3}
\int_{\Omega} \phi g_{\eps} \, \dd \div \FF  \to \int_{\Omega} \phi \tildef{g} \, \dd \div \FF
\end{equation}
up to
a subsequence.
As for the last term, we have
\begin{equation}\label{3.5.4}
\int_{\Omega} \phi(x) \FF(x) \cdot \nabla g_{\eps}(x) \, \dr x
= \int_{\Omega} (\phi \FF)_{\eps}(y) \cdot \, \dr  Dg(y).
\end{equation}
By the uniform continuity of $\phi$, for any $\delta> 0$ and $x \in \Omega$,
there exists $\eps_{0} > 0$ such that $|\phi(y) - \phi(x)| < \eta$
for any $y \in B(x, \eps)$ and $\eps \in (0, \eps_{0})$.
Since $\phi$ has compact support in $\Omega$, we can also assume that $B(x, \eps) \subset \Omega$ without loss of generality.
This implies
\begin{align*}
\big|(\phi \FF)_{\eps}(x) - \phi(x) \FF_{\eps}(x)\big|
& = \left |\int_{\Omega} \big(\phi(y) - \phi(x)\big) \FF(y) \eta_{\eps}(x - y) \, \dr y \right | \\
& \le \delta \int_{B(0, 1)} |\FF(x + \eps z)| \eta(z) \, \dr z \\
& \le \delta \| \eta\|_{L^{p'}(B(0, 1))} \| \FF \|_{L^{p}(\Omega; \R^{n})}.
\end{align*}
Hence, it follows that
\begin{equation}\label{3.5.5}
\int_{\Omega} (\phi \FF)_{\eps}(y) \cdot \, \dd Dg(y) = \int_{\Omega} \phi(y) \, \FF_{\eps}(y) \cdot \, \dd Dg(y) + o_{\eps}(1).
\end{equation}
Now we use our assumption on sequence $\FF_{\eps}$ to obtain
\begin{equation}\label{3.5.6}
\int_{\Omega} \phi(y) \, \FF_{\eps}(y) \cdot \, \dd Dg(y)\, \to \int_{\Omega} \phi(y) \, \dr (\FF, D g)(y).
\end{equation}
Combining \eqref{3.5.1}--\eqref{3.5.6}, we conclude that $\tildef{g}$ is actually unique and that \eqref{prod rule sigma} holds.
In particular, we see that $\div(g \FF) \in \mathcal{M}(\Omega)$, which implies that $g \FF \in \DM^{p}(\Omega)$.

\smallskip
2. As for the absolute continuity property of $(\FF, D g)$, we notice that
\begin{equation*}
(\FF, D g) = \div (g \FF) - \tildef{g}\, \div \FF
\end{equation*}
and  $\FF, g \FF \in \DM^{p}(\Omega)$.
We recall now that $|\div \FF|+|\div(g \FF)| \ll \Haus{n - 1}$ if $p = \infty$ (see \cite[Proposition 3.1]{CF1}
and \cite[Theorem 3.2]{Silhavy1}) and that, if $p \in [\frac{n}{n-1}, \infty)$, $|\div \FF|(B) = |\div (g \FF)|(B) = 0$
for any Borel set $B$ with $\sigma$--finite $\Haus{n - p'}$ measure, by \cite[Theorem 3.2]{Silhavy1}.
This concludes the proof.
\end{proof}

It seems to be delicate
to characterize the cases in which  measure $(\FF, D g)$ does exist
and the absolute continuity, {\it i.e.} $|(\FF, D g)| \ll |D g|$ holds as in Example \ref{prodruleWhitney}.
We give here a partial result.

\begin{corollary} \label{weak_star_sub_prod_rule_F}
Let $\FF \in \DM^{p}(\Omega)$ for $1 \le p \le \infty$, and let $g \in L^{\infty}(\Omega) \cap BV(\Omega)$.
Assume that there exists $\tildef{\FF} \in L^{\infty}_{\rm loc}(\Omega, |D g|; \R^{n})$ such that
$\FF_{\eps} \weakstarto \tildef{\FF}$ in $L^{\infty}_{\rm loc}(\Omega, |D g|; \R^{n})$,
where $\FF_{\eps} = \FF \ast \eta_{\eps}$ is the mollification of $\FF$.
Then  $g \FF \in \DM^{p}_{\rm loc}(\Omega)$ and
\begin{equation*}
\div(g \FF) = \tildef{g}\, \div \FF + \tildef{\FF} \cdot D g,
\end{equation*}
where $\tildef{g} \in L^{\infty}(\Omega; |\div \FF|)$ is the weak$^{*}$--limit of a subsequence of $g_{\eps}$ so that
$\tildef{g}(x) = g^{*}(x)$, whenever $g^{*}$ is well defined. In addition, for any open set $U \Subset \Omega$,
\begin{equation} \label{uniform_control_Dg}
|\tildef{\FF} \cdot D g| \res U
\le \inf_{\substack{U \Subset U' \Subset \Omega \\U' \ {\rm open}}}
 \|\FF\|_{L^{\infty}(U'; \R^{n})}\, |D g| \res U.
 \end{equation}
\end{corollary}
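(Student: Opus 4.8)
The plan is to read this corollary off Theorem \ref{prodruleBVgeneral}: the point is to check that its hypothesis holds with the choice $(\FF, Dg) := \tildef{\FF}\cdot Dg$, where $\tildef{\FF}\cdot Dg$ denotes the scalar Radon measure $(\tildef{\FF}\cdot\nu_g)\,|Dg|$, with $Dg = \nu_g\,|Dg|$ the polar decomposition (so $|\nu_g| = 1$ for $|Dg|$--a.e. $x$) and $|Dg|$ a finite Radon measure on $\Omega$ since $g\in BV(\Omega)$. Because $\tildef{\FF}\in L^{\infty}_{\rm loc}(\Omega, |Dg|;\R^n)$, this is a well-defined signed Radon measure on $\Omega$, finite on every $U\Subset\Omega$ but possibly not globally finite — which already signals that the conclusion can only be local.

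First I would show that $\FF_{\eps}\cdot Dg \weakto \tildef{\FF}\cdot Dg$ in the sense of locally finite Radon measures. Given $\phi\in C_{c}(\Omega)$, choose an open set $U$ with $\supp\phi\Subset U\Subset\Omega$; then $\phi\,\nu_g$ belongs to $L^{1}(\Omega, |Dg|;\R^n)$ and is supported in $U$, so the assumed weak$^{*}$ convergence $\FF_{\eps}\weakstarto\tildef{\FF}$ in $L^{\infty}(U, |Dg|;\R^n)$ gives, as $\eps\to0$,
\begin{align*}
\int_{\Omega}\phi\,\FF_{\eps}\cdot\,\dd Dg
&= \int_{U}(\phi\,\nu_g)\cdot\FF_{\eps}\,\dd|Dg| \\
&\longrightarrow \int_{U}(\phi\,\nu_g)\cdot\tildef{\FF}\,\dd|Dg|
= \int_{\Omega}\phi\,\dd\big(\tildef{\FF}\cdot Dg\big).
\end{align*}
Fixing now any open $U\Subset\Omega$, observe that $\FF\in\DM^{p}(U)$, that $g\in L^{\infty}(U)\cap BV(U)$, and that a mollification of $\FF|_{U}$ coincides with $\FF_{\eps}$ on every compact subset of $U$ once $\eps$ is small; hence the computation above shows that the hypothesis of Theorem \ref{prodruleBVgeneral} is satisfied on $U$ with $(\FF, Dg) = \tildef{\FF}\cdot Dg$. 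Theorem \ref{prodruleBVgeneral} then yields $g\FF\in\DM^{p}(U)$ and $\div(g\FF) = \tildef{g}\,\div\FF + \tildef{\FF}\cdot Dg$ on $U$, with $\tildef{g}$ the uniquely determined weak$^{*}$--limit of a subsequence of $g_{\eps}$, equal to $g^{*}$ wherever $g^{*}$ is well defined. Since $U\Subset\Omega$ is arbitrary, and since both $\tildef{g}$ (by the uniqueness assertion in Theorem \ref{prodruleBVgeneral}, applied on overlaps) and the measure $\tildef{\FF}\cdot Dg$ are defined consistently on all of $\Omega$, these identities glue together, giving $g\FF\in\DM^{p}_{\rm loc}(\Omega)$ and the asserted formula on $\Omega$.

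It remains to prove \eqref{uniform_control_Dg}. Fix an open $U\Subset\Omega$. If no open $U'$ with $U\Subset U'\Subset\Omega$ satisfies $\FF\in L^{\infty}(U';\R^n)$, the right-hand side of \eqref{uniform_control_Dg} equals $+\infty$ and there is nothing to prove; otherwise fix one such $U'$. For $\eps$ small, $\overline{U}+\overline{B(0,\eps)}\subset U'$, so $\norm{\FF\ast\eta_{\eps}}_{L^{\infty}(U)}\le\norm{\FF}_{L^{\infty}(U+B(0,\eps))}\le\norm{\FF}_{L^{\infty}(U';\R^n)}$; since $\FF_{\eps}$ is continuous, this gives $\norm{\FF_{\eps}}_{L^{\infty}(U, |Dg|;\R^n)}\le\norm{\FF}_{L^{\infty}(U';\R^n)}$. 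Weak$^{*}$ lower semicontinuity of the $L^{\infty}(U, |Dg|;\R^n)$--norm then yields $\norm{\tildef{\FF}}_{L^{\infty}(U, |Dg|;\R^n)}\le\norm{\FF}_{L^{\infty}(U';\R^n)}$, whence, using $|\nu_g| = 1$ $|Dg|$--a.e.,
\begin{equation*}
|\tildef{\FF}\cdot Dg|\res U = |\tildef{\FF}\cdot\nu_g|\,|Dg|\res U \le \norm{\FF}_{L^{\infty}(U';\R^n)}\,|Dg|\res U ;
\end{equation*}
taking the infimum over all admissible $U'$ gives \eqref{uniform_control_Dg}.

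The genuinely delicate part is the first half: recognising that weak$^{*}$ convergence in the \emph{weighted} space $L^{\infty}_{\rm loc}(\Omega, |Dg|)$ is precisely what is needed to pass $\FF_{\eps}\cdot Dg$ to the limit against compactly supported test functions — through the legitimate test field $\phi\,\nu_g\in L^{1}(|Dg|)$ — and then carrying out the localisation, since Theorem \ref{prodruleBVgeneral} is stated globally with a \emph{finite} measure $(\FF, Dg)$, whereas here $\tildef{\FF}\cdot Dg$ need not be finite, which is exactly why one only recovers $g\FF\in\DM^{p}_{\rm loc}(\Omega)$. The verification of \eqref{uniform_control_Dg} is comparatively routine, combining the elementary bound $\norm{\FF\ast\eta_{\eps}}_{L^{\infty}(U)}\le\norm{\FF}_{L^{\infty}(U+B(0,\eps))}$ with lower semicontinuity of the norm under weak$^{*}$ convergence.
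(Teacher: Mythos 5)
Your proof is correct and follows essentially the same route as the paper: derive the product formula by applying Theorem \ref{prodruleBVgeneral} with $(\FF, Dg) = \tildef{\FF}\cdot Dg$, and obtain \eqref{uniform_control_Dg} by combining $\|\FF_\eps\|_{L^\infty(U)}\le\|\FF\|_{L^\infty(U+B(0,\eps))}$ with weak$^*$ lower semicontinuity of the $L^\infty(U,|Dg|)$--norm. The paper's proof is terser on two points that you fill in carefully: it simply asserts that ``the assumptions imply $\FF_\eps\cdot Dg\weakto\tildef{\FF}\cdot Dg$,'' whereas you verify this explicitly by testing against $\phi\,\nu_g\in L^1(|Dg|)$ via the polar decomposition $Dg=\nu_g\,|Dg|$; and it does not comment on why the conclusion is only $\DM^p_{\rm loc}(\Omega)$ rather than $\DM^p(\Omega)$, whereas you correctly observe that $\tildef{\FF}\cdot Dg$ need not be a finite measure on $\Omega$ and therefore carry out the localisation to $U\Subset\Omega$ and glue. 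Both of these are genuine improvements in rigor over the published argument, though not a different approach.
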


\begin{proof}
\, The first part of the result follows directly from Theorem \ref{prodruleBVgeneral},
since the assumptions imply that $\FF_{\eps} \cdot D g \weakto (\FF, D g) = \tildef{\FF} \cdot Dg$.
Moreover, since $\tildef{\FF} \in L^{\infty}_{\rm loc}(\Omega, |D g|; \R^{n})$, we have
$$
|\tildef{\FF} \cdot D g| \le \|\tildef{\FF}\|_{L^{\infty}(U, |D g|; \R^{n})} |D g|
\qquad\mbox{on any open set $U \Subset \Omega$.}
$$
Finally, since $|\FF_{\eps}(x)| \le \|\FF\|_{L^{\infty}(U + B(0, \eps); \R^{n})}$ for any $x \in U$, then,
for any open set $U'$ satisfying $U \Subset U' \Subset \Omega$,
the lower semicontinuity of the $L^{\infty}$--norm with respect to the weak$^{*}$--convergence implies
\begin{equation*}
\|\tildef{\FF}\|_{L^{\infty}(U, |D g|; \R^{n})}
\le \liminf_{\eps \to 0} \sup_{x \in U} |\FF_{\eps}(x)|
\le \|\FF\|_{L^{\infty}(U'; \R^{n})}.
\end{equation*}
By taking the infimum over $U'$, we obtain \eqref{uniform_control_Dg}.
\end{proof}

\begin{remark}
\, The assumptions on $\FF$ are satisfied in the case ${\FF \in C^{0}(\Omega; \R^{n})}$, for which $\tildef{\FF} = \FF$.

If $\FF \in L^{\infty}_{\rm loc}(\Omega; \R^{n})$, then, for any open set $\Omega' \Subset \Omega$,
$$
|\FF_{\eps}(x)| \le \| \FF \|_{L^{\infty}(\Omega' + B(0, \eps); \R^{n})} \qquad \mbox{for any $x \in \Omega'$}.
$$
Thus, by weak$^{*}$--compactness, there exists $\tildef{\FF} \in L^{\infty}_{\rm loc}(\Omega, |D g|; \R^{n})$
such that $\FF_{\eps} \weakstarto \tildef{\FF}$ in $L^{\infty}_{\rm loc}(\Omega, |D g|; \R^{n})$, up to a subsequence.
This implies the result of Corollary {\rm \ref{weak_star_sub_prod_rule_F}} again.
Moreover, since $|\div \FF| \ll \Haus{n - 1}$, by \cite[Theorem 3.2]{Silhavy1},
we can conclude
$$
\tildef{g} = g^{*}  \qquad \mbox{$|\div \FF|$--{\it a.e.}}
$$
since the precise representative of a $BV$ function $g$ exists $\Haus{n - 1}$--{\it a.e.}
In addition, by the product rule established in Theorem {\rm \ref{productruleinfty}},
we obtain the identity{\rm :}
$$
{\tildef{\FF} \cdot D g = \overline{\FF \cdot D g}}.
$$
Thus, if $\nu_{g}$ is the Borel vector field such that $D g = \nu_{g} |D g|$, then
\begin{equation*}
\overline{\FF \cdot D g} = (\tildef{\FF} \cdot \nu_{g}) |D g|.
\end{equation*}
That is, $\tildef{\FF} \cdot \nu_{g}$ is the density of measure $\overline{\FF \cdot D g}$
with respect to $|D g|$.

Finally,
the assumption
that $\FF \in L^{\infty}_{\rm loc}(\Omega; \R^{n})$ can be relaxed
to $\FF \in L^{\infty}_{\rm loc}(U; \R^{n})$, for some open set $U \supset \mathrm{supp}(|D g|)$.
Indeed, this implies that $\FF_{\eps}$ is uniformly bounded in $L^{\infty}(U', |D g|; \R^{n})$,
for any open set $U' \Subset U$ and $\eps$ small enough,
which ensures the existence of a weak$^{*}$--limit $\tildef{\FF} \in L^{\infty}_{\rm loc}(U, |D g|; \R^{n})$,
up to
a subsequence.
\end{remark}

On the other hand, we have seen that there are some examples of unbounded and discontinuous $\DM^p$--fields
which admit a product rule of this type, as in Example \ref{prodruleWhitney}.
Moreover, there exists an unbounded $\DM^p$--field $\GG$ and a set of finite perimeter $E$
for which a product rule holds,
but $|(\GG, D \chi_{E})|$ is not absolutely continuous with respect to $|D \chi_{E}|$,
as shown in the following example.

\begin{example}
Let $n = 2$, $E = (0, 1)^{2}$, and $\FF$ as in Example {\rm \ref{prodruleWhitney}}.
We have shown that $\chi_{E} \FF \in \DM^{p}_{\rm loc}(\R^{2})$ for any $p \in [1, 2)$ and that
\begin{equation} \label{divergence_prod_1_E}
\div(\chi_{E} \FF) = \frac{1}{4} \delta_{(0, 0)} + (\FF, D \chi_{E}),
\end{equation}
by \eqref{divergence_prod_1}.
Let now $\GG := \chi_{E} \FF$.
It is clear that $\chi_{E} \GG = \GG$ so that
$\div (\chi_{E} \GG) \in \mathcal{M}(\R^{2})$.
Let $\eta_{\eps}(x)$ be the mollifiers defined in \eqref{mollifier-1}--\eqref{mollifier-2},
and let $\phi \in C^{1}_{c}(\R^{2})$.
A simple calculation shows that
\begin{equation*}
\int_{\R^{2}} (\eta_{\eps} \ast \chi_{E}) \GG \cdot \nabla \phi \, \dr x
= - \int_{\R^{2}} \phi (\eta_{\eps} \ast \chi_{E}) \, \dd \div \,\GG
 - \int_{\R^{2}} \phi \GG \cdot \nabla (\eta_{\eps} \ast \chi_{E}) \, \dr x.
\end{equation*}
By Lebesgue's dominated convergence theorem, we have
\begin{equation*}
\int_{\R^{2}} (\eta_{\eps} \ast \chi_{E}) \GG \cdot \nabla \phi \, \dr x \to \int_{\R^{2}} \chi_{E} \GG \cdot \nabla \phi \, \dr x
= - \int_{\R^{2}} \phi \, \dd \div (\chi_{E} \GG),
\end{equation*}
and
\begin{align*}
\int_{\R^{2}} \phi (\eta_{\eps} \ast \chi_{E}) \, \dd \div\, \GG
&= \int_{\R^{2}} \phi (\eta_{\eps} \ast \chi_{E}) \, \dr \big(\frac{1}{4} \delta_{(0, 0)} + (\FF, D \chi_{E}) \big)\\
&\to \frac{1}{16} \phi(0, 0) + \int_{\redb E} \frac{1}{2} \phi \, \dr (\FF, D \chi_{E}),
\end{align*}
since $|(\FF, D \chi_{E})| \ll |D \chi_{E}|$ and $\chi_{E}^{*}(0, 0) = \frac{1}{4}$.
This and the density of $C^{1}_{c}(\R^{2})$ in $C^{0}_{c}(\R^{2})$ show
that $\GG \cdot \nabla (\eta_{\eps} \ast \chi_{E})$ is weakly converging to some measure $(\GG, D \chi_{E})$
that satisfies
\begin{equation} \label{divergence_prod_2}
\div(\chi_{E} \GG) = \frac{1}{16} \delta_{(0, 0)} + \frac{1}{2} (\FF, D \chi_{E}) + (\GG, D \chi_{E}).
\end{equation}
However, it is clear that $\div(\chi_{E} \GG) = \div\,\GG = \div(\chi_{E} \FF)$.
Therefore, \eqref{divergence_prod_1_E}--\eqref{divergence_prod_2} imply
\begin{equation} \label{pairing_E_G}
(\GG, D \chi_{E}) = \frac{3}{16} \delta_{(0, 0)} + \frac{1}{2} (\FF, D \chi_{E}).
\end{equation}
Therefore, $|(\GG, D \chi_{E})| \ll |D \chi_{E}| = \Haus{1} \res \redb E$ does not hold,
since there is a concentration at $(0, 0)$.
\end{example}

\section{\, Regularity of Normal Traces of Divergence-Measure Fields}

In this section, we investigate the connection between these product rules and
the representation of the normal trace
of the $\DM^p$--field as a Radon measure.

We first introduce the notion of generalized normal trace of a $\DM^p$--field $\FF$
on the boundary of a Borel set $E$,
which has indeed a close
relation with
the product rule between $\FF$ and $\chi_{E}$.

\begin{definition}
Given $\FF \in \DM^{p}(\Omega)$ for $1 \le p \le \infty$, and a bounded Borel set $E \subset \Omega$,
define the normal trace of $\FF$ on $\partial E$ as
\begin{equation} \label{normal trace def}
\ban{\FF \cdot \nu, \phi}_{\partial E} := \int_{E} \phi \, \dr \div \FF + \int_{E} \FF \cdot \nabla \phi \, \dr x
\qquad \mbox{for any $\phi \in \Lip_{c}(\R^{n})$}.
\end{equation}
\end{definition}

\begin{remark}
\, Since $\div \FF$ is a Radon measure, any Borel set $E$ is $|\div \FF|$--measurable.
Moreover, for any $|\div \FF|$--measurable set $E$, there is a Borel set $B \supset E$ such that $|\div \FF|(B \setminus E) = 0$,
so that there exists a $|\div \FF|$--negligible set $\mathcal{N}_{E}$ with $\mathcal{N}_{E} = B \setminus E$.
Therefore, if $\mathcal{N}_{E}$ is Lebesgue measurable, then $E$ is admissible for the definition of normal traces.

Furthermore, by the definition, the normal trace of $\FF \in \DM^{p}(\Omega)$ on the boundary of a bounded Borel
set $E \subset \Omega$ is a distribution of order $1$ on $\R^{n}$,
since
\begin{equation*}
|\ban{\FF \cdot \nu, \phi}_{\partial E}|
\le \|\phi\|_{L^{\infty}(\R^{n})} |\div \FF|(E)
 + \| \nabla \phi \|_{L^{\infty}(\R^{n}; \R^{n})} |E|^{1 - \frac{1}{p}} \| \FF \|_{L^{p}(E; \R^{n})}
\end{equation*}
for any $\phi \in C^{1}_{c}(\R^{n})$.
Moreover, the normal trace is not stable {\it a priori} under the modifications of $E$
by Lebesgue negligible sets.
Indeed, if $\tildef{E}$ is any Borel set such that $|E \Delta \tildef{E}| = 0$,
then, unless $|\div \FF| \ll \Leb{n}$,
we may obtain that $|\div \FF|(E \Delta \tildef{E}) \neq 0$, even though the second terms in \eqref{normal trace def} are equal.

Therefore, the normal trace depends on the particular Borel representative of set $E$,
not even only on $\partial E$.
Indeed, if $U \subset \Omega$ is an open set with smooth boundary,
then $\partial U = \partial \overline{U}${\rm ;} however, when $|\div \FF|(\partial U) \neq 0$, the normal traces of $\FF$
on the boundary of $U$ and $\overline{U}$ are different in general.
\end{remark}

\begin{remark} \label{normal_trace_functional_fract_Sobolev}
\, By the definition of normal traces, we have
\begin{equation*}
\ban{\FF \cdot \nu, \phi}_{\partial E} = \div (\phi \FF)(E).
\end{equation*}
Therefore, Theorem {\rm \ref{product rule p}} implies that,
if $\FF \in \DM^{p}(\Omega)$ for $1 \le p \le \infty$,
$\ban{\FF \cdot \nu, \cdot}_{\partial E}$ as a functional can be extended to the space of
test functions $\phi \in C^{0}(\Omega) \cap L^{\infty}(\Omega)$ such that
$\nabla \phi \in L^{p'}(\Omega; \R^{n})$.
Under such conditions, we can also take any Borel set $E \subset \Omega$, since
\begin{equation*}
|\ban{\FF \cdot \nu, \phi}_{\partial E}|
\le \|\phi\|_{L^{\infty}(\Omega)} |\div \FF|(E)
 + \|\nabla \phi\|_{L^{p'}(\Omega; \R^{n})} \|\FF\|_{L^{p}(E; \R^{n})}.
\end{equation*}
Therefore, if $\FF \in \DM^{p}(\Omega)$ for $1 \le p \le \infty$,
and $E$ is a Borel set in $\Omega$,
then the normal trace $\ban{\FF \cdot \nu, \cdot}_{\partial E}$ can be extended to a functional
in the dual of $$\{ \phi \in C^{0}(\Omega) \cap L^{\infty}(\Omega)\, :\; \nabla \phi \in L^{p'}(\Omega; \R^{n}) \}.$$
\end{remark}

\begin{proposition} \label{support trace}
Let $\FF \in \DM^{p}(\Omega)$ for $1 \le p \le \infty$.
Then the normal trace of $\FF$ on the boundary of a bounded Borel set $E \subset \Omega$
is a distribution of order $1$ supported on $\partial E$.
\end{proposition}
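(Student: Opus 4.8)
The statement has two parts: that $\ban{\FF\cdot\nu,\cdot}_{\partial E}$ is a distribution of order $1$, and that its support lies in $\partial E$. The first part has in effect already been recorded in the Remark following the definition of the normal trace, via the bound $|\ban{\FF\cdot\nu,\phi}_{\partial E}|\le\|\phi\|_{L^\infty(\R^n)}\,|\div\FF|(E)+\|\nabla\phi\|_{L^\infty(\R^n;\R^n)}\,|E|^{1-1/p}\,\|\FF\|_{L^p(E;\R^n)}$ for $\phi\in C^1_c(\R^n)$, together with the density of $C^1_c(\R^n)$ in $C_c(\R^n)$. So the plan is to prove the second part: $\ban{\FF\cdot\nu,\phi}_{\partial E}=0$ whenever $\phi\in\Lip_c(\R^n)$ satisfies $\supp\phi\cap\partial E=\emptyset$. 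This forces the functional to vanish on the open set $\R^n\setminus\partial E$, hence to be supported in $\partial E$.

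The key step is a cutoff separating the interior contribution from the exterior one. Using the disjoint decomposition $\R^n=\mathring E\cup\partial E\cup(\R^n\setminus\overline E)$, whose first and third pieces are open, the compact set $K:=\supp\phi$ splits, when $K\cap\partial E=\emptyset$, as the disjoint union of the two compact sets $K_{\mathrm{in}}:=K\cap\overline E\subset\mathring E$ and $K_{\mathrm{out}}:=K\cap(\R^n\setminus\mathring E)\subset\R^n\setminus\overline E$. If one of these is empty, one of the two cases treated below applies directly to $\phi$; otherwise $d_0:=\dist(K_{\mathrm{in}},K_{\mathrm{out}})>0$, and I would choose a Lipschitz cutoff $\zeta:=h(\dist(\cdot,K_{\mathrm{in}}))$ with $h:[0,\infty)\to[0,1]$ Lipschitz, $h\equiv1$ on $[0,d_0/3]$ and $h\equiv0$ on $[2d_0/3,\infty)$. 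Then $\phi=\phi_1+\phi_2$ with $\phi_1:=\zeta\phi\in\Lip_c(\R^n)$ and $\phi_2:=(1-\zeta)\phi\in\Lip_c(\R^n)$, and, by construction of $\zeta$, one has $\supp\phi_1\subset\mathring E$ and $\supp\phi_2\subset\R^n\setminus\overline E$.

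By linearity of the functional it then suffices to show that $\ban{\FF\cdot\nu,\phi_1}_{\partial E}$ and $\ban{\FF\cdot\nu,\phi_2}_{\partial E}$ both vanish. For $\phi_2$ this is immediate, since both integrals in \eqref{normal trace def} are over $E$ and $\supp\phi_2\cap E=\emptyset$. For $\phi_1$: because $\supp\phi_1$ is a compact subset of the open set $\mathring E\subset\Omega$, the integrals over $E$ in \eqref{normal trace def} coincide with the corresponding integrals over all of $\Omega$ (the discrepancy $E\setminus\mathring E\subset\partial E$ carries neither $\phi_1$ nor, $\Leb{n}$--a.e., $\nabla\phi_1$), whence $\ban{\FF\cdot\nu,\phi_1}_{\partial E}=\int_\Omega\phi_1\,\dd\div\FF+\int_\Omega\FF\cdot\nabla\phi_1\,\dd x=0$, since $\phi_1\in\Lip_c(\Omega)$ is an admissible test function in the definition of the distributional divergence of $\FF$.

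The only genuine obstacle is the cutoff step in the second paragraph: a test function supported off $\partial E$ may well be nonzero both inside $\mathring E$ and outside $\overline E$, so there is no single-component argument — it must truly be broken into an interior and an exterior piece, and both pieces must remain Lipschitz with compact support so that the interior one is a legitimate test function for $\div\FF$ on $\Omega$; everything else is routine bookkeeping with supports. As an alternative for the $\phi_1$ part, one may instead note that $\phi_1\FF\in\DM^p(\mathring E)$ with $\div(\phi_1\FF)$ compactly supported in $\mathring E$, so that $\ban{\FF\cdot\nu,\phi_1}_{\partial E}=\div(\phi_1\FF)(E)=\div(\phi_1\FF)(\mathring E)=0$, as in \cite[Lemma 3.1]{comi2017locally}, but the direct computation above is shorter and self-contained.
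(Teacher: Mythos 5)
Your argument is correct, and it takes essentially the same route as the paper's proof: split the test function into an interior piece $\phi_1$ supported in $\mathring{E}$ and an exterior piece $\phi_2$ supported in $\R^n\setminus\overline{E}$, then observe that the former contributes zero by the definition of the distributional divergence and the latter contributes zero because neither integral in \eqref{normal trace def} sees its support. The gain in your write-up is that you carry out explicitly the reduction to this dichotomy --- the paper compresses it into ``we may assume $V\subset\mathring{E}$ without loss of generality,'' which is automatic only when $V$ is connected and hence lies entirely in one of the two open components of $\R^n\setminus\partial E$ --- via the Lipschitz cutoff $\zeta=h(\dist(\cdot,K_{\mathrm{in}}))$ separating the disjoint compact sets $K\cap\overline{E}$ and $K\setminus\mathring{E}$; and your direct verification for $\phi_1$ (the integrals over $E$ equal those over $\Omega$ since $\phi_1$ and $\nabla\phi_1$ vanish outside a compact subset of $\mathring{E}$, and the latter vanish by definition of $\div\FF$) replaces the paper's appeal to \cite[Lemma 3.1]{comi2017locally}, which, as you remark at the end, is equivalent. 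One cosmetic point: the bound quoted from the Remark already gives a distribution of order at most one on its own, so the density of $C^1_c(\R^n)$ in $C_c(\R^n)$ is not actually needed for that part of the claim.
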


\begin{proof}
\, Let $V \Subset \Omega \setminus \partial E$ and $\phi \in C^{1}_{c}(V)$.
We need to show that $\ban{\FF \cdot \nu, \phi}_{\partial E} = 0$.

Since $\phi \FF \in \DM^{p}(\Omega)$ and $\mathrm{supp}(\phi \FF) \subset V$, then $\mathrm{supp}(\div(\phi \FF)) \subset V$.
From this, it follows that
\begin{equation*}
\ban{\FF \cdot \nu, \phi}_{\partial E} = \div(\phi \FF)(E) = \div(\phi \FF)(V \cap \mathring{E}).
\end{equation*}
We may assume that $\mathring{E} \neq \emptyset$ (otherwise, there is nothing to prove)
and $V \subset \mathring{E}$, without loss of generality.
Then  \cite[Lemma 3.1]{comi2017locally}
implies that $\div(\phi \FF)(V) = 0$ so that
$\ban{\FF \cdot \nu, \phi}_{\partial E} = \div(\phi \FF)(V) = 0$.
\end{proof}

\begin{remark}
\, Given $\FF \in \DM^{p}(\Omega)$ for $1 \le p \le \infty$,
then, for any Borel set $E$ in $\Omega$,
the following locality property for the normal trace functional holds{\rm :}
\begin{equation*}
\ban{\FF \cdot \nu, \cdot}_{\partial E} = - \ban{\FF \cdot \nu, \cdot}_{\partial (\Omega \setminus E)}
\end{equation*}
in the sense of distributions on $\Omega$.

Indeed, given any $\phi \in C^{1}_{c}(\Omega)$, $\phi \FF \in \DM^{p}(\Omega)$ by Proposition {\rm \ref{product rule p}}, and
\begin{equation*}
\int_{\Omega} \phi \, \dd \div \FF + \int_{\Omega} \FF \cdot \nabla \phi \, \dr x
= \div(\phi \FF)(\Omega) = 0
\end{equation*}
by \cite[Lemma 3.1]{comi2017locally}, since $\mathrm{supp}(\phi \FF)$ is compact in $\Omega$.
Then
\begin{equation*}
\int_{E} \phi \, \dd \div \FF + \int_{E} \FF \cdot \nabla \phi \, \dr x
= - \int_{\Omega \setminus E} \phi \, \dd \div \FF - \int_{\Omega \setminus E} \FF \cdot \nabla \phi \, \dr x.
\end{equation*}
\end{remark}

\begin{theorem} \label{equivalence trace prod}
Let $\FF \in \DM^{p}(\Omega)$ for $1 \le p \le \infty$,
and let $E \subset \Omega$ be a bounded Borel set.
Then
\begin{equation} \label{trace_div_meas_repr}
\ban{\FF \cdot \nu, \cdot}_{\partial E} = \chi_{E} \div \FF - \div (\chi_{E} \FF)
\end{equation}
in the sense of distributions on $\Omega$.
Thus, $\ban{\FF \cdot \nu, \cdot}_{\partial E} \in \mathcal{M}(\partial E)$ if and only
if $\div (\chi_{E} \FF) \in \mathcal{M}(\Omega)${\rm ;} that is, $\chi_{E} \FF \in \DM^{p}(\Omega)$.
In addition, if $\ban{\FF \cdot \nu, \cdot}_{\partial E}$ is a measure,
then
\begin{enumerate}
\item[\rm (i)] $|\ban{\FF \cdot \nu, \cdot}_{\partial E}| \ll \Haus{n - 1} \res \partial E$, if $p = \infty${\rm ;}

\item[\rm (ii)] $|\ban{\FF \cdot \nu, \cdot}_{\partial E}|(B) = 0$ for any Borel set $B \subset \partial E$ with
$\sigma$--finite $\Haus{n - p'}$ measure, if $\frac{n}{n - 1} \le p < \infty$.
\end{enumerate}
\end{theorem}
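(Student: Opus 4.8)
The plan is to obtain the distributional identity \eqref{trace_div_meas_repr} directly from the defining formula \eqref{normal trace def}, and then to read off every remaining assertion from it. For $\phi \in \Lip_c(\R^n)$, the definition gives $\ban{\FF \cdot \nu, \phi}_{\partial E} = \int_E \phi \, \dd\div \FF + \int_E \FF \cdot \nabla \phi \, \dr x$. Since $E$ is a bounded Borel set, it is $|\div\FF|$--measurable, so $\int_E \phi \, \dd\div\FF = \ban{\chi_E\div\FF, \phi}$; and, as $\chi_E\FF \in L^p(\Omega;\R^n)$ with $\Lip_c$ functions admissible as test functions, $\int_E \FF \cdot \nabla\phi \,\dr x = \int_\Omega \chi_E \FF \cdot \nabla\phi \,\dr x = -\ban{\div(\chi_E\FF),\phi}$ by the very definition of the distributional divergence of $\chi_E\FF$. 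Combining these two identities yields \eqref{trace_div_meas_repr} on $\Omega$.

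For the equivalence, I would observe that $\chi_E\div\FF$ is always a finite Radon measure on $\Omega$ (the product of the bounded Borel function $\chi_E$ with the finite Radon measure $\div\FF$). Hence, by \eqref{trace_div_meas_repr}, the functional $\ban{\FF\cdot\nu,\cdot}_{\partial E}$ coincides with a finite Radon measure on $\Omega$ if and only if $\div(\chi_E\FF)$ does, i.e. if and only if $\chi_E\FF \in \DM^p(\Omega)$ (recall $\chi_E\FF \in L^p$ automatically). When this is the case, Proposition \ref{support trace} tells us that this measure is supported on $\partial E$, so it belongs to $\mathcal{M}(\partial E)$; conversely, if the normal trace lies in $\mathcal{M}(\partial E)$, then $\div(\chi_E\FF) = \chi_E\div\FF - \ban{\FF\cdot\nu,\cdot}_{\partial E}$ is a difference of finite Radon measures, hence itself a measure.

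For the fine properties, I would assume the normal trace is a measure, so that $\chi_E\FF \in \DM^p(\Omega)$, and apply the known absolute-continuity theory for divergences of $\DM^p$--fields to both $\FF$ and $\chi_E\FF$. If $p = \infty$, then $|\div\FF| + |\div(\chi_E\FF)| \ll \Haus{n-1}$ by \cite[Proposition 3.1]{CF1} and \cite[Theorem 3.2]{Silhavy1}, and since $|\chi_E\div\FF| \le |\div\FF|\res E$, identity \eqref{trace_div_meas_repr} gives $|\ban{\FF\cdot\nu,\cdot}_{\partial E}| \ll \Haus{n-1}$; combined once more with Proposition \ref{support trace}, this yields (i). If $\frac{n}{n-1} \le p < \infty$, then \cite[Theorem 3.2]{Silhavy1} shows that $|\div\FF|$ and $|\div(\chi_E\FF)|$ both vanish on every Borel set of $\sigma$--finite $\Haus{n-p'}$ measure, and the same computation gives (ii).

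The proof presents no serious obstacle; the only points requiring care are the bookkeeping issues — that $E$ may legitimately be paired against $\div\FF$ (Borel measurability), that the identity survives the admissible extensions of the test-function class (Remark \ref{normal_trace_functional_fract_Sobolev} and density), and that being a Radon measure on $\Omega$ upgrades to being a measure on $\partial E$ via the support statement of Proposition \ref{support trace}. The real content of the theorem is simply the reorganization of the definition together with the previously recorded regularity results for $\div\FF$ of $\DM^p$--fields.
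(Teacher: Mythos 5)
Your proof is correct and follows essentially the same route as the paper's: rearrange the defining formula \eqref{normal trace def} to obtain \eqref{trace_div_meas_repr}, use the fact that $\chi_E\div\FF$ is always a finite measure together with Proposition \ref{support trace} to read off the equivalence, and then transfer the absolute-continuity results of \cite[Theorem 3.2]{Silhavy1} from $\div\FF$ and $\div(\chi_E\FF)$ to the normal trace. The only difference is that you spell out some of the bookkeeping (the "conversely" direction, the role of the support proposition) that the paper compresses by pointing back to the argument at the end of Theorem \ref{prodruleBVgeneral}.
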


\begin{proof}
\, By Proposition \ref{support trace}, the support of distribution $\ban{\FF \cdot \nu, \cdot}_{\partial E}$ is $\partial E$.
As for the equivalence, we notice that
\begin{equation*}
\ban{\FF \cdot \nu, \phi}_{\partial E} - \int_{E} \phi \, \dd \div \FF = \int_{E} \FF \cdot \nabla \phi \, \dr x
= \int_{\Omega} \chi_{E} \FF \cdot \nabla \phi \, \dr x
\end{equation*}
for any $\phi \in \Lip_{c}(\Omega)$.
This implies \eqref{trace_div_meas_repr} in the sense of distributions.
Since $\div \FF \in \mathcal{M}(\Omega)$,
it follows that $\ban{\FF \cdot \nu, \cdot}_{\partial E} \in \mathcal{M}(\partial E)$ if and only
if $\div(\chi_{E} \FF) \in \mathcal{M}(\Omega)$,
by the density of $\Lip_{c}(\Omega)$ in $C_{c}(\Omega)$ with respect to the supremum norm.
Since $\chi_{E} \FF \in L^{p}(\Omega; \R^{n})$,
then $\div(\chi_{E} \FF) \in \mathcal{M}(\Omega)$ implies that $\chi_{E} \FF \in \DM^{p}(\Omega)$.
As for the absolute continuity properties of the normal trace measure,
we argue as those in the end of the proof of Theorem \ref{prodruleBVgeneral},
by employing \eqref{trace_div_meas_repr} and \cite[Theorem 3.2]{Silhavy1}.
\end{proof}

We now employ \eqref{trace_div_meas_repr} to show the relation
between $\ban{\FF \cdot \nu, \cdot}_{\partial E}$
and $\ban{\FF \cdot \nu, \cdot}_{\partial \tildef{E}}$
for any another Borel representative $\tildef{E}$,
with respect to the Lebesgue measure, of a given bounded Borel set $E$.

\begin{proposition}
Let $\FF \in \DM^{p}(\Omega)$ for $1 \le p \le \infty$, and let $E, \tildef{E} \subset \Omega$ be bounded Borel
sets such that $|E \Delta \tildef{E}| = 0$. Then
\begin{equation} \label{difference_normal_trace_E_tilde_E}
\ban{\FF \cdot \nu, \cdot}_{\partial E} - \ban{\FF \cdot \nu, \cdot}_{\partial \tildef{E}}
= (\chi_{E \setminus \tildef{E}} - \chi_{\tildef{E} \setminus E} ) \div \FF,
\end{equation}
which means that $\ban{\FF \cdot \nu, \cdot}_{\partial E} - \ban{\FF \cdot \nu, \cdot}_{\partial \tildef{E}} \in \mathcal{M}(\Omega)$,
and
\begin{equation} \label{tot_var_difference_normal_trace_E_tilde_E}
|\ban{\FF \cdot \nu, \cdot}_{\partial E} - \ban{\FF \cdot \nu, \cdot}_{\partial \tildef{E}}|
= \chi_{E \Delta \tildef{E}} |\div \FF|.
\end{equation}
In particular, if $U$ is an open bounded set in $\Omega$ with $|\partial U| = 0$, then
\begin{equation} \label{difference_normal_trace_U_closure_U}
\ban{\FF \cdot \nu, \cdot}_{\partial \overline{U}} - \ban{\FF \cdot \nu, \cdot}_{\partial U}
= \chi_{\partial U}\, \div \FF.
\end{equation}
\end{proposition}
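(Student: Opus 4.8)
The plan is to reduce everything to the distributional identity \eqref{trace_div_meas_repr} of Theorem \ref{equivalence trace prod}, which represents each normal trace as $\ban{\FF\cdot\nu,\cdot}_{\partial E}=\chi_E\,\div\FF-\div(\chi_E\FF)$ in the sense of distributions on $\Omega$.

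First I would apply \eqref{trace_div_meas_repr} to $E$ and to $\tildef{E}$ separately and subtract, which gives
\[
\ban{\FF\cdot\nu,\cdot}_{\partial E}-\ban{\FF\cdot\nu,\cdot}_{\partial\tildef{E}}
=(\chi_E-\chi_{\tildef{E}})\,\div\FF-\div\big((\chi_E-\chi_{\tildef{E}})\FF\big)
\]
as distributions on $\Omega$. Since $|E\Delta\tildef{E}|=0$, the scalar function $\chi_E-\chi_{\tildef{E}}$ vanishes $\Leb{n}$--a.e., hence the vector field $(\chi_E-\chi_{\tildef{E}})\FF$ is the zero element of $L^p(\Omega;\R^n)$ and its distributional divergence is identically zero; the second term therefore disappears. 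Combining this with the pointwise identity $\chi_E-\chi_{\tildef{E}}=\chi_{E\setminus\tildef{E}}-\chi_{\tildef{E}\setminus E}$ yields \eqref{difference_normal_trace_E_tilde_E}. Because $\chi_{E\setminus\tildef{E}}-\chi_{\tildef{E}\setminus E}$ is a bounded Borel function and $\div\FF\in\mathcal M(\Omega)$, the right-hand side is a finite Radon measure, which establishes that the difference of the two normal traces belongs to $\mathcal M(\Omega)$.

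For the total-variation identity \eqref{tot_var_difference_normal_trace_E_tilde_E}, I would use the polar decomposition $\div\FF=\sigma\,|\div\FF|$ with $|\sigma|=1$ holding $|\div\FF|$--a.e. Then the measure appearing in \eqref{difference_normal_trace_E_tilde_E} equals $(\chi_{E\setminus\tildef{E}}-\chi_{\tildef{E}\setminus E})\,\sigma\,|\div\FF|$, whose total variation is $|\chi_{E\setminus\tildef{E}}-\chi_{\tildef{E}\setminus E}|\,|\div\FF|$; since $E\setminus\tildef{E}$ and $\tildef{E}\setminus E$ are disjoint, the integrand equals $\chi_{E\Delta\tildef{E}}$, and \eqref{tot_var_difference_normal_trace_E_tilde_E} follows. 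Finally, for \eqref{difference_normal_trace_U_closure_U} I would specialize \eqref{difference_normal_trace_E_tilde_E} to $E=\overline{U}$ and $\tildef{E}=U$, both bounded Borel subsets of $\Omega$: the hypothesis $|\partial U|=0$ gives $|\overline{U}\Delta U|=0$, while $\overline{U}\setminus U=\partial U$ and $U\setminus\overline{U}=\emptyset$, so the right-hand side collapses to $\chi_{\partial U}\,\div\FF$.

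The whole argument is essentially immediate once Theorem \ref{equivalence trace prod} is in hand; the only point requiring a moment's care is the total-variation step, where $\div\FF$ must be handled through its Radon-Nikodym (polar) decomposition rather than as a positive measure — everything else is bookkeeping with characteristic functions together with the fact that an $L^p$-class vector field whose representative vanishes $\Leb{n}$--a.e.\ has a vanishing distributional divergence.
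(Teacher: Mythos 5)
Your proposal is correct and follows essentially the same route as the paper: apply the distributional identity \eqref{trace_div_meas_repr} of Theorem \ref{equivalence trace prod} to $E$ and $\tildef{E}$, subtract, and use that $(\chi_E-\chi_{\tildef{E}})\FF$ vanishes $\Leb{n}$-a.e.\ (the paper phrases this as $\div(\chi_E\FF)=\div(\chi_{\tildef{E}}\FF)$). You spell out the total-variation step via the polar decomposition $\div\FF=\sigma\,|\div\FF|$, which the paper leaves implicit; that is a reasonable addition and the rest is identical.
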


\begin{proof}
\, Since $|E \Delta \tildef{E}| = 0$,
$\div(\chi_{E} \FF) = \div(\chi_{\tildef{E}} \FF)$ in the sense of distributions.
Thus, by subtracting \eqref{trace_div_meas_repr} for $\tildef{E}$ from the same identity with $E$,
we obtain \eqref{difference_normal_trace_E_tilde_E}.
Then we see that $\ban{\FF \cdot \nu, \cdot}_{\partial E} - \ban{\FF \cdot \nu, \cdot}_{\partial \tildef{E}} \in \mathcal{M}(\Omega)$
and \eqref{tot_var_difference_normal_trace_E_tilde_E}.
Finally, if $U$ is open bounded set with $|\partial U| = 0$,
\eqref{difference_normal_trace_U_closure_U} follows from \eqref{difference_normal_trace_E_tilde_E}
with $E = \overline{U}$ and $\tildef{E} = U$.
\end{proof}

\begin{remark} \, While $\div(\chi_{E} \FF)$ is not a Radon measure in general,
we can employ \eqref{trace_div_meas_repr} to obtain some information on its restriction to some particular sets.
Indeed, since $\ban{\FF \cdot \nu, \cdot}_{\partial E}$ is supported on $\partial E$,
by Proposition {\rm \ref{support trace}}, it suffices to restrict  \eqref{trace_div_meas_repr}
to $\partial E$ and $\mathring{E}$ to obtain
\begin{align*}
\ban{\FF \cdot \nu, \cdot}_{\partial E}
 = \chi_{E \cap \partial E} \div \FF - \div(\chi_{E} \FF) \res \partial E,
 \quad \div \FF \res \mathring{E} - \div(\chi_{E} \FF) \res \mathring{E}=0.
\end{align*}
In particular, this means that $\div(\chi_{E} \FF) \res \mathring{E} = \div \FF \res \mathring{E}$,
so that this restriction is a Radon measure for any $\FF \in \DM^{p}(\Omega)$ and bounded Borel set $E$ in $\Omega$.
In addition, if $U$ is an open bounded set in $\Omega$, then
\begin{equation*}
\ban{\FF \cdot \nu, \cdot}_{\partial U} = - \div(\chi_{U} \FF) \res \partial U.
\end{equation*}
\end{remark}

We now state a particular result concerning the sets of finite perimeter and the case $p = \infty$,
which gathers much of the known
theory (see \cite{ctz,comi2017locally}).
It also provides a generalization of the Gauss-Green formulas by allowing for scalar functions
$\phi \in C^{0}(\Omega)$ with $\nabla \phi \in L^{1}_{\rm loc}(\Omega; \R^{n})$.
Such a result can be seen as a particular case of \cite[Theorem 5.1]{crasta2017anzellotti},
when $\Omega = \R^{n}$.

First, we need to recall the definitions of both {\em measure-theoretic interior} and {\em measure-theoretic boundary}
of a measurable set $E$:
\begin{align*}
E^{1}  := \Big\{ x \in \R^{n} : \lim_{r \to 0} \frac{|B(x,r) \cap E|}{|B(x,r)|} = 1 \Big\}, \quad
\partial^{m} E  := \R^{n} \setminus (E^{1} \cup (\R^{n} \setminus E)^{1}).
\end{align*}
By Lebesgue's differentiation theorem, it follows that $|E \Delta E^{1}| = 0$ and $|\partial^{m} E| = 0$.
By \cite[Lemma 5.9, \S 5.11]{eg}, $E^{1}$ and $\partial^{m} E$ are Borel measurable sets.

We notice that, if $\FF \in \DM^{\infty}_{\rm loc}(\Omega)$, and $E \subset \Omega$ is a set of locally finite perimeter,
then $\redb E$ is a $|\div \FF|$--measurable set.
Indeed, $\redb E \subset \partial^{m} E$ and $\Haus{n - 1}(\partial^{m} E \setminus \redb E) = 0$ by \cite[Lemma 5.5, \S 5.8]{eg}.
This means that $\partial^{m} E = \redb E \cup \mathcal{N}_{E}$ for some set $\mathcal{N}_{E}$ satisfying $\Haus{n - 1}(\mathcal{N}_{E}) = 0$.
Since $|\div \FF| \ll \Haus{n - 1}$ by \cite[Proposition 3.1]{CF1},
$\redb E$ is $|\div \FF|$--measurable, because it is the difference
between the Borel set $\partial^{m} E$ and the $|\div \FF|$--negligible set $\mathcal{N}_{E}$.
This means that, if $\FF \in \DM^{\infty}_{\rm loc}(\Omega)$, and $E \subset \Omega$ is a set of locally finite perimeter,
then $\ban{\FF \cdot \nu, \cdot}_{\partial E^{1}}$ and $\ban{\FF \cdot \nu, \cdot}_{\partial (E^{1} \cup \redb E)}$
are well defined.

\begin{proposition} \label{normal trace p infty}
Let $\FF \in \DM^{\infty}_{\rm loc}(\Omega)$, and let $E \Subset \Omega$ be a set of finite perimeter.
Then the normal trace of $\FF$ on the boundary of any Borel representative $\tildef{E}$ of set $E$
is a Radon measure supported on $\redb E \cup (\tildef{E} \Delta E^{1})\subset \partial\tilde{E}$.
In particular, if $\tildef{E} = E^{1}$ or $\tildef{E} = E^{1} \cup \redb E$ up to $\Haus{n - 1}$--negligible sets, then
$$
|\ban{\FF \cdot \nu, \cdot}_{\partial \tildef{E}}| \ll \Haus{n - 1} \res \redb E
$$
with density in $L^{\infty}(\redb E; \Haus{n - 1})$.
More precisely, for any set $E$ of locally finite perimeter in $\Omega$ and $\phi \in C^{0}(\Omega)$
such that $\nabla \phi \in L^{1}_{\rm loc}(\Omega; \R^n)$ and $\chi_{E} \phi$ has compact support in $\Omega$,
then
\begin{align}
&\int_{E^{1}} \phi \, \dd \div \FF + \int_{E} \FF \cdot \nabla \phi \, \dr x
 = - \int_{\redb E} \phi \, (\mathfrak{F}_{\ii} \cdot \nu_{E}) \, \dr \Haus{n - 1},  \label{G-G phi Sobolev int}\\
&\int_{E^{1} \cup \redb E} \phi \, \dd \div \FF + \int_{E} \FF \cdot \nabla \phi \, \dr x
 = - \int_{\redb E} \phi \, (\mathfrak{F}_{\rm e} \cdot \nu_{E}) \, \dr \Haus{n - 1}, \label{G-G phi Sobolev ext}
\end{align}
where $(\mathfrak{F}_{\ii} \cdot \nu_{E}), (\mathfrak{F}_{\rm e} \cdot \nu_{E}) \in L^{\infty}_{\rm loc}(\redb E; \Haus{n - 1})$
are the interior and exterior normal traces of $\FF$, respectively, as introduced in {\rm \cite[Theorem 5.3]{ctz}}.
\end{proposition}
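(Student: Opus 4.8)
The strategy is to bootstrap from the interior and exterior Gauss--Green formulas over sets of finite perimeter established in \cite[Theorem 5.3]{ctz}, and then to upgrade both the class of admissible test functions and the class of admissible Borel representatives by means of the identities \eqref{trace_div_meas_repr} and \eqref{difference_normal_trace_E_tilde_E}. I first treat the distinguished representatives. For $E\Subset\Omega$ of finite perimeter and $\phi\in\Lip_c(\Omega)$, the definition of the normal trace together with $|E\Delta E^1|=0$ gives $\ban{\FF\cdot\nu,\phi}_{\partial E^1}=\int_{E^1}\phi\,\dd\div\FF+\int_E\FF\cdot\nabla\phi\,\dr x$, and the analogous identity with $E^1\cup\redb E$ in place of $E^1$; by \cite[Theorem 5.3]{ctz} these equal $-\int_{\redb E}\phi\,(\mathfrak{F}_{\ii}\cdot\nu_E)\,\dd\Haus{n-1}$ and $-\int_{\redb E}\phi\,(\mathfrak{F}_{\rm e}\cdot\nu_E)\,\dd\Haus{n-1}$ respectively, with $\mathfrak{F}_{\ii}\cdot\nu_E,\,\mathfrak{F}_{\rm e}\cdot\nu_E\in L^\infty(\redb E;\Haus{n-1})$ since $E\Subset\Omega$. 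Since $\Lip_c(\Omega)$ is dense in $C_c(\Omega)$ in the supremum norm, this identifies $\ban{\FF\cdot\nu,\cdot}_{\partial E^1}=-(\mathfrak{F}_{\ii}\cdot\nu_E)\,\Haus{n-1}\res\redb E$ and $\ban{\FF\cdot\nu,\cdot}_{\partial(E^1\cup\redb E)}=-(\mathfrak{F}_{\rm e}\cdot\nu_E)\,\Haus{n-1}\res\redb E$ as Radon measures, giving the stated absolute continuity with density in $L^\infty(\redb E;\Haus{n-1})$.

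For an arbitrary Borel representative $\tildef{E}$ of $E$ one has $|\tildef{E}\Delta E^1|=0$, and Theorem \ref{productruleinfty} gives $\chi_E\FF\in\DM^\infty_{\rm loc}(\Omega)$; hence \eqref{trace_div_meas_repr} shows $\ban{\FF\cdot\nu,\cdot}_{\partial\tildef{E}}=\chi_{\tildef{E}}\div\FF-\div(\chi_E\FF)\in\mathcal{M}(\Omega)$, while \eqref{difference_normal_trace_E_tilde_E} applied with $E^1$ yields $\ban{\FF\cdot\nu,\cdot}_{\partial\tildef{E}}=\ban{\FF\cdot\nu,\cdot}_{\partial E^1}+(\chi_{\tildef{E}\setminus E^1}-\chi_{E^1\setminus\tildef{E}})\,\div\FF$. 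The first summand is supported on $\redb E$ by the previous step and the second on $\tildef{E}\Delta E^1$, and this union is contained in $\partial\tildef{E}$: a point of $\redb E$ has density strictly between $0$ and $1$ with respect to $\tildef{E}$; a point of $E^1\setminus\tildef{E}$ has density $1$ but is not in $\tildef{E}$; and a point of $\tildef{E}\setminus E^1$ is in $\tildef{E}$ but does not have density $1$, so none of these points can be interior or exterior to $\tildef{E}$. When $\tildef{E}$ agrees with $E^1$ (resp. $E^1\cup\redb E$) up to an $\Haus{n-1}$--negligible set, the correction measure vanishes because $|\div\FF|\ll\Haus{n-1}$, which recovers the absolute continuity and the two Gauss--Green formulas for such $\tildef{E}$.

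It remains to prove \eqref{G-G phi Sobolev int}--\eqref{G-G phi Sobolev ext} for $E$ of locally finite perimeter and $\phi\in C^0(\Omega)$ with $\nabla\phi\in L^1_{\rm loc}(\Omega;\R^n)$ and $\chi_E\phi$ compactly supported. The key remark is that every point of $E^1\cup\redb E$ has positive density with respect to $E$, so continuity of $\phi$ confines $(E^1\cup\redb E)\cap\{\phi\neq0\}$ to $K:=\supp(\chi_E\phi)\Subset\Omega$; thus multiplying $\phi$ by a cutoff $\zeta\in C^\infty_c(\Omega)$ with $\zeta\equiv1$ on a neighbourhood of $K$, and (if needed) truncating $E$ to $E\cap B_R$ with $K\Subset B_R$ and $\Haus{n-1}(\redb E\cap\partial B_R)=0$, alters none of the three integrals and reduces us to $\phi\in C^0_c(\Omega)$ with $\nabla\phi\in L^1(\Omega;\R^n)$ and $E\Subset\Omega$ of finite perimeter. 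For such $\phi$, the mollifications $\phi_\eps=\phi\ast\eta_\eps$ satisfy $\phi_\eps\to\phi$ uniformly, $\nabla\phi_\eps\to\nabla\phi$ in $L^1(\Omega;\R^n)$, with supports in a fixed compact set; writing the identities of the first paragraph for $\phi_\eps$ and letting $\eps\to0$, all three terms pass to the limit since $|\div\FF|$ and $\Haus{n-1}\res\redb E$ are finite on compact sets, $\FF\in L^\infty$ on compact sets, and $\mathfrak{F}_{\ii}\cdot\nu_E,\,\mathfrak{F}_{\rm e}\cdot\nu_E\in L^\infty_{\rm loc}(\redb E;\Haus{n-1})$. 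Carrying this out on an exhaustion of $\Omega$ by balls gives the $L^\infty_{\rm loc}$ regularity of the normal traces.

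I expect the main obstacle to be this last step: one must check that ``$\chi_E\phi$ compactly supported'' is precisely the hypothesis that confines the three integrals to a compact set --- so that both the cutoff and the truncation of $E$ are harmless --- and that the limit commutes with the merely locally finite measures $\div\FF$ and $\Haus{n-1}\res\redb E$ and with the merely $L^1_{\rm loc}$ gradient. The first two paragraphs are then essentially bookkeeping with the identities established earlier in this section and elementary density-point considerations.
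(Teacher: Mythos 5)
Your proof is correct and follows essentially the same route as the paper: bootstrap from \cite[Theorem 5.3]{ctz}, identify the normal traces on $\partial E^1$ and $\partial(E^1\cup\redb E)$ as Radon measures absolutely continuous with respect to $\Haus{n-1}\res\redb E$, transfer to a general Borel representative $\tildef{E}$ via \eqref{difference_normal_trace_E_tilde_E} (with the support argument $\mathring{\tildef{E}}\subset E^1\subset\overline{\tildef{E}}$, which you verify by elementary density-point reasoning instead of stating it directly), and finish by mollifying the test function. The one genuine deviation is in the last step: the paper goes straight to \cite[Theorem 4.2]{comi2017locally}, which already gives \eqref{G-G phi Sobolev int}--\eqref{G-G phi Sobolev ext} for locally finite perimeter sets and $\phi\in\Lip_{\rm loc}$ with $\supp(\chi_E\phi)\Subset\Omega$, so only the $\Lip_{\rm loc}\to C^0$ upgrade remains; you instead re-derive this reduction by hand, cutting off $\phi$ and truncating $E$ to $E\cap B_R$ before mollifying. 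That route works, but two of the ``alters none of the three integrals'' claims are not free: equality of $\int_E\FF\cdot\nabla\phi$ with the truncated version silently uses that $\nabla\phi=0$ $\Leb{n}$--a.e. on $E\setminus K$ (a consequence of $\phi\in W^{1,1}_{\rm loc}$ vanishing on $E\setminus K$, the latter following from your density observation plus continuity of $\phi$), and equality of the boundary integrals uses locality of the Chen--Torres--Ziemer interior/exterior normal traces under replacing $E$ by $E\cap B_R$. Both facts are standard and true, but they should be stated; citing \cite[Theorem 4.2]{comi2017locally} sidesteps both. You also invoke Theorem \ref{productruleinfty} to establish that $\ban{\FF\cdot\nu,\cdot}_{\partial\tildef{E}}$ is a Radon measure, which is not needed: the decomposition via \eqref{difference_normal_trace_E_tilde_E} already exhibits it as a sum of two Radon measures, as the paper observes.
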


\begin{proof}
\, Assume first that $E \Subset \Omega$.
By \cite[Theorem 5.3]{ctz} and \cite[Theorem 4.2]{comi2017locally},
it follows that the normal traces on the boundaries of $E^{1}$ and $E^{1} \cup \redb E$ are Radon measures.
They are indeed absolutely continuous with respect to $\Haus{n - 1} \res \redb E$
and with densities given by essentially bounded interior and exterior normal traces: For any $\phi \in \Lip_{c}(\Omega)$,
\begin{align*}
&\ban{\FF \cdot \nu, \phi}_{\partial E^{1}}
 = - \int_{\redb E} \phi \, (\mathfrak{F}_{\ii} \cdot \nu_{E}) \, \dr \Haus{n - 1}, \\
&\ban{\FF \cdot \nu, \phi}_{\partial (E^{1} \cup \redb E)}
 = - \int_{\redb E} \phi \, (\mathfrak{F}_{\ee} \cdot \nu_{E}) \, \dr \Haus{n - 1}.
\end{align*}

These two formulas hold also for any set $\tildef{E}$ with $\Haus{n - 1}(\tildef{E} \Delta E^{1}) = 0$
or $\Haus{n - 1}(\tildef{E} \Delta (E^{1} \cup \redb E)) = 0$, respectively,
since $|\div \FF| \ll \Haus{n - 1}$ if $\FF \in \DM^{\infty}(\Omega)$, by \cite[Theorem 3.2]{Silhavy1}.

Let $\tildef{E}$ be any Borel representative of $E$ with respect to the Lebesgue measure so that
$|E \Delta \tildef{E}| = 0$,
which implies that $\tildef{E}^{1} = E^{1}$.
By \eqref{difference_normal_trace_E_tilde_E}, we have
\begin{equation*}
\ban{\FF \cdot \nu, \phi}_{\partial \tildef{E}}
= \ban{\FF \cdot \nu, \phi}_{\partial E^{1}} + \int_{\Omega} \phi \, (\chi_{\tildef{E} \setminus E^{1}}
- \chi_{E^{1} \setminus \tildef{E}}) \, \dd \div \FF \,\,\, \mbox{for any $\phi \in \Lip_{c}(\Omega)$}.
\end{equation*}
This shows that $\ban{\FF \cdot \nu, \cdot}_{\partial \tildef{E}}$ is a Radon measure on $\redb E \cup (\tildef{E} \Delta E^{1})$,
while this set is contained in $\partial \tildef{E}$, since $\mathring{\tildef{E}} \subset E^{1} \subset \overline{\tildef{E}}$,
coherently with Proposition \ref{support trace}.

Finally, let $E$ be a set of locally finite perimeter in $\Omega$, and let $\phi \in C^{0}(\Omega)$
such that $\nabla \phi \in L^{1}_{\rm loc}(\Omega; \R^n)$ and $\mathrm{supp}(\chi_{E} \phi)\Subset \Omega$.
Then \eqref{G-G phi Sobolev int}--\eqref{G-G phi Sobolev ext} follow from \cite[Theorem 4.2]{comi2017locally}.
Indeed, such equations hold for $\phi \in \Lip_{\rm loc}(\Omega)$ such that $\mathrm{supp}(\chi_{E} \phi) \subset V \Subset \Omega$
for some open set $V$.
Thus, we can take any mollification $\phi_{\eps}$ of $\phi$, with $\eps > 0$ small enough,
such that $\mathrm{supp}(\chi_{E} \phi_{\eps}) \subset V$.
Then we pass to the limit as $\eps \to 0$ by employing the fact that $\phi_{\eps} \to \phi$ uniformly
on $V$ and $\nabla \phi_{\eps} \to \nabla \phi$ in $L^{1}(V; \R^{n})$.
This completes the proof.
\end{proof}

\begin{remark}
\, Given $\FF \in \DM^{\infty}_{\rm loc}(\Omega)$ and a set of locally finite perimeter $E \subset \Omega$,
\eqref{G-G phi Sobolev int}--\eqref{G-G phi Sobolev ext} hold for any $\phi \in \Lip_{c}(\Omega)$.
This shows that the normal traces of $\FF$ on the portion of the boundaries $\partial E^{1} \cap \Omega$
and $\partial(E^{1} \cup \redb E) \cap \Omega$ are locally represented by  measures
$(\mathfrak{F}_{i} \cdot \nu_{E}) \, \Haus{n - 1} \res \redb E$
and $(\mathfrak{F}_{e} \cdot \nu_{E})\,  \Haus{n - 1} \res \redb E$, respectively.
\end{remark}

\begin{remark}
\, Proposition {\rm \ref{normal trace p infty}} can be seen as a special case
of Theorem {\rm \ref{equivalence trace prod}}, because of Theorem {\rm \ref{productruleinfty}}.
In addition, it shows that the normal trace measures of $\FF \in \DM^{\infty}(\Omega)$ on $\partial E^{1}$
and  $\partial (E^{1} \cup \redb E)$ are actually concentrated on $\redb E = \redb E^{1} = \redb (E^{1} \cup \redb E)$,
for any set of finite perimeter $E \Subset \Omega$.

Moreover,
if $\FF \in \DM^{p}(\Omega)$ for $1 \le p < \infty$,
the normal trace on $\partial E$ is not
a measure that is absolutely continuous
with respect to $\Haus{n - 1}$ in general, as shown in \cite[Example 6.1]{comi2017locally}.
However, as we will see in \S 7,
the normal trace on the boundary of open and closed sets can still be represented
as the limit of the classical normal traces on an approximating family of smooth sets.
\end{remark}

\begin{remark}
\, Theorem {\rm \ref{equivalence trace prod}} shows that,
in the case of Example {\rm \ref{prodruleWhitney}},
the normal trace is a Radon measure on $\partial E$,
since a product rule holds between
$$
\FF(x_1, x_2) = \frac{1}{2 \pi} \frac{(x_1, x_2)}{x_1^{2} + x_2^{2}}
\quad\,\,\mbox{and} \quad\,\, \chi_{E} \,\,\,\mbox{for $E = (0, 1)^{2}$}.
$$
Indeed, we have
\begin{equation*}
\div(\chi_{E} \FF) = \frac{1}{4} \delta_{(0,0)} + (\FF, D \chi_{E})
\end{equation*}
with
\begin{equation*}
(\FF, D \chi_{E})(\phi) := - \frac{1}{2 \pi} \left ( \int_{0}^{1} \frac{\phi(x_{1}, 1)}{1 + x_{1}^{2}} \, \dr x_{1}
+ \int_{0}^{1} \frac{\phi(1, x_{2})}{1 + x_{2}^{2}} \, \dr x_{2} \right ).
\end{equation*}
Using \eqref{trace_div_meas_repr} and $(0, 0) \notin E$,
it follows that, for any $\phi \in \Lip_{c}(\R^{2})$,
\begin{align*}
\ban{\FF \cdot \nu, \phi}_{\partial E}
& = \int_{\R^{2}} \phi \chi_{E} \, \dd \div \FF - \int_{\R^{2}} \phi \, \dd \div(\chi_{E} \FF)
= - \int_{\R^{2}} \phi \, \dd \div(\chi_{E} \FF) \\
&  = - \frac{1}{4} \phi(0, 0) - (\FF, D \chi_{E})(\phi).
\end{align*}
Therefore, $\ban{\FF \cdot \nu, \cdot}_{\partial E}$ is a Radon measure on $\partial E$.

In this example, $E$ is also a set of finite perimeter with $E = E^{1}$,
but the normal trace is supported on $\partial E$,
not only on $\redb E$, since $(0, 0) \notin \redb E$.
\end{remark}

\begin{remark} \label{trace_no_meas_ex}
\, Theorem {\rm \ref{equivalence trace prod}} implies that,
if $\FF \in \DM^{p}(\Omega)$ does not admit a normal trace on $\partial E$ representable by a Radon measure,
then $\chi_{E} \FF \notin \DM^{p}(\Omega)$,
even for a set of locally finite perimeter $E$.

An example of such a vector field was provided by
\cite[Example 2.5]{Silhavy2} and \cite[Remark 2.2]{Frid2} as follows{\rm :}
$$
\FF(x_1, x_2) := \frac{(-x_2, x_1)}{x_1^{2} + x_2^{2}}.
$$
Then  $\FF \in \DM^{p}_{\rm loc}(\R^{2})$ for any $1 \le p < 2$,
$\div \FF = 0$ on  $E = (- 1, 1) \times (- 1, 0)$.
For any $\phi \in \Lip_{c}((-1, 1)^{2})$, we have
\begin{align*}
&\int_{(-1, 1)^{2}} \chi_{E} \FF \cdot \nabla \phi \, \dr x_1 \, \dr x_2\\[1mm]
& = \int_{-1}^{1} \int_{-1}^{0} \frac{1}{x_1^{2} + x_2^{2}} \left ( - x_2 \frac{\partial \phi}{\partial x_1}
   + x_1 \frac{\partial \phi}{\partial x_2} \right ) \, \dr x_2 \, \dr x_1 \\[1mm]
& = \lim_{\eps \to 0} \left ( \int_{-1}^{- \eps} + \int_{\eps}^{1} \right ) \int_{- 1}^{0} \frac{1}{x_1^{2} + x_2^{2}}
  \left ( - x_2 \frac{\partial \phi}{\partial x_1} + x_1 \frac{\partial \phi}{\partial x_2} \right ) \, \dr x_2 \, \dr x_1 \\[1mm]
& = \lim_{\eps \to 0}\left\{\int_{-1}^{0} \frac{x_{2}}{\eps^{2} + x_2^{2}} \big(- \phi(- \eps, x_2) +  \phi(\eps, x_2)\big) \, \dr x_2
+\left ( \int_{-1}^{- \eps} + \int_{\eps}^{1} \right ) \frac{\phi(x_1, 0)}{x_1} \, \dr x_1\right\}\\
& = {\rm P.V.} \int_{-1}^{1} \frac{\phi(x_1, 0)}{x_1} \, \dr x_1,
\end{align*}
since the two area integrals are simplified and
\begin{align*}
&\left | \lim_{\eps \to 0} \int_{-1}^{0} \frac{x_2}{\eps^{2} + x_2^{2}} \big(- \phi(- \eps, x_2) +  \phi(\eps, x_2)\big) \, \dr x_2 \right |\\
&\le 2L \lim_{\eps \to 0} \int_{0}^{1} \frac{\eps x_2}{\eps^{2} + x_2^{2}} \, \dr x_2
= L \lim_{\eps \to 0} \eps  \log ( 1 + \frac{1}{\eps^{2}})= 0,
\end{align*}
where $L$ is the Lipschitz constant of $\phi$.
This shows
\begin{equation*}
\div(\chi_{E} \FF) = {\rm P.V.}(\frac{1}{x_1})\res (- 1, 1) \otimes \delta_{0},
\end{equation*}
so that $\div( \chi_{E} \FF) \notin \mathcal{M}((-1, 1)^{2})$,
which means that $\chi_{E} \FF \notin \DM^{p}((-1, 1)^{2})$ for any $1 \le p < 2$.

The argument can be generalized to
\begin{equation*}
\FF(x_1, x_2) = \frac{(- x_2, x_1)}{(x_1^{2} + x_2^{2})^{\frac{\alpha}{2}}} \qquad\mbox{for $2 \le \alpha < 3$}
\end{equation*}
to obtain
\begin{equation*}
\div(\chi_{E} \FF) = ({\rm P. V.} \, \sgn{(x_1)} \, |x_1|^{1 - \alpha}) \res (- 1, 1) \otimes \delta_{0}.
\end{equation*}
\end{remark}

\begin{remark}\label{rem:3.22}
\,\, By Theorem {\rm \ref{equivalence trace prod}},
$\FF \in \DM^{p}(\Omega)$ admits a normal trace on the boundary of a Borel set $E \Subset \Omega$
representable by a Radon measure if and only if $\chi_{E} \FF \in \DM^{p}(\Omega)$.
This condition is generally weaker than the requirement of $E$ to be a set of locally finite perimeter in $\Omega$.
Indeed,
there exist a set $E \subset \R^{2}$
with $\chi_{E} \notin BV_{\rm loc}(\R^{2})$
and a field $\FF \in \DM^{p}(\R^{2})$ for any $p \in [1, \infty]$
with $\ban{\FF \cdot \nu, \cdot}_{\partial E} \in \mathcal{M}(\partial E)$.
The key observation in the construction of such a set $E$ is that, given a constant vector field $\FF \equiv \vv \in \R^{n}$,
\eqref{trace_div_meas_repr} implies that the normal trace is given by
\begin{equation*}
\ban{\vv \cdot \nu, \cdot}_{\partial E}
= - \div(\chi_{E} \vv) = - \sum_{j = 1}^{n} v_{j} D_{x_{j}} \chi_{E}.
\end{equation*}
Clearly, the requirement that $\sum_{j = 1}^{n} v_{j} D_{x_{j}} \chi_{E} \in \mathcal{M}(\Omega)$
is weaker than the requirement that $\chi_{E} \in BV(\Omega)$, since there may be some cancellations.

We choose $E$ as the open bounded set whose boundary is given by
\begin{equation*}
\partial E =\big( \{0\} \times [0, 1]\big) \cup \big([0, 1] \times \{0\}\big) \cup \big([0, 1 + \log{2}] \times \{1\}\big) \cup S,
\end{equation*}
as shown in Figure {\rm \ref{fig-4.1}}, where
\begin{align*}
S & = \Big(\{1\} \times [0, \frac{1}{2}]\Big) \bigcup \Big([1, 2] \times \{ \frac{1}{2}\}\Big)\\
&\quad  \bigcup \Big(\bigcup_{n \ge 1} \{ 1 + \sum_{k = 1}^{n} \frac{(-1)^{k - 1}}{k}\} \times [ 1 - \frac{1}{2^{n}}, 1 - \frac{1}{2^{n + 1}}]\Big)  \\
&\quad  \bigcup \Big(\bigcup_{n \ge 1} [ 1 + \sum_{k = 1}^{2n} \frac{(-1)^{k - 1}}{k} , 1 + \sum_{k = 1}^{2n + 1} \frac{(-1)^{k - 1}}{k}]\times \{ 1 - \frac{1}{2^{2n + 1}}\}\Big) \\
&\quad \bigcup \Big(\bigcup_{n \ge 1} [ 1 + \sum_{k = 1}^{2n} \frac{(-1)^{k - 1}}{k} , 1 + \sum_{k = 1}^{2n - 1} \frac{(-1)^{k - 1}}{k}] \times \{ 1 - \frac{1}{2^{2n}} \}\Big).
\end{align*}
\begin{figure} \label{fig-4.1}
  \centering
      \includegraphics[width=0.7\textwidth]{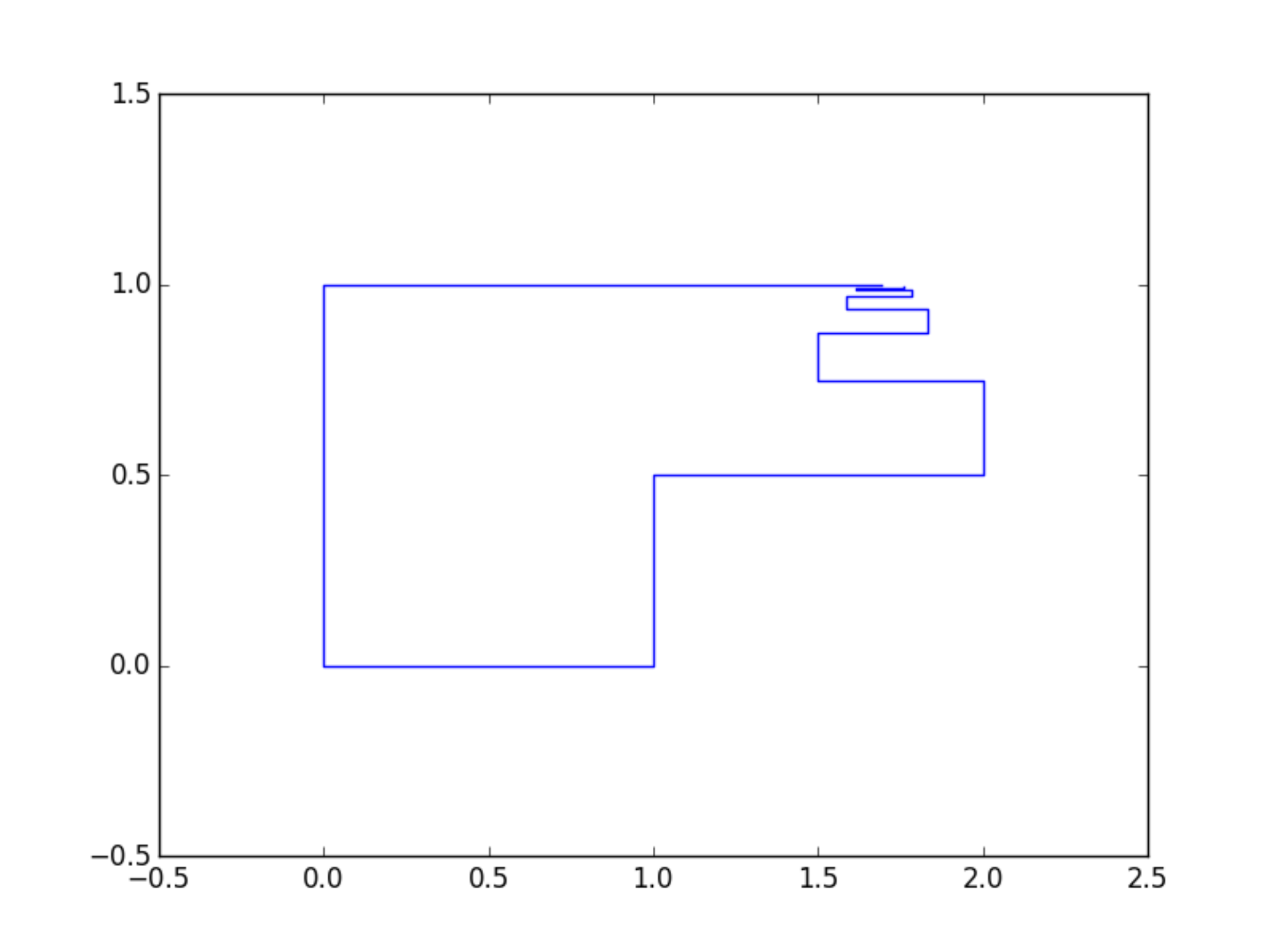}
  \caption{The open bounded set $E$}
\end{figure}
Then $\chi_{E} \notin BV_{\rm loc}(\R^{2})$, since $\Haus{1}(S) = \infty$.
However, we can show that $D_{x_1} \chi_{E} \in \mathcal{M}(\R^{2})$.

Indeed, given any $\phi \in C^{1}_{c}(\R^{2})$, we have
\begin{align*}
&\int_{E} \frac{\partial \phi}{\partial x_1} \, \dr x_1 \, \dr x_2\\
& = \int_{0}^{\frac{1}{2}} \int_{0}^{1} \frac{\partial \phi}{\partial x_1} \, \dr x_1 \, \dr x_2
 + \sum_{n = 1}^{\infty} \int_{1 - \frac{1}{2^{n}}}^{1 - \frac{1}{2^{n + 1}}}
  \int_{0}^{1 + \sum_{k = 1}^{n} \frac{(-1)^{k - 1}}{k}} \frac{\partial \phi}{\partial x_1} \, \dr x_1 \, \dr x_2 \\
& = \int_{0}^{\frac{1}{2}} \big(\phi(1, x_2) - \phi(0, x_2)\big) \, \dr x_2 \\
&\quad  + \sum_{n = 1}^{\infty} \int_{1 - \frac{1}{2^{n}}}^{1 - \frac{1}{2^{n + 1}}}
  \big(\phi (1 + \sum_{k = 1}^{n} \frac{(-1)^{k - 1}}{k}, x_2 ) - \phi(0, x_2)\big) \, \dr x_2 \\
& = - \int_{0}^{1} \phi(0, x_2) \, dx_2 + \int_{0}^{\frac{1}{2}} \phi(1, x_2) \, \dr x_2
 + \sum_{n = 1}^{\infty}
   \int_{1 - \frac{1}{2^{n}}}^{1 - \frac{1}{2^{n + 1}}} \phi(1 + \sum_{k = 1}^{n} \frac{(-1)^{k - 1}}{k}, x_2) \, \dr x_2.
\end{align*}
This implies
\begin{align*} \label{derivative_x_E}
&D_{x_{1}} \chi_{E} \\
&=\, \Haus{1} \res \big(\{0\} \times (0, 1)\big) \\
&\quad - \Haus{1} \res \Big(\big(\{1\} \times (0, \frac{1}{2})\big) \bigcup \big (\bigcup_{n \ge 1}\{ 1 + \sum_{k = 1}^{n} \frac{(-1)^{k - 1}}{k} \}
        \times ( 1 - \frac{1}{2^{n}}, 1 - \frac{1}{2^{n + 1}}) \big)\Big),
\end{align*}
which is clearly a finite Radon measure on $\R^{2}$.

Now we observe that, if $\FF(x_1, x_2) = f(x_2) g(x_1) (1, 0)$ for some $f \in L^{p}(\R)$ and $g \in C^{1}_{c}(\R)$,
then $\FF \in \DM^{p}(\R^{2})$,
$$
\div \FF = f(x_2) g'(x_1) \Leb{2},
$$
and
\begin{equation} \label{divergence_chi_E_F_example}
\div(\chi_{E}\FF) = f(x_2) g(x_1) D_{x_1} \chi_{E} + \chi_{E}(x_1,x_2) f(x_2) g'(x_1) \Leb{2}.
\end{equation}
Indeed, for any $\phi \in C^{1}_{c}(\R^{2})$, we have
\begin{align*}
\int_{\R^{2}} \chi_{E} \FF \cdot \nabla \phi \, \dr x_1 \dr x_2
& = \int_{\R^{2}} \chi_{E}(x_1, x_2) f(x_2) g(x_1) \frac{\partial \phi(x_1, x_2)}{\partial x_1} \, \dr x_1 \dr x_2 \\
& = \int_{\R^{2}} \chi_{E}(x_1,x_2) f(x_2) \frac{\partial (g(x_1) \phi(x_1, x_2))}{\partial x_1} \, \dr x_1 \dr x_2  \\
&\quad - \int_{\R^{2}} \chi_{E}(x_1, x_2) f(x_2) \phi(x_1, x_2) g'(x_1) \, \dr x_1 \dr x_2 \\
& = - \int_{\R^{2}} f(x_2) g(x_1) \phi(x_1, x_2) \, \dr  D_{x_1} \chi_{E} \\
&\quad - \int_{\R^{2}} \chi_{E}(x_1, x_2) f(x_2) \phi(x_1, x_2) g'(x_1) \, \dr x_1 \dr x_2.
\end{align*}
Thus, by \eqref{divergence_chi_E_F_example}, $\div(\chi_{E} \FF) \in \mathcal{M}(\R^{2})$
so that $\ban{\FF \cdot \nu, \cdot}_{\partial E} \in \mathcal{M}(\partial E)$, by Theorem {\rm \ref{equivalence trace prod}},
even if $E$ is not a set of locally finite perimeter in $\R^{2}$.
In addition, by \eqref{trace_div_meas_repr}, we have
\begin{equation*}
\ban{\FF \cdot \nu, \cdot}_{\partial E} = \chi_{E} \div \FF - \div(\chi_{E} \FF) = - f(x_2) g(x_1) D_{x_1} \chi_{E},
\end{equation*}
from which the following is deduced{\rm :}
\begin{align*}
|\ban{\FF \cdot \nu, \cdot}_{\partial E}| \ll \Haus{1} \res & \Big( \big(\{0\} \times (0, 1)\big) \bigcup \big(\{1\} \times (0, \frac{1}{2})\big)
   \\
  &\quad \bigcup \big(\bigcup_{n \ge 1} \{ 1 + \sum_{k = 1}^{n} \frac{(-1)^{k - 1}}{k}\}
    \times ( 1 - \frac{1}{2^{n}}, 1 - \frac{1}{2^{n + 1}})\big)\Big).
\end{align*}
\end{remark}

On the other hand, as we will show,
whether
$\ban{\FF \cdot \nu, \cdot}_{\partial E}$ is a Radon measure on $\partial E$ or not
does not play any role in the representation of the normal trace of $\FF$
on the boundary of an open or closed set as the limit of classical normal traces
on the boundaries of a sequence of approximating smooth sets.

We provide now a necessary condition for the normal trace to be a Radon measure.
\begin{proposition} \label{necessary_condition_trace_measure}
Let $\FF \in \DM^{p}(\Omega)$ for $1 \le p \le \infty$,
and let $E \subset \Omega$ be a Borel set
such that there exists $\sigma \in \mathcal{M}(\partial E)$ satisfying
\begin{equation*}
\ban{\FF \cdot \nu, \phi}_{\partial E} = \int_{\partial E} \phi \, \dr \sigma
\qquad \mbox{for any $\phi \in \Lip_{\rm c}(\R^n)$}.
\end{equation*}
Then, if $1 \le p < \frac{n}{n - 1}$, for any $x \in \partial E$ and $r > 0$,
there exists a constant $C >0$ such that $|\sigma|(\partial E) + |\div \FF|(E)\ge C$ and
\begin{equation} \label{necess_cond_trace_meas_sub_critical}
\Big| \int_{B(x, r) \cap E} \FF(y) \cdot \frac{(y - x)}{|y - x|} \, \dr y \Big| \le C r.
\end{equation}
If $p \ge \frac{n}{n - 1}$, for any $x \in \partial E$ and $r > 0$,
\begin{equation} \label{necess_cond_trace_meas_super_critical}
\Big| \int_{B(x, r) \cap E} \FF(y) \cdot \frac{(y - x)}{|y - x|} \, \dr y \Big| = o(r).
\end{equation}
Moreover, given any $\alpha \in (0, n]$, for $\Haus{\alpha}$--{\it a.e.} $x \in \partial E$ and $r > 0$,
there exists a constant $C = C_{E, \FF, x} > 0$ such that
\begin{equation} \label{necess_cond_trace_meas_Haus_a_e}
\Big| \int_{B(x, r) \cap E} \FF(y) \cdot \frac{(y - x)}{|y - x|} \, \dr y \Big| \le C r^{\alpha + 1}.
\end{equation}
\end{proposition}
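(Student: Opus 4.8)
The plan is to reduce every claim to a single estimate obtained by testing the representation $\ban{\FF \cdot \nu, \phi}_{\partial E} = \int_{\partial E} \phi \, \dr \sigma$ against the ``tent'' function centred at $x$,
$$
\phi_{x,r}(y) := \bigl(r - |y - x|\bigr)^{+},
$$
which lies in $\Lip_{c}(\R^{n})$, is supported in $\overline{B(x,r)}$, satisfies $0 \le \phi_{x,r} \le r$ with $\{\phi_{x,r} > 0\} = B(x,r)$, and has $\nabla \phi_{x,r}(y) = -\frac{y - x}{|y - x|}$ for $\Leb{n}$--a.e.\ $y$ with $0 < |y - x| < r$ and $\nabla \phi_{x,r} = 0$ $\Leb{n}$--a.e.\ on $\{|y-x|>r\}$. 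Substituting $\phi_{x,r}$ into \eqref{normal trace def} and using the hypothesis gives the master identity
$$
- \int_{B(x,r) \cap E} \FF(y) \cdot \frac{y - x}{|y - x|} \, \dr y
= \int_{\partial E} \phi_{x,r} \, \dr \sigma - \int_{E} \phi_{x,r} \, \dd \div \FF ,
$$
whence, since $0 \le \phi_{x,r} \le r$ and $\phi_{x,r}$ vanishes outside $B(x,r)$,
$$
\Bigl| \int_{B(x,r) \cap E} \FF(y) \cdot \frac{y - x}{|y - x|} \, \dr y \Bigr|
\le r \, \mu\bigl(B(x,r)\bigr), \qquad \mu := |\sigma| + |\div \FF| \res E .
$$
Here $\mu$ is a finite Radon measure on $\R^{n}$, since $\div \FF \in \mathcal{M}(\Omega)$ is finite and $\sigma$ has finite total variation on $\partial E$. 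Each assertion will be read off from this inequality by estimating $\mu(B(x,r))$ in the appropriate range of $p$.

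For $1 \le p < \frac{n}{n - 1}$, one simply bounds $\mu(B(x,r)) \le \mu(\R^{n}) = |\sigma|(\partial E) + |\div \FF|(E) =: C$, which gives \eqref{necess_cond_trace_meas_sub_critical}; if $C = 0$ the left-hand side already vanishes, and otherwise $C$ is the asserted positive constant. For $\frac{n}{n - 1} \le p \le \infty$, the key point is that $\mu$ charges no point: when $p = \infty$, $|\sigma| + |\div \FF| \ll \Haus{n - 1}$ by Theorem~\ref{equivalence trace prod}(i) and \cite[Proposition 3.1]{CF1}; when $\frac{n}{n - 1} \le p < \infty$, $\bigl(|\sigma| + |\div \FF|\bigr)(B) = 0$ for every Borel set $B$ of $\sigma$--finite $\Haus{n - p'}$ measure by Theorem~\ref{equivalence trace prod}(ii) and \cite[Theorem 3.2]{Silhavy1}, and $B = \{x\}$ qualifies since $\Haus{n - p'}(\{x\})$ equals $0$ if $p' < n$ and $1$ if $p' = n$. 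Hence $\mu(\{x\}) = 0$, and as the open balls $B(x,r)$ decrease to $\{x\}$ when $r \to 0^{+}$ and $\mu$ is finite, continuity from above gives $\mu(B(x,r)) \to 0$; combined with the master inequality, this is \eqref{necess_cond_trace_meas_super_critical}.

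For the $\Haus{\alpha}$--a.e.\ statement (valid for all $1 \le p \le \infty$), I would invoke the standard density estimate for finite Radon measures: if $\mu$ is a finite Radon measure on $\R^{n}$ and $0 < \alpha \le n$, then the upper $\alpha$--density $\limsup_{r \to 0} r^{-\alpha} \mu(B(x,r))$ is finite for $\Haus{\alpha}$--a.e.\ $x$ — indeed, the set where it is $+\infty$ has $\Haus{\alpha}$--measure zero, since on that set the reverse density comparison $\mu \gtrsim \lambda \, \Haus{\alpha}$ (as $\lambda \to \infty$) would force $\mu$ to have infinite mass; see, e.g., \cite{eg}. Thus, for $\Haus{\alpha}$--a.e.\ $x \in \partial E$, there are $L_{x} < \infty$ and $r_{x} > 0$ with $\mu(B(x,r)) \le (L_{x} + 1) r^{\alpha}$ for $0 < r < r_{x}$, while $\mu(B(x,r)) \le \mu(\R^{n}) \le \mu(\R^{n}) \, r_{x}^{-\alpha} r^{\alpha}$ for $r \ge r_{x}$. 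Setting $C_{E, \FF, x} := \max\{L_{x} + 1, \, \mu(\R^{n}) \, r_{x}^{-\alpha}\}$ and inserting this into the master inequality yields \eqref{necess_cond_trace_meas_Haus_a_e} for every $r > 0$.

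The only genuinely non-elementary ingredient is the density estimate of the last paragraph; everything else is bookkeeping around the single test function $\phi_{x,r}$ together with the absolute-continuity properties of $\sigma$ and $\div \FF$ already recorded in Theorem~\ref{equivalence trace prod} (via \cite{Silhavy1,CF1}). The two spots deserving care are: (a) checking that a singleton is admissible as a set of $\sigma$--finite $\Haus{n - p'}$ measure in the borderline case $p = \frac{n}{n - 1}$, where $n - p' = 0$; and (b) the passage $\mu(B(x,r)) \to \mu(\{x\})$, which relies on the finiteness of $\mu$ (hence on $\div\FF$ being a \emph{finite} measure and on $\sigma$ having finite total variation).
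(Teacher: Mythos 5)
Your argument is correct and follows the paper's proof essentially step for step: the same tent test function $\phi_{x,r}(y) = (r - |y-x|)^{+}$, the same master inequality $\lvert \cdots \rvert \le r\,\bigl(|\sigma|(B(x,r)\cap\partial E) + |\div\FF|(B(x,r)\cap E)\bigr)$, the absolute-continuity results of \cite{Silhavy1} via Theorem~\ref{equivalence trace prod} for the super-critical case, and the finiteness of the upper $\alpha$-density of a finite Radon measure (the paper cites \cite[Theorem 2.56]{afp}) for the $\Haus{\alpha}$-a.e.\ claim. Your additional remarks — verifying that the singleton $\{x\}$ is admissible at the borderline $n - p' = 0$ where $\Haus{0}$ is counting measure, and extending the density bound from small $r$ to all $r>0$ via finiteness of $\mu$ — are sensible housekeeping that the paper leaves implicit.
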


\begin{proof}
\, We just need to choose $\phi(y) := (r - |y - x|) \chi_{B(x, r)}(y)$ so that,
by \eqref{normal trace def},
\begin{align*}
&\int_{B(x, r)\cap \partial E} (r - |y - x|) \, \dr \sigma(y)\\
&= \int_{B(x, r)\cap E} (r - |y - x|) \, \dr \div \FF
  - \int_{B(x, r)\cap E} \FF(y) \cdot \frac{(y - x)}{|y - x|} \, \dr y.
\end{align*}
Then
we obtain
\begin{equation} \label{necess_cond_inter}
\left | \int_{B(x, r) \cap E} \FF(y) \cdot \frac{(y - x)}{|y - x|} \, \dr y \right |
\le r \Big (|\sigma|( B(x, r)\cap \partial E) + |\div \FF|(B(x, r)\cap E) \Big ).
\end{equation}

Now, if $1 \le p < \frac{n}{n - 1}$,
then $\div F$ and $\sigma = \chi_{E} \div F - \div(\chi_{E} F)$ do not enjoy any absolute continuity property in general,
by \cite[Example 3.3, Proposition 6.1]{Silhavy1}, so that \eqref{necess_cond_trace_meas_sub_critical} holds from \eqref{necess_cond_inter}.

If $p \ge \frac{n}{n - 1}$,
then $|\div F|(\{x\}) = |\sigma|(\{x\}) = 0$, by \cite[Theorem 3.2]{Silhavy1} and Theorem \ref{equivalence trace prod}.
Therefore, \eqref{necess_cond_inter} implies \eqref{necess_cond_trace_meas_super_critical}.
Finally, a consequence of \cite[Theorem 2.56]{afp} is that, given a positive Radon measure $\mu$ on $\Omega$,
its $\alpha$--dimensional upper density $\Theta^{*}_{\alpha}(\mu, x)$ satisfies the property:
\[
\Theta^{*}_{\alpha}(\mu, x) < \infty \qquad\,\, \text{for $\Haus{\alpha}$--{\it a.e.} $x \in \Omega$}.
\]
This means that, for $\Haus{\alpha}$--{\it a.e.} $x \in \Omega$, there exists a constant $C = C_{\mu, x}$ such that
$$
\mu(B(x, r)) \le C r^{\alpha}.
$$
Therefore, this argument holds for both measures $|\div F| \res E$ and $|\sigma| \res \partial E$.
Then,  from \eqref{necess_cond_inter}, we achieve \eqref{necess_cond_trace_meas_Haus_a_e}.
\end{proof}

\begin{remark}
\, The result of Proposition {\rm \ref{necessary_condition_trace_measure}} does not seem
to be very restrictive, since
the example in Remark {\rm \ref{trace_no_meas_ex}} satisfies
all the three conditions at any point on $(-1, 1) \times \{0\}$.

Indeed, consider points $(t, 0)$ for some $t \in (-1, 1)$, and $r > 0$ small enough so that
$B((t, 0), r) \cap \{ x_{2} < 0 \} \subset E = (-1, 1) \times (-1, 0)$.
Since $(- x_{2}, x_{1}) \cdot (x_{1} - t, x_{2}) = x_{2} t$, we have
\begin{align*}
&\int_{B((t, 0), r) \cap E} \FF(x_{1}, x_{2}) \cdot \frac{(x_{1} - t, x_{2})}{|(x_{1} - t, x_{2})|}  \, \dr x_{1} \, \dr x_{2}  \\
& = \int_{B((t, 0), r) \cap \{ x_{2} < 0 \}} \frac{x_{2} t}{(x_{1}^{2} + x_{2}^{2}) \sqrt{(x_{1} - t)^{2} + x_{2}^{2}}} \, \dr x_{1} \, \dr x_{2} \\
& = \int_{B((0, 0), 1) \cap \{ u < 0 \}} \frac{t u}{((t + rv)^{2} + r^{2} u^{2}) \sqrt{v^{2} + u^{2}}} r^{2} \, \dr v \, \dr u.
\end{align*}
Therefore, for $t \neq 0$, we have
\begin{align*}
&\bigg| \int_{B((t, 0), r) \cap E} \FF(x_{1}, x_{2}) \cdot \frac{(x_{1} - t, x_{2})}{|(x_{1} - t, x_{2})|} \, \dr x_{1} \, \dr x_{2} \bigg|\\
& = \int_{0}^{1} \int_{- \frac{\pi}{2}}^{\frac{\pi}{2}} \frac{\rho |t| \cos{\theta}}{r^{2} \rho^{2} + t^{2} + 2t r \rho \sin{\theta}} r^{2} \, \dr \theta \, \dr \rho \\
& = \int_{0}^{1} r\, {\rm sgn}(t) \log{\left | \frac{r \rho + t}{r \rho - t} \right |} \, \dr \rho  = \frac{r^{2}}{|t|} + o(r^2)
\end{align*}
for any sufficiently small $r${\rm ;} while, if $t = 0$, we just have
\begin{equation*}
\int_{B((0, 0), r) \cap E} \FF(x_{1}, x_{2}) \cdot \frac{(x_{1}, x_{2})}{|(x_{1}, x_{2})|} \, \dr x_{1} \, \dr x_{2} = 0 \qquad \mbox{for any $r > 0$}.
\end{equation*}
These calculations also show
that this $\FF$ satisfies \eqref{necess_cond_trace_meas_Haus_a_e} for any $\alpha \in (0, 1]$,
which is sufficient, since the Hausdorff dimension of $\partial E$ is $1$.

Moreover, for any $\FF \in L^{p}(\Omega; \R^{n})$,
\begin{equation*}
\Big| \int_{B(x, r) \cap E} \FF(y) \cdot \frac{(y - x)}{|y - x|} \, \dr y \Big|
\le \Big( \int_{B(x, r)} |\FF|^{p} \, \dr y \Big)^{\frac{1}{p}} (\omega_{n} r^{n})^{1 - \frac{1}{p}}.
\end{equation*}
Then condition \eqref{necess_cond_trace_meas_super_critical} is satisfied for any $r \in (0, 1]$
if $\displaystyle n - \frac{n}{p} > 1${\rm ,} that is, $p > \frac{n}{n - 1}$.

On the other hand, we obtain a better decay estimate for $\Haus{\alpha}$--{\it a.e.} $x \in \partial E$,
for any $\alpha \in (0, n]$.
Indeed, $\FF \in L^{1}_{\rm loc}(\Omega; \R^{n})$ so that measure $\mu = |\FF| \Leb{n}$
satisfies $\mu(B(x, r)) \le C r^{\alpha}$ for $\Haus{\alpha}$--{\it a.e.} $x \in \Omega$.
This implies
\begin{equation*}
\Big| \int_{B(x, r) \cap E} \FF(y) \cdot \frac{(y - x)}{|y - x|} \, \dr y \Big|  \le C r^{\alpha}
\qquad\mbox{for $\Haus{\alpha}$--{\it a.e.} $x \in \partial E$},
\end{equation*}
while we obtain the higher exponent $\alpha + 1$  in \eqref{necess_cond_trace_meas_Haus_a_e}.
\end{remark}

\section{\, The Gauss-Green Formula on General Open Sets}

We now consider a general open set $U\subset \R^{n}$ and provide a way to construct its interior and
exterior approximations  via the signed distance function,
suitable for the derivation of the Gauss-Green formula
for $\FF \in \DM^{p}$ for $1 \le p \le \infty$.

For the given  open set $U$ in $\R^{n}$, we consider the signed distance from $\partial U$:
\begin{equation}
d(x)=
\begin{cases}
\dist(x, \partial U) &\quad \mbox{for} \ \ x \in U,\\
-\dist(x, \partial U) &\quad \mbox{for}  \ \ x \notin U.
\end{cases}
\end{equation}
We summarize some known results on the signed distance function in the following lemma.

\begin{lemma} \label{properties distance}
The distance function $d(x)$ is Lipschitz in $\R^n$ with Lipschitz constant equal to $1$ and satisfies
$$
|\nabla d(x)| = 1\qquad \mbox{for $\Leb{n}$--{\it a.e.} $x \notin \partial U$}.
$$
In addition, $\nabla d = 0$ $\Leb{n}$--{\it a.e.} on sets $\{ d = t \}$ for any $t \in \R$.
\end{lemma}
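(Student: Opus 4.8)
The three assertions are classical properties of the (signed) distance function, and I would split the proof accordingly; throughout one assumes, as is implicit, that $U$ is a nonempty proper open subset of $\R^{n}$, so that $\partial U\neq\emptyset$. For the $1$-Lipschitz bound: if $x$ and $y$ lie on the same side of $\partial U$ — both in $U$, or both in $\R^{n}\setminus U$ — then $|d(x)-d(y)| = |\dist(x,\partial U)-\dist(y,\partial U)| \le |x-y|$ by the elementary $1$-Lipschitz property of $z\mapsto\dist(z,\partial U)$. If $x\in U$ and $y\notin U$, the segment $[x,y]$ meets the closed set $\partial U$ at some point $z$, and then $d(x)-d(y) = \dist(x,\partial U)+\dist(y,\partial U)\le |x-z|+|z-y| = |x-y|$; together with the symmetric case and with $d(y)-d(x)\le 0$, this gives $|d(x)-d(y)|\le|x-y|$ on all of $\R^{n}$. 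By Rademacher's theorem $d$ is then differentiable $\Leb{n}$-a.e., and $|\nabla d|\le 1$ wherever the gradient exists.

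For the eikonal identity it remains to prove $|\nabla d|\ge 1$ $\Leb{n}$-a.e. on $\R^{n}\setminus\partial U$. Fix $x_{0}\in U$ at which $d$ is differentiable, choose $y_{0}\in\partial U$ with $|x_{0}-y_{0}| = \dist(x_{0},\partial U)$ (possible since $\partial U$ is closed and nonempty), and set $\delta := \dist(x_{0},\partial U)>0$ and $u := (x_{0}-y_{0})/\delta$. One checks that $y_{0}$ remains a nearest boundary point of $x_{0}+su$ for $s\in(-\delta,0]$, so that $d(x_{0}+su) = \delta+s$ there: the upper bound is $|x_{0}+su-y_{0}| = \delta+s$, and the matching lower bound follows from the Lipschitz estimate of the previous step. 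Letting $s\to 0^{-}$ in the difference quotient gives $\nabla d(x_{0})\cdot u = 1$, hence $|\nabla d(x_{0})|\ge 1$, so $|\nabla d(x_{0})| = 1$. The same argument applied on $\R^{n}\setminus\overline{U}$, where $d = -\dist(\cdot,\partial U)$ and thus $|\nabla d| = |\nabla\dist(\cdot,\partial U)|$, handles the exterior, and since $\R^{n}\setminus\partial U = U\cup(\R^{n}\setminus\overline{U})$ the identity $|\nabla d| = 1$ holds $\Leb{n}$-a.e. there. (Alternatively, this step may simply be quoted from the standard literature on distance functions.)

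Finally, for the vanishing on level sets, fix $t\in\R$ and apply the coarea formula \eqref{coarea} to the Lipschitz function $d$ with $A := \{d=t\}$, a closed and hence $\Leb{n}$-measurable set. Since $A\cap d^{-1}(s)$ is empty for $s\ne t$ and equals $A$ for $s=t$, the right-hand side of \eqref{coarea} equals $\int_{\R}\Haus{n-1}(A)\,\chi_{\{t\}}(s)\,\dr s = 0$, as $\{t\}$ is $\Leb{1}$-negligible. Therefore $\int_{\{d=t\}}|\nabla d|\,\dr x = 0$, which forces $\nabla d = 0$ $\Leb{n}$-a.e. on $\{d=t\}$. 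The only genuinely geometric point, and hence the main obstacle, is the lower bound $|\nabla d|\ge 1$ in the second step: it uses the existence of a nearest boundary point and the unit-speed growth of $d$ along the segment joining $x_{0}$ to it, combined with a.e.\ differentiability. The $1$-Lipschitz estimate and the vanishing on level sets are routine by comparison, the latter being an immediate consequence of the coarea formula already recorded above.
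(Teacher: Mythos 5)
Your proof is correct and, in its core, takes the same geometric route as the paper. For the eikonal identity $|\nabla d|=1$ you use exactly the key idea the paper uses---unit-speed growth of $d$ along the segment from a differentiability point $x_{0}$ to a nearest boundary point $y_{0}$, combined with the $1$-Lipschitz upper bound---but you phrase it as a direct lower bound $\nabla d(x_{0})\cdot u=1$, while the paper argues by contradiction from $|\nabla d(x_{0})|<1$; the two versions are computationally identical and the stylistic difference carries no content. The genuine divergence is in the last assertion: the paper dispatches $\nabla d=0$ $\Leb{n}$-a.e.\ on $\{d=t\}$ by citing \cite[Theorem 3.2.3]{Ambrosio_Tilli}, whereas you derive it directly from the coarea formula \eqref{coarea} already recorded in \S 2, observing that
\begin{equation*}
\int_{\{d=t\}}|\nabla d|\,\dr x \;=\; \int_{\R}\Haus{n-1}\bigl(\{d=t\}\cap d^{-1}(s)\bigr)\,\dr s \;=\; 0,
\end{equation*}
since the integrand is supported on the single point $s=t$ (using the measure-theoretic convention $\infty\cdot 0=0$ if $\Haus{n-1}(\{d=t\})$ happens to be infinite). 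This is more self-contained and arguably cleaner, given that the coarea formula in the form valid for an arbitrary $\Leb{n}$-measurable set $A$ is already stated in the paper precisely so that it may be applied to level and sublevel sets; it is worth noting that you do rely on that strong form, applied to the fixed closed set $A=\{d=t\}$ rather than an open one. Your explicit case analysis for the $1$-Lipschitz bound, which the paper leaves as ``elementary,'' is also correct and a reasonable amount of detail to include.
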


\begin{proof}
\, The elementary properties of the distance show that $d$ is Lipschitz with Lipschitz constant $L \le 1$,
and hence differentiable $\Leb{n}$--{\it a.e.}.
Then it is clear that $|\nabla d(x)| \le 1$.

Let now $x \in U$ such that $d$ is differentiable at $x$ and $|\nabla d(x)|<1$.
Then there exists a point $y \in \partial U$, depending on $x$, such that $d(x) = |x - y|$.
Indeed, given $z \in \partial U$ such that $d(x) \le |x - z|$,
then we can look for $y \in \overline{B}(x, |x - z|) \cap \partial U$, which is a compact set.

Setting $x_{r}:= x + r (y - x)$, we see that $d(x_{r}) = (1 - r)|x - y|$ for any $r \in [0, 1]$.
Otherwise, if there would exist $z \in \partial U$ such that $|x_{r} - z| < |x_{r} - y|$, then we would obtain
\begin{equation*}
|x - z| \le |x - x_{r}| + |x_{r} - z| < r |y - x| + |x_{r} - y| = |x - y|,
\end{equation*}
which contradicts the assumption that $y$ realizes the minimum distance from $x$.

Since $d$ is differentiable at $x$, then
\begin{equation*}
d(x_{r}) - d(x) = \nabla d(x) \cdot (y - x)\,r + o(r),
\end{equation*}
that is,
\begin{equation*}
|y - x| = \nabla d(x) \cdot (x - y) + o(1),
\end{equation*}
which yields a contradiction with the assumption that $|\nabla d(x)| < 1$.

Similarly, we also obtain a contradiction, provided that $d$ is differentiable
at $x\in\R^b\setminus\overline{U}$ and $|\nabla d(x)|<1$.
Since the Lipschitz constant $L$
satisfies $L \ge \| \nabla d \|_{L^{\infty}(\R^{n}, \R^{n})}$, we conclude that $L = 1$.

As for the second part of the statement, we refer to \cite[Theorem 3.2.3]{Ambrosio_Tilli}.
\end{proof}

For any $\eps > 0$, denote
\begin{equation} \label{U^eps}
U^{\eps} := \{ x \in \R^{n} : d(x) > \eps \},
\end{equation}
and
\begin{equation} \label{U_eps}
U_{\eps} := \{ x \in \R^{n} : d(x) > -\eps \}.
\end{equation}
Then $U^{\eps '} \subset U^{\eps}$ when $\eps ' > \eps$, and
$$
\bigcup_{\eps > 0} U^{\eps} = U.
$$
Similarly, $U_{\ve'}\subset U_{\ve}$ when $\eps ' < \eps$, and
$$
\bigcap_{\eps > 0} U_{\eps} = \overline{U}.
$$
It is clear that, for $K:=\overline{U}$ and $K_{\eps} := \overline{U_{\eps}}$,
we recover the same setting of Schuricht's result as in \eqref{Normal trace average Schuricht}.

\begin{remark} \label{boundary negl}
\, By Lemma {\rm \ref{properties distance}}, we can integrate indifferently on $\{d \ge t \}$ and $\{d > t \}$ for any $t \in \R$
with respect to $\nabla d \, \dr x$ {\rm (}or, analogously, on $\{ d \le t \}$ and $\{ d < t \}${\rm )}.
This means that  $\partial U^{\eps}$ and $\partial U_{\eps}$ are negligible
for measure $\nabla d \, dx$ for any $\eps \ge 0$ {\rm (}with $U^{0} = U_{0} = U${\rm )}.
In particular, it follows that \eqref{coarea super-sublevel} holds for any $t \in \R$ and $h \ge 0$, if $u = d$ and $g = f \, \nabla d$
for some $f : \R^{n} \to \R$ $\Leb{n}$--summable.
\end{remark}

We can say more on the regularity of sets $U^{\eps}$ and $U_{\ve}$.
Indeed, since $d$ is a Lipschitz function, which is particularly in $BV_{\rm loc}(\R^{n})$,
the coarea formula (Theorem \ref{Federer-Fleming coarea}) implies that the superlevel and sublevel sets of $d$ are almost all sets of locally finite perimeter.
Thus, we can conclude that $U^{\eps}$ and $U_{\ve}$ are sets of locally finite perimeter for $\Leb{1}$--{\it a.e.} $\eps >0$.
In fact, we can show the following slightly stronger result.

\begin{lemma} \label{boundary tubular ngb}
For any open set $U$ in $\R^{n}$, for $\Leb{1}$--{\it a.e.} $\eps > 0$,
$$
\Haus{n - 1}(\partial U^{\eps} \setminus \redb U^{\eps}) = 0,
\qquad \,
\nabla d(x) = \nu_{U^{\eps}}(x) \quad \mbox{for $\Haus{n - 1}$--{\it a.e.} $x \in \partial U^{\eps}$},
$$
where $\nu_{U^{\eps}}$ is the measure-theoretic interior normal to $U^{\eps}$.
Analogously, for $\Leb{1}$--{\it a.e.} $\eps > 0$,
$$
\Haus{n - 1}(\partial U_{\eps} \setminus \redb U_{\eps}) = 0, \qquad \,
\nabla d(x) = \nu_{U_{\eps}}(x) \quad\mbox{for $\Haus{n - 1}$--{\it a.e.} $x \in \partial U_{\eps}$}.
$$
\end{lemma}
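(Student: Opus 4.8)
The plan is to carry out everything for $U^{\eps}=\{d>\eps\}$, the case of $U_{\eps}=\{d>-\eps\}$ being identical with $-\eps$ in place of $\eps$, and to reduce the two assertions to two facts about the level set $\{d=\eps\}$: that $\Haus{n-1}$-almost all of it lies in $\redb U^{\eps}$, and that $\nabla d$ realizes the De~Giorgi normal there. We may assume $\partial U\neq\emptyset$, since otherwise $U=\R^{n}$ and every set $\{d=\eps\}$ with $\eps>0$ is empty. Two inclusions are available for free: as $d$ is continuous and $\{d>t\}$, $\{d<t\}$ are open, $\partial\{d>t\}\subseteq\{d=t\}$ for every $t\in\R$; and whenever $U^{\eps}$ has locally finite perimeter, $\redb U^{\eps}\subseteq\mathrm{supp}\,|D\chi_{U^{\eps}}|\subseteq\partial U^{\eps}$. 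Moreover, applying the coarea formula \eqref{coarea} on the balls $B(0,R)$ shows that $\Haus{n-1}\res\{d=\eps\}$ is a (locally finite) Radon measure for $\Leb{1}$-a.e.\ $\eps$, and applying Theorem~\ref{Federer-Fleming coarea} on $B(0,R)$ (recall $d\in BV_{\rm loc}(\R^{n})$ since it is Lipschitz) shows that $U^{\eps}$ has locally finite perimeter for $\Leb{1}$-a.e.\ $\eps$. Thus it remains to prove, for $\Leb{1}$-a.e.\ $\eps>0$,
\[
\Haus{n-1}\big(\{d=\eps\}\setminus\redb U^{\eps}\big)=0,
\qquad
\nabla d=\nu_{U^{\eps}}\ \ \Haus{n-1}\text{-a.e.\ on }\redb U^{\eps}.
\]

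For the first identity I would compare two measure-valued functions of the level parameter. On any ball $B=B(0,R)$, the coarea formula \eqref{coarea} says that $t\mapsto\Haus{n-1}\res(B\cap\{d=t\})$ integrates over $t\in\R$ to $|\nabla d|\,\Leb{n}\res B=|Dd|\res B$; by Theorem~\ref{Federer-Fleming coarea} (applied to open subsets of $B$ and extended to Borel sets, both sides being finite Radon measures on $B$), so does $t\mapsto|D\chi_{\{d>t\}}|\res B$. By \eqref{redb_concentration}, $|D\chi_{\{d>t\}}|=\Haus{n-1}\res\redb U^{t}$ for a.e.\ $t$, and since $\redb U^{t}\subseteq\{d=t\}$ by the inclusions above, the second family is dominated by the first, pointwise in $t$. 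Having two nonnegative families of finite measures on $B$ with the same integral, one dominating the other: for each fixed Borel test set this forces coincidence for a.e.\ $t$, and this upgrades to a.e.\ coincidence as measures by running through a countable algebra generating the Borel $\sigma$-algebra of $B$. Letting $R\to\infty$ gives $\Haus{n-1}\res\{d=t\}=\Haus{n-1}\res\redb U^{t}$ for $\Leb{1}$-a.e.\ $t$, which contains the first identity (and, combined with \eqref{redb_concentration}, also records that all three of $\Haus{n-1}\res\{d=\eps\}$, $\Haus{n-1}\res\redb U^{\eps}$, and $|D\chi_{U^{\eps}}|$ coincide).

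For the normal, the key preliminary remark is that $d$ is differentiable at $\Haus{n-1}$-a.e.\ point of $\{d=\eps\}$ for a.e.\ $\eps$: if $N$ is the $\Leb{n}$-negligible (Rademacher) set where $d$ fails to be differentiable, then \eqref{coarea} with $A=N$ gives $\int_{\R}\Haus{n-1}(N\cap\{d=t\})\,\dr t=\int_{N}|\nabla d|\,\dr x=0$. Now fix $\eps>0$ in the full-measure parameter set on which everything above holds, and take $x\in\{d=\eps\}$ with $x\in\redb U^{\eps}$ and $d$ differentiable at $x$ ($\Haus{n-1}$-a.e.\ $x\in\{d=\eps\}$ qualifies). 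Since $\eps>0$ we have $x\in U$ and $\dist(x,\partial U)=\eps$, so a nearest point $y\in\partial U$ exists with $|x-y|=\eps$; exactly as in the proof of Lemma~\ref{properties distance}, $d\big(x+\rho\,\tfrac{y-x}{\eps}\big)=\eps-\rho$ for $\rho\in[0,\eps]$, so the one-sided directional derivative of $d$ at $x$ along $v:=\tfrac{y-x}{\eps}$ equals $-1$; with $|v|=1$ and $|\nabla d(x)|\le 1$ (Lemma~\ref{properties distance}), the Cauchy--Schwarz equality case forces $|\nabla d(x)|=1$ and $\nabla d(x)=-v=\tfrac{x-y}{\eps}$. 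Finally, differentiability of $d$ at $x$ with $\nabla d(x)\neq 0$ yields $d(z)=\eps+\nabla d(x)\cdot(z-x)+o(|z-x|)$, so the rescaled sets $(U^{\eps}-x)/r$ converge in $L^{1}_{\rm loc}$, as $r\to 0^{+}$, to the half-space $\{w:\nabla d(x)\cdot w>0\}$, whose inner unit normal is $\nabla d(x)$; by the blow-up characterization of the reduced boundary (De~Giorgi's theorem, {\it cf.}\ the discussion around \eqref{redb_concentration} and Definition~\ref{reducedboundary}) this gives $\nu_{U^{\eps}}(x)=\nabla d(x)$. The case of $U_{\eps}$ is the same with $t=-\eps$: for $x\in\{d=-\eps\}$ one has $x\notin\overline{U}$, a nearest point $y\in\partial U$ gives $d\big(x+\rho\,\tfrac{y-x}{\eps}\big)=-\eps+\rho$, hence $\nabla d(x)=\tfrac{y-x}{\eps}=\nu_{U_{\eps}}(x)$.

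The main obstacle is the normal part. Because $d$ is only differentiable $\Leb{n}$-a.e., one genuinely has to show that its non-differentiability set meets each relevant level set in an $\Haus{n-1}$-null set --- this is precisely where applying the coarea formula to a Lebesgue-null set is essential --- and then convert pointwise differentiability with non-vanishing gradient into the De~Giorgi blow-up. The first identity is more routine, but still requires the slightly delicate passage from ``equal integrals of a dominating pair of measure-families'' to ``a.e.\ equality as measures'', which is why a countable generating algebra is brought in.
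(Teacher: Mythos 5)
Your proof is correct, and it takes a genuinely different route from the one in the paper. The paper proves the lemma in a ``weak'' fashion: it first derives, via integration by parts against the truncated distance $\psi^{U}_{\eps}$ and the coarea formula, the alternative Gauss--Green identity $\int_{U^{\eps}}\div\,\varphi\,\dr x=-\int_{\partial U^{\eps}}\varphi\cdot\nabla d\,\dr\Haus{n-1}$ for a.e.\ $\eps$ and all $\varphi\in C^{1}_{c}$; it then compares this against the variational characterization of perimeter (by testing with fields that approximate $\chi_{B(0,R-\frac{1}{m})}\nabla d$) to force $\Haus{n-1}(B(0,R)\cap\redb U^{\eps})\ge\Haus{n-1}(B(0,R)\cap\partial U^{\eps})$; and finally it matches the identity with the De Giorgi--Federer formula on $U^{\eps}$ to read off $\int_{\redb U^{\eps}}\varphi\cdot(\nu_{U^{\eps}}-\nabla d)\,\dr\Haus{n-1}=0$ and hence $\nu_{U^{\eps}}=\nabla d$ $\Haus{n-1}$-a.e. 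Your argument instead decouples the two assertions and treats each directly. For the measure identity, you pair the Lipschitz coarea formula \eqref{coarea} with the $BV$ coarea formula of Theorem~\ref{Federer-Fleming coarea}: both make $t\mapsto\Haus{n-1}\res\{d=t\}$ and $t\mapsto|D\chi_{U^{t}}|$ integrate to $|\nabla d|\Leb{n}\res B$ on any ball, and since $\redb U^{t}\subseteq\{d=t\}$ for a.e.\ $t$ the second family is dominated by the first, so equality of $t$-integrals plus a countable generating $\pi$-system forces $\Haus{n-1}\res\{d=t\}=|D\chi_{U^{t}}|$ for a.e.\ $t$. For the normal, you show $d$ is differentiable $\Haus{n-1}$-a.e.\ on a.e.\ level set (by applying \eqref{coarea} to the Rademacher null set), use the segment-to-nearest-boundary-point observation to pin down $\nabla d(x)$ exactly as a unit vector, and invoke the De Giorgi blow-up characterization of $\redb U^{\eps}$ to match $\nabla d(x)$ with $\nu_{U^{\eps}}(x)$. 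Each route has its advantages: the paper's is built so that the auxiliary functions $\psi^{U}_{\eps}$, $\xi^{U}_{\eps}$ and the intermediate Gauss--Green identity involving $\nabla d$ are reused verbatim in the proofs of Theorems~\ref{interior normal trace} and \ref{exterior normal trace}, making the lemma's proof of a piece with the whole of \S 5; yours is more self-contained and intrinsic, actually delivers the slightly stronger statement $\Haus{n-1}(\{d=\eps\}\setminus\redb U^{\eps})=0$ (whereas $\partial U^{\eps}$ can be a proper subset of $\{d=\eps\}$), and identifies $\nu_{U^{\eps}}$ with $\nabla d$ pointwise by a blow-up rather than distributionally.
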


\begin{proof}
\, By the previous remarks, $U^{\eps}$ is a set of locally finite perimeter for $\Leb{1}$--{\it a.e.} $\eps > 0$.
Then, for any smooth vector field $\varphi \in C^{1}_{c}(\R^{n}; \R^{n})$,
\begin{equation}\label{G-G smooth 1}
\int_{U^{\eps}} \div\,\varphi \, \dr x
= - \int_{\redb U^{\eps}} \varphi \cdot \nu_{U^{\eps}} \, \dr \Haus{n - 1}
\qquad \mbox{for $\Leb{1}$--{\it a.e.} $\eps > 0$}.
\end{equation}
Consider now the functions:
\begin{equation*}
\psi^{U}_{\eps}(x)
:= \begin{cases}
        \eps & \mbox{if} \ x \in U^{\eps}, \\
        d(x) & \mbox{if} \ x \in U \setminus U^{\eps}, \\
        0 & \mbox{if} \ x \notin U.
 \end{cases}
\end{equation*}
Then
\begin{align*}
\int_{U} \psi_{\eps}^U \div\,\varphi \, \dr x
& = \eps \int_{U^{\eps}} \div\,\varphi \, \dr x
  + \int_{U \setminus U^{\eps}} d(x)\, \div\,\varphi \, \dr x \quad \\
& = - \int_{U \setminus U^{\eps}} \varphi \cdot \nabla d \, \dr x \\
& = - \int_{U \setminus \overline{U^{\eps}}} \varphi \cdot \nabla d \, |\nabla d| \, \dr x \\
& = - \int_{0}^{\eps} \int_{\partial U^{t}} \varphi \cdot \nabla d \, \dr \Haus{n - 1} \, \dr t,
\end{align*}
since $|\nabla d(x)| = 1$ for $\Leb{n}$--{\it a.e.} $x \notin \partial U$ and $\nabla d(x) = 0$
for $\Leb{n}$--{\it a.e.} $x \in \partial U^{\eps}$ (Lemma \ref{properties distance} and Remark \ref{boundary negl}),
and by the coarea formula \eqref{coarea super-sublevel} with $u = d$ and $g = \chi_{U} \varphi \cdot \nabla d \, |\nabla d|$.
Indeed, using \eqref{coarea superlevel}, we have
\begin{align*}
\int_{\R^{n} \setminus \overline{U^{\eps}}} \chi_{U} \varphi \cdot \nabla d \, |\nabla d| \, \dr x
& = \int_{\{d < \eps\}} \chi_{U} \varphi \cdot \nabla d \, |\nabla d| \, \dr x \\
& = \int_{\{- d > - \eps\}} \chi_{U} \varphi \cdot \nabla d \, |\nabla d| \, \dr x \\
& = \int_{- \eps}^{\infty} \int_{\{- d = t\}} \chi_{U} \varphi \cdot \nabla d \, \dr \Haus{n - 1} \, \dr t \\
& = \int_{- \infty}^{ \eps} \int_{\{d = t\}} \chi_{U} \varphi \cdot \nabla d \, \dr \Haus{n - 1} \, \dr t \\
& = \int_{0}^{\eps} \int_{\partial U^{t}} \varphi \cdot \nabla d \, \dr \Haus{n - 1} \, \dr t,
\end{align*}
since $d > 0$ in $U$ and $|\nabla d(x)|=1$ for $\Leb{n}$--{\it a.e.} $x\in \partial U$.

We can repeat the same calculation with $\psi_{\eps + h}$ for $h > 0$, and subtract the two resultant equations to obtain
\begin{equation*}
h \int_{U^{\eps + h}} \dd \div \varphi + \int_{U^{\eps } \setminus U^{\eps + h}} (d(x) - \ve ) \, \dd \div \varphi
= - \int_{\eps}^{\eps + h} \int_{\partial U^{t}} \varphi \cdot \nabla d \, \dr\Haus{n - 1} \, \dr t.
\end{equation*}

We now divide by $h$ and use the fact that $0 \le d(x) - \eps \le h$ in $U^{\eps} \setminus U^{\eps + h}$
and $|U^{\eps} \setminus U^{\eps + h}| \to 0$ as $h \to 0$ to conclude
\begin{equation}
\label{G-G smooth 2}
 \int_{U^{\eps}} \div \varphi \, \dr x
 = - \int_{\partial U^{\eps}} \varphi \cdot \nabla d \, \dr \Haus{n - 1}\qquad\mbox{for $\Leb{1}$--{\it a.e.} $\eps > 0$}.
 \end{equation}

Notice that,  for any $R > 0$,
\begin{align}
& \Haus{n - 1}(B(0, R) \cap \redb U^{\eps})\nonumber\\
&= \sup \Big\{ \int_{U^{\eps}} \div (-\varphi) \, \dr x\, :\, \varphi \in C^{1}_{c}(B(0, R); \R^{n}), \|\varphi\|_{\infty} \le 1 \Big\}
\label{yamero}.
\end{align}
Now, we can take a double index sequence of fields $\varphi_{k, m}$ in $C^{1}_{c}(B(0, R); \R^{n})$ such that
$$
\varphi_{k, m} \to \chi_{B(0, R - \frac{1}{m})} \nabla d \qquad \mbox{in $L^{1}(\R^{n}; \R^{n})$ as $k \to \infty$, for any fixed $m \in \mathbb{N}$}.
$$
For each $k$ and $m$, there is a set $\mathcal{N}_{k, m} \subset \R$ with $\Leb{1}(\mathcal{N}_{k, m}) = 0$
such that \eqref {G-G smooth 2} holds for any $\eps \notin \mathcal{N}_{k, m}$.
Set $\mathcal{N} := \bigcup_{(k, m) \in \N^{2} } \mathcal{N}_{k, m}$.
Then, for any $\eps \notin \mathcal{N}$, we obtain
\begin{align*}
&\sup \Big\{ \int_{U^{\eps}} \div (-\varphi) \, \dr x \,: \, \varphi \in C^{1}_{c}(B(0, R); \R^{n}), \|\varphi\|_{\infty} \le 1 \Big\}\\
& \ge \int_{U^{\eps}} \div ( - \varphi_{k, m}) \, \dr x = \int_{\partial U^{\eps}} \varphi_{k, m} \cdot \nabla d \, \dr \Haus{n - 1}.
\end{align*}
Now we let $k \to \infty$ and employ \eqref{yamero} to obtain
\begin{equation*}
\Haus{n - 1}(B(0, R) \cap \redb U^{\eps}) \ge  \Haus{n - 1}(B(0, R - \frac{1}{m}) \cap \partial U^{\eps}),
\end{equation*}
and the arbitrariness of $m \in \N$ yields
\begin{equation}
 \label{alla}
  \Haus{n - 1}(B(0, R) \cap \redb U^{\eps}) \ge  \Haus{n - 1}(B(0, R) \cap \partial U^{\eps}).
\end{equation}
Combining \eqref{alla} with the well-known fact that $\redb U^{\eps} \subset \partial U^{\eps}$, we obtain
$$
\Haus{n - 1}(B(0, R) \cap (\partial U^{\eps} \setminus \redb U^{\eps})) = 0,
$$
which implies that $\Haus{n - 1}(\partial U^{\eps} \setminus \redb U^{\eps}) = 0$, by the arbitrariness of $R > 0$.

Therefore, from \eqref{G-G smooth 1}--\eqref{G-G smooth 2}, we have
\begin{equation*}
\int_{\redb U^{\eps}} \varphi \cdot (\nu_{U^{\eps}} - \nabla d ) \, \dr \Haus{n - 1} = 0
\qquad\,\, \mbox{for any $\varphi \in C^{1}_{c}(\R^{n}; \R^{n})$},
\end{equation*}
which implies our assertion.

The second part of the statement is proved in a similar way  by considering the following functions instead:
\begin{equation*}
\xi^{U}_{\eps}(x) :=
\begin{cases}
\ve & \mbox{if} \ x \in U, \\
d(x)+\ve  & \mbox{if} \ x \in U_{\eps} \setminus U, \\
0 & \mbox{if} \ x \notin U_{\eps}.
\end{cases}
\end{equation*}
\end{proof}

Using similar techniques as in the proof of Lemma \ref{boundary tubular ngb}, we are able to show the following Gauss-Green formulas.

\begin{theorem}[Interior normal trace] \label{interior normal trace}
Let $U \subset \Omega$ be a bounded open set, and let $\FF \in \DM^p(\Omega)$ for $1 \le p \le \infty$.
Then, for any $\phi \in C^{0}(\Omega) \cap L^{\infty}(\Omega)$ with $\nabla \phi \in L^{p'}(\Omega; \R^{n})$,
there exists a set $\mathcal{N} \subset \R$ with $\mathcal{L}^1(\mathcal{N})=0$ such that,
for every nonnegative sequence $\{\ve_k\}$ satisfying $\ve_k \notin \mathcal{N}$ for any $k$ and $\ve_k \to 0$,
the following representation for the interior normal trace on $\partial U$ holds{\rm :}
\begin{equation}
\label{main1}
\ban{\FF \cdot \nu, \phi}_{\partial U} = \int_{U} \phi \, \dd \div \FF + \int_{U} \FF \cdot \nabla \phi \, \dr x
= - \lim_{k \to \infty} \int_{\partial^* U^{\eps_{k}}} \phi \FF \cdot \nu_{U^{\eps_{k}}} \,\, \dr \Haus{n - 1},
\end{equation}
where $\nu_{U^{\eps_{k}}}$ is the inner unit normal to $U^{\ve_k}$ on $\redb U^{\ve_k}$.
In addition, \eqref{main1} holds also for any open set $U \subset \Omega$,
provided that $\mathrm{supp}(\phi) \cap U^{\delta} \Subset \Omega$ for any $\delta > 0$.
\end{theorem}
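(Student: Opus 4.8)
The plan is to reduce everything to the product rule of Proposition~\ref{product rule p} together with the structure of the superlevel sets of the signed distance function obtained in Lemma~\ref{boundary tubular ngb}, by pairing the measure $\div(\phi\FF)$ against a family of Lipschitz cutoffs localized near the level set $\{d=\ve\}$. First I would set $\GG:=\phi\FF$. By Proposition~\ref{product rule p}, $\GG\in\DM^{p}(\Omega)$ with $\div\GG=\phi\,\div\FF+\FF\cdot\nabla\phi$, so that $\div\GG(U)=\int_{U}\phi\,\dd\div\FF+\int_{U}\FF\cdot\nabla\phi\,\dr x=\ban{\FF\cdot\nu,\phi}_{\partial U}$; this is the first equality in \eqref{main1}, and it remains to produce the claimed limit representation of $\div\GG(U)$.

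The key step is to show that, for a.e.\ $\ve>0$,
\[
\div\GG(U^{\ve})=-\int_{\redb U^{\ve}}\GG\cdot\nu_{U^{\ve}}\,\dr\Haus{n-1}.
\]
For $\ve>0$ and $0<h<\ve$, I would introduce $\tau_{\ve,h}:=\tfrac1h\min\bigl(h,\max(0,d-\ve)\bigr)$, so that $0\le\tau_{\ve,h}\le1$, $\tau_{\ve,h}\equiv0$ on $\{d\le\ve\}$, $\tau_{\ve,h}\equiv1$ on $\{d\ge\ve+h\}$, and $\nabla\tau_{\ve,h}=\tfrac1h\chi_{\{\ve<d<\ve+h\}}\nabla d$ a.e. When $U$ is bounded, $\supp\tau_{\ve,h}=\overline{U^{\ve}}\subset\{d\ge\ve\}$ is a compact subset of $U\subset\Omega$, so $\tau_{\ve,h}\in\Lip_{c}(\Omega)$ and is admissible in the distributional definition of $\div\GG$:
\[
\int_{\Omega}\tau_{\ve,h}\,\dd\div\GG=-\int_{\Omega}\GG\cdot\nabla\tau_{\ve,h}\,\dr x=-\frac1h\int_{\{\ve<d<\ve+h\}}\GG\cdot\nabla d\,\dr x .
\]
Arguing as in the proof of Lemma~\ref{boundary tubular ngb} (using $|\nabla d|=1$ a.e.\ on $U$, $\nabla d=0$ a.e.\ on each level set by Lemma~\ref{properties distance}, and the coarea formula \eqref{coarea super-sublevel} with $u=d$), the last integral equals $\tfrac1h\int_{\ve}^{\ve+h}\psi(t)\,\dr t$ with $\psi(t):=\int_{\partial U^{t}}\GG\cdot\nabla d\,\dr\Haus{n-1}$; the coarea theorem (applied on $U$, where $\mathrm{ess\,inf}\,|\nabla d|=1$) shows that $\psi(t)$ is well defined for a.e.\ $t>0$ and that $\int_{0}^{\infty}\int_{\partial U^{t}}|\GG|\,\dr\Haus{n-1}\,\dr t=\int_{U}|\GG|\,\dr x<\infty$, so $\psi\in L^{1}_{\mathrm{loc}}((0,\infty))$. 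Since $\tau_{\ve,h}\to\chi_{U^{\ve}}$ pointwise as $h\to0^{+}$, dominated convergence against the finite measure $|\div\GG|$ gives $\int_{\Omega}\tau_{\ve,h}\,\dd\div\GG\to\div\GG(U^{\ve})$, whence $\div\GG(U^{\ve})=-\psi(\ve)$ at every Lebesgue point $\ve$ of $\psi$. Taking $\mathcal{N}$ to be the union of the (null) set of non-Lebesgue points of $\psi$ with the (null) exceptional set of Lemma~\ref{boundary tubular ngb}, for $\ve\notin\mathcal{N}$ one has $\Haus{n-1}(\partial U^{\ve}\setminus\redb U^{\ve})=0$ and $\nabla d=\nu_{U^{\ve}}$ $\Haus{n-1}$-a.e.\ on $\partial U^{\ve}$, which yields the displayed identity with $\GG=\phi\FF$.

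To conclude, for a nonnegative sequence $\ve_{k}\to0$ with $\ve_{k}\notin\mathcal{N}$, I would pass to the limit: since $\chi_{U^{\ve_{k}}}\to\chi_{U}$ pointwise and $|\div\GG|(\Omega)<\infty$, dominated convergence gives $\div(\phi\FF)(U^{\ve_{k}})\to\div(\phi\FF)(U)=\ban{\FF\cdot\nu,\phi}_{\partial U}$, which combined with the previous step proves \eqref{main1}. For a general open $U$ with $\supp\phi\cap U^{\delta}\Subset\Omega$ for all $\delta>0$, the only change is to replace $\tau_{\ve,h}$ by $\tau_{\ve,h}\zeta$ with $\zeta\in C^{\infty}_{c}(\Omega)$ equal to $1$ near the compact set $\overline{\supp\phi\cap U^{\ve/2}}$ (with $h<\ve/2$): since $\GG=\phi\FF$ and $\div\GG$ are supported in $\supp\phi$ while $\tau_{\ve,h}$ and $\nabla\tau_{\ve,h}$ are supported in $\{d\ge\ve\}\subset U^{\ve/2}$, every term involving $\nabla\zeta$ vanishes identically and the argument is verbatim. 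I expect the main obstacle to be precisely the justification of the integration-by-parts pairing $\int_{\Omega}\tau_{\ve,h}\,\dd\div\GG=-\int_{\Omega}\GG\cdot\nabla\tau_{\ve,h}\,\dr x$ --- i.e.\ checking that $\tau_{\ve,h}$ (or $\tau_{\ve,h}\zeta$) is a legitimate test function so that no flux is lost at $\partial\Omega$ --- together with the bookkeeping needed to make $\mathcal{N}$ a single $\Leb{1}$-null set, depending only on $U$, $\FF$, and $\phi$, valid for every admissible sequence $\{\ve_{k}\}$; the coarea manipulations themselves are routine once Lemma~\ref{boundary tubular ngb} is available.
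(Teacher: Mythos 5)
Your proof is correct and follows essentially the same strategy as the paper's: pair $\div(\phi\FF)$ against Lipschitz cutoffs built from the signed distance, apply the coarea formula and the Lebesgue differentiation theorem in the level parameter, and use Lemma~\ref{boundary tubular ngb} to replace $\partial U^{\ve_k}$ by $\redb U^{\ve_k}$ with $\nabla d=\nu_{U^{\ve_k}}$. Your test function $\tau_{\ve,h}$ is exactly $\tfrac1h\bigl(\psi^U_{\ve+h}-\psi^U_\ve\bigr)$ in the paper's notation, so the computations coincide; working with this difference quotient directly (which is supported in $\{d\ge\ve\}$, already compact in $\Omega$ whenever $U$ is bounded) simply collapses the paper's Steps~1 and~2 into one, while your cutoff $\zeta$ together with $\supp\div(\phi\FF)\subset\supp\phi$ reproduces Step~3 for unbounded $U$.
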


\begin{proof} \, We divide the proof into three steps.

\smallskip
1. Suppose first that $U \Subset \Omega$.
Then  $U^{\eps} \Subset \Omega$ for any small $\eps > 0$. Recall that $U^{\eps '} \subset U^{\eps}$ if $\eps ' > \eps$ and $\bigcup_{\eps > 0} U^{\eps} = U$.
Define
\begin{equation*}
\psi^{U}_{\eps}(x)
:=
\begin{cases} \eps &\quad \mbox{if} \ x \in U^{\eps}, \\
d(x) &\quad \mbox{if} \ x \in U \setminus U^{\eps}, \\
0 &\quad \mbox{if} \ x \notin U.
 \end{cases}
\end{equation*}
Since $\psi^{U}_{\eps} \in \mathrm{Lip}_{c}(\Omega)$,
we can use it as a test function.
In addition, for any $\phi \in C^{0}(\Omega) \cap L^{\infty}(\Omega)$ with $\nabla \phi \in L^{p'}(\Omega; \R^{n})$,
$\phi \FF \in \DM^{p}(\Omega)$ by Proposition \ref{product rule p}.
Then
\begin{align}
\int_{U} \psi^{U}_{\eps} \, \dd \div(\phi \FF)
&= - \int_{U \setminus U^{\eps}} \phi \FF \cdot \nabla d \, \dr x
= - \int_{U \setminus U^{\eps}} \phi \FF \cdot \nabla d  \, |\nabla d| \, \dr x\nonumber \\
&= - \int_{0}^{\eps} \int_{\partial U^{t}} \phi \FF \cdot  \nabla d  \, \dr \Haus{n - 1} \, \dr t, \label{test psi}
\end{align}
by the coarea formula \eqref{coarea super-sublevel} with $u = d$ and $g = \chi_{U} \phi \FF \cdot \nabla d \, |\nabla d|$,
by Lemma \ref{properties distance} and Remark \ref{boundary negl}.
Thus, we use  test functions $\psi^{U}_{\eps}$ and $\psi^{U}_{\eps + h}$ with $h > 0$ to obtain
\begin{equation*}
\int_{U^{\ve}} \ve \, \dr \div(\phi \FF) + \int_{U \setminus U^{\ve}} d(x)  \, \dd \div(\phi \FF)
= - \int_{0}^{\eps} \int_{\partial U^{t}} \phi \FF \cdot\nabla d  \, \dr \Haus{n - 1} \, \dr t,
\end{equation*}
and
\begin{align*}
& \int_{U^{\ve+h}} (\ve+h) \, \dd \div( \phi \FF) +\int_{U \setminus U^{\ve +h}} d(x) \, \dd \div( \phi \FF)\\
&= - \int_{0}^{\eps + h} \int_{\partial U^{t}} \phi \FF \cdot\nabla d  \, \dr \Haus{n - 1} \, \dr t.
\end{align*}
Subtracting the first equation from the second one, we have
\begin{align*}
&h \int_{U^{\eps + h}} \dd \div(\FF \phi) + \int_{U^{\eps } \setminus U^{\eps + h}} (d(x) - \ve ) \, \dd \div(\FF \phi)\\
& =- \int_{\eps}^{\eps + h} \int_{\partial U^{t}} \phi \FF \cdot \nabla d \, \dr\Haus{n - 1} \, \dr t.
\end{align*}

We now divide by $h$ and use the fact that
$$
0 \le d(x) - \eps \le h \,\,\,\,\, \mbox{in $U^{\eps} \setminus U^{\eps + h}$},
\qquad\,\,
|\div(\FF \phi)|(U^{\eps} \setminus U^{\eps + h}) \to 0 \,\,\,\,\,\mbox{as $h \to 0$}
$$
to conclude
\begin{equation}
\label{aqui}
\int_{U^{\eps}} \dd \div(\FF \phi)
= - \int_{\partial U^{\eps}} \phi \FF \cdot \nabla d  \,\, \dr \Haus{n - 1} \qquad \mbox{for $\Leb{1}$--{\it a.e.} $\eps > 0$}.
\end{equation}
We can take any sequence $\eps_{k} \to 0$ of such good values to obtain
\begin{equation}
\label{almost}
\int_{U} \phi \, \dd \div \FF + \int_{U} \FF \cdot \nabla \phi \, \dr x
= - \lim_{k \to \infty} \int_{\partial U^{\eps_{k}}} \phi \FF \cdot \nabla d \,\, \dr \Haus{n - 1}.
 \end{equation}
By Lemma \ref{boundary tubular ngb}, such a sequence can be chosen so that $\Haus{n - 1}(\partial U^{\eps_{k}} \setminus \partial^{*} U^{\eps_{k}}) = 0$
and $\nabla d$ is the inner normal to $U^{\eps_{k}}$ at $\Haus{n - 1}$--almost every point of $\partial^{*} U^{\eps_{k}}$. Then the result follows.

\smallskip
2. Let now $U \subset \Omega$ be bounded.
Since $U^{\delta} \Subset \Omega$ for any $\delta > 0$, we can consider the test functions:
\begin{equation*}
\psi^{U^{\delta}}_{\eps}(x)  := \begin{cases} \eps & \mbox{if} \ x \in U^{\eps + \delta}, \\
d(x) - \delta & \mbox{if} \ x \in U^{\delta} \setminus U^{\eps + \delta}, \\
0 & \mbox{if} \ x \notin U^{\delta}.
 \end{cases}
\end{equation*}
Clearly, $\psi^{U^{\delta}}_{\eps} \in \Lip_{c}(\Omega)$ for any $\delta, \eps > 0$.
Arguing as before, identity \eqref{test psi} becomes
\begin{equation*}
\int_{U^{\delta}} \psi^{U^{\delta}}_{\eps} \, \dd \div( \phi \FF)
= - \int_{\delta}^{\eps + \delta} \int_{\partial U^{t}} \phi \FF \cdot \nabla d \, \dr \Haus{n - 1} \, \dr t.
\end{equation*}
We use the test functions $\psi^{U^{\delta}}_{\eps}$ and $\psi^{U^{\delta}}_{\eps + h}$ for any $h > 0$,
and then subtract the equation
involving $\psi^{U^{\delta}}_{\eps + h}$
from the one involving $\psi^{U^{\delta}}_{\eps}$
to obtain
\begin{align*}
& h \int_{U^{\eps + h + \delta}} \dd \div( \phi \FF)
+ \int_{U^{\eps + \delta} \setminus U^{\eps + h + \delta}} \big(d(x) - \delta - \eps\big) \, \dd \div( \phi \FF)\\
& = - \int_{\eps + \delta}^{\eps + h + \delta} \int_{\partial U^{t}} \phi \FF \cdot \nabla d \, \dr \Haus{n - 1} \, \dr t.
\end{align*}
We can divide by $h$ and send $h \to 0$  to obtain
\begin{equation*}
\int_{U^{\eps + \delta}} \dd \div( \phi \FF) = - \int_{\partial U^{\eps + \delta}} \phi \FF \cdot \nabla d \, \dr \Haus{n - 1}
\qquad\mbox{for $\Leb{1}$--{\it a.e.} $\eps, \delta > 0$}.
\end{equation*}
Now set $\eps' := \eps + \delta$. We choose a suitable sequence $\eps'_{k} \to 0$ for which Lemma \ref{boundary tubular ngb}
applies so that \eqref{main1} holds  by \eqref{product rule}.

\smallskip
3. Consider the case that $U \subset \Omega$ is not bounded.
Then we take $\phi$ with bounded support in $\Omega$. Thus, we can choose test functions $\eta \psi^{U^{\delta}}_{\eps}$
for some $\eps, \delta > 0$ and $\eta \in C^{\infty}_{c}(\Omega)$ satisfying $\eta \equiv 1$ on an open set $V$
such that $\mathrm{supp}(\phi) \cap U^{\delta} \subset V \Subset \Omega$.
Indeed, $\eta \psi^{U^{\delta}}_{\eps} \in \Lip_{c}(\Omega)$ and $\phi \eta \psi^{U^\delta}_{\eps} = \phi \psi^{U^\delta}_{\eps}$.
By the product rule \eqref{product rule}, we have
\begin{equation*}
\int_{\Omega} \phi \psi^{U^{\delta}}_{\eps} \, \dd \div \FF
= \int_{\Omega} \eta \psi^{U^{\delta}}_{\eps} \, \dd \div( \phi \FF)
- \int_{\Omega} \psi^{U^{\delta}}_{\eps} \FF \cdot \nabla \phi \, \dr x.
\end{equation*}
Again, by the product rule, we obtain that $\mathrm{supp}(\div(\phi \FF)) \subset \mathrm{\supp}(\phi)$, which implies
\begin{equation*}
\int_{\Omega} \eta \psi^{U^{\delta}}_{\eps} \, \dd \div( \phi \FF)
= \int_{\Omega} \psi^{U^{\delta}}_{\eps} \, \dd \div( \phi \FF),
\end{equation*}
since $\eta \equiv 1$ on $\mathrm{supp}(\phi) \cap U^{\delta} \supset \mathrm{supp}(\div(\phi \FF)) \cap \mathrm{supp}(\psi^{U^{\delta}}_{\eps})$.
Therefore, from this point, we can repeat the same steps as before to conclude the proof.
\end{proof}

\begin{remark} \,Theorem {\rm \ref{interior normal trace}} implies that
we may take $U = \Omega$ in \eqref{main1} to obtain the Gauss-Green formula up to the boundary of the open set where $\FF$ is defined.
\end{remark}

As an immediate consequence of Theorem \ref{interior normal trace},
we obtain approximations of the classical Green's identities for scalar functions with gradients in $\DM^{p}(\Omega)$.

\begin{theorem}[First Green's identity] \label{first Green's identity}
Let $u \in W^{1, p}(\Omega)$ for $1 \le p \le \infty$ be such that $\Delta u \in \mathcal{M}(\Omega)$,
and let $U \subset \Omega$ be a bounded open set.
Then, for any $\phi \in C^{0}(\Omega) \cap L^{\infty}(\Omega)$ with $\nabla \phi \in L^{p'}(\Omega; \R^{n})$,
there exists a set $\mathcal{N} \subset \R$ with $\mathcal{L}^1(\mathcal{N})=0$ such that,
for every nonnegative sequence $\{\ve_k\}$ satisfying $\ve_k \notin \mathcal{N}$ for any $k$ and $\ve_k \to 0$,
\begin{equation}
\label{first_Green_id}
\int_{U} \phi \, \dr \Delta u + \int_{U} \nabla u \cdot \nabla \phi \, \dr x
= - \lim_{k \to \infty} \int_{\partial^* U^{\eps_{k}}} \phi \nabla u \cdot \nu_{U^{\eps_{k}}} \,\, \dr \Haus{n - 1},
\end{equation}
where $\nu_{U^{\eps_{k}}}$ is the inner unit normal to $U^{\ve_k}$ on $\redb U^{\ve_k}$.

In particular, if $u \in W^{1, 2}(\Omega) \cap C^{0}(\Omega) \cap L^{\infty}(\Omega)$ with $\Delta u \in \mathcal{M}(\Omega)$,
\begin{equation}
\label{first_Green_id_bis}
\int_{U} u \, \dr \Delta u + \int_{U} |\nabla u|^{2} \, \dr x
= - \lim_{k \to \infty} \int_{\partial^* U^{\eps_{k}}} u \nabla u \cdot \nu_{U^{\eps_{k}}} \,\, \dr \Haus{n - 1}.
\end{equation}

In addition, \eqref{first_Green_id} holds also for any open set $U \subset \Omega$,
provided that $\mathrm{supp}(\phi) \cap U^{\delta} \Subset \Omega$ for any small $\delta > 0$.
Analogously, \eqref{first_Green_id_bis} holds for any open set $U \subset \Omega$,
provided that $\mathrm{supp}(u) \cap U^{\delta} \Subset \Omega$ for any $\delta >0$.
\end{theorem}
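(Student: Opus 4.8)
The plan is to recognize that the First Green's identity is a direct specialization of Theorem~\ref{interior normal trace} to the vector field $\FF:=\nabla u$. First I would observe that, since $u\in W^{1,p}(\Omega)$, the gradient $\FF=\nabla u$ belongs to $L^{p}(\Omega;\R^{n})$, and that its distributional divergence is, by definition, $\div(\nabla u)=\Delta u$, which is a finite Radon measure on $\Omega$ by hypothesis. Hence $\FF\in\DM^{p}(\Omega)$, and Theorem~\ref{interior normal trace} applies to this field without any modification.

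Next I would invoke Theorem~\ref{interior normal trace} verbatim: for the bounded open set $U\subset\Omega$ and any $\phi\in C^{0}(\Omega)\cap L^{\infty}(\Omega)$ with $\nabla\phi\in L^{p'}(\Omega;\R^{n})$, there is a set $\mathcal{N}\subset\R$ with $\Leb{1}(\mathcal{N})=0$ such that, along any nonnegative sequence $\{\ve_{k}\}$ with $\ve_{k}\notin\mathcal{N}$ and $\ve_{k}\to0$,
\begin{equation*}
\int_{U}\phi\,\dr\div\FF+\int_{U}\FF\cdot\nabla\phi\,\dr x
=-\lim_{k\to\infty}\int_{\redb U^{\ve_{k}}}\phi\,\FF\cdot\nu_{U^{\ve_{k}}}\,\dr\Haus{n-1}.
\end{equation*}
Substituting $\FF=\nabla u$ and $\div\FF=\Delta u$ yields exactly \eqref{first_Green_id}, with the same exceptional null set $\mathcal{N}$. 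The approximating sets $U^{\ve_{k}}$ and the identification of $\nabla d$ with the inner unit normal $\nu_{U^{\ve_{k}}}$ on $\redb U^{\ve_{k}}$ are those furnished by Lemma~\ref{boundary tubular ngb}, as in the proof of Theorem~\ref{interior normal trace}, so nothing new is needed here.

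To obtain \eqref{first_Green_id_bis}, I would take $p=2$, so that $p'=2$ as well, and choose the test function $\phi=u$; the assumption $u\in W^{1,2}(\Omega)\cap C^{0}(\Omega)\cap L^{\infty}(\Omega)$ with $\Delta u\in\mathcal{M}(\Omega)$ is precisely what makes $u$ an admissible $\phi$ in \eqref{first_Green_id}. Since $\nabla u\cdot\nabla\phi=|\nabla u|^{2}$ in this case, \eqref{first_Green_id} specializes to \eqref{first_Green_id_bis}. Finally, the assertions for a general, possibly unbounded, open set $U\subset\Omega$ follow in the same way from the corresponding last statement of Theorem~\ref{interior normal trace}: the hypothesis $\mathrm{supp}(\phi)\cap U^{\delta}\Subset\Omega$ (respectively $\mathrm{supp}(u)\cap U^{\delta}\Subset\Omega$ when $\phi=u$) is exactly the condition under which \eqref{main1} continues to hold.

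Because the argument is a pure corollary of Theorem~\ref{interior normal trace}, there is no genuine obstacle; the only points requiring a moment's care are the routine verification that $\div(\nabla u)=\Delta u$ in the distributional sense and the observation that the single exceptional set $\mathcal{N}$ produced by Theorem~\ref{interior normal trace} serves simultaneously for all the claimed approximating sequences, both of which are immediate.
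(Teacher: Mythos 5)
Your proof is correct and follows essentially the same route as the paper: both apply Theorem~\ref{interior normal trace} to $\FF=\nabla u$ and then obtain \eqref{first_Green_id_bis} by choosing $\phi=u$. The paper's argument is just a terser version of yours.
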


\begin{proof} \, In order to obtain \eqref{first_Green_id}, it suffices to apply Theorem \ref{interior normal trace} to the vector field $\FF = \nabla u$,
which clearly belongs to $\DM^{p}(\Omega)$. Then, if $u \in W^{1, 2}(\Omega) \cap C^{0}(\Omega) \cap L^{\infty}(\Omega)$,
we can take $\phi = u$ to  obtain \eqref{first_Green_id_bis}.
\end{proof}

\begin{corollary}[Second Green's identity] \label{second Green's identity}
Let $u \in W^{1, p}(\Omega) \cap C^{0}(\Omega) \cap L^{\infty}(\Omega)$ and $v \in W^{1, p'}(\Omega) \cap C^{0}(\Omega) \cap L^{\infty}(\Omega)$ for $1 \le p \le \infty$
be such that $\Delta u, \Delta v \in \mathcal{M}(\Omega)$,
and let $U \subset \Omega$ be a bounded open set.
Then there exists a set $\mathcal{N} \subset \R$ with $\mathcal{L}^1(\mathcal{N})=0$ such that,
for every nonnegative sequence $\{\ve_k\}$ satisfying $\ve_k \notin \mathcal{N}$ for any $k$ and $\ve_k \to 0$,
\begin{equation}
\label{second_Green_id}
\int_{U} v \, \dr \Delta u - u \, \dr \Delta v
= - \lim_{k \to \infty} \int_{\partial^* U^{\eps_{k}}} (v \nabla u - u \nabla v) \cdot \nu_{U^{\eps_{k}}} \,\, \dr \Haus{n - 1},
\end{equation}
where $\nu_{U^{\eps_{k}}}$ is the inner unit normal to $U^{\ve_k}$ on $\redb U^{\ve_k}$.
In addition, \eqref{second_Green_id} holds also for any open set $U \subset \Omega$,
provided that $\mathrm{supp}(u), \mathrm{supp}(v) \cap U^{\delta} \Subset \Omega$ for any small $\delta > 0$.
\end{corollary}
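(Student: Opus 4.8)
The plan is to obtain Corollary \ref{second Green's identity} as a direct consequence of the First Green's identity (Theorem \ref{first Green's identity}), by applying that theorem twice---once to the pair $(\FF,\phi)=(\nabla u, v)$ and once to the pair $(\FF,\phi)=(\nabla v, u)$---and then subtracting the two resulting formulas so that the symmetric Dirichlet-type term $\int_{U}\nabla u\cdot\nabla v\,\dr x$ cancels. No new estimate is required; the argument is purely a combination of known identities.

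First I would note that $\nabla u\in\DM^{p}(\Omega)$ with $\div(\nabla u)=\Delta u\in\mathcal{M}(\Omega)$, and that $v$ is an admissible test function since $v\in C^{0}(\Omega)\cap L^{\infty}(\Omega)$ with $\nabla v\in L^{p'}(\Omega;\R^{n})$; hence Theorem \ref{first Green's identity} applied to $\FF=\nabla u$ and $\phi=v$ produces a Lebesgue--null exceptional set $\mathcal{N}_{1}\subset\R$ such that, along any nonnegative sequence $\ve_k\to 0$ with $\ve_k\notin\mathcal{N}_{1}$,
\[
\int_{U} v\,\dr\Delta u+\int_{U}\nabla u\cdot\nabla v\,\dr x
= -\lim_{k\to\infty}\int_{\partial^{*}U^{\ve_k}} v\,\nabla u\cdot\nu_{U^{\ve_k}}\,\dr\Haus{n-1}.
\]
Symmetrically, $\nabla v\in\DM^{p'}(\Omega)$ with $\div(\nabla v)=\Delta v\in\mathcal{M}(\Omega)$, and $u\in C^{0}(\Omega)\cap L^{\infty}(\Omega)$ with $\nabla u\in L^{(p')'}(\Omega;\R^{n})=L^{p}(\Omega;\R^{n})$, so the same theorem with the roles of $p$ and $p'$ interchanged gives a null set $\mathcal{N}_{2}$ such that, along any such sequence avoiding $\mathcal{N}_{2}$,
\[
\int_{U} u\,\dr\Delta v+\int_{U}\nabla v\cdot\nabla u\,\dr x
= -\lim_{k\to\infty}\int_{\partial^{*}U^{\ve_k}} u\,\nabla v\cdot\nu_{U^{\ve_k}}\,\dr\Haus{n-1}.
\]

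I would then set $\mathcal{N}:=\mathcal{N}_{1}\cup\mathcal{N}_{2}$, which is still Lebesgue--negligible, and fix any nonnegative sequence $\ve_k\to 0$ with $\ve_k\notin\mathcal{N}$, so that both displayed identities hold simultaneously. Since $\nabla u\cdot\nabla v=\nabla v\cdot\nabla u$ pointwise and $\int_{U}|\nabla u\cdot\nabla v|\,\dr x\le\|\nabla u\|_{L^{p}(\Omega;\R^{n})}\|\nabla v\|_{L^{p'}(\Omega;\R^{n})}<\infty$ by H\"older's inequality, subtracting the second identity from the first cancels the two Dirichlet terms; since each limit on the right exists, the difference of the limits equals the limit of the difference, and \eqref{second_Green_id} follows. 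For the unbounded case, the two invocations of Theorem \ref{first Green's identity} require $\mathrm{supp}(v)\cap U^{\delta}\Subset\Omega$ and $\mathrm{supp}(u)\cap U^{\delta}\Subset\Omega$ respectively for every small $\delta>0$, which is precisely the stated hypothesis, and the rest of the argument is unchanged. The only points that need care---and there is no genuine obstacle here---are the bookkeeping of conjugate exponents (so that $u$ is admissible as a test function against $\nabla v\in\DM^{p'}$ and $v$ against $\nabla u\in\DM^{p}$) and the fact that the exceptional set must be taken as the \emph{union} of the two null sets, so that a single sequence $\{\ve_k\}$ is good for both formulas at once.
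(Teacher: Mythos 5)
Your proposal is correct and follows essentially the same route as the paper's own proof: apply Theorem \ref{first Green's identity} to $(\nabla u, v)$ and to $(\nabla v, u)$, intersect the conclusions on a common sequence, and subtract so that the Dirichlet term $\int_{U}\nabla u\cdot\nabla v\,\dr x$ cancels. If anything, your handling of the exceptional set is cleaner: you take $\mathcal{N}=\mathcal{N}_1\cup\mathcal{N}_2$, which is the natural thing to do given that Theorem \ref{first Green's identity} is stated in the strong form (every sequence avoiding a fixed null set works), whereas the paper speaks somewhat loosely of selecting a sequence good for $\nabla u$ and then extracting a subsequence good for $\nabla v$.
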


\begin{proof} \, We just need to apply Theorem \ref{first Green's identity} to the vector field $\nabla u$, by using $v$ as scalar function, and vice versa.
Then we can obtain \eqref{first_Green_id} for the vector fields $\nabla u$ and $\nabla v$ with the same sequence $U^{\eps_{k}}$,
since it is enough to select one sequence suitable for $\nabla u$ and then extract a subsequence for $\nabla v$.
Thus, we have
\begin{align*}
\int_{U} v \, \dr \Delta u + \int_{U} \nabla u \cdot \nabla v \, \dr x
& = - \lim_{k \to \infty} \int_{\partial^* U^{\eps_{k}}} v \nabla u \cdot \nu_{U^{\eps_{k}}} \,\, \dr \Haus{n - 1}, \\
\int_{U} u \, \dr \Delta v + \int_{U} \nabla u \cdot \nabla v \, \dr x
& = - \lim_{k \to \infty} \int_{\partial^* U^{\eps_{k}}} u \nabla v \cdot \nu_{U^{\eps_{k}}} \,\, \dr \Haus{n - 1},
\end{align*}
and subtracting the second equation from the first yields \eqref{second_Green_id}.
\end{proof}

\begin{theorem}[Exterior normal trace] \label{exterior normal trace}
Let $U \Subset \Omega$ be an open set, and let $\FF \in \DM^p(\Omega)$ for $1 \le p \le \infty$.
Then, for any $\phi \in C^{0}(\Omega)$ with $\nabla \phi \in L^{p'}(\Omega; \R^{n})$,
there exists a set $\mathcal{N} \subset \R$ with $\mathcal{L}^1(\mathcal{N})=0$ such that,
for every nonnegative sequence $\{\ve_k\}$ satisfying $\ve_k \notin \mathcal{N}$ for any $k$ and $\ve_k \to 0$,
the following representation for the exterior normal trace on $\partial U$ holds{\rm :}
\begin{equation}
\label{main2}
\ban{\FF \cdot \nu, \phi}_{\partial \overline{U}}
= \int_{\overline{U}} \phi \, \dd \div \FF + \int_{\overline{U}} \FF \cdot \nabla \phi \, \dr x
= - \lim_{k \to \infty} \int_{\partial^* U_{\eps_{k}}} \phi \FF \cdot \nu_{U_{\eps_{k}}} \,\, \dr \Haus{n - 1},
\end{equation}
where $\nu_{U_{\eps_{k}}}$ is the inner unit normal to $U_{\ve_k}$ on $\partial^*U_{\ve_k}$.
In addition, \eqref{main2} holds also for any open set $U$ satisfying $\overline{U} \subset \Omega$,
provided that $\mathrm{supp}(\phi)$ is compact in $\Omega$.
\end{theorem}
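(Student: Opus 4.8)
The plan is to mirror the argument for the interior normal trace (Theorem \ref{interior normal trace}), but working with the sublevel-set approximation $U_\eps = \{d > -\eps\}$ instead of the superlevel sets $U^\eps$, exploiting the duality $\bigcap_{\eps>0} U_\eps = \overline U$. First I would reduce to the case $U \Subset \Omega$: since $\overline U \subset \Omega$ is compact when $U \Subset \Omega$, there is $\eps_0 > 0$ with $\overline{U_{\eps_0}} \Subset \Omega$, so all the approximating sets $U_\eps$ with $\eps \le \eps_0$ are compactly contained in $\Omega$, and for the general case $\overline U \subset \Omega$ one localizes exactly as in Step~3 of the proof of Theorem \ref{interior normal trace}, multiplying the test function by a cutoff $\eta \in C^\infty_c(\Omega)$ identically $1$ near $\mathrm{supp}(\phi)$ and invoking the product rule \eqref{product rule} together with the fact that $\mathrm{supp}(\div(\phi\FF)) \subset \mathrm{supp}(\phi)$. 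By Proposition \ref{product rule p}, $\phi\FF \in \DM^p(\Omega)$, so it suffices to establish the Gauss--Green identity for $\phi\FF$ and then re-expand via \eqref{product rule}.

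The core computation uses the Lipschitz test functions
\begin{equation*}
\xi^{U}_{\eps}(x) :=
\begin{cases}
\eps & \mbox{if } x \in U, \\
d(x)+\eps & \mbox{if } x \in U_{\eps} \setminus U, \\
0 & \mbox{if } x \notin U_{\eps},
\end{cases}
\end{equation*}
already introduced at the end of the proof of Lemma \ref{boundary tubular ngb}. Testing $\div(\phi\FF)$ against $\xi^U_\eps$, the contribution from $U$ is $\eps\,\div(\phi\FF)(U)$ and the contribution from $U_\eps \setminus U$ produces, after using $\nabla\xi^U_\eps = \nabla d$ there, an integral $-\int_{U_\eps\setminus U} \phi\FF\cdot\nabla d\,\dr x$; by Lemma \ref{properties distance}, Remark \ref{boundary negl}, and the coarea formula \eqref{coarea super-sublevel} applied with $u = d$ and $g = \chi_{U_\eps\setminus\overline U}\,\phi\FF\cdot\nabla d\,|\nabla d|$, this equals $-\int_{-\eps}^{0}\int_{\partial U^t}\phi\FF\cdot\nabla d\,\dr\Haus{n-1}\,\dr t$ (here $U^t$ for $t<0$ denotes $\{d>t\}$). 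Then, just as in the interior case, I would write the analogous identity with $\eps+h$ in place of $\eps$, subtract, divide by $h$, and let $h\to 0^+$, using $0 \le d(x)+\eps \le h$ on $U_\eps \setminus U_{\eps+h}$... wait — more carefully, the increment sets are $U_{\eps+h}\setminus U_\eps$ and one uses $|\div(\phi\FF)|(U_{\eps+h}\setminus U_\eps)\to 0$ as $h\to 0$ — to arrive at
\begin{equation*}
\int_{U_{\eps}} \dd\div(\phi\FF) = -\int_{\partial U_{\eps}} \phi\FF\cdot\nabla d\,\,\dr\Haus{n-1}
\qquad \mbox{for $\Leb{1}$--a.e. $\eps>0$}.
\end{equation*}
Letting $\eps = \eps_k \to 0$ along a good sequence and using $\bigcap_{\eps>0}U_\eps = \overline U$ together with $|\div(\phi\FF)|(U_\eps \setminus \overline U) \to 0$ gives $\div(\phi\FF)(\overline U)$ on the left; finally Lemma \ref{boundary tubular ngb} lets the sequence be chosen so that $\Haus{n-1}(\partial U_{\eps_k}\setminus\redb U_{\eps_k}) = 0$ and $\nabla d = \nu_{U_{\eps_k}}$ $\Haus{n-1}$-a.e. on $\redb U_{\eps_k}$, which converts the boundary integral into the asserted form and, after re-expanding $\div(\phi\FF)(\overline U)$ by \eqref{product rule}, yields \eqref{main2}.

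The main obstacle is the monotonicity bookkeeping near $\partial U$: unlike the interior approximation, $\xi^U_\eps$ is constant on all of $U$ (not just on an inner core), so one must verify that the limit $\eps_k \to 0$ correctly recovers integration over $\overline U$ rather than over $U$ or over $U \cup \partial U$ with a spurious boundary term — this is where $|\div(\phi\FF)|(U_\eps \setminus \overline U)\to 0$, which holds since $U_\eps \downarrow \overline U$ and $\div(\phi\FF)$ is a finite Radon measure, is essential, and it is the reason the exterior trace naturally lives on $\partial\overline U$. A secondary technical point is justifying the coarea application on the annular region $U_\eps\setminus\overline U = \{-\eps < d < 0\}$ with the correct orientation of $\nabla d$ (which points outward relative to $U$ on that set), but this is routine given Lemma \ref{properties distance} and Remark \ref{boundary negl}.
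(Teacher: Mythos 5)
Your proposal is correct and follows essentially the same approach as the paper's proof: the same test functions $\xi^U_\eps$, the same coarea-formula computation on the annular region $U_\eps \setminus U$, the same difference-quotient argument in $h$ (with the increment set $U_{\eps+h}\setminus\overline{U_\eps}$, as you self-corrected), the same appeal to Lemma \ref{boundary tubular ngb} to identify $\nabla d$ with $\nu_{U_{\eps_k}}$ on $\redb U_{\eps_k}$, and the same cutoff localization for the non-compactly-contained case. The observation you flag as the "main obstacle" — that $|\div(\phi\FF)|(U_\eps\setminus\overline U)\to 0$ because $\bigcap_\eps U_\eps = \overline U$ and $\div(\phi\FF)$ is a finite Radon measure — is indeed exactly what the paper uses to land on $\overline U$ rather than $U$.
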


\begin{proof}
\,\, We start with the case $U \Subset \Omega$.  Then  $U_{\eps} \Subset \Omega$ for any $\eps > 0$ small enough.
We consider the Lipschitz functions:
\begin{equation*}
\xi^{U}_{\eps}(x) :=
\begin{cases}
\ve & \mbox{if} \ x \in U, \\
d(x)+ \ve & \mbox{if} \ x \in U_{\eps} \setminus U, \\
0 & \mbox{if} \ x \notin U_{\eps}.
\end{cases}
\end{equation*}

By Proposition \ref{product rule p},
$\phi \FF \in DM^{p}(\Omega')$ for any open set $\Omega'$ satisfying $U_{\eps} \Subset \Omega' \Subset \Omega$
for any $\eps > 0$ small enough.
Thus, we can use $\xi^{U}_{\eps}$ as test functions to obtain
\begin{align} \label{test xi}
\int_{\Omega'} \xi^{U}_{\eps} \, \dd \div (\phi \FF)
&= - \int_{\Omega'} \phi \FF \cdot \nabla \xi^{U}_{\eps} \, \dr x
= - \int_{U_{\eps} \setminus U} \phi \FF \cdot \nabla d
\, \dr x \nonumber\\
&= - \int_{0}^{\eps} \int_{\partial U_{t}} \phi \FF \cdot \nabla d \, \dr \Haus{n - 1} \dr t,
\end{align}
by the coarea formula \eqref{coarea super-sublevel} with $u = d$ and $g = \chi_{\Omega' \setminus U} \phi \FF \cdot \nabla d |\nabla d|$,
by Lemma \ref{properties distance} and Remark \ref{boundary negl}.

Now we proceed as in the proof of Theorem \ref{interior normal trace}:
Take $\xi^{U}_{\eps}$ and $\xi^{U}_{\eps + h}$ for some $h > 0$ small enough as test functions so that
\begin{equation*}
\int_{U_{\eps} \setminus \overline{U}} \big(d(x)+\eps \big) \, \dd \div( \phi \FF) + \int_{\overline{U}} \eps \, \dd\div( \phi \FF)
=- \int_{0}^{\eps} \int_{\partial U_{t}} \phi \FF \cdot \nabla d \, \dr \Haus{n - 1} \, dt,
\end{equation*}
and
\begin{align*}
& \int_{U_{\eps + h} \setminus \overline{U}} \big(d(x)+\eps + h \big) \, \dd \div( \phi \FF) + \int_{\overline{U}} (\eps + h) \, \dd\div( \phi \FF)\\
& =- \int_{0}^{\eps + h} \int_{\partial U_{t}} \phi \FF \cdot \nabla d \, \dr \Haus{n - 1} \, \dr t.
\end{align*}
By subtracting the first equation from the second one, we have
\begin{align*}
&\int_{U_{\eps + h} \setminus \overline{U_{\eps}}} \big(d(x)+\eps + h \big) \, \dd \div( \phi \FF)
+ \int_{\overline{U_{\eps}}} h \, \dd \div( \phi \FF)\\
& = -\int_{\eps}^{\eps + h} \int_{\partial U_{t}} \phi \FF \cdot \nabla d \, \dr \Haus{n - 1} \, \dr t.
\end{align*}
It is clear that $ 0 \le d(x)+\ve \le h$ on $U_{\eps + h} \setminus \overline{U_{\eps}}$
and that $\bigcap_{h > 0} U_{\eps + h} = \overline{U_{\eps}}$ for any $\eps > 0$
implies
$$
|\div (\phi \FF)|(U_{\eps + h} \setminus \overline{U_{\eps}}) \to 0 \qquad \mbox{as $h \to 0$}.
$$
Then we can divide by $h$ and let $h \to 0$, by applying the Lebesgue theorem, to obtain that, for $\Leb{1}$--{\it a.e.} $\eps > 0$,
\begin{equation}
\label{Gauss-Green-compact}
\int_{\overline{U_{\eps}}} \phi \, \dd \div \FF + \int_{\overline{U_{\eps}}} \FF \cdot \nabla \phi \, \dr x
= -\int_{\partial U_{\eps}} \phi \FF \cdot \nabla d \, \dr \Haus{n - 1},
\end{equation}
by the product rule \eqref{product rule}.
We now choose a sequence $\eps_{k} \to 0$ such that \eqref{Gauss-Green-compact} holds
and pass to the limit to obtain
\begin{equation} \label{Gauss-Green p compact eps}
\int_{\overline{U}} \phi \, \dd \div \FF + \int_{\overline{U}} \FF \cdot \nabla \phi \, \dr x
= - \lim_{k \to \infty} \int_{\partial U_{\eps_{k}}} \phi \FF \cdot \nabla d \, \dr \Haus{n - 1}.
\end{equation}

As in the proof of Theorem \ref{interior normal trace},
we can choose the sequence in such a way that the assertion
in Lemma \ref{boundary tubular ngb} also holds. Thus, we obtain the result.

\smallskip
In the general case, $\overline{U} \subset \Omega$, and $\mathrm{supp}(\phi)$ is a compact subset of $\Omega$.
Hence, we can argue as in the last part of the proof of Theorem \ref{interior normal trace},
by taking a smooth cutoff function $\eta \in C^{\infty}_{c}(\Omega)$ such that $\eta \equiv 1$ on an open neighborhood $V$
of $\mathrm{supp}(\phi)$. Following the same steps and replacing $\xi^{U}_{\eps}$ as test functions,
we obtain the desired result.
\end{proof}

\begin{remark}
\, The previous results apply in particular to the case when $U$ is an open set of finite perimeter.
\end{remark}

\begin{remark} \label{G-G_eps_open_closed}
\, As a byproduct of the proofs of Theorems {\rm \ref{interior normal trace}} and {\rm \ref{exterior normal trace}},
we obtain the Gauss-Green formulas for almost every set that is approximating a given open set $U$ from the interior and the exterior.
More precisely, if $U \Subset \Omega$ is an open set, $\FF \in \DM^p(\Omega)$ for $1 \le p \le \infty$,
and $\phi \in C^{0}(\Omega)$ with $\nabla \phi \in L^{p'}(\Omega; \R^{n})$,
then, for $\Leb{1}$--{\it a.e.} $\eps > 0$,
\begin{align}
\int_{U^{\eps}} \phi \, \dd \div \FF + \int_{U^{\eps}} \FF \cdot \nabla \phi \, \dr x
& = -\int_{\redb U^{\eps}} \phi \FF \cdot \nu_{U^{\eps}} \, \dr \Haus{n - 1},\label{G-G_eps_open} \\
\int_{\overline{U_{\eps}}} \phi \, d \div \FF + \int_{U_{\eps}} \FF \cdot \nabla \phi \, \dr x
& = -\int_{\redb U_{\eps}} \phi \FF \cdot \nu_{U_{\eps}} \, \dr \Haus{n - 1}. \label{G-G_eps_closed}
\end{align}
This follows from \eqref{aqui} and \eqref{Gauss-Green-compact}
and by taking $\eps > 0$ {\rm (}up to another negligible set{\rm )} such that Lemma {\rm \ref{boundary tubular ngb}} holds,
so that $\Haus{n - 1}(\partial U^{\eps} \setminus \redb U^{\eps}) = 0$,
$\Haus{n - 1}(\partial U_{\eps} \setminus \redb U_{\eps}) = 0$ {\rm (}which implies that $|\partial U^{\eps}| = 0${\rm )},
$\nabla d = \nu_{U^{\eps}}$ $\Haus{n - 1}$--{\it a.e.} on $\redb U^{\eps}$,
and $\nabla d = \nu_{U_{\eps}}$ $\Haus{n - 1}$--{\it a.e.} on $\redb U_{\eps}$.

In addition, \eqref{G-G_eps_open}--\eqref{G-G_eps_closed} also hold
for any open set $U$ satisfying $\overline{U} \subset \Omega$, provided that $\mathrm{supp}(\phi)$ is compact in $\Omega$.
In general, this statement is valid for $\Leb{1}$--{\it a.e.} $\eps > 0$ because we need to apply
Lemma {\rm \ref{boundary tubular ngb}} and to derive the integrals in $h > 0${\rm :}
\begin{equation*}
\int_{\eps}^{\eps + h} \int_{\partial U^{t}} \phi \FF \cdot \nabla d \, \dr \Haus{n - 1} \, \dr t,  \qquad
\int_{\eps}^{\eps + h} \int_{\partial U_{t}} \phi \FF \cdot \nabla d \, \dr \Haus{n - 1} \, \dr t.
\end{equation*}
Therefore, such a condition may be removed as long as the conclusions of Lemma {\rm \ref{boundary tubular ngb}} hold
for any $\eps > 0$, and $\int_{\partial U^{t}} \phi \FF \cdot \nabla d \, \dr \Haus{n - 1}$ and $\int_{\partial U_{t}} \phi \FF \cdot \nabla d \, \dr \Haus{n - 1}$
are continuous functions of $t > 0$.
\end{remark}

\begin{remark}
\, It is not necessary to use the signed distance function to construct a family of approximating sets suitable
for Theorems {\rm \ref{interior normal trace}} and  {\rm \ref{exterior normal trace}}.
Such an argument is
related to the one in
\cite[Theorem 2.4]{Silhavy2}.

If, for a given open set $U \Subset \Omega$, there exists a function $m \in \Lip(\Omega)$ satisfying $m > 0$ in $U$,
$m = 0$ on $\partial U$, and $\mathrm{ess inf} (|\nabla m|) > 0$ in $U$,
then  sets $\{ m > \eps \}, \eps \in \R$, can be used for the approximation.
In fact, sets $\{m > \eps \}$ are of finite perimeter for $\Leb{1}$--{\it a.e.} $\eps>0$ and,
for such good values of $\eps$, the measure-theoretic unit interior normals satisfy
\begin{equation*}
\nu_{\{m > \eps \}} = \frac{\nabla m}{|\nabla m|} \qquad \,\, \Haus{n - 1}\text{--{\it a.e.} on} \, \, \redb \{ m > \eps \}.
\end{equation*}
In addition, if there exists such a function $m$ with $C^{k}, k \ge 2$, or $C^{\infty}$ regularity,
then $\{m > \eps \}$ has a $C^{k}$ or smooth boundary.

As we will see in \S 7, if $U$ is an open bounded set with $C^{0}$ boundary,
then there exists a smooth regularized distance $\rho$ satisfying the previously mentioned properties.
For a general open set $U$, this may be false.
\end{remark}

We can extend Theorem \ref{exterior normal trace} to any compact set $K \subset \Omega$, in the spirit of \cite[Theorem 5.20]{Sch}.
Indeed, we just need to choose the following Lipschitz functions as test functions:
\begin{equation*}
\varphi_{K}^{\eps}(x) :=
\begin{cases} \eps &\,\, \mbox{if} \ \dist(x, K) = 0, \\
\eps - \dist(x, K) &\,\, \mbox{if} \ 0 < \dist(x, K) < \eps, \\
0 &\,\, \mbox{if} \ \dist(x, K) \ge \eps,
\end{cases}
\end{equation*}
and then argue as in the proof of Theorem \ref{exterior normal trace} to achieve the following result.

\begin{corollary} \label{K_compact_GG_eps}
Let $K \subset \Omega$ be a compact set, and let $\FF \in \DM^p(\Omega)$.
Then, for any
$\phi \in C^{0}(\Omega)$ with $\nabla \phi \in L^{p'}(\Omega; \R^{n})$,
there exists a set $\mathcal{N} \subset \R$ with $\mathcal{L}^1(\mathcal{N})=0$ such that,
for every nonnegative sequence $\{\ve_k\}$ satisfying $\ve_k \notin \mathcal{N}$ for any $k$ and $\ve_k \to 0$,
\begin{equation} \label{Gauss-Green p compact}
\int_{K} \phi \, \dd \div \FF + \int_{K} \FF \cdot \nabla \phi \, \dr x
= \lim_{k \to \infty} \int_{\partial K_{\eps_{k}}} \phi \FF \cdot \nabla \dist(x, K) \, \dr \Haus{n - 1}, \end{equation}
where $K_{\eps}{\rm :}= \{ x \in \Omega : \dist(x, K) \le \eps \}$.
In addition, \eqref{Gauss-Green p compact} holds also for any closed set $C \subset \Omega$,
provided that $\mathrm{supp}(\phi)$ is compact in $\Omega$.
\end{corollary}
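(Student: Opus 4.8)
The plan is to follow the proof of Theorem~\ref{exterior normal trace} almost verbatim, with the signed distance $d$ replaced by the distance function $x \mapsto \dist(x,K)$, the sets $U_\eps$ replaced by $K_\eps = \{x\in\Omega : \dist(x,K)\le\eps\}$, and the Lipschitz cutoffs $\xi^{U}_\eps$ replaced by the functions $\varphi^\eps_K$ displayed above. First I would record the analogue of Lemma~\ref{properties distance} for $\dist(\cdot,K)$: it is $1$--Lipschitz on $\R^n$, satisfies $|\nabla\dist(\cdot,K)| = 1$ $\Leb n$--a.e.\ on the open set $\{\dist(\cdot,K) > 0\}$, and has $\nabla\dist(\cdot,K) = 0$ $\Leb n$--a.e.\ on every level set $\{\dist(\cdot,K) = t\}$; the proof is the one of Lemma~\ref{properties distance}, using that the closed set $K$ realizes the minimal distance to each of its exterior points. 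Since $K$ is compact and contained in the open set $\Omega$, we have $K_\eps \Subset \Omega$ for all $\eps \in (0, \dist(K,\partial\Omega))$, so $\varphi^\eps_K \in \Lip_c(\Omega)$ is a legitimate test function; moreover, fixing a bounded open set $\Omega'$ with $K_\eps \Subset \Omega' \Subset \Omega$, the continuous function $\phi$ is bounded on $\Omega'$, whence $\phi\FF \in \DM^p(\Omega')$ and \eqref{product rule} holds there by Proposition~\ref{product rule p}.

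Next I would test $\div(\phi\FF)$ against $\varphi^\eps_K$. Since $\nabla\varphi^\eps_K = -\chi_{\{0<\dist(\cdot,K)<\eps\}}\,\nabla\dist(\cdot,K)$ $\Leb n$--a.e., the coarea formula \eqref{coarea super-sublevel}, applied on $\{\dist(\cdot,K) > 0\}$ with $u = \dist(\cdot,K)$ and $g = \chi_{\{0<\dist<\eps\}}\,\phi\,\FF\cdot\nabla\dist\,|\nabla\dist|$ (so that the factor $|\nabla\dist| = 1$ is harmless and the level sets on which $\nabla\dist$ vanishes do not contribute), gives
\[
\int_{K_\eps}\big(\eps - \dist(x,K)\big)\,\dd\div(\phi\FF)
= \int_{0}^{\eps}\int_{\partial K_t}\phi\,\FF\cdot\nabla\dist(x,K)\,\dr\Haus{n-1}\,\dr t .
\]
Then I would run the one--parameter differentiation argument of Theorems~\ref{interior normal trace}--\ref{exterior normal trace}: write this identity for $\eps$ and for $\eps + h$ with $h > 0$, subtract, use $0 \le \eps + h - \dist(x,K) \le h$ on $K_{\eps+h}\setminus K_\eps$ together with $|\div(\phi\FF)|(K_{\eps+h}\setminus K_\eps) \to 0$ as $h\to 0^{+}$ (continuity from above of the finite measure $\div(\phi\FF)$ on $\Omega'$, since $\bigcap_{h>0}K_{\eps+h} = K_\eps$), divide by $h$, and let $h \to 0^{+}$, to obtain
\[
\div(\phi\FF)(K_\eps) = \int_{\partial K_\eps}\phi\,\FF\cdot\nabla\dist(x,K)\,\dr\Haus{n-1}
\qquad\text{for }\Leb 1\text{--a.e.\ }\eps > 0 ,
\]
the exceptional set $\mathcal N$ consisting of the $\eps$ that fail to be Lebesgue points of the function $t\mapsto\int_{\partial K_t}\phi\,\FF\cdot\nabla\dist\,\dr\Haus{n-1}$ (which is in $L^1_{\mathrm{loc}}(0,\infty)$ by coarea) or for which $\{\dist(\cdot,K)=\eps\}$ does not agree with $\partial K_\eps$ up to $\Haus{n-1}$--null sets (Lemma~\ref{boundary tubular ngb}). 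Letting $\eps \downarrow 0$ along any $\{\eps_k\} \subset \R\setminus\mathcal N$ and again using continuity from above together with $\bigcap_{\eps>0}K_\eps = K$ and \eqref{product rule}, the left--hand side tends to $\div(\phi\FF)(K) = \int_K\phi\,\dd\div\FF + \int_K\FF\cdot\nabla\phi\,\dr x$, which is \eqref{Gauss-Green p compact}. The extension to an arbitrary closed set $C\subset\Omega$ with $\mathrm{supp}(\phi)$ compact in $\Omega$ is then carried out exactly as in Step~3 of the proof of Theorem~\ref{interior normal trace}: multiply $\varphi^\eps_C$ by a cutoff $\eta\in C^\infty_c(\Omega)$ with $\eta\equiv 1$ on a neighborhood of $\mathrm{supp}(\phi)$, note $\phi\,\eta\,\varphi^\eps_C = \phi\,\varphi^\eps_C$ and $\mathrm{supp}(\div(\phi\FF))\subset\mathrm{supp}(\phi)$, and repeat the computation.

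Since the whole argument is a transcription of Theorem~\ref{exterior normal trace}, there is no single hard obstacle; the only points demanding care are the bookkeeping around the distance function --- verifying $|\nabla\dist(\cdot,K)| = 1$ $\Leb n$--a.e.\ off $K$ and the $\nabla\dist\,\dr x$--negligibility of its level sets, so that the coarea formula \eqref{coarea super-sublevel} applies cleanly on the open set $\{\dist(\cdot,K) > 0\}$ --- and the identification of $\{\dist(\cdot,K) = \eps\}$ with $\partial K_\eps$ up to $\Haus{n-1}$--null sets for $\Leb 1$--a.e.\ $\eps$; both are handled exactly as in Lemma~\ref{properties distance}, Remark~\ref{boundary negl}, and Lemma~\ref{boundary tubular ngb}.
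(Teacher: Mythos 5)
Your proposal is correct and is essentially the paper's own proof: the paper simply states ``choose the test functions $\varphi^\eps_K$ and argue as in the proof of Theorem~\ref{exterior normal trace},'' and you have carried that out faithfully, including the correct formula $\nabla\varphi^\eps_K = -\chi_{\{0<\dist(\cdot,K)<\eps\}}\nabla\dist(\cdot,K)$, the coarea identity, the difference-quotient argument with $h\to 0^{+}$ using continuity from above, the Lebesgue-point characterization of the good set of $\eps$, and the cutoff localization for unbounded closed sets. No gaps.
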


The right-hand side of \eqref{Gauss-Green p compact} can be seen as the definition of the generalized normal trace
functional related to $\FF$ on $\partial K$, where $\nabla \dist(x, K)$ plays the role of a generalized unit exterior normal,
even in the case $\mathring{K} = \emptyset$.

\begin{remark}\label{Traceneighborhood}
\, The results of Schuricht \cite[Theorem 5.20]{Sch} and \v{S}ilhav\'y \cite[Theorem 2.4]{Silhavy2} can be recovered by \eqref{test xi} and \eqref{test psi}, respectively.

Indeed, under the same assumptions of Theorem {\rm \ref{interior normal trace}},
we divide by $\eps$ in \eqref{test psi}, use the product rule \eqref{product rule}, and send $\eps \to 0$ to obtain
\begin{equation} \label{Gauss-Green open average}
\int_{U} \phi \, \dd \div \FF + \int_{U} \FF \cdot \nabla \phi \, \dr x
= - \lim_{\eps \to 0} \frac{1}{\eps} \int_{U \setminus U^{\eps}} \phi \FF \cdot \nabla d \, \dr x,
\end{equation}
since $0 \le \frac{\psi^{U}_{\eps}}{\eps} \le 1$ on $U \setminus U^{\eps}$
and $|\div(\FF \phi)|(U \setminus U^{\eps}) \to 0$ as $\eps \to 0$.
On the other hand, applying the same steps to \eqref{test xi} yields
\begin{equation} \label{Gauss-Green compact average}
\int_{\overline{U}} \phi \, \dd \div \FF + \int_{\overline{U}} \FF \cdot \nabla \phi \, \dr x
= -\lim_{\eps \to 0} \frac{1}{\eps} \int_{U_{\eps} \setminus U} \phi \FF \cdot \nabla d \, \dr x,
\end{equation}
if $\FF, \phi$, and $U$ satisfy the conditions of Theorem {\rm \ref{exterior normal trace}}.
In particular, this works for any compact set $K \subset \Omega$, as in Corollary {\rm \ref{K_compact_GG_eps}}{\rm :}
\begin{equation} \label{Gauss-Green real compact average}
\int_{K} \phi \, \dd \div \FF + \int_{K} \FF \cdot \nabla \phi \, \dr x
= \lim_{\eps \to 0} \frac{1}{\eps} \int_{K_{\eps} \setminus K} \phi \FF \cdot \nabla \dist(x, K) \, \dr x.
\end{equation}
\end{remark}

\begin{remark}
\, Formulas \eqref{main1} and \eqref{main2} can be used to obtain the Gauss-Green formula on the boundary of $U \Subset \Omega${\rm :}
\begin{align} \label{Gauss-Green p boundary}
\div(\FF \phi)(\partial U) & = \div(\FF \phi)(\overline{U}) - \div(\FF \phi)(U)\nonumber \\
& = \lim_{k \to \infty} \Big(\int_{\redb U^{\eps_{k}}} \phi \FF \cdot \nu_{U^{\eps_{k}}} \, \dr \Haus{n - 1}
     - \int_{\redb U_{\eps_{k}}} \phi \FF \cdot \nu_{U_{\eps_{k}}} \, \dr\Haus{n - 1} \Big),
\end{align}
since we can extract the same subsequence $\eps_{k}$ for $\overline{U}$ and $U$.
The same result holds for $U$ such that $\overline{U} \subset \Omega$ if $\phi$ has compact support in $\Omega$.
\end{remark}

\begin{remark}
\, If $U = B(x_{0}, r)$, we obtain the Gauss-Green formula for $\Leb{1}$--{\it a.e.} $r > 0$.

Indeed,
$\dist(x, \partial B(x_{0}, r)) = r - |x - x_{0}|$ for any $x \in B(x_{0}, r)$ so that
\eqref{G-G_eps_open}
implies that, for $\Leb{1}$--{\it a.e.} $\eps \in (0, r)$,
\begin{align*}
\int_{B(x_{0}, r - \eps)} \dd \div(\phi \FF)
& = \int_{\partial B(x_{0}, r - \eps)} \phi(x) \FF(x) \cdot \frac{(x - x_{0})}{|x - x_{0}|} \, \dr \Haus{n - 1}(x) \\
& = - \int_{\partial B(x_{0}, r - \eps)} \phi \FF \cdot \nu_{B(x_{0}, r - \eps)} \, \dr \Haus{n - 1}.
\end{align*}
Since the initial choice of $r$ is arbitrary, we conclude
\begin{equation} \label{G-G open balls}
\int_{B(x_{0}, r)} \dd \div(\phi \FF) = - \int_{\partial B(x_{0}, r)} \phi \FF \cdot \nu_{B(x_{0}, r)} \, \dr \Haus{n - 1}
\qquad \mbox{for $\Leb{1}$--{\it a.e.} $r > 0$}.
\end{equation}
Moreover, the same argument works with closed balls so that, by \eqref{G-G_eps_closed},
\begin{equation} \label{G-G closed balls}
\int_{\overline{B(x_{0}, r)}} \dd \div(\phi \FF)
= - \int_{\partial B(x_{0}, r)} \phi \FF \cdot \nu_{B(x_{0}, r)} \, \dr \Haus{n - 1}\qquad \mbox{for $\Leb{1}$--{\it a.e.} $r > 0$},
\end{equation}
since $\dist(x, \partial B(x_{0}, r)) = |x - x_{0}| - r$ for any $x \notin B(x_{0}, r)$.

This can also be seen as a consequence of the fact:
$$
|\div(\phi \FF)|(\partial B(x_{0}, r)) = 0 \qquad \mbox{for $\Leb{1}$--{\it a.e.} $r > 0$},
$$
since $\div(\phi \FF)$ is a Radon measure.
\end{remark}

We now present a concrete example of applications of \eqref{main1} and \eqref{main2} to a $\DM^p$--field
whose norm blows up on the boundary of the integration domains.

\begin{example} \label{Whitney_field_example}
Let $\FF : \R^{2} \setminus \{ (0, 0) \} \to \R^{2}$ be the vector field{\rm :}
\begin{equation} \label{Whitney_field}
\FF(x_1, x_2) := \frac{(x_1,x_2)}{x_1^{2} + x_2^{2}}.
\end{equation}
This is the particular case for $n = 2$ of the vector field $\FF : \R^{n} \setminus \{\mathbf{0}\} \to \R^{n}$ given by
\begin{equation*}
\FF(x) := \frac{x}{|x|^{n}}   \qquad \mbox{for $x\ne \mathbf{0}$}.
\end{equation*}
Then $\FF \in \DM^{p}_{\rm loc}(\R^{n})$ for $1 \le p < \frac{n}{n - 1}$ and
\begin{equation} \label{divergence_distr}
\div \FF = n \omega_{n} \delta_{\mathbf{0}},
\end{equation}
where $\omega_{n} = |B(\mathbf{0}, 1)|$.
In particular, if $n = 2$,  $\FF \in \DM^{p}_{\rm loc}(\R^{2})$ for $1 \le p < 2$,
and $\div \FF = 2 \pi \delta_{(0,0)}$.

Consider $U = (0, 1)^{2}$.
Chen-Frid \cite[Example 1.1]{CF2} observed that
\begin{equation*}
0 = \div \FF (U) \neq - \int_{\partial U} \FF \cdot \nu_{U} \, \dr \Haus{1} = \frac{\pi}{2},
\end{equation*}
since $\FF \cdot \nu_{U} = 0$ on $\big(\{0\} \times (0, 1)\big) \cup \big((0, 1) \times \{0\}\big)$
and $\displaystyle  \int_{0}^{1} \frac{1}{1 + x_1^{2}} \, \dr x_1 = \frac{\pi}{4}$.

The approach employed in the proof of Theorems {\rm \ref{interior normal trace}} and {\rm \ref{exterior normal trace}}
enable us to solve this apparent contradiction, by showing that
\begin{align*}
&0 = \div \FF (U) = - \lim_{\eps \to 0} \int_{\partial U^{\eps}} \FF \cdot \nu_{U^{\eps}} \, \dr \Haus{1}, \\
&2 \pi = \div \FF (\overline{U})  = - \lim_{\eps \to 0} \int_{\partial U_{\eps}} \FF \cdot \nu_{U_{\eps}} \, \dr \Haus{1},
\end{align*}
where $U^{\eps}$ and $U_{\eps}$ are given by \eqref{U^eps} and \eqref{U_eps}, respectively.

In this case, we do not have to select a suitable sequence $\eps_{k} \to 0$.
Indeed, $\FF$ is smooth away from the origin, and  $U^{\eps}$ and $U_{\eps}$ are sets of finite perimeter for any $\eps > 0$.
Moreover, for this choice of $U$, Lemma {\rm \ref{boundary tubular ngb}} is valid for any $\eps \in (0, 1)$.
Also,
the continuity condition mentioned in Remark {\rm \ref{G-G_eps_open_closed}} can be checked.
Therefore, by \eqref{G-G_eps_open}--\eqref{G-G_eps_closed}, we obtain that, for any $\eps > 0$,
\begin{align}
0 = \div \FF (U^{\eps}) = - \int_{\partial U^{\eps}} \FF \cdot \nu_{U^{\eps}} \, \dr \Haus{1}, \label{G-G_Whit_eps_int}\\
2 \pi = \div \FF (\overline{U_{\eps}}) = - \int_{\partial U_{\eps}} \FF \cdot \nu_{U_{\eps}} \, \dr \Haus{1}.
\label{G-G_Whit_eps_ext}
\end{align}
Passing to the limit verifies our assertion.

We may also verify this statement by hand.
Observe that $U^{\eps} = (\eps, 1 - \eps)^{2}$ for any $\eps \in (0, 1)$.
Therefore, we have
\begin{align*}
\int_{\partial U^{\eps}} \FF \cdot \nu_{U^{\eps}} \, \dr \Haus{1}
 =&\, \int_{\eps}^{1 - \eps} \frac{\eps}{\eps^{2} + x_1^{2}} \, \dr x_1
- \int_{\eps}^{1 - \eps} \frac{1 - \eps}{(1 - \eps)^{2} + x_1^{2}} \, \dr x_1 \\
& + \int_{\eps}^{1 - \eps} \frac{\eps}{\eps^{2} + x_2^{2}} \, \dr x_2
- \int_{\eps}^{1 - \eps} \frac{1 - \eps}{(1 - \eps)^{2} + x_2^{2}} \, \dr x_2 \\[2mm]
=&\, 2 \Big( \arctan{( \frac{1 - \eps}{\eps})} - \frac{\pi}{4} - \frac{\pi}{4}
  + \arctan{(\frac{\eps}{1 - \eps})} \Big) \\
=&\,  0 = - \div \FF(U^{\eps})
\end{align*}
for any $\eps > 0$, which is \eqref{G-G_Whit_eps_int}.
As for \eqref{G-G_Whit_eps_ext},
$\partial U_{\eps}$ is the union of four segments{\rm :}
$$
(0, 1) \times \{- \eps \}, \quad \{ 1 + \eps \} \times (0, 1),
\quad (0, 1) \times \{ 1 + \eps \}, \quad \{ - \eps \} \times (0, 1),
$$
and of four circumference arcs of angle $\frac{\pi}{2}$ and radius $\eps$
centered at the corners of square $U$.
Therefore, these terms give
\begin{align*}
&\int_{\partial U_{\eps}} \FF \cdot \nu_{U_{\eps}} \, \dr \Haus{1}\\
& = -\int_{0}^{1} \frac{\eps}{\eps^{2} + x_1^{2}} \, \dr x_1
  - \int_{0}^{1} \frac{1 + \eps}{(1 + \eps)^{2} + x_2^{2}} \, \dr x_2
  - \int_{0}^{1} \frac{1 + \eps}{(1 + \eps)^{2} + x_1^{2}} \, \dr x_1\\
&\quad - \int_{0}^{1} \frac{\eps}{\eps^{2} + x_2^{2}} \, \dr x_2
  + \int_{\pi}^{\frac{3\pi}{2}} \big( - \frac{1}{\eps} \big) \eps \, \dr \theta
  - \int_{\frac{\pi}{2}}^{\pi} \frac{\eps(\eps + \sin{\theta})}{1 + \eps^{2} + 2 \eps \sin{\theta}} \, \dr \theta \\
&\quad -  \int_{\frac{3\pi}{2}}^{2\pi} \frac{\eps(\eps + \cos{\theta})}{1 + \eps^{2} + 2 \eps \cos{\theta}} \, \dr \theta
   - \int_{0}^{\frac{\pi}{2}}  \frac{\eps(\eps + \cos{\theta} + \sin{\theta})}{2 + \eps^{2} + 2 \eps (\cos{\theta} + \sin{\theta})} \, \dr \theta \\
& = - 2 \arctan{( \frac{1}{\eps})} - 2 \arctan{( \frac{1}{1 + \eps})} - \frac{\pi}{2} - \frac{\pi}{4}
   + \arctan{(\frac{1 - \eps}{1 + \eps})}  \\
&\quad - \frac{\pi}{2} + \arctan{(\frac{1}{\eps})} - \frac{\pi}{4}
  - \arctan{(\frac{\eps}{\eps + 2})} + \arctan{( \frac{1}{1 + \eps})} \\
& = - \frac{3}{2}\pi - \arctan{(\frac{1}{\eps})}
   - \arctan{(\frac{1}{1 + \eps})}
     + \frac{\pi}{2} - \arctan{\eps} - \arctan{(1 + \eps)} \\
& = - \frac{3}{2}\pi - \frac{\pi}{2} = - 2 \pi = - \div \FF(U_{\eps})
\qquad\,\, \mbox{for any $\eps > 0$}.
\end{align*}
\end{example}

\section{\, Other Classes of Divergence-Measure Fields with Normal Trace Measures}

In this section, as a result of the construction in \S 5, we characterize a class of $\DM^p$--fields
whose normal traces on $\partial U$ are represented by Radon measures.

Remark \ref{Traceneighborhood} allows us to find a new sufficient condition
under which the normal trace functional on an open or closed set can be represented
by a Radon measure on the boundary.
Such a condition requires a particular representation for the vector field $\FF$,
first introduced by \v{S}ilhav\'y \cite[Proposition 6.1]{Silhavy1}.
We also need to recall the notion of lower $(n - 1)$--dimensional Minkowski content.

\begin{definition} Given a closed set $K$ in $\R^{n}$, the $(n - 1)$--dimensional Minkowski content is defined as
\begin{equation*}
\mathscr{M}_{*}^{n - 1}(K) := \liminf_{\eps \to 0} \frac{|K + B(0, \eps)|}{2 \eps}.
\end{equation*}
\end{definition}

\begin{proposition} \label{trace_Silhavy_repr}
Let $\FF \in \DM^{p}(\Omega)$ for $1 \le p \le \infty$,
and let $U \subset \Omega$ be a bounded open set such that $\mathscr{M}_{*}^{n - 1}(\partial U) < \infty$.
Assume that $\div \FF$ has compact support in $U$ and that
\begin{equation} \label{repr_F_Silhavy}
\FF(x) = \frac{1}{n \omega_{n}} \int_{\Omega} \frac{(x - y)}{|x - y|^{n}} \, \dr\div \FF(y)
\qquad\mbox{for $\Leb{n}$--a.e. $x \in \Omega$}.
\end{equation}
Then
$$
\ban{\FF \cdot \nu, \cdot}_{\partial U} \in \mathcal{M}(\partial U).
$$
Similarly, if $K \subset \Omega$ is a compact set such that $\mathscr{M}_{*}^{n - 1}(\partial K) < \infty$,
then
$$
\ban{\FF \cdot \nu, \cdot}_{\partial K} \in \mathcal{M}(\partial K),
$$
which is in particular true for $K = \overline{U}$ if $U \Subset \Omega$.
\end{proposition}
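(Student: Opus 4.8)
The plan is to use the averaging representation \eqref{Gauss-Green open average} (respectively \eqref{Gauss-Green compact average}), together with the explicit formula \eqref{repr_F_Silhavy} and the finiteness of the lower Minkowski content, to show that the normal trace functional is bounded with respect to the sup norm, hence extends to a Radon measure. First I would fix $\phi \in \Lip_c(\R^n)$ and write, from \eqref{Gauss-Green open average},
\[
\ban{\FF \cdot \nu, \phi}_{\partial U}
= - \lim_{\eps \to 0} \frac{1}{\eps} \int_{U \setminus U^{\eps}} \phi\, \FF \cdot \nabla d \,\, \dr x.
\]
Since $|\nabla d| \le 1$, the absolute value of the integrand is bounded by $|\phi|\,|\FF|$. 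The key point is therefore to control $\int_{U \setminus U^{\eps}} |\FF| \,\dr x$ as $\eps \to 0$: I would substitute the representation \eqref{repr_F_Silhavy}, use that $\div \FF$ has compact support $S \Subset U$, so that for $x$ near $\partial U$ one has $|x - y| \ge \operatorname{dist}(S, \partial U) =: \delta_0 > 0$ for all $y \in S$, and hence $|\FF(x)| \le \frac{|\div \FF|(\Omega)}{n\omega_n \delta_0^{n-1}} =: M$ uniformly on a neighborhood of $\partial U$. Consequently $\int_{U \setminus U^{\eps}} |\FF|\,\dr x \le M\, |U \setminus U^{\eps}| \le M\,|(\partial U + B(0,\eps)) \cap U|$.

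Next I would invoke $\mathscr{M}_*^{n-1}(\partial U) < \infty$: by definition there is a sequence $\eps_j \to 0$ with $|(\partial U + B(0,\eps_j))| \le (2\mathscr{M}_*^{n-1}(\partial U) + 1)\,\eps_j$ for $j$ large. Along this sequence,
\[
\Big| \frac{1}{\eps_j} \int_{U \setminus U^{\eps_j}} \phi\, \FF \cdot \nabla d \,\, \dr x \Big|
\le \frac{M\,\|\phi\|_{L^\infty(\R^n)}}{\eps_j} \, |(\partial U + B(0,\eps_j)) \cap U|
\le M\,(2\mathscr{M}_*^{n-1}(\partial U) + 1)\,\|\phi\|_{L^\infty(\R^n)}.
\]
Since the full limit in \eqref{Gauss-Green open average} exists (by Theorem~\ref{interior normal trace}), it equals the limit along $\{\eps_j\}$, so the same bound holds for $\ban{\FF \cdot \nu, \phi}_{\partial U}$. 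This shows $\phi \mapsto \ban{\FF\cdot\nu,\phi}_{\partial U}$ is a linear functional on $\Lip_c(\R^n)$ bounded by a constant times $\|\phi\|_{L^\infty}$; by the density of $\Lip_c$ in $C_c$ and the Riesz representation theorem, together with Proposition~\ref{support trace} giving that its support is $\partial U$, we conclude $\ban{\FF\cdot\nu,\cdot}_{\partial U} \in \mathcal{M}(\partial U)$. (Alternatively, one can invoke Theorem~\ref{equivalence trace prod}: the uniform bound shows $\div(\chi_U \FF) = \chi_U \div\FF - \ban{\FF\cdot\nu,\cdot}_{\partial U}$ extends to a measure, i.e.\ $\chi_U \FF \in \DM^p(\Omega)$.)

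For the compact-set version I would argue identically, replacing \eqref{Gauss-Green open average} by \eqref{Gauss-Green real compact average} and $U$ by $K$: here $\div\FF$ has support in the open set $U$ with $\overline{U} \subset \Omega$, but what is actually needed is only that $|\FF|$ is bounded on a neighborhood of $\partial K$; since $\FF$ is given by \eqref{repr_F_Silhavy} with $\div\FF$ compactly supported, $\FF$ is (real-)analytic, in particular bounded, away from $\operatorname{supp}(\div\FF)$, which suffices provided $\operatorname{supp}(\div\FF) \cap \partial K = \emptyset$. (One should note that the statement should implicitly assume, as in the open case, that $\div\FF$ is supported away from $\partial K$; for $K = \overline U$ with $\operatorname{supp}(\div\FF) \Subset U$ this is automatic, and then the final clause follows immediately.) The only real obstacle is the uniform estimate $\int_{U\setminus U^\eps}|\FF| \le M\eps$ along a suitable sequence $\eps_j \to 0$; once the kernel estimate for $|\FF|$ near $\partial U$ and the Minkowski-content bound are in place, the rest is a soft functional-analytic wrap-up. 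A minor subtlety worth a sentence: the Minkowski content only controls $|(\partial U + B(0,\eps))|$ along a subsequence, which is why it is essential that the \emph{full} limit in \eqref{Gauss-Green open average} is already known to exist from Theorem~\ref{interior normal trace}.
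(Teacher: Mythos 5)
Your argument is correct and follows essentially the same route as the paper's: both start from the averaging representation \eqref{Gauss-Green open average}, use the compact support of $\div \FF$ together with \eqref{repr_F_Silhavy} to bound $|\FF|$ near $\partial U$ by a constant times $\dist(\supp(\div\FF),\partial U)^{1-n}\,|\div\FF|(\Omega)$ (the paper packages the same kernel estimate through Fubini), and then invoke the finite lower Minkowski content of $\partial U$, noting that the full limit already exists so it coincides with the $\liminf$ realizing that content. Your remark that the general compact-$K$ clause implicitly requires $\supp(\div\FF)\cap\partial K=\emptyset$ is a genuine subtlety the paper leaves unstated; it is automatic for $K=\overline{U}$ under the given hypotheses, which is the case the statement is really after.
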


\begin{proof}
\, The normal trace $\ban{\FF \cdot \nu, \cdot}_{\partial U}$ has the following representation:
\begin{equation}
\ban{\FF \cdot \nu, \varphi}_{\partial U}= -\lim_{\ve \to 0} \frac{1}{\eps} \int_{U \setminus U^{\ve}} \varphi(x) \FF (x) \cdot \nabla d (x)\, \dr x
\qquad \mbox{for any $\varphi \in \Lip_{c}(\R^{n})$};
\end{equation}
see \eqref{Gauss-Green open average} and the observations in Remark \ref{Traceneighborhood}.
Thus, in order to prove that $\ban{\FF \cdot \nu, \cdot}_{\partial U} \in \mathcal{M}(\partial U)$,
 it suffices to show
 \begin{equation} \label{measure_cond_content}
|\ban{\FF \cdot \nu, \varphi}_{\partial U}| \leq C \norm{\varphi}_{L^{\infty}(\partial U)}
\end{equation}
for a constant $C$ independent of $\varphi$. Let $V \subset U$ be a compact set such that $\supp (|\div \FF|) \subset V$.
Then it follows that
\begin{align}
\big|\ban{\FF \cdot \nu, \varphi}_{\partial U}\big|
& = \lim_{\ve \to 0} \Big|\frac{1}{\ve}\int_{U \setminus U^{\ve}} \varphi(x) \FF (x) \cdot \nabla d (x)\, \dr x \Big|   \nonumber\\
 &=  \lim_{\ve \to 0} \Big| \frac{1}{\ve}\int_{U \setminus U^{\ve}} \varphi(x)
    \Big(\int_{\Omega} \frac{(x-y)}{|x-y|^n} \, \dd \div \FF(y) \cdot \nabla d (x) \Big)\, \dr x \Big| \nonumber \\
 &\leq  \liminf_{\ve \to 0} \frac{1}{\ve} \norm{\varphi}_{L^{\infty}(U \setminus U^{\eps})}
    \int_{U \setminus U^{\ve}} \int_{V} \frac{1}{|x-y|^{n- 1}} \, \dr |\div \FF|(y)\, \dr x \nonumber \\
  &=  \liminf_{\ve \to 0} \frac{1}{\ve} \norm{\varphi}_{L^{\infty}(U \setminus U^{\eps})} \int_{V} \int_{U \setminus U^{\ve}} \frac{1}{|x-y|^{n- 1}} \, \dr x \,
   \dr |\div \FF|(y),
\end{align}
where we have used Fubini's theorem and the fact that $|\nabla d| = 1$ $\Leb{n}$--{\it a.e.} (Lemma \ref{properties distance}).
Moreover,
$\lim_{\eps \to 0} \sup_{U \setminus U^{\eps}} |\varphi|
   =  \norm{\varphi}_{L^{\infty}(\partial U)}$, by the continuity of $\varphi$.

Since $V \subset U$ is a compact set, there exists $k=k(V)$ such that, for small enough $\ve$,
$$
|x-y| \geq k \qquad \mbox{for any $x \in U \setminus U^{\eps}$ and $y \in V$}.
$$
Then it follows that
\begin{align*}
|\ban{\FF \cdot \nu, \varphi}_{\partial U}|
&\leq \norm{\varphi}_{L^{\infty}(\partial U)} |\div \FF|(V) k^{1 - n} \liminf_{\ve \to 0} \frac{1}{\ve}  |U \setminus U^{\ve}| \\
&\leq 2 k^{1-n} |\div \FF|(V) \norm{\varphi}_{L^{\infty}(\partial U)}  \liminf_{\ve \to 0} \frac{1}{2\ve} |\partial U + B(0,\ve)| \\
&\leq  2 k^{1-n}  |\div \FF|(V) \mathscr{M}_{*}^{n - 1}(\partial U) \norm{\varphi}_{L^{\infty}(\partial U)}.
\end{align*}
This proves \eqref{measure_cond_content}, since $\partial U$ is of finite lower $(n - 1)$--dimensional Minkowski content.

In the same way, using \eqref{Gauss-Green real compact average}, we can show that
$\ban{\FF \cdot \nu, \cdot}_{\partial K} \in \mathcal{M}(\partial K)$ if $K \subset \Omega$ is a compact set,
especially when $K = \overline{U}$ for some bounded open set $U$.
\end{proof}

\begin{remark}
\,\, Condition \eqref{repr_F_Silhavy} is not strongly restrictive in the sense that $\FF$ may not be compactly supported and unbounded.
Indeed, let $\FF(x) = \frac{x}{|x|^{n}}$ as in Example {\rm \ref{Whitney_field_example}}.
Then, by \eqref{divergence_distr},
$\FF$ satisfies \eqref{repr_F_Silhavy}, even though $\FF$ is  unbounded and supported on the whole $\R^{n}$.

Moreover, \eqref{repr_F_Silhavy} is satisfied by a large class of vector fields $\FF$,
as
shown in
\cite[Proposition 6.1]{Silhavy1}.
Indeed, given any $\mu \in \mathcal{M}(\Omega)$ with compact support in $\Omega$, the vector field
\begin{equation*}
\FF(x) = \frac{1}{n \omega_{n}} \int_{\Omega} \frac{(x - y)}{|x - y|^{n}} \, \dr \mu(y)
\end{equation*}
satisfies $\div \FF = \mu$ in $\mathcal{M}(\Omega)$,
and $\FF \in L^{p}_{\rm loc}(\Omega; \R^{n})$ for any $1 \le p < \frac{n}{n - 1}$.
In addition, if $\frac{n}{n - 1} \le p \le \infty$,
then $\FF \in L^{p}_{\rm loc}(\Omega; \R^{n})$
if $|\mu|(B(x, r)) \le c r^{m}$ for any $x \in \R^{n}$ and $r \in (0, a)$,
for some $m > n - \frac{p}{p-1}$, $a > 0$, and $c > 0$.
\end{remark}

\begin{remark}
\,\, Proposition {\rm \ref{trace_Silhavy_repr}} applies to a particular subfamily of sets of finite perimeter.
Indeed, any bounded open set $U$ with $\mathscr{M}_{*}^{n - 1}(\partial U) < \infty$ is a set of finite perimeter in $\R^{n}$.

Even though the result is well known, we give here a short proof for the ease of the reader.
Let
\begin{equation*}
g_{\eps}(x) := \max\{0, 1 - \frac{\mathrm{dist}(x, U^{\eps})}{\eps}\}.
\end{equation*}
Then $g_{\eps} \to \chi_{U}$ in $L^{1}(\R^{n})$, and
$
|\nabla g_{\eps}| = \frac{1}{\eps} \chi_{U \setminus U^{\eps}}.
$
Thus, for any $\phi \in C^{1}_{c}(\R^{n}; \R^{n})$, we have
\begin{align*}
\Big| \int_{\R^{n}} \chi_{U} \div \phi \, \dr x \Big|
& = \lim_{\eps \to 0} \Big| \int_{\R^{n}} g_{\eps} \div \phi \, \dr x \Big|
= \lim_{\eps \to 0} \Big| \int_{\R^{n}} \phi \cdot \nabla g_{\eps} \, \dr x \Big|\\
&\le \|\phi\|_{L^{\infty}(\R^{n}; \R^{n})} \liminf_{\eps \to 0} \frac{|U \setminus U^{\eps}|}{\eps}
 \le 2 \mathscr{M}_{*}^{n - 1}(\partial U) \|\phi\|_{L^{\infty}(\R^{n}; \R^{n})}.
\end{align*}
This implies that $U$ is a set of finite perimeter with $|D \chi_{U}|(\R^{n}) \le 2 \mathscr{M}_{*}^{n - 1}(\partial U)$.

Arguing analogously, we can also show that any compact set $K$ with $\mathscr{M}_{*}^{n - 1}(\partial K) < \infty$
is a set of finite perimeter in $\R^{n}$, with $|D \chi_{K}|(\R^{n}) \le 2 \mathscr{M}_{*}^{n - 1}(\partial K)$.
This can be shown by considering
the functions{\rm :}
\begin{equation*}
f_{\eps}(x) := \max\{0, 1 - \frac{d(x, K)}{\eps}  \},
\end{equation*}
which satisfy that $f_{\eps} \to \chi_{K}$ in $L^{1}(\R^{n})$ and
$
|\nabla f_{\eps}| = \frac{1}{\eps} \chi_{K_{\eps} \setminus K}.
$
\end{remark}

In the case $p = \infty$,  assumption \eqref{repr_F_Silhavy} is superfluous,
as shown in the following proposition,
which can be seen as a particular case of \cite[Theorem 2.4]{Silhavy2} and \cite[Theorem 2.4]{Frid2}.

\begin{proposition} \label{trace_infty_open_average}
Let $\FF \in \DM^{p}(\Omega)$ for $1 \le p \le \infty$,
and let $U \subset \Omega$ be a bounded open set such that $\mathscr{M}_{*}^{n - 1}(\partial U) < \infty$.
If $p = \infty$, or $1 \le p < \infty$ and $\FF$ satisfies
\begin{equation} \label{boundedness_average_integral_open_p}
\limsup_{\eps \to 0} \frac{1}{\eps} \int_{U \setminus U^{\eps}} |\FF|^{p} \, \dr x < \infty,
\end{equation}
then
$$
\ban{\FF \cdot \nu, \cdot}_{\partial U} \in \mathcal{M}(\partial U).
$$
Analogously, let $K \subset \Omega$ be a compact set such that $\mathscr{M}_{*}^{n - 1}(\partial K) < \infty$.
If $p = \infty$, or $1 \le p < \infty$ and $\FF$ satisfies
\begin{equation} \label{boundedness_average_integral_compact_p}
\limsup_{\eps \to 0} \frac{1}{\eps} \int_{K_{\eps} \setminus K} |\FF|^{p} \, \dr x < \infty,
\end{equation}
then
$$
\ban{\FF \cdot \nu, \cdot}_{\partial K} \in \mathcal{M}(\partial K).
$$
In particular, this implies that, if $U \Subset \Omega$, $\mathscr{M}_{*}^{n - 1}(\partial U) < \infty$,
and the same assumption on $\FF$ is made with $K = \overline{U}$,
then
$$
\ban{\FF \cdot \nu, \cdot}_{\partial \overline{U}} \in \mathcal{M}(\partial \overline{U}).
$$
\end{proposition}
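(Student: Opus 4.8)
The plan is to deduce the statement from the averaging representation of the normal trace established in Remark~\ref{Traceneighborhood} --- formula \eqref{Gauss-Green open average} for the bounded open set $U$ and \eqref{Gauss-Green real compact average} for the compact set $K$ --- together with the definition of the lower Minkowski content. Concretely, the target is the uniform estimate
\begin{equation*}
|\ban{\FF \cdot \nu, \varphi}_{\partial U}| \le C\, \|\varphi\|_{L^\infty(\partial U)} \qquad \text{for all } \varphi \in \Lip_c(\R^n),
\end{equation*}
with $C$ independent of $\varphi$; once this is available one argues exactly as at the end of the proof of Proposition~\ref{trace_Silhavy_repr}. Indeed, the estimate forces $\ban{\FF \cdot \nu, \varphi}_{\partial U}$ to depend only on $\varphi|_{\partial U}$, and since $\partial U$ is compact and $\{\varphi|_{\partial U} : \varphi \in \Lip_c(\R^n)\}$ is dense in $C^0(\partial U)$ for the supremum norm, the functional $\varphi \mapsto \ban{\FF \cdot \nu, \varphi}_{\partial U}$ extends uniquely to a bounded linear functional on $C^0(\partial U)$, hence is represented by a finite Radon measure on $\partial U$ by the Riesz representation theorem (consistently with Proposition~\ref{support trace}).

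To establish the uniform estimate I would fix $\varphi \in \Lip_c(\R^n)$, note that $\varphi|_\Omega \in C^0(\Omega) \cap L^\infty(\Omega)$ with $\nabla \varphi \in L^{p'}(\Omega;\R^n)$ so that \eqref{Gauss-Green open average} applies, and then bound, for every $\eps > 0$, using $|\nabla d| = 1$ $\Leb{n}$--a.e.\ (Lemma~\ref{properties distance}) and H\"older's inequality,
\begin{equation*}
\Big| \frac{1}{\eps} \int_{U \setminus U^\eps} \varphi\, \FF \cdot \nabla d \, \dr x \Big|
\le \|\varphi\|_{L^\infty(U \setminus U^\eps)} \Big( \frac{1}{\eps}\int_{U \setminus U^\eps} |\FF|^p \, \dr x \Big)^{1/p} \Big( \frac{|U \setminus U^\eps|}{\eps} \Big)^{1/p'}
\end{equation*}
(for $p = \infty$ this reads $\le \|\varphi\|_{L^\infty(U \setminus U^\eps)}\, \|\FF\|_{L^\infty(\Omega;\R^n)}\, |U \setminus U^\eps|/\eps$). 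Since $U \setminus U^\eps \subset \partial U + B(0,\eps)$, monotonicity of $\liminf$ gives $\liminf_{\eps \to 0} |U \setminus U^\eps|/\eps \le 2\,\mathscr{M}_{*}^{n-1}(\partial U) < \infty$; moreover $\|\varphi\|_{L^\infty(U \setminus U^\eps)} \to \|\varphi\|_{L^\infty(\partial U)}$ by continuity of $\varphi$ and compactness of $\partial U$, and the middle factor has finite $\limsup$, call it $M^{1/p}$, by hypothesis \eqref{boundedness_average_integral_open_p}. Because the left-hand side of \eqref{Gauss-Green open average} is a genuine limit, I can evaluate $|\ban{\FF \cdot \nu, \varphi}_{\partial U}|$ as a $\liminf$ and pass to the limit along a sequence $\eps_k \to 0$ realizing the $\liminf$ of $|U \setminus U^\eps|/\eps$, obtaining
\begin{equation*}
|\ban{\FF \cdot \nu, \varphi}_{\partial U}| \le \|\varphi\|_{L^\infty(\partial U)}\, M^{1/p}\, \big( 2\,\mathscr{M}_{*}^{n-1}(\partial U) \big)^{1/p'}
\end{equation*}
(with $M^{1/p}$ replaced by $\|\FF\|_{L^\infty(\Omega;\R^n)}$ and $p' = 1$ when $p = \infty$), which is the required estimate.

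The compact case would be handled in the same way, with \eqref{Gauss-Green real compact average} in place of \eqref{Gauss-Green open average}, $K_\eps \setminus K$ in place of $U \setminus U^\eps$, $\nabla \dist(\cdot, K)$ in place of $\nabla d$, the inclusion $K_\eps \setminus K \subset \partial K + B(0,\eps)$ (the point of $K$ nearest to an exterior point lies on $\partial K$), and \eqref{boundedness_average_integral_compact_p} in place of \eqref{boundedness_average_integral_open_p}; for $p = \infty$ no such hypothesis is needed. The last assertion, for $K = \overline{U}$ with $U \Subset \Omega$, is then the instance $K = \overline{U}$ of the compact case, since $\partial \overline{U} \subset \partial U$ yields $\mathscr{M}_{*}^{n-1}(\partial \overline{U}) \le \mathscr{M}_{*}^{n-1}(\partial U) < \infty$ and \eqref{boundedness_average_integral_compact_p} for $K = \overline{U}$ is precisely the stated hypothesis. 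I expect the one delicate point to be the bookkeeping of limit operations in the second step: one must exploit that the averaging formula provides an \emph{actual} limit (so that one may pass along the subsequence that activates the Minkowski-content $\liminf$) while simultaneously keeping the $\FF$-factor controlled through the $\limsup$ hypothesis and letting $\|\varphi\|_{L^\infty(U \setminus U^\eps)}$ converge to $\|\varphi\|_{L^\infty(\partial U)}$; the elementary inequality $\liminf_\eps(a_\eps b_\eps c_\eps) \le (\lim_\eps a_\eps)(\limsup_\eps b_\eps)(\liminf_\eps c_\eps)$, valid for nonnegative factors with $a_\eps$ convergent and $b_\eps$ bounded near $0$, is what ties the three together.
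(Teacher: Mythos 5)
Your argument is correct and follows the same route as the paper's proof: it invokes the averaging representation \eqref{Gauss-Green open average}--\eqref{Gauss-Green real compact average}, applies H\"older's inequality, splits $1/\eps$ across the two factors, and bounds the resulting product via the lower Minkowski content and hypothesis \eqref{boundedness_average_integral_open_p}, then concludes by Riesz representation as at the end of Proposition \ref{trace_Silhavy_repr}. The only differences are cosmetic: you spell out the limit bookkeeping (which the paper states without comment) and you supply the monotonicity observation $\mathscr{M}_{*}^{n-1}(\partial\overline{U}) \le \mathscr{M}_{*}^{n-1}(\partial U)$ for the final assertion, which the paper leaves implicit.
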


\begin{proof}
\, Arguing as in the proof of Proposition \ref{trace_Silhavy_repr}, we see that,
for any $\varphi \in \Lip_{c}(\R^{n})$,
\begin{align*}
\big|\ban{\FF \cdot \nu, \varphi}_{\partial U}\big| & = \lim_{\ve \to 0} \Big|\frac{1}{\ve}\int_{U \setminus U^{\ve}} \varphi(x) \FF (x) \cdot \nabla d (x)\, \dr x \Big| \\
& \le \liminf_{\eps \to 0} \norm{\varphi}_{L^{\infty}(U \setminus U^{\eps})} \frac{1}{\ve}  |U \setminus U^{\ve}|^{\frac{1}{p'}} \|\FF\|_{L^{p}(U \setminus U^{\eps}; \R^{n})}.
\end{align*}
If $p = \infty$, we have
\begin{equation*}
|\ban{\FF \cdot \nu, \varphi}_{\partial U}| \le 2 \|\FF\|_{L^{\infty}(U; \R^{n})} \mathscr{M}_{*}^{n - 1}(\partial U) \norm{\varphi}_{L^{\infty}(\partial U)}.
\end{equation*}
If $1 \le p < \infty$, then
\begin{equation*}
|\ban{\FF \cdot \nu, \varphi}_{\partial U}|
\le \big(2 \mathscr{M}_{*}^{n - 1}(\partial U)\big)^{\frac{1}{p'}}
\limsup_{\eps \to 0} \eps^{- \frac{1}{p}} \|\FF\|_{L^{p}(U \setminus U^{\eps}; \R^{n})}\,\norm{\varphi}_{L^{\infty}(\partial U)},
\end{equation*}
from which  $\ban{\FF \cdot \nu, \cdot}_{\partial U} \in \mathcal{M}(\partial U)$, because of \eqref{boundedness_average_integral_open_p}.
The case of the compact set follows analogously from \eqref{Gauss-Green real compact average},
by employing \eqref{boundedness_average_integral_compact_p}, if $\FF \notin \DM^{\infty}(\Omega)$.
\end{proof}

This proposition may also be seen as an alternative way of obtaining a part of the results of Proposition \ref{normal trace p infty}
in the case that $E$ is an open or compact set whose boundary has finite Minkowski content.

\begin{remark} In particular,
\, Propositions {\rm \ref{trace_Silhavy_repr}} and {\rm \ref{trace_infty_open_average}}
hold also for $U = \Omega$, when $\Omega$ is an open bounded set
such that $\mathscr{M}_{*}^{n - 1}(\partial \Omega) < \infty$.
\end{remark}

\begin{remark} \label{weak_conv_traces_sets}
\, We can reinterpret Theorems {\rm \ref{interior normal trace}} and {\rm \ref{exterior normal trace}} in the distributional sense.
Indeed, given $\FF \in \DM^{p}(\Omega)$ for $1 \le p \le \infty$,
and an open bounded set $U \subset \Omega$, \eqref {main1} is equivalent to the following{\rm :}
There exists a set $\mathcal{N} \subset \R$ with $\mathcal{L}^1(\mathcal{N})=0$ such that,
for every nonnegative sequence $\{\ve_k\}$ satisfying $\ve_k \notin \mathcal{N}$ for any $k$ and $\ve_k \to 0$,
\begin{equation} \label{weak_conv_tr_int}
\FF \cdot \nu_{U^{\eps_{k}}} \, \Haus{n - 1} \res \partial U^{\eps_{k}}
= \FF \cdot D \chi_{U^{\eps_{k}}} \weakstarto -\ban{F \cdot \nu, \cdot}_{\partial U}
\end{equation}
in the distributional sense on $\R^{n}${\rm ;} that is, testing the traces against $\phi \in \Lip_{c}(\R^{n})$.

Analogously, if $U \Subset \Omega$, \eqref{main2} implies that there exists a set $\mathcal{N}' \subset \R$
with $\mathcal{L}^1(\mathcal{N}')=0$ such that,
for every nonnegative sequence $\{\ve_k\}$ satisfying $\ve_k \notin \mathcal{N}$ for any $k$ and $\ve_k \to 0$,
\begin{equation} \label{weak_conv_tr_ext}
\FF \cdot \nu_{U_{\eps_{k}}} \, \Haus{n - 1} \res \partial U_{\eps_{k}}
= \FF \cdot D \chi_{U_{\eps_{k}}} \weakstarto -\ban{F \cdot \nu, \cdot}_{\partial \overline{U}}
\end{equation}
in the distributional sense  on $\R^{n}$.
In particular, this means that, if $\ban{\FF \cdot \nu, \cdot}_{\partial U} \in \mathcal{M}(\Omega)$,
then, by the uniform boundedness principle, we have
\begin{equation*}
\limsup_{k \to \infty} \|\FF \cdot \nu_{U^{\eps_{k}}}\|_{L^{1}(\redb U^{\eps_{k}}; \Haus{n - 1})} < \infty.
\end{equation*}
Analogously, if $\ban{\FF \cdot \nu, \cdot}_{\partial \overline{U}} \in \mathcal{M}(\Omega)$, then
\begin{equation*}
\limsup_{k \to \infty} \|\FF \cdot \nu_{U_{\eps_{k}}}\|_{L^{1}(\redb U_{\eps_{k}}; \Haus{n - 1})} < \infty.
\end{equation*}

Furthermore,
if $U$ is an open set of finite perimeter in $\Omega$,
then
$$
D \chi_{U^{\eps_{k}}} \weakto D \chi_{U}
$$
in the sense of Radon measures,
where $\eps_{k}$ is a vanishing sequence for which the conclusions of Lemma {\rm \ref{boundary tubular ngb}} hold.
Indeed, for any $\phi \in C^{1}_{c}(\Omega; \R^{n})$,
\begin{equation*}
- \int_{\Omega} \phi \cdot \, \dd D \chi_{U^{\eps_{k}}}
= \int_{\Omega} \chi_{U^{\eps_{k}}} \, \div\,\phi \, \dr x
 \,\longrightarrow \,\int_{\Omega} \chi_{U} \, \div\,\phi \, \dr x
= - \int_{\Omega} \phi \cdot \, \dd D \chi_{U},
\end{equation*}
and the assertion follows by the density of $C^{1}_{c}(\Omega; \R^{n})$ in $C_{c}(\Omega; \R^{n})$
with respect to the supremum norm.
If $\overline{U}$ is also a set of finite perimeter in $\Omega$,
then $D \chi_{U_{\eps_{k}}} \weakto D \chi_{\overline{U}}$ analogously.
\end{remark}

Thanks to Remark \ref{weak_conv_traces_sets},
we can show that the normal traces on open and closed sets of finite perimeter agree with the classical dot product,
provided that $\FF$ is continuous.
It is true that $\FF \in C^{0}(\Omega; \R^{n}) \cap \DM^{p}(\Omega)$ for $1 \le p \le \infty$
implies that $\FF \in \DM^{\infty}_{\rm loc}(\Omega)$.
Thus, we may expect the existence of normal traces as locally bounded functions
by the known
theory (\cite{ctz,comi2017locally}).
Through \eqref{weak_conv_tr_int}--\eqref{weak_conv_tr_ext}, we now give a more direct proof.

\begin{proposition} \label{normal_trace_classical_repr_cont}
Let $\FF \in C^{0}(\Omega; \R^{n}) \cap \DM^{p}(\Omega)$ for $1 \le p \le \infty$,
and let $U \subset \Omega$ be an open set of finite perimeter.
Then
$$
\ban{\FF \cdot \nu, \cdot}_{\partial U} = -\FF \cdot \nu_{U} \Haus{n - 1} \res \redb U \qquad \mbox{in $\mathcal{M}_{\rm loc}(\Omega)$}.
$$
Similarly, if $\overline{U} \subset \Omega$ is a set of finite perimeter,
then
$$
\ban{\FF \cdot \nu, \cdot}_{\partial \overline{U}} = -\FF \cdot \nu_{\overline{U}} \Haus{n - 1} \res \redb \overline{U} \qquad \mbox{in $\mathcal{M}_{\rm loc}(\Omega)$}.
$$
In addition, if $U \Subset \Omega$, the previous identities hold in $\mathcal{M}(\Omega)$.
\end{proposition}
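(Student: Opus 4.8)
The plan is to combine the approximation theorems of \S 5 with the weak-star convergence reformulation in Remark~\ref{weak_conv_traces_sets}. Fix $\FF \in C^{0}(\Omega; \R^{n}) \cap \DM^{p}(\Omega)$ and an open set $U$ of finite perimeter, and first suppose $U \Subset \Omega$ so that all the integrals below are taken over compact pieces of $\Omega$. By Theorem~\ref{interior normal trace}, there is a negligible set $\mathcal{N} \subset \R$ such that, along any sequence $\eps_{k} \to 0$ with $\eps_{k} \notin \mathcal{N}$, identity \eqref{main1} holds; moreover, refining the sequence if necessary, Lemma~\ref{boundary tubular ngb} applies, so $\Haus{n-1}(\partial U^{\eps_{k}} \setminus \redb U^{\eps_{k}}) = 0$ and $\nabla d = \nu_{U^{\eps_{k}}}$ $\Haus{n-1}$--a.e.\ on $\redb U^{\eps_{k}}$. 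Since $U$ is a set of finite perimeter, the last part of Remark~\ref{weak_conv_traces_sets} gives $D\chi_{U^{\eps_{k}}} \weakto D\chi_{U}$ in $\mathcal{M}(\Omega)$ along this same sequence, which by \eqref{redb_concentration} means $\nu_{U^{\eps_{k}}} \Haus{n-1} \res \redb U^{\eps_{k}} \weakto \nu_{U} \Haus{n-1} \res \redb U$.

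The key step is then to pass to the limit in the boundary integrals using the \emph{continuity} of $\FF$. For any $\phi \in \Lip_{c}(\Omega)$, the function $\phi\FF$ is continuous and bounded on a neighbourhood of $\supp\phi$, so testing the vector-valued weak-star convergence $D\chi_{U^{\eps_{k}}} \weakto D\chi_{U}$ against the continuous compactly supported field $\phi\FF$ yields
\begin{equation*}
\int_{\redb U^{\eps_{k}}} \phi\, \FF \cdot \nu_{U^{\eps_{k}}} \, \dr \Haus{n-1}
\;\longrightarrow\;
\int_{\redb U} \phi\, \FF \cdot \nu_{U} \, \dr \Haus{n-1}.
\end{equation*}
Comparing this with the right-hand side of \eqref{main1} (applied with test function $\phi$, which is admissible since $\phi \in \Lip_{c}(\Omega) \subset C^{0}(\Omega)\cap L^{\infty}(\Omega)$ with $\nabla\phi \in L^{p'}$) shows that $\ban{\FF\cdot\nu, \phi}_{\partial U} = -\int_{\redb U} \phi\, \FF\cdot\nu_{U}\, \dr\Haus{n-1}$ for every $\phi \in \Lip_{c}(\Omega)$. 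Since $\Lip_{c}(\Omega)$ is dense in $C_{c}(\Omega)$ in the supremum norm and the right-hand side is a (locally) finite Radon measure, the identity $\ban{\FF\cdot\nu,\cdot}_{\partial U} = -\FF\cdot\nu_{U}\,\Haus{n-1}\res\redb U$ holds in $\mathcal{M}(\Omega)$. The exterior statement is proved identically: replace $U^{\eps_{k}}$ by $U_{\eps_{k}}$, invoke Theorem~\ref{exterior normal trace} and \eqref{main2}, use that $\overline{U}$ is a set of finite perimeter so that $D\chi_{U_{\eps_{k}}} \weakto D\chi_{\overline{U}}$, and conclude $\ban{\FF\cdot\nu,\cdot}_{\partial\overline U} = -\FF\cdot\nu_{\overline U}\,\Haus{n-1}\res\redb\overline U$.

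For the general case in which $U$ is only assumed open of finite perimeter with $\overline U \subset \Omega$ (resp.\ $U \subset \Omega$), I would localise: for $\phi \in \Lip_{c}(\Omega)$ the product $\phi\FF$ has compact support, and both Theorems~\ref{interior normal trace} and~\ref{exterior normal trace} remain applicable in this setting (as stated in their last sentences, using a cutoff $\eta \equiv 1$ near $\supp\phi$); the weak convergence $D\chi_{U^{\eps_{k}}} \weakto D\chi_{U}$ from Remark~\ref{weak_conv_traces_sets} is tested only against $C^{1}_{c}(\Omega;\R^{n})$ fields and hence is unaffected. Running the same argument produces the stated identities in $\mathcal{M}_{\mathrm{loc}}(\Omega)$. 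The main obstacle — really the only subtle point — is justifying the interchange of limit and integral in the boundary terms: the measures $\nu_{U^{\eps_{k}}}\Haus{n-1}\res\redb U^{\eps_{k}}$ converge only weakly-star, not in total variation, so one genuinely needs $\FF$ to be continuous (not merely $L^{\infty}_{\mathrm{loc}}$) in order to pair it against this weakly convergent sequence of vector measures; the compact support of $\phi$ is what makes $\phi\FF$ a legitimate bounded continuous test field and keeps the masses $\Haus{n-1}(\redb U^{\eps_{k}} \cap \supp\phi)$ bounded.
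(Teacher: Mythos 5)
Your proposal is correct and follows essentially the same route as the paper's own proof: both arguments choose a vanishing sequence $\eps_k$ along which \eqref{main1} (resp.\ \eqref{main2}) and the conclusions of Lemma~\ref{boundary tubular ngb} hold, invoke the weak-star convergence $D\chi_{U^{\eps_k}}\weakto D\chi_U$ from Remark~\ref{weak_conv_traces_sets}, pair it against the compactly supported continuous field $\phi\FF$, and then handle the $U\Subset\Omega$ case with a cutoff. The only differences are cosmetic — the paper works with $\phi\in C_c(\Omega)$ directly and establishes the $\mathcal{M}_{\rm loc}(\Omega)$ statement first, whereas you start from $U\Subset\Omega$ and pass from $\Lip_c$ to $C_c$ by density — and neither affects the substance of the argument.
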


\begin{proof} \, Let $\eps_{k} \to 0$ be a sequence such that both \eqref{weak_conv_tr_int} and the conclusions of Lemma \ref{boundary tubular ngb} hold.
By Remark \ref{weak_conv_traces_sets}, we obtain
\begin{equation*}
\int_{\partial U^{\eps_{k}}} \phi \FF \cdot \nu_{U^{\eps_{k}}} \, \dr \Haus{n - 1} \,
\longrightarrow\, \int_{\redb U} \phi \FF \cdot \nu_{U} \, \dr \Haus{n - 1}
\qquad\mbox{for any $\phi \in C_{c}(\Omega)$},
\end{equation*}
since $\phi \FF \in C_{c}(\Omega; \R^{n})$
and
$$
\nu_{U^{\eps_{k}}} \, \Haus{n - 1} \res \redb U^{\eps_{k}} = D \chi_{U^{\eps_{k}}} \weakto D \chi_{U}
= \nu_{U} \, \Haus{n - 1} \res \redb U \qquad \mbox{in $\mathcal{M}(\Omega)$}.
$$
This implies
\begin{equation*}
\ban{\FF \cdot \nu, \phi}_{\partial U} = -\int_{\redb U} \phi \FF \cdot \nu_{U} \, \dr \Haus{n - 1}
\qquad\mbox{for any $\phi \in C_{c}(\Omega)$},
\end{equation*}
which means that $\ban{\FF \cdot \nu, \cdot}_{\partial U} = -\FF \cdot \nu_{U} \Haus{n - 1} \res \redb U$ in $\mathcal{M}_{\rm loc}(\Omega)$.

We can argue in a similar way with \eqref{weak_conv_tr_ext} and the fact that $D \chi_{U_{\eps_{k}}} \weakto D \chi_{\overline{U}}$
in $\mathcal{M}(\Omega)$ to prove the second part of the statement.

Finally, if $U \Subset \Omega$, there exists $\eta \in C_{c}(\Omega)$ such that $\eta \equiv 1$ on $\overline{U}$.
Hence, for any $\phi \in C^{0}(\Omega)$,  $\eta \phi F \in C_{c}(\Omega; \R^{n})$ so that
\begin{align*}
\int_{\partial U^{\eps_{k}}} \phi \FF \cdot \nu_{U^{\eps_{k}}} \, \dr \Haus{n - 1}
&= \int_{\partial U^{\eps_{k}}} \eta \phi \FF \cdot \nu_{U^{\eps_{k}}} \, \dr \Haus{n - 1}\\
& \to \int_{\redb U} \eta \phi \FF \cdot \nu_{U} \, \dr \Haus{n - 1}
= \int_{\redb U} \phi \FF \cdot \nu_{U} \, \dr \Haus{n - 1},
\end{align*}
which implies that $\ban{\FF \cdot \nu, \cdot}_{\partial U} = \FF \cdot \nu_{U} \Haus{n - 1} \res \redb U$ in $\mathcal{M}(\Omega)$.
Arguing similarly for $\overline{U}$, we complete the proof.
\end{proof}

This result also allows us to obtain  Green's identities for scalar functions in $C^{1}(\Omega)$
with gradient in $\DM^{p}(\Omega)$ and open sets of finite perimeter.

\begin{proposition} Let $u \in C^{1}(\Omega) \cap W^{1, p}(\Omega)$ for $1 \le p \le \infty$
be such that $\Delta u \in \mathcal{M}(\Omega)$, and let $U \Subset \Omega$ be an open set of finite perimeter.
Then, for any $\phi \in C^{0}(\Omega)$ with $\nabla \phi \in L^{p'}(\Omega; \R^{n})$,
\begin{equation} \label{first_Green_id_exact}
\int_{U} \phi \, \dr \Delta u + \int_{U} \nabla u \cdot \nabla \phi \, \dr x
 = - \int_{\redb U} \phi \nabla u \cdot \nu_{U} \, \dr \Haus{n - 1}.
\end{equation}
In particular, if $u \in C^{1}(\Omega) \cap W^{1, 2}(\Omega)$ with $\Delta u \in \mathcal{M}(\Omega)$, then
\begin{equation} \label{first_Green_id_bis_exact}
\int_{U} u \, \dr \Delta u + \int_{U} |\nabla u|^{2} \, \dr x
 = - \int_{\redb U} u \nabla u \cdot \nu_{U} \, \dr \Haus{n - 1}.
\end{equation}
In addition, if $u \in C^{1}(\Omega) \cap W^{1, p}(\Omega)$ and $v \in C^{1}(\Omega) \cap W^{1, p'}(\Omega)$ for $1 \le p \le \infty$
with $\Delta u, \Delta v \in \mathcal{M}(\Omega)$, then
\begin{equation}
\label{second_Green_id_exact}
\int_{U} v \, \dr \Delta u - u \, \dr \Delta v
 = - \int_{\redb U} (v \nabla u - u \nabla v) \cdot \nu_{U} \,\, \dr \Haus{n - 1}.
\end{equation}
Finally, we can also consider open sets of finite perimeter $U \subset \Omega$,
if
the supports of $\phi, u$, and $v$ are required to be compact in $\Omega$.
\end{proposition}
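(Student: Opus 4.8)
The plan is to deduce all four identities from Proposition~\ref{normal_trace_classical_repr_cont} applied to the vector field $\FF := \nabla u$. Since $u \in C^{1}(\Omega)\cap W^{1,p}(\Omega)$ and $\Delta u \in \mathcal{M}(\Omega)$, we have $\FF \in C^{0}(\Omega;\R^{n})\cap\DM^{p}(\Omega)$ with $\div\FF = \Delta u$; hence, for any open set of finite perimeter $U \Subset \Omega$, Proposition~\ref{normal_trace_classical_repr_cont} gives $\ban{\FF\cdot\nu,\cdot}_{\partial U} = -\,\FF\cdot\nu_{U}\,\Haus{n-1}\res\redb U$ in $\mathcal{M}(\Omega)$. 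Testing this identity against $\phi \in \Lip_{c}(\Omega)$ and rewriting the left-hand side, by the definition~\eqref{normal trace def} of the normal trace, as $\int_{U}\phi\,\dr\Delta u + \int_{U}\nabla u\cdot\nabla\phi\,\dr x$, I obtain \eqref{first_Green_id_exact} for Lipschitz test functions with compact support.

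Next I would extend \eqref{first_Green_id_exact} to every $\phi \in C^{0}(\Omega)$ with $\nabla\phi\in L^{p'}(\Omega;\R^{n})$. Fixing a cutoff $\eta \in C^{\infty}_{c}(\Omega)$ with $\eta\equiv 1$ on an open neighborhood of $\overline U$, one checks that $\eta\phi \in C^{0}(\Omega)\cap L^{\infty}(\Omega)$ with $\nabla(\eta\phi)\in L^{p'}(\Omega;\R^{n})$, that $\eta\phi$ and its gradient coincide with $\phi$ and $\nabla\phi$ on $U$, and that $\ban{\FF\cdot\nu,\phi}_{\partial U} = \ban{\FF\cdot\nu,\eta\phi}_{\partial U}$ by the support property of the normal trace (Proposition~\ref{support trace}); so one may assume $\phi \in C^{0}(\Omega)\cap L^{\infty}(\Omega)$ and invoke the extension of the normal trace functional from Remark~\ref{normal_trace_functional_fract_Sobolev}, which rests on Proposition~\ref{product rule p}. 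Equivalently, one mollifies $\eta\phi$ and passes to the limit, using that $\Delta u$ is a finite measure, that $\nabla u \in L^{p}(U;\R^{n})$ and is bounded on the compact set $\overline U$, and that $\Haus{n-1}\res\redb U$ is finite, so that all four terms converge. Formula~\eqref{first_Green_id_bis_exact} then follows by taking $\phi = u$: when $u \in W^{1,2}(\Omega)$ we have $p'=2$ and $\nabla u \in L^{p'}(\Omega;\R^{n})$, so $u$ is admissible after localization by $\eta$, and on $U$ the term $\int_{U}\nabla u\cdot\nabla u\,\dr x$ equals $\int_{U}|\nabla u|^{2}\,\dr x$.

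For \eqref{second_Green_id_exact} I would apply \eqref{first_Green_id_exact} twice: once to the field $\nabla u$ (with $u\in W^{1,p}$) and test function $v$, which is legitimate because $\nabla v \in L^{p'}(\Omega;\R^{n})$, and once to the field $\nabla v$ (with $v\in W^{1,p'}$) and test function $u$, which is legitimate because $\nabla u \in L^{(p')'}(\Omega;\R^{n}) = L^{p}(\Omega;\R^{n})$. Subtracting the two resulting identities, the common term $\int_{U}\nabla u\cdot\nabla v\,\dr x$ cancels and \eqref{second_Green_id_exact} follows. Finally, the case of a general open set of finite perimeter $U \subset \Omega$ with $\phi,u,v$ of compact support in $\Omega$ is handled in exactly the same manner, replacing the conclusion ``in $\mathcal{M}(\Omega)$'' of Proposition~\ref{normal_trace_classical_repr_cont} by its $\mathcal{M}_{\rm loc}(\Omega)$ version and using that compact support of the test functions keeps the integrals over $\redb U$ finite.

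Since each step is an immediate consequence of Proposition~\ref{normal_trace_classical_repr_cont} and of the definition of the normal trace, there is no substantial obstacle; the only point demanding a little care is the approximation argument upgrading the admissible test functions from $\Lip_{c}$ to the class $\{\phi \in C^{0}(\Omega):\nabla\phi\in L^{p'}(\Omega;\R^{n})\}$, and, for \eqref{first_Green_id_bis_exact}, the verification that $u$ itself — which need not be globally bounded on $\Omega$ — qualifies as a test function once localized by the cutoff $\eta$.
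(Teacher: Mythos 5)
Your proposal is correct and follows essentially the same route as the paper: the paper's one-line proof also reduces everything to Proposition~\ref{normal_trace_classical_repr_cont} applied to $\FF=\nabla u$ (and $\nabla v$), combined with Theorem~\ref{first Green's identity} and Corollary~\ref{second Green's identity}, which are just the definition of the normal trace functional for the extended class of test functions dressed up as a limit of boundary integrals. You bypass the explicit citation of Theorem~\ref{first Green's identity} by appealing directly to the definition~\eqref{normal trace def} and Remark~\ref{normal_trace_functional_fract_Sobolev}, plus the cutoff/mollification argument; this is the same content, packaged slightly more explicitly.
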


\begin{proof}
\, Clearly, $\nabla u \in C^{0}(\Omega; \R^n) \cap \DM^{p}(\Omega)$ and  $\nabla v \in C^{0}(\Omega; \R^n) \cap \DM^{p'}(\Omega)$.
Thus, it suffices to combine the results of Theorem \ref{first Green's identity},
Corollary \ref{second Green's identity}, and Proposition \ref{normal_trace_classical_repr_cont} to complete the proof.
\end{proof}

We notice that \eqref{first_Green_id_exact} and \eqref{second_Green_id_exact} are closely
related to the results
of Comi-Payne \cite[Proposition 4.5]{comi2017locally}, where Green's identities are achieved
for $C^{1}$ functions (whose gradients are essentially bounded $\DM$--fields)
and sets of finite perimeter.

Arguing in a similar way and employing the refinement of the Gauss-Green formula
for $\DM^{\infty}$--fields given in Proposition \ref{normal trace p infty},
we now achieve all Green's identities for Lipschitz functions with Laplacian
measure and sets of finite perimeter.

\begin{proposition}
Let $u \in \Lip_{\rm loc}(\Omega)$ be such that $\Delta u \in \mathcal{M}_{\rm loc}(\Omega)$,
and let $E \subset \Omega$ be a set of locally finite perimeter.
Then there exist interior and exterior normal traces of $\nabla u${\rm :}\,
$(\nabla u_{\ii} \cdot \nu_{E}), (\nabla u_{\ee} \cdot \nu_{E}) \in L^{\infty}_{\rm loc}(\redb E; \Haus{n - 1})$
such that, for any $v \in C^{0}(\Omega)$ satisfying $\nabla v \in L^{1}_{\rm loc}(\Omega; \R^{n})$
and $\mathrm{supp}(\chi_{E} v) \Subset \Omega$,
\begin{align}
&\int_{E^{1}} v \, \dr \Delta u + \int_{E} \nabla v \cdot \nabla u \, \dr x
  = - \int_{\redb E} v (\nabla u_{\ii} \cdot \nu_{E}) \, \dr \Haus{n - 1},\label{Green_id_1} \\
&\int_{E^{1} \cup \redb E} v \, \dr \Delta u + \int_{E} \nabla v \cdot \nabla u \, \dr x
  = - \int_{\redb E} v (\nabla u_{\ee} \cdot \nu_{E}) \, \dr \Haus{n - 1}. \label{Green_id_2}
\end{align}
For any open set $U \Subset \Omega$, the following estimates hold{\rm :}
\begin{align}
\| \nabla u_{\ii} \cdot \nu_{E} \|_{L^{\infty}(\redb E \cap U; \Haus{n  - 1})}
& \le \| \nabla u \|_{L^{\infty}(U \cap E; \R^{n})}, \label{norm_trace_grad_bound_int} \\
\| \nabla u_{\ee} \cdot \nu_{E} \|_{L^{\infty}(\redb E \cap U; \Haus{n  - 1})}
& \le \| \nabla u \|_{L^{\infty}(U \setminus E; \R^{n})}. \label{norm_trace_grad_bound_ext}
\end{align}
In addition, if $v \in \Lip_{\rm loc}(\Omega)$ with $\Delta v \in \mathcal{M}_{\rm loc}(\Omega)$,
and $\mathrm{supp}(\chi_{E} v), \mathrm{supp}(\chi_{E} u) \Subset \Omega$,
then the following formulas hold{\rm :}
\begin{align}
&\int_{E^{1}} v \, \dr \Delta u - u \, \dr \Delta v
  = - \int_{\redb E} \big(v (\nabla u_{\ii} \cdot \nu_{E}) - u (\nabla v_{\ii} \cdot \nu_{E})\big) \, \dr \Haus{n - 1},\label{Green_id_3} \\
&\int_{E^{1} \cup \redb E} v \, \dr \Delta u - u \, \dr \Delta v
  = - \int_{\redb E} \big(v (\nabla u_{\ee} \cdot \nu_{E}) - u (\nabla v_{\ee} \cdot \nu_{E})\big) \, \dr \Haus{n - 1}. \label{Green_id_4}
\end{align}
In particular, if $\mathrm{supp}(\chi_{E} u) \Subset \Omega$, then
\begin{align}
&\int_{E^{1}} u \, \dr \Delta u + \int_{E} |\nabla u|^{2} \, \dr x
  = - \int_{\redb E} u (\nabla u_{\ii} \cdot \nu_{E}) \, \dr \Haus{n - 1}, \label{Green_id_5}  \\
&\int_{E^{1} \cup \redb E} u \, \dr \Delta u + \int_{E} |\nabla u|^{2} \, \dr x
= - \int_{\redb E} u (\nabla u_{\ee} \cdot \nu_{E}) \, \dr \Haus{n - 1}. \label{Green_id_6}
\end{align}
\end{proposition}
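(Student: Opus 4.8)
The plan is to apply Proposition \ref{normal trace p infty} to the vector field $\FF = \nabla u$. Since $u \in \Lip_{\rm loc}(\Omega)$, we have $\nabla u \in L^{\infty}_{\rm loc}(\Omega; \R^{n})$, and the assumption $\Delta u \in \mathcal{M}_{\rm loc}(\Omega)$ says precisely that $\div(\nabla u)$ is a locally finite Radon measure; hence $\nabla u \in \DM^{\infty}_{\rm loc}(\Omega)$. By Proposition \ref{normal trace p infty} (that is, \cite[Theorem 5.3]{ctz} together with \cite[Theorem 4.2]{comi2017locally}), for the set of locally finite perimeter $E$ there exist interior and exterior normal traces $(\nabla u_{\ii} \cdot \nu_{E}),\,(\nabla u_{\ee} \cdot \nu_{E}) \in L^{\infty}_{\rm loc}(\redb E; \Haus{n - 1})$, and, taking $\phi = v$ in \eqref{G-G phi Sobolev int}--\eqref{G-G phi Sobolev ext} — which is legitimate exactly under the stated hypotheses $v \in C^{0}(\Omega)$, $\nabla v \in L^{1}_{\rm loc}(\Omega;\R^{n})$, $\mathrm{supp}(\chi_{E}v)\Subset\Omega$ — the identities \eqref{Green_id_1}--\eqref{Green_id_2} follow immediately, since $\int_{E}\nabla v\cdot\nabla u\,\dr x = \int_{E}\nabla u\cdot\nabla v\,\dr x$.

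For the bounds \eqref{norm_trace_grad_bound_int}--\eqref{norm_trace_grad_bound_ext}, I would return to the construction of the normal traces in \cite[Theorem 5.3]{ctz}: on each open set $U \Subset \Omega$ the interior trace is the weak$^{*}$ limit in $L^{\infty}(\redb E \cap U; \Haus{n-1})$ of classical normal traces of $\nabla u$ over smooth sets approximating $E^{1}$ from the interior (and the exterior trace analogously, approximating $E^{1}\cup\redb E$ from the exterior), while the approximating fields are uniformly bounded by $\|\nabla u\|_{L^{\infty}(U\cap E;\R^{n})}$ (resp.\ $\|\nabla u\|_{L^{\infty}(U\setminus E;\R^{n})}$); lower semicontinuity of the $L^{\infty}$-norm under weak$^{*}$ convergence then yields \eqref{norm_trace_grad_bound_int}--\eqref{norm_trace_grad_bound_ext}. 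Equivalently, these estimates can be read off from the averaged representation of Remark \ref{Traceneighborhood} applied to $\chi_{E}\nabla u$, using $|\nabla u\cdot\nabla d|\le\|\nabla u\|_{L^{\infty}}$ on the one-sided tubular neighborhoods together with $|\nabla d|=1$ a.e.

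For the second Green's identities \eqref{Green_id_3}--\eqref{Green_id_4}, I would use \eqref{Green_id_1} (resp.\ \eqref{Green_id_2}) twice: once with field $\nabla u$ and test function $v$, and once with field $\nabla v$ and test function $u$ — both applications are valid because $v \in \Lip_{\rm loc}(\Omega)$ with $\Delta v \in \mathcal{M}_{\rm loc}(\Omega)$ forces $\nabla v \in \DM^{\infty}_{\rm loc}(\Omega)$, and $\mathrm{supp}(\chi_{E}v),\,\mathrm{supp}(\chi_{E}u)\Subset\Omega$ guarantee the required compactness — and then subtract, so that the symmetric bulk term $\int_{E}\nabla u\cdot\nabla v\,\dr x$ cancels. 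Finally, \eqref{Green_id_5}--\eqref{Green_id_6} are the special case $v = u$ of \eqref{Green_id_1}--\eqref{Green_id_2}, using $\nabla u\cdot\nabla u=|\nabla u|^{2}$.

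The only genuinely delicate step is the sharp estimate \eqref{norm_trace_grad_bound_int}--\eqref{norm_trace_grad_bound_ext}: Proposition \ref{normal trace p infty} records only $L^{\infty}_{\rm loc}$-membership, so one has to track the uniform $L^{\infty}$ bound through the approximation procedure (either the one of \cite{ctz} or the one underlying Theorem \ref{interior normal trace} and Remark \ref{Traceneighborhood}); everything else is a direct specialization of Gauss--Green formulas already proved.
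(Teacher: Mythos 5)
Your proposal is correct and follows essentially the same route as the paper: apply Proposition \ref{normal trace p infty} to $\FF = \nabla u \in \DM^{\infty}_{\rm loc}(\Omega)$ with $\phi = v$ for \eqref{Green_id_1}--\eqref{Green_id_2}, swap the roles of $u$ and $v$ and subtract for \eqref{Green_id_3}--\eqref{Green_id_4}, and set $v = u$ for \eqref{Green_id_5}--\eqref{Green_id_6}; the bounds \eqref{norm_trace_grad_bound_int}--\eqref{norm_trace_grad_bound_ext} are simply attributed to \cite[Theorem 5.3]{ctz} and \cite[Theorem 4.2]{comi2017locally}, which establish them along the lines you describe. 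One small caution: the secondary route you sketch via Remark \ref{Traceneighborhood} does not apply as stated, since that remark concerns open or compact sets and their signed-distance tubular neighborhoods, whereas $E$ here is a general set of locally finite perimeter, so the one-sided approximation of \cite{ctz} (not the distance function) is the correct mechanism behind the sharp $L^{\infty}$ bound.
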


\begin{proof}
\, Since $\nabla u \in \DM^{\infty}_{\rm loc}(\Omega)$, the existence of interior and exterior normal
traces in $L^{\infty}_{\rm loc}(\redb E; \Haus{n - 1})$
and estimates \eqref{norm_trace_grad_bound_int}--\eqref{norm_trace_grad_bound_ext} follow
from \cite[Theorem 4.2]{comi2017locally} and Proposition \ref{normal trace p infty}.
Analogously, \eqref{Green_id_1}--\eqref{Green_id_2} are an immediate consequence
of \eqref{G-G phi Sobolev int}--\eqref{G-G phi Sobolev ext},
with $\FF = \nabla u$ and $\phi = v$.

In addition, if $\mathrm{supp}(\chi_{E} u) \Subset \Omega$
and $v \in \Lip_{\rm loc}(\Omega)$ with $\Delta v \in \mathcal{M}_{\rm loc}(\Omega)$,
then we can exchange the role of $u$ and $v$ in \eqref{Green_id_1} and \eqref{Green_id_2}:
\begin{align}
&\int_{E^{1}} u \, \dr \Delta v + \int_{E} \nabla v \cdot \nabla u \, \dr x
  = - \int_{\redb E} u (\nabla v_{\ii} \cdot \nu_{E}) \, \dr \Haus{n - 1},\label{Green_id_1_v}  \\
&\int_{E^{1} \cup \redb E} v \, \dr \Delta v + \int_{E} \nabla v \cdot \nabla u \, \dr x
 = - \int_{\redb E} u (\nabla v_{\ee} \cdot \nu_{E}) \, \dr \Haus{n - 1}.\label{Green_id_2_v}
\end{align}
Thus, it suffices to subtract \eqref{Green_id_1_v} from \eqref{Green_id_1}
to obtain \eqref{Green_id_3}, and to subtract \eqref{Green_id_2_v} from \eqref{Green_id_2} to obtain \eqref{Green_id_4}.
Finally, choosing $u = v$ in \eqref{Green_id_1}--\eqref{Green_id_2},
we obtain \eqref{Green_id_5}--\eqref{Green_id_6}.
\end{proof}

\section{\, Normal Traces for Open Sets as the Limits of the Classical Normal Traces for Smooth Sets}

In this section, we show that the approximations
of a general open set $U$ can be refined in such a way that $\ban{\FF \cdot \nu, \cdot}_{\partial U}$
and $\ban{\FF \cdot \nu, \cdot}_{\partial \overline{U}}$ can be regarded as the limits of the classical normal traces on the boundaries of smooth sets.
In the case that the open set $U$ has continuous boundary,
we can exhibit explicit approximating families of open sets with smooth boundary as deformations to the open set $U$.

\subsection{\, The general case}

In order to achieve the smooth approximation, we recall another remarkable result concerning the approximation
of any open set by an increasing sequence of open sets with smooth boundary,
a very simple proof of which was given by Daners \cite[Proposition 8.2.1]{Daners_2008}.

\begin{proposition} \label{smooth approx general open}
Let $U \subset \R^{n}$ be an open set.
Then there exists a sequence of bounded open sets $U_{k}$ with boundary of class $C^{\infty}$
such that $U_{k} \Subset U_{k + 1} \Subset U$ and $\bigcup_{k} U_{k} = U$.
\end{proposition}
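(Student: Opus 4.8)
The plan is to reduce the statement to two classical ingredients: the existence of a smooth \emph{exhaustion function} on $U$ together with Sard's theorem. First I would dispose of the trivial cases: if $U=\emptyset$ there is nothing to prove, and if $U=\R^n$ one simply takes $U_k=B(\mathbf 0,k)$. In the remaining case, the goal is to construct a function $f\in C^\infty(U)$ with $f\ge 0$ such that, for every $t\ge 0$, the sublevel set $\{x\in U:f(x)\le t\}$ is a compact subset of $U$, and such that $f(x)<\infty$ at every point, so that these sublevel sets exhaust $U$ as $t\to\infty$.

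To build $f$, I would fix a countable, locally finite open cover $\{B_j\}_{j\ge 1}$ of $U$ with each $B_j\Subset U$ (for instance, small balls; such a cover exists since $\R^n$ is second countable and locally compact), together with a subordinate smooth partition of unity $\{\psi_j\}_{j\ge 1}$, and set $f:=\sum_{j\ge 1} j\,\psi_j$. Local finiteness makes this a locally finite sum, so $f\in C^\infty(U)$ and $f$ is everywhere finite. Since $\psi_j(x)>0$ implies $x\in B_j$ and $\sum_j\psi_j\equiv 1$ on $U$, any $x$ with $f(x)\le t$ must satisfy $\psi_j(x)>0$ for some $j\le t$; hence $\{f\le t\}\subset\bigcup_{j\le t}\overline{B_j}$, which is a finite union of compact sets contained in $U$. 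Being closed in $U$ and contained in this compact set, the sublevel set $\{f\le t\}$ is itself compact in $U$, as desired.

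Next I would invoke Sard's theorem: the critical values of $f\in C^\infty(U)$ form a Lebesgue-negligible subset of $\R$, so I can pick a strictly increasing sequence of regular values $t_k\to\infty$ and define $U_k:=\{x\in U:f(x)<t_k\}$. Each $U_k$ is open with $\overline{U_k}\subset\{f\le t_k\}$, hence bounded and compactly contained in $U$; since $t_k<t_{k+1}$, one gets $\overline{U_k}\subset\{f\le t_k\}\subset U_{k+1}$, so $U_k\Subset U_{k+1}\Subset U$; and $\bigcup_k U_k=\bigcup_k\{f<t_k\}=U$ because $t_k\to\infty$ and $f<\infty$ pointwise. Finally, a point $x\in\partial U_k$ is a limit of points of $U_k$, so $f(x)\le t_k$, while $x\notin U_k$ forces $f(x)\ge t_k$; moreover $x\in\overline{U_k}\subset U$, so $f$ is defined at $x$. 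Hence $\partial U_k=f^{-1}(t_k)$, which, $t_k$ being a regular value of $f$, is a $C^\infty$ hypersurface (possibly empty, in which case $U_k$ may be dropped from the sequence).

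The part needing the most care is the \emph{properness} of the sublevel sets of $f$, which is exactly where the identity $\sum_j\psi_j\equiv 1$ together with the local finiteness of the cover is used; once this is in place, the rest is routine. A slightly shorter but less self-contained alternative would be to take $f$ to be a smooth regularization of $x\mapsto |x|^2+1/\dist(x,\partial U)$ and argue as above, but the partition-of-unity construction is entirely elementary and avoids quoting results on regularized distances.
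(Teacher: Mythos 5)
Your proof is correct and self-contained. The paper itself does not reproduce a proof of this proposition but cites Daners [Proposition 8.2.1], whose argument is also built on applying Sard's theorem to a smooth proper function on $U$ and taking sublevel sets at regular values; your partition-of-unity exhaustion function $f=\sum_j j\,\psi_j$ is a standard, slightly more elementary choice of such a function in place of a mollified distance, and the rest of your argument (properness of $\{f\le t\}$, compact nesting of $\{f<t_k\}$, identification $\partial U_k=f^{-1}(t_k)$ at a regular value) is carried out carefully and correctly.
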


We can use this result to extend Theorem \ref{interior normal trace},
via showing that the normal trace can be approximated by a sequence of the classical normal traces on smooth boundaries.

\begin{theorem} \label{interior trace smooth general}
Let $U \subset \Omega$ be a bounded open set, and let $\FF \in \DM^p(\Omega)$.
Then, for any $\phi \in C^{0}(\Omega) \cap L^{\infty}(\Omega)$ with $\nabla \phi \in L^{p'}(\Omega; \R^{n})$,
there exists a sequence of bounded open sets $U_{k}$ with boundary of class $C^{\infty}$ such that $U_{k} \Subset U$, $\bigcup_{k} U_{k} = U$, and
\begin{equation}
\label{main1 smooth version}
\ban{\FF \cdot \nu, \phi}_{\partial U} = - \lim_{k \to \infty} \int_{\partial U_{k}} \phi \FF \cdot \nu_{U_{k}} \,\, \dr\Haus{n - 1},
\end{equation}
 where $\nu_{U_{k}}$ is the inner unit normal to $U_{k}$.
In addition, \eqref{main1 smooth version} holds also for any open set $U \subset \Omega$,
provided that $\mathrm{supp}(\phi) \cap U^{\delta} \Subset \Omega$ for any $\delta > 0$.
\end{theorem}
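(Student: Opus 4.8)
The plan is to combine the smooth exhaustion of an arbitrary open set provided by Proposition~\ref{smooth approx general open} with the Gauss--Green identity \eqref{G-G_eps_open}, which is already available for the level sets of a distance function. Daners' sets are smooth, but since $\FF$ is only an $L^p$--field it need not possess an $\Haus{n-1}$--integrable trace on a given smooth hypersurface; the remedy is to perturb each Daners set \emph{inward} to a nearby smooth set, realized as a good level set of the signed distance from its boundary, on which \eqref{G-G_eps_open} does apply.

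First I would fix, via Proposition~\ref{smooth approx general open}, bounded open sets $V_k$ with boundary of class $C^\infty$ such that $V_k\Subset V_{k+1}\Subset U$ and $\bigcup_k V_k=U$; since each $V_k$ is bounded with $\overline{V_k}\subset U\subset\Omega$, one has $V_k\Subset\Omega$. For each $k$, let $d_k$ be the signed distance from $\partial V_k$ (positive in $V_k$, as in \S 5), and set $V_k^t:=\{d_k>t\}$; since $\partial V_k$ is a compact hypersurface of class $C^\infty$, there is $\tau_k>0$ such that $d_k$ is $C^\infty$ with $|\nabla d_k|\equiv 1$ on the collar $\{|d_k|<\tau_k\}$, so that, for every $t\in(0,\tau_k)$, the set $V_k^t$ is a bounded open set with $C^\infty$ boundary $\partial V_k^t=\{d_k=t\}$, satisfies $V_k^t\Subset V_k$, and has $\nabla d_k$ as inner unit normal on $\partial V_k^t$. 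Applying \eqref{G-G_eps_open} together with Lemma~\ref{boundary tubular ngb} to the bounded open set $V_k\Subset\Omega$ (with $\FF$ and the given $\phi$) I obtain a Lebesgue--null set $\mathcal N_k\subset\R$ such that, for every $t\in(0,\tau_k)\setminus\mathcal N_k$,
\begin{equation*}
\int_{V_k^t}\phi\,\dd\div\FF+\int_{V_k^t}\FF\cdot\nabla\phi\,\dr x=-\int_{\partial V_k^t}\phi\,\FF\cdot\nabla d_k\,\dr\Haus{n-1};
\end{equation*}
by the product rule \eqref{product rule}, the left-hand side equals $\div(\phi\FF)(V_k^t)$.

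The main work is the inductive choice of the parameters $t_k$: pick $t_k\in(0,\tau_k)\setminus\mathcal N_k$ with $t_k<1/k$ and, for $k\ge 2$, also $t_k<t_{k-1}$ and $t_k<\dist(\overline{V_{k-1}},\partial V_k)$ (the latter being positive); this is possible because $(0,\tau_k)\setminus\mathcal N_k$ has full measure in every subinterval of $(0,\tau_k)$. Setting $U_k:=V_k^{t_k}$, one then checks that: $U_k$ is a bounded open set with boundary of class $C^\infty$ and $U_k\Subset V_k\Subset U$; the sets are nested, $U_k\subset V_k\subset V_{k+1}^{t_{k+1}}=U_{k+1}$, since every $x\in V_k$ satisfies $\dist(x,\partial V_{k+1})\ge\dist(\overline{V_k},\partial V_{k+1})>t_{k+1}$; and $\bigcup_k U_k=U$, since for $x\in U$ one chooses $r>0$ with $\overline{B(x,r)}\subset U$, obtains $\overline{B(x,r)}\subset V_N$ for some $N$ by compactness, so that $\dist(x,\partial V_k)\ge r$ for all $k\ge N$, whence $x\in U_k$ as soon as also $t_k<r$. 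Moreover, by Lemma~\ref{boundary tubular ngb} and the smoothness of $V_k^{t_k}$, the inner unit normal $\nu_{U_k}$ to $U_k$ on $\partial U_k$ coincides with $\nabla d_k$, so the identity above reads $\div(\phi\FF)(U_k)=-\int_{\partial U_k}\phi\,\FF\cdot\nu_{U_k}\,\dr\Haus{n-1}$.

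Finally, since $\phi\FF\in\DM^p(\Omega)$ by Proposition~\ref{product rule p}, the measure $\mu:=\div(\phi\FF)$ is finite on $\Omega$; hence, as $U_k\nearrow U$, continuity of $\mu$ gives $\mu(U_k)\to\mu(U)$. Combining this with the identity above and the definition \eqref{normal trace def} of the normal trace, extended as in Remark~\ref{normal_trace_functional_fract_Sobolev}, yields
\begin{align*}
-\lim_{k\to\infty}\int_{\partial U_k}\phi\,\FF\cdot\nu_{U_k}\,\dr\Haus{n-1}
&=\lim_{k\to\infty}\mu(U_k)=\mu(U)\\
&=\int_U\phi\,\dd\div\FF+\int_U\FF\cdot\nabla\phi\,\dr x=\ban{\FF\cdot\nu,\phi}_{\partial U},
\end{align*}
which is \eqref{main1 smooth version}. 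For the last assertion the argument is essentially unchanged: Proposition~\ref{smooth approx general open} still produces bounded $C^\infty$ sets $V_k\Subset V_{k+1}\Subset U$ with $\bigcup_k V_k=U$ and $V_k\Subset\Omega$, and $\phi\FF\in\DM^p(\Omega)$ keeps $\mu$ finite. I expect the genuinely substantial point to be conceptual — recognizing that Daners' smooth sets must be replaced by the inward level sets $\{d_k=t_k\}$, chosen among the a.e.\ values for which \eqref{G-G_eps_open} and Lemma~\ref{boundary tubular ngb} hold — whereas the parameter bookkeeping that secures nestedness and $\bigcup_k U_k=U$ is routine.
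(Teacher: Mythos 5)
Your proof is correct and takes essentially the same route as the paper: apply Daners' exhaustion (Proposition~\ref{smooth approx general open}) to obtain a nested sequence of smooth subdomains, then replace each by an inward level set $\{d_k = t_k\}$ of its signed distance function for a good parameter $t_k$ in the full-measure set where \eqref{G-G_eps_open}/Lemma~\ref{boundary tubular ngb} apply and the level set is still a smooth hypersurface, and pass to the limit by continuity from below of the finite measure $\div(\phi\FF)$. The paper packages the bookkeeping a bit differently — it notes that the double-indexed family $U_m^{\eps_j}$ is monotone in both $m$ and $j$ (using that $d_m \le d_{m+1}$ on $U_m$ when $U_m \Subset U_{m+1}$) and extracts a diagonal subsequence — whereas your inductive choice of $t_k$ with $t_k < \dist(\overline{V_{k-1}},\partial V_k)$ achieves the same nestedness more explicitly; these are cosmetic variants of the same argument.
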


\begin{proof}
\, We just need to apply Proposition \ref{smooth approx general open} to $U$
in order to obtain an approximating sequence of smooth sets $U_{m}$,
and then argue as in the proof of Theorem \ref{interior normal trace}
with respect to any $U_{m}$.

We note that sets $U_{m}^{\eps}$ have smooth boundaries, for any $0 < \eps < \delta_{m}$, for some $\delta_{m}$ sufficiently small.
Indeed, the (signed) distance function $d_{m}$ from $\partial U_{m}$ is smooth in $U_{m} \setminus \overline{U_{m}^{\delta_{m}}}$
and satisfies $\nabla d_{m}(x) = \nu_{U_{m}^{\eps}}(x)$ for any $x \in \partial U_{m}^{\eps}$,
which implies that $|\nabla d_{m}(x)| = 1$ for any $x \in U_{m} \setminus \overline{U_{m}^{\delta_{m}}}$
(for a proof of these facts, we refer to \cite[Appendix B]{giusti1984minimal} and \cite[Lemma 14.16]{gilbarg2015elliptic}).
Therefore, the level sets $\{ d_{m} = \eps \} = \partial U_{m}^{\eps}$ are smooth for any $\eps \in [0, \delta_{m})$.

Then we obtain a sequence of open bounded sets $U_{m}^{\eps_{j}}$ with smooth boundary satisfying
\begin{equation} \label{m, j approx eq}
\int_{U_{m}^{\eps_{j}}} \phi \, \dd \div \FF + \int_{U_{m}^{\eps_{j}}} \FF \cdot \nabla \phi \, \dr x
= - \int_{\partial U_{m}^{\eps_{j}}} \phi \FF \cdot \nu_{U_{m}^{\eps_{j}}} \,\, \dd \Haus{n - 1}
\end{equation}
for some decreasing sequence $\eps_{j} \to 0$ and any $m, j \in \N$.

Clearly, $U^{\eps_{j}}_{m} \Subset U^{\eps_{j}}_{m + 1}$ and $U^{\eps_{j}}_{m} \Subset U^{\eps_{j + 1}}_{m}$,
so that we can find a subsequence $U^{\eps_{k}}_{k} =: U_{k}$ satisfying $U_{k} \Subset U_{k + 1}$, $U_{k} \Subset U$,
and $\bigcup_{k} U_{k} = U$.
Therefore, we can pass to the limit on the left-hand side of \eqref{m, j approx eq}
by the Lebesgue theorem to obtain \eqref{main1 smooth version}
(in the case that $U$ is not bounded, we employ the condition on the support of $\phi$).
\end{proof}

Similarly, an analogous kind of approximation can also be shown for closed sets.

\begin{proposition} \label{smooth approx general closed}
Let $C \subset \R^{n}$ be a closed set.
Then there exists a sequence of closed sets $C_{k}$ with boundary of class $C^{\infty}$ such that
$C_{k} \supset \mathring{C}_{k} \supset C_{k + 1} \supset \mathring{C}_{k + 1} \supset C$ and $\bigcap_{k} C_{k} = C$.
In addition, if $C$ is bounded, then  the closed sets $C_{k}$ can be chosen to be bounded.
\end{proposition}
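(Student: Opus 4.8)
The plan is to obtain the statement from Proposition \ref{smooth approx general open} by passing to complements, and then to deal with the boundedness refinement by a separate, direct construction, since the complement construction necessarily yields unbounded sets. For the general assertion I would set $U := \R^{n} \setminus C$, which is open, apply Proposition \ref{smooth approx general open} to get bounded open sets $U_{k}$ of class $C^{\infty}$ with $U_{k} \Subset U_{k+1} \Subset U$ and $\bigcup_{k} U_{k} = U$, and define $C_{k} := \R^{n} \setminus U_{k}$. Then $C_{k}$ is closed with $\partial C_{k} = \partial U_{k}$ of class $C^{\infty}$ and $\mathring{C}_{k} = \R^{n} \setminus \overline{U_{k}}$, so the inclusions $\overline{U_{k}} \subset U_{k+1} \subset U$ translate immediately into $C_{k+1} \subset \mathring{C}_{k}$ and $\mathring{C}_{k+1} \supset C$, while $\bigcap_{k} C_{k} = \R^{n} \setminus \bigcup_{k} U_{k} = C$. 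This settles everything except boundedness, and it cannot be fixed within this approach because each $U_{k}$ is bounded, so each $C_{k}$ is unbounded.

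For the bounded case I would instead argue directly (assuming $C \ne \emptyset$, the empty case being trivial). Consider the distance $d_{C}(x) := \dist(x, C)$, which is $1$--Lipschitz with $d_{C}^{-1}(0) = C$, and its mollifications $d_{C}^{\eps} := d_{C} \ast \eta_{\eps}$, which are of class $C^{\infty}$ and satisfy $|d_{C}^{\eps} - d_{C}| \le \eps$ on $\R^{n}$. Choosing $\eps_{k} := 8^{-k}$ and, by Sard's theorem, a regular value $t_{k} \in (2\eps_{k}, 3\eps_{k})$ of $d_{C}^{\eps_{k}}$ — every value in that interval being attained once $k$ is large enough, since $d_{C}^{\eps_{k}} \le \eps_{k}$ on $C$ and $d_{C}^{\eps_{k}}(y) \ge d_{C}(y) - \eps_{k}$ is larger than $3\eps_{k}$ somewhere — I would set
\begin{equation*}
C_{k} := \{ x \in \R^{n} : d_{C}^{\eps_{k}}(x) \le t_{k} \}.
\end{equation*}
Since $t_{k}$ is a regular value and the level set $\{ d_{C}^{\eps_{k}} = t_{k} \}$ is nonempty, $C_{k}$ is closed with $C^{\infty}$ boundary $\partial C_{k} = \{ d_{C}^{\eps_{k}} = t_{k} \}$ and $\mathring{C}_{k} = \{ d_{C}^{\eps_{k}} < t_{k} \}$. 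Boundedness of $C_{k}$ is forced by $d_{C}^{\eps_{k}} \ge d_{C} - \eps_{k} \to \infty$ as $|x| \to \infty$; the inclusion $C \subset \{ d_{C}^{\eps_{k}} \le \eps_{k} \} \subset \mathring{C}_{k}$ is immediate; and $\bigcap_{k} C_{k} = C$ follows because, for $x \notin C$, one has $d_{C}^{\eps_{k}}(x) \ge d_{C}(x) - \eps_{k} > t_{k}$ for all large $k$.

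The one point that needs care — and the main obstacle — is the strict nesting $C_{k+1} \subset \mathring{C}_{k}$: the $O(\eps)$ error produced by the mollification must be beaten by the gap between consecutive levels, which is why the geometric choice $\eps_{k} = 8^{-k}$ is made. Concretely, if $d_{C}^{\eps_{k+1}}(x) \le t_{k+1} < 3\eps_{k+1}$, then $d_{C}(x) < 4\eps_{k+1} = \tfrac{1}{2}\eps_{k}$, hence $d_{C}^{\eps_{k}}(x) < \tfrac{3}{2}\eps_{k} < 2\eps_{k} < t_{k}$, so $x \in \mathring{C}_{k}$; combined with $\mathring{C}_{k} \supset C_{k+1} \supset \mathring{C}_{k+1} \supset C$ this gives the full chain of inclusions. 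I would also note that this construction does not in fact use the boundedness of $C$, so it provides an alternative proof of the general case as well.
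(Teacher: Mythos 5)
Your first part --- passing to complements $C_k := \R^n \setminus U_k$ via Proposition \ref{smooth approx general open} --- coincides with the paper's argument. For the bounded refinement the two routes genuinely diverge. The paper stays within the complement construction: for a fixed $\delta > 0$, writing $C_\delta := \{ \dist(\cdot, C) < \delta \}$, once $k$ is large enough that the compact set $\partial C_\delta$ is absorbed into $U_k$ (so $\partial U_k \cap \partial C_\delta = \emptyset$), it truncates by setting $C_k := \overline{C_\delta} \setminus U_k$; this makes $\partial C_k = \partial U_k \cap \overline{C_\delta}$ a union of connected components of the smooth hypersurface $\partial U_k$ and hence still a smooth closed hypersurface, although the paper leaves that verification implicit. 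You instead build bounded $C_k$ from scratch as the sublevel sets $\{ d_C^{\eps_k} \le t_k \}$ of a mollified distance, with $t_k \in (2\eps_k, 3\eps_k)$ a regular value chosen by Sard's theorem, the geometric scale $\eps_k = 8^{-k}$ beating the $O(\eps)$ mollification error so as to force the strict nesting $C_{k+1} \subset \mathring{C}_k$. Both arguments are correct and complete. Your construction is heavier machinery-wise (Sard plus mollification estimates) but is self-contained --- it does not rely on Proposition \ref{smooth approx general open} or on Daners' exhaustion --- and, as you observe, it works verbatim for unbounded $C$, so it yields an independent proof of the whole proposition rather than merely of the boundedness refinement. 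The paper's truncation is the more economical route given that Proposition \ref{smooth approx general open} is already in hand.
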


\begin{proof} \,  Let $U := \R^{n} \setminus C$.
Then it suffices to define $C_{k} := \R^{n} \setminus U_{k}$
and apply Proposition \ref{smooth approx general open} to $U$.
The result follows easily.

In the case that $C$ is bounded, then, for any $\delta > 0$,
there exists $k_{0}$ large enough such that $\partial U_{k} \cap \partial C_{\delta} = \emptyset$ for any $k \ge k_{0}$,
where $C_{\delta} = \{ x \in \R^{n} : \dist(x, C) < \delta \}$.
Then we set $C_{k} = \overline{C_{\delta}} \setminus U_{k}$, up to relabeling the sequence $U_{k}$ in such a way that it starts from $k_{0}$.
\end{proof}

Arguing similarly as before, Proposition \ref{smooth approx general closed}
can be used to represent the exterior normal trace as the limit of the classical normal traces on smooth boundaries,
thus improving the result of Theorem \ref{exterior normal trace}.

\begin{theorem} \label{exterior trace smooth general}
Let $U \Subset \Omega$ be an open set, and let $\FF \in \DM^p(\Omega)$.
Then, for any
$\phi \in C^{0}(\Omega)$ with $\nabla \phi \in L^{p'}(\Omega; \R^{n})$,
there exists a sequence of bounded open sets $V_{k}$ with boundary of class $C^{\infty}$
such that $U\Subset V_{k}\subset \Omega$, $\bigcap_{k} V_{k} = \overline{U}$, and
\begin{equation}
\label{main2 smooth version}
\ban{\FF \cdot \nu, \phi}_{\partial \overline{U}}
= - \lim_{k \to \infty} \int_{\partial V_{k}} \phi \FF \cdot \nu_{V_{k}} \,\, \dr \Haus{n - 1},
\end{equation}
where $\nu_{V_{k}}$ is the classical inner unit normal to $V_{k}$.
In addition, \eqref{main2 smooth version} holds also for any open set $U$ satisfying $\overline{U} \subset \Omega$,
provided that $\mathrm{supp}(\phi)$ is compact in $\Omega$.
\end{theorem}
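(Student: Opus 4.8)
The plan is to argue exactly as in the proof of Theorem~\ref{interior trace smooth general}, but starting from a smooth exhaustion of the \emph{complement} of $\overline{U}$ (Proposition~\ref{smooth approx general closed}) and then, on each term of that exhaustion, using the \emph{exterior} $\eps$-neighbourhoods to produce the exact Gauss-Green identity. First I would fix $\delta>0$ so small that the compact set $\{x\in\R^n:\dist(x,\overline U)\le\delta\}$ lies in $\Omega$, and a bounded open $\Omega'$ with $\{x:\dist(x,\overline U)\le\delta\}\subset\Omega'\Subset\Omega$; this is possible since $U\Subset\Omega$. On $\Omega'$ the function $\phi$ is continuous on the compact set $\overline{\Omega'}\subset\Omega$, hence bounded, so by Proposition~\ref{product rule p} one has $\phi\FF\in\DM^{p}(\Omega')$ and $\div(\phi\FF)=\phi\,\div\FF+\FF\cdot\nabla\phi$ is a \emph{finite} signed Radon measure on $\Omega'$. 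Applying Proposition~\ref{smooth approx general closed} to the bounded closed set $C:=\overline U$ with this $\delta$, one gets bounded closed sets $C_{m}$ of class $C^{\infty}$ with $C_{m}\subset\{x:\dist(x,\overline U)\le\delta\}\subset\Omega'$, $C_{m}\supset\mathring C_{m}\supset C_{m+1}\supset\mathring C_{m+1}\supset\overline U$, and $\bigcap_{m}C_{m}=\overline U$. Setting $V_{m}:=\mathring C_{m}$, each $V_{m}$ is a bounded open set with $C^{\infty}$ boundary, $\overline{V_{m+1}}\subset V_{m}\subset\Omega'$, $\overline U\subset V_{m}$ for every $m$, and $\bigcap_{m}V_{m}=\bigcap_{m}\overline{V_{m}}=\overline U$.

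Second, for each fixed $m$ I would run the argument in the proof of Theorem~\ref{exterior normal trace} with $V_{m}$ in place of $U$: writing $d_{m}$ for the signed distance from $\partial V_{m}$ and $(V_{m})_{\eps}:=\{d_{m}>-\eps\}=\{x:\dist(x,\overline{V_{m}})<\eps\}$, formula~\eqref{Gauss-Green-compact} gives, for $\Leb{1}$-a.e.\ $\eps>0$,
\begin{equation*}
\int_{\overline{(V_{m})_{\eps}}}\phi\,\dd\div\FF+\int_{(V_{m})_{\eps}}\FF\cdot\nabla\phi\,\dr x=-\int_{\partial (V_{m})_{\eps}}\phi\,\FF\cdot\nu_{(V_{m})_{\eps}}\,\dr\Haus{n-1}.
\end{equation*}
Since $\partial V_{m}$ is of class $C^{\infty}$, $d_{m}$ is smooth with $|\nabla d_{m}|=1$ in a two-sided tubular neighbourhood of $\partial V_{m}$ (see \cite[Lemma~14.16]{gilbarg2015elliptic} and \cite[Appendix~B]{giusti1984minimal}), so for all small $\eps>0$ the set $(V_{m})_{\eps}$ has $C^{\infty}$ boundary, $\partial(V_{m})_{\eps}=\redb(V_{m})_{\eps}$, and $\nu_{(V_{m})_{\eps}}=\nabla d_{m}$ there; hence the conclusions of Lemma~\ref{boundary tubular ngb} are automatic, and the only genuine restriction is the $\Leb{1}$-null set of $\eps$ for which $\div\FF$ (equivalently $\div(\phi\FF)$) charges $\partial(V_{m})_{\eps}$, coming from the identity above. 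Taking the union over $m\in\N$ of these null sets, I pick a single decreasing sequence $\eps_{k}\to0$ of good values, with $\eps_{k}$ small enough that $(V_{k})_{\eps_{k}}$ has smooth boundary and $(V_{1})_{\eps_{1}}\subset\Omega'$, so that the displayed identity holds simultaneously for every $(V_{m})_{\eps_{k}}$.

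Third, I would diagonalise exactly as in the proof of Theorem~\ref{interior trace smooth general}. Put $\widetilde V_{k}:=(V_{k})_{\eps_{k}}$, a bounded open set of class $C^{\infty}$. From $\overline{V_{m+1}}\subset\overline{V_{m}}$ (whence $(V_{m+1})_{\eps}\subset(V_{m})_{\eps}$) together with $\eps_{k+1}<\eps_{k}$ one checks $\overline{\widetilde V_{k+1}}\subset\widetilde V_{k}$, so $\{\widetilde V_{k}\}$ and $\{\overline{\widetilde V_{k}}\}$ are decreasing; moreover $\overline U\subset\overline{V_{k}}\subset\widetilde V_{k}\subset\Omega'$, and since $\overline{V_{k}}\searrow\overline U$ (decreasing compact sets) and $\eps_{k}\to0$, a short compactness argument gives $\bigcap_{k}\widetilde V_{k}=\bigcap_{k}\overline{\widetilde V_{k}}=\overline U$. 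Thus $U\Subset\widetilde V_{k}\subset\Omega$ and $\bigcap_{k}\widetilde V_{k}=\overline U$. Passing to the limit $k\to\infty$ in
\begin{equation*}
\int_{\overline{\widetilde V_{k}}}\phi\,\dd\div\FF+\int_{\widetilde V_{k}}\FF\cdot\nabla\phi\,\dr x=-\int_{\partial\widetilde V_{k}}\phi\,\FF\cdot\nu_{\widetilde V_{k}}\,\dr\Haus{n-1},
\end{equation*}
the first term on the left converges to $\int_{\overline U}\phi\,\dd\div\FF$ by continuity from above of the finite measures $(\phi\,\div\FF)^{\pm}$ on $\Omega'$ along the decreasing sequence $\overline{\widetilde V_{k}}$, and the second converges to $\int_{\overline U}\FF\cdot\nabla\phi\,\dr x$ by dominated convergence, since $\FF\cdot\nabla\phi\in L^{1}(\Omega')$ and $\chi_{\widetilde V_{k}}\to\chi_{\overline U}$ pointwise. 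Recalling that $\ban{\FF\cdot\nu,\phi}_{\partial\overline U}=\int_{\overline U}\phi\,\dd\div\FF+\int_{\overline U}\FF\cdot\nabla\phi\,\dr x$ by~\eqref{normal trace def}, this is~\eqref{main2 smooth version} with $V_{k}:=\widetilde V_{k}$. The extension to open sets $U$ with $\overline U\subset\Omega$ and $\mathrm{supp}(\phi)$ compact in $\Omega$ follows, as in the last part of the proof of Theorem~\ref{exterior normal trace}, by inserting a cutoff $\eta\in C^{\infty}_{c}(\Omega)$ with $\eta\equiv1$ on a neighbourhood of $\mathrm{supp}(\phi)$, so that $\div(\phi\FF)$ remains compactly supported, the localized test functions $\eta\,\xi^{V_{m}}_{\eps}$ lie in $\Lip_{c}(\Omega)$, and the same construction and diagonal argument go through.

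I expect the delicate point to be the bookkeeping in the diagonalisation: one must arrange the double array $(V_{m})_{\eps_{k}}$ so that the diagonal $\widetilde V_{k}$ is genuinely \emph{decreasing} with $\bigcap_{k}\overline{\widetilde V_{k}}=\overline U$, which is precisely what legitimizes the final passage to the limit via continuity from above for the \emph{signed} measure $\div(\phi\FF)$ — and this is exactly why it is essential to keep all the approximating sets inside a fixed bounded $\Omega'\Subset\Omega$ on which $\phi$ is bounded and hence $\div(\phi\FF)$ is finite.
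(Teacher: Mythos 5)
Your proof is correct and carries out precisely the argument the paper sketches in one line: apply Proposition \ref{smooth approx general closed} to $C=\overline U$ to obtain $V_m:=\mathring{C}_m$, form their exterior $\eps$-neighbourhoods as in the proof of Theorem \ref{exterior normal trace}, and diagonalize as in the proof of Theorem \ref{interior trace smooth general}. Your explicit attention to working inside a bounded $\Omega'\Subset\Omega$ (so that $\phi$ is bounded and $\div(\phi\FF)$ is a finite measure there) and to the monotone decrease of the diagonal sequence $\widetilde V_k$ supplies exactly the details the paper leaves implicit.
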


\begin{proof}
\, It suffices to  define $V_{k} := \mathring{C_{k}}$
and apply Proposition \ref{smooth approx general closed} to
$C = \overline{U}$.
Then the result follows in an analogous way as in the proof of Theorem \ref{interior trace smooth general}.
\end{proof}

\subsection{\, The case of $C^0$ open sets}
We now consider the question of constructing the interior and exterior normal traces
as the limit of classical normal traces over {\it smooth} approximations of the open bounded
set $U$ with $C^{0}$ boundary.
In general, as it has been explained in the introduction,
it is a challenging question to approximate an open (bounded) set $U$
with smooth domains $U^{\ve}$ essentially from the {\it inside}
in such a way that $\div \FF (U^{\ve}) \to \div \FF (U)$ and an {\it interior} Gauss-Green formula holds
for unbounded $\DM^p$--fields.
Indeed, in \S 5,
we have used the standard signed distance $d$ to obtain Theorems \ref{interior normal trace}
and \ref{exterior normal trace},
but the approximating sets are not smooth.
We have shown that such results
in Theorems \ref{interior trace smooth general} and \ref{exterior trace smooth general}
can be improved.
On the other hand, such an approximation is quite abstract,
and it gives little insight in the actual shape of the approximating sets.

\begin{remark} \label{remark_C^0_sets}
\, We observe that an open bounded set $U$ with $C^0$ boundary might not have finite perimeter{\rm ;}
such examples include von Koch's snowflake \cite{vonKoch}.
Also, we do not have a notion of unit normals to a $C^0$ open set.
Thus, such a type of sets is  more general in this sense.
On the other hand, an open set of finite perimeter may have really wild topological
boundary even with full Lebesgue measure {\rm (}see {\it e.g.} \cite [Example 12.25] {Maggi}{\rm )}
so that it is not a $C^{0}$ open set in general,
since it is well know that, if $\partial U$ can be seen locally as the graph of a continuous function,
then $|\partial U| = 0$.
\end{remark}

We now exhibit here a rather explicit family of open smooth sets approximating a given bounded open set with $C^{0}$ boundary
from both the interior and the exterior.
To this purpose, we consider a different type of distance, the regularized distance $\rho$,
which was introduced in Lieberman \cite{L}.

\begin{definition}
$\rho$ is a regularized distance for $U$  if the following conditions hold{\rm :}
\begin{enumerate}
\item[\rm (i)] $\rho \in C^2(\R^n \setminus \partial U)  \cap \textnormal{Lip}(\R^n)${\rm ;}
\item[\rm (ii)] The ratios $\frac{\rho(x)}{d(x)}$ and $\frac{d(x)}{\rho(x)}$ are positive and uniformly bounded
   for all $x \in \R^n \setminus \partial U$, where $d$ is the signed distance introduced in \S {\rm 5}.
\end{enumerate}
\end{definition}

It was proved by Lieberman \cite[Lemma 1.1] {L} that any open set $U$ has a regularized distance,
since the signed distance $d$ is a $1$-Lipschitz function (Lemma \ref{properties distance}).
Indeed, given any $\eta \in C^2(\R^n)$, $\textnormal{supp } \eta \subset B(0, 1)$,
and $\int_{\R^n} \eta(z) \dr z =1$,
we can define
\begin{equation}
\label{ladefinicion}
 G(x,\tau)= \int_{B(0, 1)}d( x - \frac{\tau}{2}z) \eta(z) \, \dr z.
\end{equation}

The regularized distance $\rho$ is then given
by the equation:
\begin{equation}
\label{ladefinicion2}
\rho(x)=G(x, \rho(x)),
\end{equation}
which has a unique solution for every $x \in \R^n$.
Moreover,
\begin{equation}
\frac{1}{2} \leq \frac{\rho(x)}{d(x)} \leq 2 \qquad  \textnormal{ for all } x \in \R^n \setminus \partial U.
\end{equation}
Then the following result holds, for which we refer to Lieberman \cite[Lemma 1.1, Corollary 1.2]{L} and the comments before it.

\begin{lemma}
Every open set $U$ has a regularized distance $\rho$.
Moreover, if  $\eta \in C^{\infty}(\R^n)$ is chosen for \eqref{ladefinicion},
then $\rho \in C^{\infty}(\R^n\setminus \partial U)$.
\end{lemma}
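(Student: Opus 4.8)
The plan is to construct $\rho$ as the unique solution of the implicit equation \eqref{ladefinicion2} by an elementary fixed-point argument exploiting that $d$ is $1$-Lipschitz, and then to upgrade its regularity away from $\partial U$ via the implicit function theorem applied to $F(x,\tau):=\tau-G(x,\tau)$.

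First I would record three estimates on the function $G$ from \eqref{ladefinicion}, all immediate consequences of Lemma~\ref{properties distance}: for all $x,y\in\R^n$ and $\tau,\tau_1,\tau_2\in\R$,
\begin{gather*}
|G(x,\tau)-d(x)|\le\tfrac{|\tau|}{2}\int_{B(0,1)}|z|\,\eta(z)\,\dr z\le\tfrac{|\tau|}{2},\\
|G(x,\tau_1)-G(x,\tau_2)|\le\tfrac{|\tau_1-\tau_2|}{2},\qquad
|G(x,\tau)-G(y,\tau)|\le|x-y|.
\end{gather*}
With these in hand, for each fixed $x$ the function $\tau\mapsto F(x,\tau)$ is continuous and strictly increasing, with $F(x,\tau_1)-F(x,\tau_2)\ge\tfrac12(\tau_1-\tau_2)$ whenever $\tau_1>\tau_2$, and $F(x,\tau)\to\pm\infty$ as $\tau\to\pm\infty$; hence \eqref{ladefinicion2} has a unique solution $\rho(x)$. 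Evaluating the first estimate at $\tau=\rho(x)$ gives $|\rho(x)-d(x)|\le\tfrac12|\rho(x)|$, which forces $\rho(x)$ to share the sign of $d(x)$ and yields the two-sided bound $\tfrac12\le\frac{\rho(x)}{d(x)}\le2$ for $x\in\R^n\setminus\partial U$, while $\rho(x)=0$ on $\partial U$ by uniqueness since $G(x,0)=d(x)=0$ there. Finally, subtracting the defining identities for $\rho(x)$ and $\rho(y)$ and using the last two estimates gives $|\rho(x)-\rho(y)|\le|x-y|+\tfrac12|\rho(x)-\rho(y)|$, so $\rho$ is $2$-Lipschitz on $\R^n$; this establishes properties (i)--(ii) of the definition of a regularized distance.

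For the smoothness assertion, fix $x_0\in\R^n\setminus\partial U$, so $d(x_0)\ne0$ and hence $\tau_0:=\rho(x_0)\ne0$, say $\tau_0>0$. The key point is that $G\in C^\infty(\R^n\times(0,\infty))$: on $\{\tau>0\}$, the change of variables $w=x-\tfrac{\tau}{2}z$ in \eqref{ladefinicion} yields
\begin{equation*}
G(x,\tau)=\Big(\tfrac{2}{\tau}\Big)^{\!n}\int_{\{|x-w|<\tau/2\}}d(w)\,\eta\!\Big(\tfrac{2(x-w)}{\tau}\Big)\,\dr w,
\end{equation*}
so that all the dependence on $(x,\tau)$ is carried by the kernel $(x,\tau)\mapsto\big(\tfrac{2}{\tau}\big)^{n}\eta\big(\tfrac{2(x-w)}{\tau}\big)$, which is $C^\infty$ for $\tau>0$ with all partial derivatives continuous and, locally uniformly in $(x,\tau)$, compactly supported in $w$; differentiation under the integral sign (here, and only here, using $\eta\in C^\infty$) then gives $G\in C^\infty$ on $\{\tau>0\}$, and symmetrically on $\{\tau<0\}$. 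Since $\partial_\tau F=1-\partial_\tau G$ and $|\partial_\tau G|\le\tfrac12$ by the Lipschitz-in-$\tau$ estimate above, we have $\partial_\tau F(x_0,\tau_0)\ge\tfrac12>0$, so the implicit function theorem applied to the identity $F(x,\rho(x))=0$ near $(x_0,\tau_0)$ shows that $\rho$ is $C^\infty$ in a neighbourhood of $x_0$. As $x_0$ was an arbitrary point of $\R^n\setminus\partial U$, we conclude $\rho\in C^\infty(\R^n\setminus\partial U)$.

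The only genuinely delicate step, I expect, is the joint $C^\infty$-regularity of $G$ on $\{\tau\ne0\}$, since $d$ itself is merely Lipschitz; the change of variables above is precisely what transfers all the $(x,\tau)$-dependence onto the smooth mollifying kernel and so legitimizes differentiating under the integral sign. The remaining ingredients — the contraction-type bounds, the strict monotonicity of $F(x,\cdot)$, and the ratio estimates — are routine.
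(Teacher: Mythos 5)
Your proof is correct and follows precisely the approach the paper sketches (and which is carried out in the cited reference of Lieberman): define $G$ from the mollified signed distance, obtain $\rho$ as the unique solution of $\rho=G(\cdot,\rho)$ via the contraction-in-$\tau$ estimate $|\partial_\tau G|\le\frac12$, derive the ratio and Lipschitz bounds, and then apply the implicit function theorem after observing that the change of variables $w=x-\frac{\tau}{2}z$ transfers all $(x,\tau)$-dependence onto the smooth kernel, so that $G$ is jointly smooth for $\tau\neq 0$. One small expositional point: at the end of your second paragraph you claim that the Lipschitz bound ``establishes properties (i)--(ii)'', but property (i) also requires $\rho\in C^2(\R^n\setminus\partial U)$, which only comes out of the (same) differentiation-under-the-integral plus implicit-function-theorem argument of your third paragraph applied with $\eta\in C^2$; the argument is present, just stated in the wrong order.
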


Even though every open set $U$ has a regularized distance, it is important to obtain
properties concerning the non-degeneracy of gradient $\nabla \rho$.
Indeed,
if the gradient of $\rho$ does not vanish in a neighborhood of $\partial U$, then
we can apply the techniques in \S 5 to obtain the {\it interior} and {\it exterior} Gauss-Green formulas
for $\DM^p$--fields.

The non-degeneracy of $\nabla \rho$ might not be true for general open sets $U$ (see \cite[Corollary 1.2]{L} and the comments following it).
However, it was proved by Ball-Zarnescu \cite[Proposition 3.1]{BallZarnescu} that this property holds for $C^0$ domains, which yields
the following result.

\begin{theorem}[Ball-Zarnescu]\label{nondegeneracy}
\, If $U$ is an open bounded set with $C^0$ boundary,
then $|\nabla \rho (x)| \neq 0$ for all $x$ in a neighborhood of $\partial U$ but $x \notin\partial U$.
\end{theorem}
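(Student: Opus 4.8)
The plan is to differentiate Lieberman's defining relation \eqref{ladefinicion2} and reduce the non-degeneracy of $\nabla\rho$ to a one-sided geometric property of the signed distance near the $C^{0}$ boundary. First, observe that for $\tau>0$ the function $G$ in \eqref{ladefinicion} is a mollification of $d$ at scale $\tau/2$: substituting $y=x-\tfrac{\tau}{2}z$ gives $G(x,\tau)=(d\ast\eta_{\tau/2})(x)$, so $G\in C^{\infty}$ in $(x,\tau)$ for $\tau>0$, with
\begin{equation*}
\nabla_{x}G(x,\tau)=\int_{B(0,1)}\nabla d\bigl(x-\tfrac{\tau}{2}z\bigr)\,\eta(z)\,\dr z,\qquad
\partial_{\tau}G(x,\tau)=-\tfrac12\int_{B(0,1)}\nabla d\bigl(x-\tfrac{\tau}{2}z\bigr)\cdot z\,\eta(z)\,\dr z.
\end{equation*}
Since $|\nabla d|\le 1$ a.e. and $|z|\le 1$ on $B(0,1)$, we get $|\partial_{\tau}G|\le\tfrac12$ — which is exactly Lieberman's bound making \eqref{ladefinicion2} uniquely solvable — so implicit differentiation of $\rho(x)=G(x,\rho(x))$ on $\R^{n}\setminus\partial U$ yields
\begin{equation*}
\nabla\rho(x)=\frac{\nabla_{x}G(x,\rho(x))}{1-\partial_{\tau}G(x,\rho(x))},\qquad 1-\partial_{\tau}G(x,\rho(x))\in[\tfrac12,\tfrac32].
\end{equation*}
Thus it suffices to prove $\nabla_{x}G(x,\rho(x))\ne 0$ for every $x\notin\partial U$ in a uniform neighbourhood of $\partial U$.

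Next I would fix such an $x$, say $x\in U$ with $d(x)$ small (the exterior case being symmetric under $d\mapsto -d$), and pick a nearest boundary point $P\in\partial U$, $|x-P|=d(x)$. Because $\partial U$ is $C^{0}$ and bounded, hence $\partial U$ compact, Definition \ref{gooddirection} provides a good direction $\nu=e_{n}$ at $P$, a radius $\delta$, and a continuous $f$ with $U\cap B(P,\delta)=\{y_{n}>f(y'),\,|y-P|<\delta\}$; by compactness these can be chosen with a common lower bound on $\delta$. For $d(x)$ small enough, the ball $B(x,\rho(x)/2)\subset\overline{B(x,d(x))}$ sits, up to the null sphere, inside $U\cap B(P,\delta)$, and the core claim is that
\begin{equation*}
\nabla d(y)\cdot e_{n}\ge 0\qquad\text{for }\Leb{n}\text{--a.e. }y\in B(x,\rho(x)/2),
\end{equation*}
with strict inequality on a set of positive measure. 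For the a.e. sign, at a differentiability point $y\in U$ one has $\nabla d(y)=(y-\pi(y))/d(y)$ with $\pi(y)=(w',f(w'))$ the unique nearest boundary point; if $y_{n}<f(w')$, then the point $(w',y_{n})\notin U$ lies at distance $|y'-w'|=\sqrt{d(y)^{2}-(f(w')-y_{n})^{2}}<d(y)$ from $y$, so the segment joining them meets $\partial U$ strictly inside $B(y,d(y))$, contradicting $\dist(y,\partial U)=d(y)$. Hence $\nabla d(y)\cdot e_{n}\ge 0$. Feeding this into the displayed identity for $\nabla_{x}G$ and using $\eta\ge 0$ gives $\nabla_{x}G(x,\rho(x))\cdot e_{n}>0$ once strict positivity on a non-negligible portion of the ball is secured, whence $\nabla\rho(x)\ne 0$; combined with the uniform $\delta$, this holds on a genuine neighbourhood of $\partial U$, and the case $x\notin\overline{U}$ is symmetric.

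I expect the main obstacle to be precisely the strict positivity of $\nabla d\cdot e_{n}$ on a positive-measure subset of $B(x,\rho(x)/2)$. A $C^{0}$ domain satisfies no interior or exterior cone condition, so one cannot place a supporting cone at $P$, and the nearest-point map $\pi$ need not be continuous, nor single-valued at $x$ itself. The argument must instead exploit that Lieberman's construction forces the averaging radius $\rho(x)/2$ to be comparable to $d(x)$ — via $\tfrac12\le\rho/d\le 2$ and Lemma \ref{properties distance} — so that $B(x,\rho(x)/2)$ always contains a fixed fraction, near the segment $[x,P]$, on which $\nabla d$ points strictly into $U$ along $e_{n}$; equivalently, one rules out $\nabla d\cdot e_{n}\equiv 0$ a.e. on the ball, which would make $\nabla d$ a.e. horizontal and contradict the graph structure of $\partial U$ together with $|\nabla d|=1$ a.e. Making these estimates quantitative, with constants independent of $x$ and of the boundary chart, is the delicate part; the remaining ingredients are the implicit function theorem and routine bookkeeping.
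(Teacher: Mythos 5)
The paper does not prove this theorem itself; it cites Ball--Zarnescu \cite[Proposition~3.1]{BallZarnescu}, so there is no internal proof to compare against. Your overall strategy is reasonable and in the right spirit: implicit differentiation of $\rho(x)=G(x,\rho(x))$ with $|\partial_\tau G|\le\frac{1}{2}$ reduces the claim to $\nabla_x G(x,\rho(x))\ne 0$; for this you pick a good direction $e_n$ at a nearest boundary point $P$ to $x$ and show the one-sided sign condition $\nabla d\cdot e_n\ge 0$ almost everywhere on the averaging ball $B(x,\rho(x)/2)$, via the horizontal-segment and intermediate-value argument in the $C^0$ chart at $P$. That part is correct, and using compactness of $\partial U$ to get a uniform chart radius is the right device for obtaining a genuine neighbourhood.

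The gap is the one you flag yourself: you never prove the strict inequality $\nabla_x G(x,\rho(x))\cdot e_n>0$, only $\ge 0$. The contradiction you sketch, namely that $\nabla d\cdot e_n\equiv 0$ a.e.\ on the ball ``would make $\nabla d$ a.e.\ horizontal and contradict the graph structure together with $|\nabla d|=1$ a.e.'', is a heuristic, not an argument; nothing you write explains why an a.e.\ horizontal unit gradient is inconsistent with the $C^0$ graph structure on a ball that stays inside $U$. The missing ingredient is the \emph{strict} monotonicity of $d$ along $e_n$ in the chart: a second application of the same intermediate-value idea, now comparing $y$ and $y+te_n$ for $t>0$ and distinguishing whether the nearest boundary point of $y+te_n$ lies above or below $y$, gives $d(y+te_n)>d(y)$. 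Since $t\mapsto d(y'+te_n)$ is $1$-Lipschitz, hence absolutely continuous, having $\partial_n d=0$ for a.e.\ $t$ on a vertical segment would make $d$ constant there, contradicting strict monotonicity; so $\partial_n d>0$ on a positive-measure set of each vertical line through the averaging ball, and the weighted average $\nabla_x G(x,\rho(x))\cdot e_n$ is strictly positive. With this inserted the proof closes. Incidentally, the ``quantitative, chart-independent estimates'' you describe as the delicate part are not actually needed for the pointwise non-vanishing claim; only the uniform chart radius is, and compactness already supplies it.
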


\begin{remark} \label{local statement nondegeneracy}
\, The argument of the proof of Theorem {\rm \ref{nondegeneracy}} relies on both the construction of a suitable good neighborhood
for any point of $\partial U$ and the use of the compactness assumption. Hence, we see that the local version of this result
also applies to general open sets with $C^{0}$ boundary{\rm :}
For any compact subset $K\subset \partial U$, there exists a suitable neighborhood $V$ of $K$
such that $|\nabla \rho(x)| \neq 0$ for any $x \in V \setminus \partial U$.
\end{remark}

Thanks to Ball-Zarnescu's theorem (Theorem \ref{nondegeneracy}),
we can proceed as in \S 5 to obtain an analogous statement,
by approximating $U$ and $\overline{U}$ in $\Omega$ with the following smooth sets:
\begin{align*}
U^{\ve, \rho} := \{ x \in \R^{n} : \rho(x) > \eps \}, \quad
U_{\ve, \rho} &:= \{ x \in \R^{n} : \rho(x) > - \eps \} \,\, \qquad\mbox{for $\ve > 0$}.
\end{align*}

\begin{theorem}[Interior normal trace via smooth approximations] \label{interior normal trace smooth}
\, Let $U \subset \Omega$ be a bounded open set with $C^{0}$ boundary,
and let $\FF \in \DM^p(\Omega)$ for $1 \le p \le \infty$.
Then, for any $\phi \in C^{0}(\Omega) \cap L^{\infty}(\Omega)$ with $\nabla \phi \in L^{p'}(\Omega; \R^{n})$,
there exists a set $\mathcal{N} \subset \R$ with $\mathcal{L}^1(\mathcal{N})=0$ such that,
for every nonnegative sequence $\{\ve_k\}$ satisfying $\ve_k \notin \mathcal{N}$ for any $k$ and $\ve_k \to 0$,
\begin{equation}
\label{G-G int C0}
\ban{\FF \cdot \nu, \phi}_{\partial U}
= \int_{U} \phi \, \dd \div \FF + \int_{U} \FF \cdot \nabla \phi \, \dr x
= - \lim_{k \to \infty} \int_{\partial U^{\eps_{k}, \rho}} \phi \FF \cdot \nu_{U^{\eps_{k}, \rho}} \,\, \dr \Haus{n - 1},
\end{equation}
 where $\nu_{U^{\eps_{k}, \rho}}$ is the inner unit normal to the smooth sets $U^{\ve_k, \rho}$.
In addition, \eqref{G-G int C0} holds also for any open set $U \subset \Omega$,
provided that $\mathrm{supp}(\phi) \cap U^{\delta} \Subset \Omega$ for any small $\delta > 0$.
\end{theorem}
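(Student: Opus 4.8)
The plan is to repeat the argument of Theorem~\ref{interior normal trace} essentially verbatim, with the Lipschitz signed distance $d$ replaced everywhere by the regularized distance $\rho$. The gain in passing to $\rho$ is that $\rho$ is smooth away from $\partial U$ and, by Theorem~\ref{nondegeneracy}, has non-vanishing gradient on a one-sided neighborhood of $\partial U$: since $\partial U$ is compact there is $\delta_0>0$ with $|\nabla\rho|\neq 0$ on $\{0<\rho<\delta_0\}$, which is contained in $\{0<d<2\delta_0\}$ because $\tfrac12\le \rho/d\le 2$. Consequently, for every $\eps\in(0,\delta_0)$ the superlevel set $U^{\eps,\rho}=\{\rho>\eps\}$ is a bounded open set whose boundary equals the compact $C^{\infty}$ hypersurface $\{\rho=\eps\}$, with inner unit normal $\nu_{U^{\eps,\rho}}=\nabla\rho/|\nabla\rho|$; in particular $\partial U^{\eps,\rho}=\redb U^{\eps,\rho}$, so Lemma~\ref{boundary tubular ngb} is not needed here. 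Moreover $U^{\eps',\rho}\subset U^{\eps,\rho}$ for $\eps'>\eps$, $\bigcup_{\eps>0}U^{\eps,\rho}=U$ (since $\rho>0$ precisely on $U$), and $U^{\eps,\rho}\subset\{d>\eps/2\}\Subset\Omega$ whenever $U\Subset\Omega$.

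First I would treat $U\Subset\Omega$. Fix $\phi\in C^{0}(\Omega)\cap L^{\infty}(\Omega)$ with $\nabla\phi\in L^{p'}(\Omega;\R^{n})$; then $\phi\FF\in\DM^{p}(\Omega)$ by Proposition~\ref{product rule p}. For $0<\eps<\delta_0$ I would use the Lipschitz, compactly supported test function
\[
\psi^{U,\rho}_{\eps}:=\min\{\eps,\max\{0,\rho\}\},
\]
that is, $\psi^{U,\rho}_{\eps}=\eps$ on $U^{\eps,\rho}$, $=\rho$ on $U\setminus U^{\eps,\rho}$, and $=0$ off $U$, so that $\nabla\psi^{U,\rho}_{\eps}=\chi_{U\setminus U^{\eps,\rho}}\nabla\rho$ $\Leb{n}$--a.e. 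Testing $\div(\phi\FF)$ against $\psi^{U,\rho}_{\eps}$ and slicing by the coarea formula \eqref{coarea super-sublevel} with $u=\rho$ (which is licit because $|\nabla\rho|\neq 0$ on $\{0<\rho<\delta_0\}$), and noting that the coarea density $1/|\nabla\rho|$ is exactly the factor that turns $\FF\cdot\nabla\rho$ into $\FF\cdot\nu_{U^{t,\rho}}$ on $\partial U^{t,\rho}$, gives
\[
\int_{U}\psi^{U,\rho}_{\eps}\,\dd\div(\phi\FF)
=-\int_{\{0<\rho<\eps\}}\phi\FF\cdot\nabla\rho\,\dr x
=-\int_{0}^{\eps}\int_{\partial U^{t,\rho}}\phi\FF\cdot\nu_{U^{t,\rho}}\,\dr\Haus{n-1}\,\dr t.
\]
Writing this identity for $\psi^{U,\rho}_{\eps}$ and $\psi^{U,\rho}_{\eps+h}$, subtracting, dividing by $h$, and letting $h\to0^{+}$ — using $0\le\rho-\eps\le h$ on $U^{\eps,\rho}\setminus U^{\eps+h,\rho}$ together with $|\div(\phi\FF)|(U^{\eps,\rho}\setminus U^{\eps+h,\rho})\to0$ — produces an $\Leb{1}$--negligible set $\mathcal{N}\subset(0,\delta_0)$ (the complement of the set of Lebesgue points of $t\mapsto\int_{\partial U^{t,\rho}}\phi\FF\cdot\nu_{U^{t,\rho}}\,\dr\Haus{n-1}$, which lies in $L^{1}_{\mathrm{loc}}$ by coarea) such that
\[
\int_{U^{\eps,\rho}}\dd\div(\phi\FF)=-\int_{\partial U^{\eps,\rho}}\phi\FF\cdot\nu_{U^{\eps,\rho}}\,\dr\Haus{n-1}\qquad\text{for }\eps\notin\mathcal{N}.
\]

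To conclude, for any sequence $\eps_{k}\notin\mathcal{N}$ with $\eps_{k}\to0$, I would pass to the limit on the left using $U^{\eps_{k},\rho}\uparrow U$ and the finiteness of $\div(\phi\FF)$, obtaining $\int_{U^{\eps_{k},\rho}}\dd\div(\phi\FF)\to\div(\phi\FF)(U)$, and then rewrite $\div(\phi\FF)(U)=\int_{U}\phi\,\dd\div\FF+\int_{U}\FF\cdot\nabla\phi\,\dr x=\ban{\FF\cdot\nu,\phi}_{\partial U}$ via the product rule \eqref{product rule} and \eqref{normal trace def}; this is \eqref{G-G int C0}. For a general bounded $U\subset\Omega$ one shifts the levels of $\rho$ by $\delta$ (working with $U^{\delta,\rho}\Subset\Omega$), and for an unbounded $U$ with $\mathrm{supp}(\phi)\cap U^{\delta}\Subset\Omega$ one inserts a cutoff $\eta\in C^{\infty}_{c}(\Omega)$ equal to $1$ near $\mathrm{supp}(\phi)\cap U^{\delta}$, exactly as in Steps~2--3 of the proof of Theorem~\ref{interior normal trace}.

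The only genuinely new difficulty — and the step I expect to require care — is the replacement of $d$ by $\rho$: one loses the identity $|\nabla d|\equiv1$, and now both the smoothness of the approximating hypersurfaces $\partial U^{\eps,\rho}$ and the legitimacy of the coarea slicing hinge on Theorem~\ref{nondegeneracy} (Ball--Zarnescu), which guarantees $\rho\in C^{\infty}$ with $\nabla\rho\neq0$ on a one-sided neighborhood of $\partial U$. Once this non-degeneracy is granted, the extra factor $|\nabla\rho|$ that arises is precisely cancelled by the coarea density $1/|\nabla\rho|$, so it disappears from the final formula, and the remainder of the proof is a line-by-line transcription of that of Theorem~\ref{interior normal trace}.
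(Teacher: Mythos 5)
Your plan is essentially the paper's own: replace the Lipschitz signed distance $d$ by the regularized distance $\rho$, invoke Theorem~\ref{nondegeneracy} (Ball--Zarnescu) to get $\nabla\rho\neq 0$ on a one-sided neighborhood of $\partial U$ so that the level sets $\{\rho=\eps\}$ are smooth hypersurfaces with inner normal $\nabla\rho/|\nabla\rho|$, and then notice that the factor $|\nabla\rho|$ appearing in the coarea slicing is exactly cancelled by the density $1/|\nabla\rho|$, so the argument of Theorem~\ref{interior normal trace} goes through verbatim and Lemma~\ref{boundary tubular ngb} is no longer needed. The only procedural difference is that you first treat $U\Subset\Omega$ with the unshifted function $\psi^{U,\rho}_{\eps}$ and slice over $\{0<\rho<\eps\}$, whereas the paper jumps directly to the $\delta$--shifted test function $\psi^{U^{\delta,\rho}}_{\eps,\rho}$ and slices over $\{\delta<\rho<\delta+\eps\}$ even in the compact case. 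This is a small but deliberate choice: the compact slab $\{\delta\le\rho\le\delta+\eps\}$ is bounded away from $\partial U$, so smoothness of $\rho$ on $\R^n\setminus\partial U$ plus nondegeneracy immediately yields $\operatorname{ess\,inf}|\nabla\rho|>0$ there, exactly the hypothesis needed to apply the coarea statement \eqref{coarea super-sublevel} as written. On $\{0<\rho<\eps\}$ Theorem~\ref{nondegeneracy} only gives pointwise nonvanishing (the infimum could a priori degenerate as $x\to\partial U$), so to justify your unshifted step you would need to either verify a uniform lower bound on $|\nabla\rho|$ near $\partial U$ or appeal to the general Lipschitz coarea identity $\int h|\nabla u|\,\dr x=\int_{\R}\int_{u^{-1}(s)}h\,\dr\Haus{n-1}\,\dr s$ (which requires no lower bound, since one never divides by $|\nabla u|$). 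Either way the argument closes; the paper's shift is simply the cleanest way to stay within the hypotheses of its stated coarea theorem.
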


\begin{proof}
\, Since $\rho$ is smooth and $|\nabla \rho(x)| \neq 0$ for any $x \in U \setminus U^{\eps, \rho}$ for small enough $\eps$,
it follows that $\{ x \in \R^n: \rho(x)=\ve \}$ is a smooth hypersurface in $\R^n$.
Therefore,
$\partial U^{\ve, \rho} = \partial^* U^{\ve, \rho}$ and
\begin{equation*}
\frac{\nabla \rho}{|\nabla \rho|}(x)
= \nu_{U^{\eps, \rho}}(x) \qquad\,\textnormal{ for every } x \in \partial U^{\ve, \rho}.
\end{equation*}
We can now proceed in the same way as in the second step of the proof of Theorem \ref{interior normal trace}
by noticing that the only difference is in the application of the coarea formula and in the use of $\rho$,
instead of $d$,
in the definition of $\psi^{U^{\delta, \rho}}_{\ve, \rho}$ for $\delta, \eps > 0$.
Indeed, using Theorem \ref{nondegeneracy}, we rewrite \eqref{test psi} in the case $U \Subset \Omega$ as follows:
\begin{align*}
\int_{U} \psi^{U^{\delta, \rho}}_{\eps, \rho} \, \dd \div(\phi \FF)
& = - \int_{U^{\delta, \rho} \setminus U^{\delta + \eps, \rho}} \phi \FF \cdot \nabla \rho \, \dr x \\
& = - \int_{U^{\delta, \rho} \setminus U^{\delta + \eps, \rho}} \phi \FF \cdot \frac{\nabla \rho}{|\nabla \rho|}  |\nabla \rho| \, \dr x \\
& = - \int_{\delta}^{\delta + \eps} \int_{\partial U^{t, \rho}} \phi \FF \cdot \frac{\nabla \rho}{|\nabla \rho|}  \, d \Haus{n - 1} \, \dr t,
\end{align*}
by the coarea formula \eqref{coarea super-sublevel}
with $u = \rho$ and $g = \chi_{U^{\delta, \rho}} \phi \FF \cdot \frac{\nabla \rho}{|\nabla \rho|} \, |\nabla \rho|$,
since
$$
\mathrm{ess inf} |\nabla \rho| > 0 \qquad \mbox{on $U^{\delta, \rho} \setminus U^{\delta + \eps, \rho}$ for any $\delta, \eps > 0$}.
$$

Then we can proceed as in Steps 2--3 of the proof of Theorem \ref{interior normal trace}.
Finally, in the case that $U$ is not bounded, we employ Remark \ref{local statement nondegeneracy}
to obtain the desired result.
\end{proof}

\begin{remark}
\, In particular, Theorem {\rm \ref{interior normal trace smooth}} implies that, if $\Omega$ is of $C^{0}$ boundary,
the  Gauss-Green formula up to the boundary holds by approximating $\partial \Omega$ with a sequence of smooth sets.
This can be seen by taking $U = \Omega$ in \eqref{G-G int C0}.
\end{remark}

Analogously, we also have a smooth version of Theorem \ref{exterior normal trace},
in which we employ the fact that $|\partial U| = 0$  if $U$ has a $C^{0}$ boundary,
by Remark \ref{remark_C^0_sets}, in order to integrate $\FF \cdot \nabla \phi$ only on $U$.

\begin{theorem}[Exterior normal traces via smooth approximations] \label{exterior normal trace smooth}
Let $U \Subset \Omega$ be a $C^0$ open set, and  let $\FF \in \DM^p(\Omega)$ for $1 \le p \le \infty$.
Then, for any $\phi \in C^{0}(\Omega)$ with $\nabla \phi \in L^{p'}(\Omega; \R^{n})$,
there exists a set $\mathcal{N} \subset \R$ with $\mathcal{L}^1(\mathcal{N})=0$ such that,
for every nonnegative sequence $\{\ve_k\}$ satisfying $\ve_k \notin \mathcal{N}$ for any $k$ and $\ve_k \to 0$,
\begin{equation} \label{G-G ext C0}
\ban{\FF \cdot \nu, \phi}_{\partial \overline{U}}
= \int_{\overline{U}} \phi \, \dd \div \FF + \int_{U} \FF \cdot \nabla \phi \, \dr x
= - \lim_{k \to \infty} \int_{\partial U_{\eps_{k}, \rho}} \phi \FF \cdot \nu_{U_{\eps_{k}, \rho}} \,\, \dr \Haus{n - 1},
\end{equation}
where $\nu_{U_{\eps_{k}, \rho}}$ is the inner unit normal to the smooth sets $U_{\ve_k, \rho}$.
In addition, \eqref{G-G ext C0} holds also for any open set $U$ satisfying $\overline{U} \subset \Omega$,
provided that $\mathrm{supp}(\phi)$ is compact in $\Omega$.
\end{theorem}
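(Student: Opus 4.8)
The plan is to repeat the proof of Theorem \ref{exterior normal trace} almost verbatim, with the regularized distance $\rho$ in place of the signed distance $d$ and with Ball--Zarnescu's non-degeneracy theorem (Theorem \ref{nondegeneracy}) playing the role of Lemma \ref{boundary tubular ngb}; this is precisely the modification by which Theorem \ref{interior normal trace smooth} was obtained from Theorem \ref{interior normal trace}. By Theorem \ref{nondegeneracy} there is $\delta_{0}>0$ with $|\nabla\rho|\neq0$ on $\{\,0<|\rho|<\delta_{0}\,\}$, so every level set $\partial U_{\ve,\rho}=\{\rho=-\ve\}$ with $\ve\in(0,\delta_{0})$ is a smooth hypersurface on which the inner unit normal is $\nu_{U_{\ve,\rho}}=\nabla\rho/|\nabla\rho|$; since $U\Subset\Omega$, also $U_{\ve,\rho}\Subset\Omega$ for small $\ve$. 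Because a continuous $\phi$ is bounded on relatively compact sets, Proposition \ref{product rule p} gives $\phi\FF\in\DM^{p}(\Omega')$ for any $\Omega'$ with $\overline{U_{\ve,\rho}}\subset\Omega'\Subset\Omega$, so the Lipschitz, compactly supported functions
\[
\xi^{U}_{\ve,\rho}(x):=\begin{cases} \ve & \text{if } \rho(x)\ge 0,\\ \rho(x)+\ve & \text{if } -\ve<\rho(x)<0,\\ 0 & \text{if } \rho(x)\le-\ve, \end{cases}
\]
are admissible test functions, in analogy with \eqref{test xi}.

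Testing $\div(\phi\FF)$ against $\xi^{U}_{\ve,\rho}$, using that $\nabla\xi^{U}_{\ve,\rho}=\nabla\rho$ a.e.\ on $\{-\ve<\rho<0\}=U_{\ve,\rho}\setminus\overline{U}$, where $\mathrm{ess\,inf}\,|\nabla\rho|>0$, the coarea formula \eqref{coarea super-sublevel} (legitimate since the smooth slices $\{\rho=-t\}$ are $\Leb{n}$--negligible) gives
\[
\int_{\Omega'}\xi^{U}_{\ve,\rho}\,\dd\div(\phi\FF) = -\int_{U_{\ve,\rho}\setminus\overline{U}}\phi\FF\cdot\nabla\rho\,\dr x = -\int_{0}^{\ve}\int_{\partial U_{t,\rho}}\phi\FF\cdot\tfrac{\nabla\rho}{|\nabla\rho|}\,\dr\Haus{n-1}\,\dr t.
\]
Subtracting the identities for $\xi^{U}_{\ve,\rho}$ and $\xi^{U}_{\ve+h,\rho}$, dividing by $h$ and letting $h\to0^{+}$ (as in Theorem \ref{exterior normal trace}: $0\le\rho+\ve+h\le h$ on $U_{\ve+h,\rho}\setminus\overline{U_{\ve,\rho}}$, $|\div(\phi\FF)|(U_{\ve+h,\rho}\setminus\overline{U_{\ve,\rho}})\to0$, and Lebesgue differentiation of the inner integral), and then invoking the product rule \eqref{product rule}, produces a null set $\mathcal{N}\subset\R$ (the complement of the Lebesgue points of $t\mapsto\int_{\partial U_{t,\rho}}\phi\FF\cdot\tfrac{\nabla\rho}{|\nabla\rho|}\,\dr\Haus{n-1}$, together with the countably many $t$ with $|\div(\phi\FF)|(\{\rho=-t\})>0$) such that, for every $\ve\notin\mathcal{N}$,
\[
\int_{\overline{U_{\ve,\rho}}}\phi\,\dd\div\FF + \int_{U_{\ve,\rho}}\FF\cdot\nabla\phi\,\dr x = -\int_{\partial U_{\ve,\rho}}\phi\FF\cdot\nu_{U_{\ve,\rho}}\,\dr\Haus{n-1}.
\]

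For a sequence $\ve_{k}\notin\mathcal{N}$, $\ve_{k}\to0$, I would pass to the limit: since $\overline{U_{\ve,\rho}}=\{\rho\ge-\ve\}\downarrow\overline{U}$, continuity from above of the finite measure $|\phi|\,|\div\FF|$ (restricted to a fixed compact neighborhood of $\overline{U}$) gives $\int_{\overline{U_{\ve_{k},\rho}}}\phi\,\dd\div\FF\to\int_{\overline{U}}\phi\,\dd\div\FF$; and since $|\partial U|=0$ for a $C^{0}$ open set (Remark \ref{remark_C^0_sets}) while $\chi_{U_{\ve,\rho}}\to\chi_{\overline{U}}$ pointwise with $\FF\cdot\nabla\phi\in L^{1}(\Omega)$, dominated convergence gives $\int_{U_{\ve_{k},\rho}}\FF\cdot\nabla\phi\,\dr x\to\int_{U}\FF\cdot\nabla\phi\,\dr x$. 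Hence the left-hand side tends to $\int_{\overline{U}}\phi\,\dd\div\FF+\int_{U}\FF\cdot\nabla\phi\,\dr x=\ban{\FF\cdot\nu,\phi}_{\partial\overline{U}}$ (by \eqref{normal trace def} and $|\partial U|=0$), which is \eqref{G-G ext C0}. The case $\overline{U}\subset\Omega$ with $\mathrm{supp}(\phi)$ compact in $\Omega$ is handled by multiplying $\xi^{U}_{\ve,\rho}$ by a cutoff $\eta\in C^{\infty}_{c}(\Omega)$ equal to $1$ near $\mathrm{supp}(\phi)$ and using the local form of non-degeneracy near the compact set $\partial U\cap\mathrm{supp}(\phi)$ (Remark \ref{local statement nondegeneracy}), exactly as in the final step of Theorem \ref{exterior normal trace}. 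The only point that is not a literal copy of that proof is the coarea slicing of the one-sided collar $\{-\ve<\rho<0\}$, which simultaneously requires $\mathrm{ess\,inf}\,|\nabla\rho|>0$ and smoothness of the slices there; this is exactly what Theorem \ref{nondegeneracy} supplies, and it is the only delicate step.
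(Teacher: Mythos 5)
Your overall strategy (replace the signed distance $d$ by the regularized distance $\rho$, replace Lemma \ref{boundary tubular ngb} by the Ball--Zarnescu non-degeneracy theorem, and use $|\partial U|=0$ to write the $\FF\cdot\nabla\phi$--integral over $U$) is exactly the ``analogous'' argument the paper intends, and your convergence argument at the end and the handling of the unbounded case via cutoffs are both correct.

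There is, however, one step that is not justified as stated. You apply \eqref{coarea super-sublevel} directly on the collar $\{-\ve<\rho<0\}$, asserting that $\mathrm{ess\,inf}\,|\nabla\rho|>0$ there ``is exactly what Theorem \ref{nondegeneracy} supplies.'' Theorem \ref{nondegeneracy} gives only the pointwise statement $|\nabla\rho(x)|\neq 0$ for $x$ near $\partial U$ with $x\notin\partial U$. Since the closure of $\{-\ve<\rho<0\}$ meets $\partial U$, where $\rho$ is merely Lipschitz, this does not imply a uniform positive lower bound; a uniform bound near $\partial U$ is established in the paper only for Lipschitz domains, through Lieberman's estimate (Lemma \ref{Lieb Lip boundary estimates}), not for general $C^{0}$ domains. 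Note that the paper's own proof of the \emph{interior} smooth Theorem \ref{interior normal trace smooth} is careful on precisely this point: it works with the $\delta$--shifted test function $\psi^{U^{\delta,\rho}}_{\ve,\rho}$, whose collar $\{\delta<\rho<\delta+\ve\}$ has compact closure disjoint from $\partial U$, and it is compactness of that closed annulus (together with the pointwise nonvanishing) that yields $\mathrm{ess\,inf}\,|\nabla\rho|>0$ there. Your exterior proof should mirror this: replace $\xi^{U}_{\ve,\rho}$ by a $\delta$--shifted version supported on $\{-\delta-\ve<\rho<-\delta\}$, obtain the identity on $U_{\delta+\ve,\rho}$ for a.e.\ value of $\ve'=\delta+\ve$, and then send $\ve'\to 0$ exactly as you do at the end. (Alternatively, observe that the integrand already carries a factor of $\nabla\rho$, so the slicing $\int_{\{-\ve<\rho<0\}}\phi\FF\cdot\nabla\rho\,\dr x=\int_{0}^{\ve}\int_{\partial U_{t,\rho}}\phi\FF\cdot\frac{\nabla\rho}{|\nabla\rho|}\,\dr\Haus{n-1}\,\dr t$ follows from the general coarea formula for Lipschitz maps applied to $g=\chi_{\{-\ve<\rho<0\}}\phi\FF\cdot\tfrac{\nabla\rho}{|\nabla\rho|}$, without any lower bound on $|\nabla\rho|$; but that version is not the one quoted in the paper's Theorem 2.1.) With this single adjustment your argument matches the paper's.
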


\section{\, The Gauss-Green Formula on Lipschitz Domains}

In Ball-Zarnescu \cite{BallZarnescu}, it is shown that,
if a domain $U$ is of class $C^0$, then  there is a canonical smooth field of good directions defined
in a suitable neighborhood of $\partial U$,
in terms of which a corresponding flow can be defined.
By means of
this flow,
$U$ can be approximated from both the interior and the exterior
by diffeomorphic domains of class $C^{\infty}$;
see also Hofmann-Mitrea-Taylor \cite{HofmannMitreaTaylor} for the definition
of a continuous vector field transversal to the boundary
of an open set of locally finite perimeter.

In a related issue,  Chen-Frid in \cite{CF1,CF2}
introduced the notion of {\it regular Lipschitz deformable boundary}
(see Definition \ref{aquiaqui}).
Then, for a bounded open set $U$ satisfying this condition,
they proceeded to obtain the Gauss-Green formulas for $\DM^p$--fields $\FF$.
For the case $p \neq \infty$, the normal trace of $\FF$ is defined
as a distribution which is expressed as an average over a neighborhood of Lipschitz
deformable boundary $\partial U$ determined by
the Lipschitz deformations.
However, as explained in the introduction,
the main goal of this paper is to present the Gauss-Green formula for the case $p \neq \infty$,
by using the classical normal traces $\FF \cdot \nu$ which are defined on almost every surface
that approximates $\partial U$.
Thus, the present paper aligns with the later work by Chen-Torres-Ziemer \cite{ctz},
in which the Gauss-Green formula for bounded $\DM$--fields has been
established over arbitrary sets of finite perimeter $E$
via the normal trace on $\partial^*E$ as the limit of classical normal
traces on smooth approximations of $E$.
The goal of this section is to show that the main result in Ball-Zarnescu \cite{BallZarnescu}
implies that any Lipschitz domain satisfies condition (ii) of Definition \ref{aquiaqui}, which indicates that
condition (ii) holds automatically for a Lipschitz domain.

For a domain $U$ in the class $C^0$,
the concept of {\it a good direction} (see Definition \ref{gooddirection})
has been introduced, and the following result has been established in
\cite{BallZarnescu}.

\begin{proposition}{\rm \cite[Proposition 2.1]{BallZarnescu}}\label{prop 2.1}\,\,
Let $U \subset \R^n$ be a bounded open set with boundary of class $C^0$.
Then there exist a neighborhood $V$ of $\partial U$ and a smooth function $G:V \to \Sph^{n-1}$
so that, for each $P \in V$, the unit vector $G(P)$ is a good direction.
\end{proposition}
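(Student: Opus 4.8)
The plan is to build $G$ locally from the pseudonormals furnished by the $C^{0}$--regularity of $\partial U$, and then to glue these local choices together by a smooth partition of unity, the point being that the gluing cannot destroy the property of being a good direction. The structural fact behind this is a \emph{convexity at a point} property: at any fixed $Q$ near $\partial U$, the set of good directions at $Q$, regarded as a subset of $\Sph^{n-1}$, is closed under normalized convex combinations and never fills all of $\Sph^{n-1}$.

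First I would assemble the local pieces. For each $P\in\partial U$, Definition~\ref{gooddirection} provides a unit vector $\nu_{P}$, a radius $\delta_{P}>0$ with $B(P,\delta_{P})\cap\partial U\neq\emptyset$, and a continuous $f_{P}$ realizing $U\cap B(P,\delta_{P})$ as the supergraph of $f_{P}$ in the $\nu_{P}$--direction. Recentering this representation at any point $Q$ near $P$ and restricting to a smaller concentric ball shows that $\nu_{P}$ is still a good direction at $Q$; hence there are an open neighborhood $V_{P}\ni P$ in $\R^{n}$ and $\rho_{P}>0$ so that $\nu_{P}$ is a good direction at every $Q\in V_{P}$ with respect to $B(Q,\rho)$ for every admissible $\rho\le\rho_{P}$. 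Since $\partial U$ is compact, finitely many $V_{P_{1}},\dots,V_{P_{N}}$ cover it; I then take a smooth partition of unity $\{\chi_{i}\}_{i=1}^{N}$ with $\chi_{i}\ge 0$, $\operatorname{supp}\chi_{i}\subset V_{P_{i}}$, and $\sum_{i}\chi_{i}\equiv 1$ on an open neighborhood $V$ of $\partial U$.

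The heart of the argument is the following lemma, which I would prove next: if $\mu_{1},\mu_{2}$ are good directions at a point $Q$ near $\partial U$, then $\mu_{1}\neq-\mu_{2}$ and, for every $\lambda\in[0,1]$, the normalization of $\lambda\mu_{1}+(1-\lambda)\mu_{2}$ is again a good direction at $Q$. The supergraph property says that $U$ is monotone along $\mu_{1}$ and along $\mu_{2}$ inside suitable balls centered at $Q$; this monotonicity is transferred to any intermediate direction by connecting a point of $U$ to a translate of it through a fine ``staircase'' whose legs point along $\mu_{1}$ and $\mu_{2}$, and the fact that the resulting floor function is a genuine continuous function, not just a monotone barrier, follows because $U$ is open and $\R^{n}\setminus U$ is closed. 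The non-degeneracy $\mu_{1}\neq-\mu_{2}$ is seen by observing that a set monotone along both $\mu$ and $-\mu$ inside a ball meeting $\partial U$ would be a cylinder there, forcing a jump in its floor function and contradicting continuity. Consequently no nontrivial convex combination of good directions at $Q$ vanishes, and iterating the two--vector statement over the finitely many indices $i$ with $Q\in\operatorname{supp}\chi_{i}$ shows that the normalization of $\sum_{i}\chi_{i}(Q)\nu_{P_{i}}$ is a well-defined good direction at $Q$.

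It then remains to set $G:=\bigl(\sum_{i}\chi_{i}\nu_{P_{i}}\bigr)\big/\bigl|\sum_{i}\chi_{i}\nu_{P_{i}}\bigr|$ on $V$: the denominator is smooth and, by the lemma, strictly positive on $V$, so $G\in C^{\infty}(V;\Sph^{n-1})$, and the lemma also gives that $G(Q)$ is a good direction at every $Q\in V$. The main obstacle is the lemma above, and within it the bookkeeping of scales: the radius on which $U$ is known to be a supergraph along $\mu_{1}$, along $\mu_{2}$, and along the combined direction need not coincide, and the staircase joining a point to its translate can leave a prescribed ball when $\mu_{1},\mu_{2}$ are nearly antipodal, so one must argue either that the supergraph representation persists on a ball large enough to contain the staircase, or localize so that only small balls — on which two good directions at a point cannot be nearly antipodal in a way that defeats the construction — are used. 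Everything else (recentering, compactness, the partition of unity, normalization) is routine.
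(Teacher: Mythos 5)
Your overall plan --- local pseudonormals near each boundary point, compactness to obtain a finite cover, a smooth partition of unity, and a ``spherical convexity'' lemma for the set of good directions at a point so that the averaged vector is still good --- is the right skeleton, and it is in essence the approach of Ball--Zarnescu (the paper merely cites \cite[Proposition 2.1]{BallZarnescu} and does not reproduce the proof). The convexity lemma you isolate is correct. The problem lies in the argument you give for it.

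The ``staircase'' correctly yields that $U$ is monotone along any intermediate direction $\mu_\lambda$: following short legs along $\mu_1$ and $\mu_2$ keeps you in $U$, so $x\in U$ forces $x+t\mu_\lambda\in U$ for small $t>0$. But monotonicity of $U$ along $\mu_\lambda$ is strictly weaker than $\mu_\lambda$ being a good direction: it only shows that the sections of $U$ along $\mu_\lambda$ are open rays, i.e.\ that the floor function $\alpha$ exists and is upper semicontinuous; it does not exclude a ``vertical segment'' along $\mu_\lambda$ inside $\partial U$. Your assertion that continuity of $\alpha$ ``follows because $U$ is open and $\R^n\setminus U$ is closed'' is not a valid implication: every set's complement is closed, and an open, $\mu$--monotone set can perfectly well have a discontinuous floor. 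For instance, in $\R^2$ take $\alpha(y_1)=1$ if $y_1=0$ and $\alpha(y_1)=0$ otherwise, and $U=\{y_2>\alpha(y_1)\}$; this $U$ is open, monotone along $e_2$, has closed complement, yet $\{0\}\times[0,1]\subset\partial U$ and $e_2$ is not a good direction there. So openness and closedness alone cannot do the work; you must use the hypothesis that \emph{both} $\mu_1$ and $\mu_2$ are good, which the staircase argument never exploits in the continuity step. You flag a scale-bookkeeping issue as the main obstacle, but this is the deeper gap.

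The clean way to close it is the two--dimensional reduction. Normalize $\mu_1=e_n$, so $\partial U\cap B(Q,\delta)$ is the graph $\{y_n=f(y')\}$ with $f$ continuous, and write $\mu_2$ as the image of $e_n$ under a rotation by $\theta_0\in(0,\pi)$ in the plane $\mathrm{span}\{\mu_1,\mu_2\}$. Then $\mu_2$ being a good direction with the correct (supergraph) orientation is, slice by slice in $\hat y=(y_2,\dots,y_{n-1})$, equivalent to strict \emph{increase} of the reparametrization
\begin{equation*}
g_{\theta_0}(y_1)\;:=\;y_1\cos\theta_0-f(y_1,\hat y)\sin\theta_0,
\end{equation*}
and the direction at intermediate angle $\theta\in(0,\theta_0)$ corresponds to
\begin{equation*}
g_\theta \;=\; \frac{\sin(\theta_0-\theta)}{\sin\theta_0}\,g_0 \;+\; \frac{\sin\theta}{\sin\theta_0}\,g_{\theta_0},
\end{equation*}
a strictly positive combination of $g_0(y_1)=y_1$ and $g_{\theta_0}$, both strictly increasing. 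Hence $g_\theta$ is strictly increasing, which is precisely the statement that $\partial U$ is a genuine continuous graph in the $\mu_\lambda$--coordinates and that $U$ is its supergraph, on $B(Q,\min\{\delta_1,\delta_2\})$. This also gives the non-degeneracy $\mu_1\neq-\mu_2$ without the cylinder heuristic: if $e_n$ and $-e_n$ were both good at $Q$ on a common ball meeting $\partial U$, the two supergraph representations would force $\{y_n>f\}$ and $\{y_n<h\}$ to coincide there, an immediate contradiction at any boundary point in the ball. With this lemma in place, the remaining steps --- persistence of each $\nu_{P_i}$ as a good direction on a small neighborhood $V_{P_i}$ with a ball still meeting $\partial U$, compactness, the partition of unity, iteration of the two--vector lemma to the finite sum, and normalization --- are routine as you describe.
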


\begin{remark} \label{local prop 2.1}
\, Proposition {\rm \ref{prop 2.1}} can be localized.
Indeed, if $U$ is an unbounded open set with boundary of class $C^{0}$,
then $U \cap B(0, R)$ is a bounded open set with boundary of class $C^{0}$ for any $R > 0$.
Therefore, there exist a neighborhood $V$ of any compact set $K \subset \partial U$
and a smooth function $G:V \to \Sph^{n-1}$ so that,
for each $P \in V$, the unit vector $G(P)$ is a good direction.
\end{remark}

We recall that $\nu=G(P)$ is a good direction if $\partial U$ is the graph of a continuous function
in a small neighborhood $B(P,\delta)$ and in some system of coordinates $(y',y_n)$,
where $\nu$ is a unit vector in the direction of $y_n$.
Using the field of good directions $G(p)$, a flow
$S(\cdot)(\cdot):\R \times \R^n \to \R^n$
can be defined through:
\begin{equation}   \label{flow good directions}
\dot{x}(t)= \gamma(x(t))G(x(t)), \quad\, x(0)=x_0,
\end{equation}
with $x(t)=S(t)(x_0)$ as the solution of the initial value
problem \eqref{flow good directions} for the differential equation at time $t$,
where $\gamma$  is an appropriate smooth
function (see \cite[\S 4]{BallZarnescu} for the details on
the construction of the flow).

By exploiting the properties of the flow of good directions,
the following theorem has been proved in Ball-Zarnescu \cite{BallZarnescu},
which is very helpful
in the rest of this section.

\begin{theorem}{\rm \cite[Theorem 5.1]{BallZarnescu}}\label{Ball}\,\,
Let $U \subset \R^n, n \geq 2$, be a bounded domain of class $C^0$.
Let $\rho$ be a regularized distance defined in \S {\rm 7}.
For $\varepsilon \in \R$, define
\begin{equation*}
U^{\varepsilon, \rho}=\{x \in \R^n:\rho (x) > \varepsilon\},\qquad U_{\varepsilon, \rho}
=\{x \in \R^n:\rho (x) > -\varepsilon\}.
\end{equation*}
Then there exists $\varepsilon_0 = \varepsilon_0(U)>0$ such that,
if $0 < \varepsilon < \varepsilon_0$, $U_{\varepsilon,\rho}$ and $U^{\varepsilon,\rho}$ are
bounded domains of class $C^{\infty}$ and satisfy the following{\rm :}
\begin{enumerate}
\item[\rm (i)] $\bigcap_{0 <\varepsilon<\varepsilon_{0}} U_{\varepsilon,\rho} =\overline{U},\, \bigcup_{0< \varepsilon< \varepsilon_{0}}U^{\varepsilon,\rho}=U$,
   \text{ and }\\
   $\overline{U}^{\varepsilon',\rho} \subset U^{\varepsilon,\rho}$
     and $U_{\varepsilon',\rho} \supset \overline{U}_{\varepsilon,\rho}$  if $ 0< \varepsilon < \varepsilon' < \varepsilon_0$.

\smallskip
\item[\rm (ii)]  For $-\varepsilon_0 < \varepsilon < \varepsilon_0$,
there is a homeomorphism $f(\varepsilon, \cdot)$ of $\R^n$ onto $\R^n$ with inverse denoted $f^{-1}(\varepsilon,\cdot)$ so that

\smallskip
\begin{itemize}

\item  $f(\varepsilon, \overline{U}) =  \overline{U}^{\varepsilon,\rho}$ and
    $f(\varepsilon,\partial U)=\partial U^{\varepsilon,\rho}\, $ \text{for } $\ve >0$, \\
     $f(\varepsilon, \overline{U}) =  \overline{U}_{-\varepsilon,\rho}$ and $f(\varepsilon, \partial U)=\partial U_{-\varepsilon,\rho}$ \text{for } $\ve <0${\rm ;}

\smallskip
\item $f(\varepsilon, x) = x $ for $|\rho(x)| \ge 3 |\varepsilon|$, so that $f(0,\cdot)$ is the identity{\rm ;}

\smallskip
\item $ f(\varepsilon,\cdot) : \R^n \setminus \partial U \to \R^n \setminus \partial U^{\varepsilon,\rho}$ for $\ve >0$,
  and $f(\varepsilon,\cdot) : \R^n \setminus \partial U \to \R^n \setminus \partial U_{-\varepsilon,\rho}$ for $\ve <0$ are both
  $C^{\infty}$ diffeomorphisms. In addition,
  $$
  f, f^{-1} \in C^{0}((-\eps_{0}, \eps_{0}) \times \R^{n}; \R^{n}).
  $$
\end{itemize}

\item[\rm (iii)] There is a map $g : (0, \eps_{0}) \times (- \eps_{0}, 0) \times \R^{n} \to \R^{n}$ such that,
if $\eps \in (0, \eps_{0})$ and $\eps' \in (- \eps_{0}, 0)$, then

\smallskip
\begin{itemize}
\item $g(\eps, \eps', \cdot)$ is a $C^{\infty}$ diffeomorphism of $\R^{n}$ onto $\R^{n}$ with inverse $g^{-1}(\eps, \eps', \cdot) : \R^{n} \to \R^{n}${\rm ;}

\smallskip
\item $g(\eps, \eps', U^{\eps, \rho}) = U_{-\eps', \rho}$, $g(\eps, \eps', \partial U^{\eps, \rho}) = \partial U_{- \eps', \rho}${\rm ;}

\smallskip
\item $g(\eps, \eps', x) = x $ for $ 3\eps\le \rho(x) \le 3 \eps' $.
\end{itemize}
\end{enumerate}
\end{theorem}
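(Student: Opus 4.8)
The plan is to reconstruct Ball--Zarnescu's argument from the two ingredients already at hand in \S~7: the non-degeneracy of $\nabla\rho$ near $\partial U$ (Theorem~\ref{nondegeneracy}) and the smooth field of good directions on a neighborhood of $\partial U$ (Proposition~\ref{prop 2.1}). First I would use Theorem~\ref{nondegeneracy} together with the comparability $\frac{1}{2}\le\rho/d\le 2$ to fix $\eps_0>0$ so small that the band $B:=\{x : 0<|\rho(x)|\le 3\eps_0\}$ lies in the punctured neighborhood of $\partial U$ on which $\rho$ is $C^\infty$ with $|\nabla\rho|\neq 0$. Then every $t$ with $0<|t|\le 3\eps_0$ is a regular value of $\rho$, so $\{\rho=t\}$ is a $C^\infty$ hypersurface; hence, for $0<\eps<\eps_0$, the sets $U^{\eps,\rho}=\{\rho>\eps\}$ and $U_{\eps,\rho}=\{\rho>-\eps\}$ have boundary of class $C^\infty$, and they are bounded since $\rho\sim d$ and $U$ is bounded (one has $U^{\eps,\rho}\subset U$ and $U_{\eps,\rho}\subset\{x:\dist(x,\overline U)<2\eps\}$). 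The monotone inclusions in~(i) follow from the continuity of $\rho$ and the disjointness of its level sets, while $\bigcup_{\eps}U^{\eps,\rho}=U$ and $\bigcap_{\eps}U_{\eps,\rho}=\overline U$ follow from the facts that $\rho>0$ exactly on $U$ and $\rho\ge 0$ exactly on $\overline U$. Connectedness of $U^{\eps,\rho}$ and $U_{\eps,\rho}$ will come for free once the homeomorphisms of~(ii) are built, since these sets are then homeomorphic images of $U$.

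Next I would construct the deformation $f$ of~(ii) by flowing the field of good directions. Take the smooth $G:V\to\Sph^{n-1}$ from Proposition~\ref{prop 2.1}, multiply it by a smooth cutoff $\gamma$ supported near $\partial U$ and equal to $1$ on a smaller neighborhood, and let $S(\cdot)$ be the flow generated by $\gamma G$: a one-parameter group of $C^\infty$ diffeomorphisms of $\R^n$ equal to the identity off $\supp\gamma$. The analytic heart of the argument is a transversality estimate $\nabla\rho\cdot G\ge c>0$ on a (possibly smaller) punctured neighborhood of $\partial U$; this refines Theorem~\ref{nondegeneracy} and encodes that moving along a good direction pushes one strictly across the level sets of $\rho$. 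Granting it, and shrinking $\eps_0$, the function $t\mapsto\rho(S(t)(x))$ is strictly increasing as long as $S(t)(x)\in B$, so for each $x$ with $|\rho(x)|<3\eps$ there is a unique flow time $\tau(\eps,x)$ carrying $\{\rho=0\}=\partial U$ onto $\{\rho=\eps\}$ and interpolating to $\tau\equiv 0$ as $|\rho(x)|\uparrow 3\eps$. Setting $f(\eps,x):=S(\tau(\eps,x))(x)$ for $\eps>0$, and flowing backwards for $\eps<0$, yields the required map: $f(\eps,\partial U)=\partial U^{\eps,\rho}$ for $\eps>0$ and $f(\eps,\partial U)=\partial U_{-\eps,\rho}$ for $\eps<0$; $f(\eps,x)=x$ for $|\rho(x)|\ge 3|\eps|$; away from $\partial U$, where $\rho$ is $C^\infty$, the implicit function theorem (using $\partial_t\rho(S(t)(x))>0$) makes $\tau(\eps,\cdot)$, hence $f(\eps,\cdot)$, a $C^\infty$ diffeomorphism of $\R^n\setminus\partial U$ onto $\R^n\setminus\partial U^{\eps,\rho}$; continuity of $\tau$ up to $\partial U$ makes $f(\eps,\cdot)$ a homeomorphism of $\R^n$; and continuous dependence of the flow and of $\tau$ on $(\eps,x)$ gives $f,f^{-1}\in C^0((-\eps_0,\eps_0)\times\R^n;\R^n)$.

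For~(iii) I would run the same flow between two positive levels. For $\eps\in(0,\eps_0)$ and $\eps'\in(-\eps_0,0)$, both $\partial U^{\eps,\rho}=\{\rho=\eps\}$ and $\partial U_{-\eps',\rho}=\{\rho=-\eps'\}$ are regular level sets lying in the open set $\{\rho>0\}$, on which $\rho$ is $C^\infty$; hence the flow time $\sigma(\eps,\eps',x)$ carrying level $\eps$ to level $-\eps'$ is $C^\infty$ in $x$ on the relevant band, which stays strictly away from $\partial U$. Setting $g(\eps,\eps',x):=S(\sigma(\eps,\eps',x))(x)$, interpolated to the identity outside that band, produces a genuine $C^\infty$ diffeomorphism of all of $\R^n$ — no loss of smoothness occurs, precisely because $\partial U$ is never met — with $g(\eps,\eps',U^{\eps,\rho})=U_{-\eps',\rho}$ and the stated behaviour off the band. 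The step I expect to be the real obstacle is establishing the transversality $\nabla\rho\cdot G\ge c>0$ near $\partial U$, and then verifying that the reparametrization $\tau(\eps,x)$ can be made simultaneously exact on $\partial U$, $C^\infty$ off $\partial U$, and continuous across it; this is where the local graph structure supplied by the good directions — hence the $C^0$ hypothesis on $\partial U$ rather than mere finite perimeter — is genuinely used.
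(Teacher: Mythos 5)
Your sketch of parts (i)--(ii) follows essentially the same route as Ball--Zarnescu: regular values of $\rho$ from the nondegeneracy of $\nabla\rho$ near $\partial U$, a flow along the smooth field of good directions with a transversality bound $\nabla\rho\cdot G\ge c>0$, and an implicit-function-theorem treatment of the flow time $\tau(\eps,\cdot)$ off $\partial U$. (The paper only cites this theorem and gives no proof, but the formulas you would need are reproduced in the proof of Theorem~\ref{bi Lip no eps}, Steps~1 and~4.) You also correctly flag that the genuine difficulty in (ii) is making $\tau$ continuous across $\partial U$ while $C^\infty$ off it, and that this is where the $C^0$ graph structure enters.

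Part (iii), however, rests on a sign error that hides exactly the subtle point. Since $U_{\delta,\rho}=\{\rho>-\delta\}$, one has $\partial U_{-\eps',\rho}=\{\rho=\eps'\}$, not $\{\rho=-\eps'\}$. For $\eps'\in(-\eps_0,0)$ this level set lies in $\{\rho<0\}$, outside $\overline U$. Thus $g(\eps,\eps',\cdot)$ carries the interior level $\{\rho=\eps\}$ to the exterior level $\{\rho=\eps'\}$, and the band $\{3\eps'<\rho<3\eps\}$ on which the flow acts nontrivially contains $\partial U=\{\rho=0\}$ --- the only place where $\rho$ fails to be $C^\infty$. Your claim that ``no loss of smoothness occurs, precisely because $\partial U$ is never met'' is therefore false, and yet (iii) asserts that $g(\eps,\eps',\cdot)$ is a $C^\infty$ diffeomorphism of the \emph{whole} of $\R^n$ --- a strictly stronger statement than the off-$\partial U$ smoothness claimed for $f$ in (ii). Obtaining it requires a device you did not anticipate: the reparametrized flow time $w$ is assembled only from flow times $\theta(\tau,\cdot)$ to \emph{nonzero} levels $\tau\in\{2\eps,\eps,\eps',2\eps'\}$, where $\theta$ is smooth, spliced together by the smooth auxiliary interpolation $u(r,s,q)$ (Ball--Zarnescu's Lemma~5.1) which agrees with $q$ for $q\notin[0,1]$; this is what makes $g$ smooth across $\partial U$ and equal to the identity outside the band. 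The formulas $q,r,s$, $u$, and $w$ doing this work appear explicitly in Step~4 of the proof of Theorem~\ref{bi Lip no eps}. The naive flow time from $\{\rho=\eps\}$ to $\{\rho=\eps'\}$ would instead pass through $\theta(0,\cdot)$, which degenerates on $\partial U$, and smoothness of $g$ would be lost there.
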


\begin{remark} \label{convergence to the identity}
\, An easy consequence of Theorem {\rm \ref{Ball}(ii)} is that $f(\eps, \cdot)$ converges uniformly to the identity.
Indeed,
\begin{equation*}
|f(\eps, x) - x| = 0 \qquad \text{in} \ \{x\,:\,  |\rho(x)| \ge 3 |\eps| \}, \end{equation*}
and
\begin{equation*}
|f(\eps, x) - x| \le \mathrm{diam}(\{ |\rho| < 3 |\eps| \}) \qquad  \text{in} \ \{x\,:\, |\rho(x)| < 3 |\eps| \}, \end{equation*}
since $f(\eps, \cdot)$ is injective so that, for any $x$ such that $|\rho(x)| < 3 |\eps|$,
$f(\eps, x)-x$ cannot belong to $\{ |\rho| \ge 3 |\eps| \}$.
Then this shows that
\begin{equation*}
\sup_{x \in \R^{n}} |f(\eps, x) - x| \to 0 \qquad \text{as $\eps \to 0$}.
\end{equation*}
\end{remark}

We now proceed to show that condition (ii) of Definition \ref{aquiaqui} is not necessary;
that is, any Lipschitz domain admits a bi-Lipschitz deformation.
Even though the proof of Theorem \ref{bi Lip no eps} is outlined in \cite[Remark 5.3] {BallZarnescu},
we present a detailed proof for our purpose of the subsequent developments.
First we recall a result of Lieberman \cite[Lemma A.1]{L2} and an extension theorem for Sobolev functions.

\begin{lemma} \label{Lieb Lip boundary estimates}
If $U$ is a Lipschitz domain with Lipschitz constant of the local parametrization of $\partial U$ uniformly bounded,
then there exists $\delta > 0$ such that, for any $x$ with $0 < |\rho(x)| < \delta$,
\begin{equation} \label{below estimate normal der int}
\frac{\partial \rho}{\partial x_{n}}(x) \ge \frac{2}{3 \sqrt{1 + L^{2}}},
\end{equation}
where $x = (x', x_{n})$ is an orthonormal coordinate system for which $\partial U$ is locally parametrized
by a Lipschitz function with Lipschitz constant less or equal to $L$, and $e_{n}$ is a good direction.
\end{lemma}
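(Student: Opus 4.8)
The plan is to work locally near a boundary point and to estimate the normal derivative of the regularized distance $\rho$ directly from its defining relation~\eqref{ladefinicion}--\eqref{ladefinicion2}. Fix a point $P \in \partial U$ and an orthonormal coordinate system $x = (x', x_n)$ in which $e_n$ is a good direction at $P$, so that in a ball $B(P, \delta_0)$ the boundary $\partial U$ is the graph $\{x_n = \gamma(x')\}$ of a Lipschitz function $\gamma$ with $\mathrm{Lip}(\gamma) \le L$, and $U$ lies on the side $\{x_n > \gamma(x')\}$. First I would record the elementary fact that, for such a graph, the signed distance $d$ satisfies a one-sided Lipschitz-type bound in the $x_n$-direction: for points $x$ close enough to $\partial U$, moving in the $+e_n$ direction strictly increases $d$, and quantitatively
\begin{equation*}
d(x + t e_n) - d(x) \ge \frac{t}{\sqrt{1 + L^2}} \qquad \text{for $t \ge 0$ small},
\end{equation*}
since the nearest boundary point cannot recede faster than the graph slope allows; this is where the constant $\frac{1}{\sqrt{1+L^2}}$ enters. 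Hence $\frac{\partial d}{\partial x_n} \ge \frac{1}{\sqrt{1+L^2}}$ at every point of differentiability of $d$ in a neighborhood of $P$.

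Next I would transfer this lower bound from $d$ to $\rho$ using the mollification structure~\eqref{ladefinicion}. Since $G(x,\tau) = \int_{B(0,1)} d(x - \tfrac{\tau}{2} z)\, \eta(z)\, dz$ and differentiation in $x_n$ passes under the integral, the bound $\partial_{x_n} d \ge \frac{1}{\sqrt{1+L^2}}$ (valid $\Leb n$-a.e.\ on a neighborhood, and the exceptional set is killed by the convolution) gives $\partial_{x_n} G(x, \tau) \ge \frac{1}{\sqrt{1+L^2}}$ for $x$ near $P$ and $\tau$ small. Then I differentiate the implicit relation $\rho(x) = G(x, \rho(x))$ in $x_n$ to obtain
\begin{equation*}
\frac{\partial \rho}{\partial x_n}(x) = \frac{\partial_{x_n} G(x, \rho(x))}{1 - \partial_\tau G(x, \rho(x))}.
\end{equation*}
From~\eqref{ladefinicion} one has $\partial_\tau G(x,\tau) = -\tfrac12 \int_{B(0,1)} \nabla d(x - \tfrac{\tau}{2}z)\cdot z\, \eta(z)\, dz$, so $|\partial_\tau G| \le \tfrac12$ because $|\nabla d| \le 1$ (Lemma~\ref{properties distance}) and $|z| \le 1$ on the support of $\eta$; therefore $1 - \partial_\tau G \in [\tfrac12, \tfrac32]$. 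Combining, $\frac{\partial \rho}{\partial x_n}(x) \ge \frac{1}{\tfrac32 \sqrt{1+L^2}} = \frac{2}{3\sqrt{1+L^2}}$, which is exactly~\eqref{below estimate normal der int}. Finally, a compactness/uniformity argument over $\partial U$ — using that the Lipschitz constants of the local parametrizations are uniformly bounded by $L$ — produces a single $\delta > 0$ that works for all boundary points, so that the estimate holds whenever $0 < |\rho(x)| < \delta$.

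The main obstacle I anticipate is the first step: making the one-sided bound $\partial_{x_n} d \ge \frac{1}{\sqrt{1+L^2}}$ rigorous and uniform. One must be careful that the nearest-point projection onto $\partial U$ behaves well — near a Lipschitz graph the projection is single-valued only in a thin tubular neighborhood whose width depends on $L$, and one needs to track that width uniformly; the signed distance must also be handled on both sides $\{x_n > \gamma\}$ and $\{x_n < \gamma\}$, which is why the hypothesis restricts to $0 < |\rho(x)| < \delta$ rather than allowing $\rho(x) = 0$. Everything after that — passing the bound through the mollifier and the implicit function relation, and the compactness patching — is routine given Lemma~\ref{properties distance} and the construction recalled before Lemma~\ref{nondegeneracy}.
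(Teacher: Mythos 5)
The paper does not prove this lemma; it is stated as a result recalled from Lieberman (\cite[Lemma A.1]{L2}), so your proposal supplies a proof where the text provides none. Your outline follows the route that the definitions essentially force, and which I believe matches Lieberman's own argument: derive $\partial_{x_n}\rho = \partial_{x_n}G/(1-\partial_\tau G)$ from \eqref{ladefinicion2}, bound $|\partial_\tau G|\le\tfrac12$ using $|\nabla d|\le 1$ (Lemma \ref{properties distance}) together with the factor $\tfrac{\tau}{2}$ in \eqref{ladefinicion}, and push a lower bound on $\partial_{x_n}d$ through the mollifier to get one on $\partial_{x_n}G$. The factor $2/3$ appears exactly as you compute, and the constant $1/\sqrt{1+L^2}$ comes from the geometry of the Lipschitz graph.

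Where the write-up is thin is precisely where you flag it: the inequality $\partial_{x_n} d\ge 1/\sqrt{1+L^2}$. The sentence ``the nearest boundary point cannot recede faster than the graph slope allows'' is a heuristic, not a proof. The clean route is through the nearest-point description of $\nabla d$. At a point $x\in U$ where $d$ is differentiable, the nearest boundary point $y$ is unique and $\nabla d(x)=(x-y)/|x-y|$. The open ball $B(x,d(x))$ lies in $U$ and so avoids $U^{c}$, which, in the local chart at $y$, contains the open cone $C^{-}_{y}:=\{z:\, z_{n}-y_{n}<-L|z'-y'|\}$. Computing $\mathrm{dist}(x,C^{-}_{y})$ and imposing $\mathrm{dist}(x,C^{-}_{y})\ge d(x)=|x-y|$ gives, after squaring, $(|(x-y)'|-L(x-y)_{n})^{2}\le 0$, hence $|(x-y)'|\le L\,(x-y)_{n}$ and $(x-y)_{n}/|x-y|\ge 1/\sqrt{1+L^{2}}$; a mirror argument with the upper cone $C^{+}_{y}$ treats $x\notin\overline{U}$. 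You also need to ensure that, for the convolution step, all the points $x-\tfrac{\tau}{2}z$ with $z\in B(0,1)$ and $\tau=\rho(x)$ lie in a single chart where $e_{n}$ is a good direction and the angle bound applies; since $|\rho|\le 2|d|$ and the chart radii are bounded below by compactness (using the uniform Lipschitz constant $L$), this is where the uniform $\delta>0$ of the statement comes from. With these two points made explicit, the argument is complete.
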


\begin{remark}
\, Lemma {\rm \ref{Lieb Lip boundary estimates}} applies to
the case that $U$ is a bounded Lipschitz domain,
since, by compactness, $\partial U$ can be covered with a finite number of charts of the local Lipschitz parametrization.
However, there are cases of unbounded $U$ which still satisfy the assumption, such as the half-spaces.
In addition, since \eqref{below estimate normal der int} is a local result,
it holds for any open set $U$ with Lipschitz boundary, up to the localization to a bounded subset of $\partial U$.
\end{remark}

\begin{lemma} \label{extension_thm_Sobolev_domain}
Let $U$ be a domain satisfying the minimal smoothness conditions{\rm ;} that is,
there exist $\eps > 0$, $N \in \N$, $M > 0$, and a sequence of open sets $\{V_{i}\}$ such that{\rm :}
\begin{enumerate}
\item[\rm (i)] If $x \in \partial U$, then $B(x, \eps) \subset V_{i}$ for some $i${\rm ;}

\vspace{1mm}
\item[\rm (ii)]  No point in $\R^{n}$ is contained in more than $N$ of $\{V_{i}\}${\rm ;}

\vspace{1mm}
\item[\rm (iii)]  For each $i$, there exists a Lipschitz function $\psi_{i}$ with Lipschitz constant
  less or equal to $M$ such that $V_{i} \cap U$ is the subgraph of $\psi_{i}$ inside $V_{i}$.
\end{enumerate}
Then there exists a continuous linear operator $\mathcal{E}: W^{k, p}(U) \to W^{k, p}(\R^{n})$,
for any $k \in \N$ and $1 \le p \le \infty$, such that $\mathcal{E}(f) = f$ on $U$.
\end{lemma}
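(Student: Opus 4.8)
This is Stein's classical extension theorem for domains satisfying the minimal smoothness (locally uniformly Lipschitz) conditions, and the plan is to reproduce its proof in the present notation. The strategy has two halves: \emph{(a)} build a single model extension operator across the graph of a Lipschitz function and prove its $W^{k,p}$--boundedness with a constant depending only on $n$, $k$, $p$, and the Lipschitz bound $M$; and \emph{(b)} glue the local models together with a partition of unity subordinate to $\{V_i\}$ (plus one interior patch), using the finite-overlap hypothesis (ii) to control the resulting sum. I expect the $L^p$--estimate for the model operator, carried out with constants uniform in $M$, to be the main obstacle; everything else is bookkeeping.

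First I would treat the model case $U_0 = \{(x',x_n)\in\R^{n}: x_n < \psi(x')\}$ for a Lipschitz function $\psi:\R^{n-1}\to\R$ with Lipschitz constant $\le M$. Let $\delta(x)$ be a regularized distance to $\partial U_0$ in the sense of \S{\rm 7}, so that $\delta\in C^{\infty}(\R^{n}\setminus\partial U_0)$, $c_1\,\dist(x,\partial U_0)\le\delta(x)\le c_2\,\dist(x,\partial U_0)$, and $|\partial^{\alpha}\delta(x)|\le C_{\alpha}\,\dist(x,\partial U_0)^{1-|\alpha|}$, all constants depending only on $n$ and $M$. Fix a kernel $\psi_{*}\in C^{\infty}([1,\infty))$ decaying faster than any power at infinity and satisfying the moment conditions
\begin{equation*}
\int_{1}^{\infty}\psi_{*}(\lambda)\,\dr\lambda = 1, \qquad \int_{1}^{\infty}\lambda^{m}\psi_{*}(\lambda)\,\dr\lambda = 0 \quad\text{for all }m\ge 1,
\end{equation*}
such a $\psi_{*}$ can be written down explicitly (Stein). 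Then define, for $x_n>\psi(x')$,
\begin{equation*}
(\mathcal{E}_0 f)(x):=\int_{1}^{\infty} f\big(x', x_n - \lambda\,\delta(x)\big)\,\psi_{*}(\lambda)\,\dr\lambda,
\end{equation*}
and set $\mathcal{E}_0 f := f$ on $U_0$; note that $(x',x_n-\lambda\delta(x))\in\overline{U_0}$ for every $\lambda\ge 1$ by the comparability of $\delta$ with the vertical distance to the graph (after rescaling $\delta$ if necessary).

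The two facts to check for $\mathcal{E}_0$ are: \emph{(i)} $\mathcal{E}_0 f$ matches $f$ to order $k-1$ across $\partial U_0$ — this is where $\delta\equiv 0$ on $\partial U_0$ together with the vanishing-moment conditions on $\psi_{*}$ enters, so that the distributional derivatives of $\mathcal{E}_0 f$ of order $\le k$ carry no singular part on $\partial U_0$ and $\mathcal{E}_0 f\in W^{k,p}(\R^{n})$; and \emph{(ii)} $\|\mathcal{E}_0 f\|_{W^{k,p}(\R^{n})}\le C(n,k,p,M)\,\|f\|_{W^{k,p}(U_0)}$. For (ii), differentiating under the integral sign, each $\partial^{\alpha}(\mathcal{E}_0 f)$ with $|\alpha|\le k$ is a finite sum of integrals $\int_{1}^{\infty}(\partial^{\beta}f)(x',x_n-\lambda\delta(x))\,m_{\alpha\beta}(x,\lambda)\,\psi_{*}(\lambda)\,\dr\lambda$ with $|\beta|\le k$, where $m_{\alpha\beta}$ is a product of derivatives of $\delta$ and hence $|m_{\alpha\beta}(x,\lambda)|\le C\lambda^{|\alpha|-|\beta|}$; the rapid decay of $\psi_{*}$ absorbs the powers of $\lambda$, and after the change of variables $t=x_n-\lambda\delta(x)$ together with Minkowski's integral inequality (a Hardy-type bound) one reduces the $L^{p}(\{x_n>\psi(x')\})$--norm to $\|\partial^{\beta}f\|_{L^{p}(U_0)}$, uniformly in $M$. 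This is the technical heart of the argument.

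Finally I would globalize. Choose an open set $V_0$ with $\overline{V_0}\subset U$ so that $U\subset V_0\cup\bigcup_i V_i$, and a locally finite partition of unity $\{\varphi_i\}_{i\ge 0}$ subordinate to $\{V_i\}_{i\ge 0}$ with $\sum_i\varphi_i\equiv 1$ on $U$ and $\|\varphi_i\|_{C^{k}}\le C$ uniformly — possible because of hypothesis (i) with its fixed radius $\eps$. For $i\ge 1$, in the orthonormal frame attached to $V_i$ let $\mathcal{E}_0^{(i)}$ be the model operator above built from $\psi_i$, and set
\begin{equation*}
\mathcal{E}f := \varphi_0 f + \sum_{i\ge 1}\mathcal{E}_0^{(i)}\big((\varphi_i f)\,\chi_{V_i\cap U}\big).
\end{equation*}
Each summand lies in $W^{k,p}(\R^{n})$ with norm $\le C\,\|f\|_{W^{k,p}(V_i\cap U)}$ by the model estimate, and on $U$ one has $\mathcal{E}f=\sum_i\varphi_i f=f$; linearity is clear. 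The finite-overlap condition (ii) bounds $x\mapsto\sum_i\chi_{V_i}(x)$ by $N$, so summing the local bounds (and using that the supports overlap at most $N$ times to pass from the $\ell^p$--sum of local norms to the global $W^{k,p}$--norm of $f$) yields $\|\mathcal{E}f\|_{W^{k,p}(\R^{n})}\le C(n,k,p,M,N)\,\|f\|_{W^{k,p}(U)}$, which completes the construction of the bounded linear extension operator.
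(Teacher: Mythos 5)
Your proposal is correct and is precisely a reconstruction of Stein's proof (\emph{Singular Integrals and Differentiability Properties of Functions}, Chapter VI, \S 3, Theorem 5), which is exactly the reference the paper cites for this lemma without reproducing the argument. The two pieces you identify — the model extension across a Lipschitz graph via the regularized distance and the rapidly decaying kernel with vanishing moments, followed by globalization through a finite-overlap partition of unity — are exactly the structure of Stein's proof, so there is nothing to reconcile.
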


For the proof of this result, we refer to
Stein \cite[Chapter VI, \S 3, Theorem 5]{Stein}.
These conditions are satisfied for any bounded open Lipschitz domain $U$.
However, they may fail in the general case of an unbounded open Lipschitz domain.

\begin{theorem} \label{bi Lip no eps}
If $U$ is a bounded Lipschitz domain,
then $f_{\ve}(\cdot): \overline{U} \to \overline{U}^{\ve, \rho}$ is bi-Lipschitz
with Lipschitz constants uniformly bounded in $\eps > 0$.
If $\ve <0$, the corresponding result is also true.
\end{theorem}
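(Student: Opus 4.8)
The plan is to use the regularized distance $\rho$ and the flow of good directions from Ball-Zarnescu \cite{BallZarnescu} to produce the deformation map explicitly, and then to establish the two-sided Lipschitz bounds directly from the properties of $\rho$. Recall that, by Theorem \ref{Ball}(ii), the map $f(\eps, \cdot)$ restricted to $\overline{U}$ is a homeomorphism onto $\overline{U}^{\eps, \rho}$ which equals the identity outside $\{|\rho| < 3\eps\}$. The first step is to recall from the construction in \cite[\S 4--5]{BallZarnescu} that $f(\eps, \cdot)$ is obtained by following the flow \eqref{flow good directions} of the smooth field of good directions $G$ for a time comparable to $\eps$; since $U$ is bounded Lipschitz, Proposition \ref{prop 2.1} gives such a $G$ on a neighborhood $V$ of $\partial U$, and on this neighborhood the flow is generated by a smooth (hence locally Lipschitz) vector field. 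Thus $f(\eps, \cdot)$ and $f^{-1}(\eps, \cdot)$ are locally bi-Lipschitz on $\R^n \setminus \partial U$ for each fixed $\eps$, and the only issue is uniformity of the Lipschitz constants as $\eps \to 0$ and across $\partial U$.

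The key step is to exploit the non-degeneracy estimate of Lemma \ref{Lieb Lip boundary estimates}: for a bounded Lipschitz domain, there is $\delta > 0$ so that $\frac{\partial \rho}{\partial x_n}(x) \ge \frac{2}{3\sqrt{1+L^2}}$ whenever $0 < |\rho(x)| < \delta$, in the local coordinates where $e_n$ is a good direction. This means that, near $\partial U$, the level sets $\{\rho = t\}$ are uniformly transversal to the good direction, and the natural deformation can be taken to move each point $x \in \partial U$ along the integral curve of $G$ (equivalently, in the $e_n$-direction up to a bounded tilt) until it reaches the level set $\{\rho = \eps\}$. Writing the deformation as $\Psi(x, \tau) = f(c\tau, x)$ for $x \in \partial U$, $\tau \in [0,1]$, and a suitable constant $c = c(U, \eps_0)$, I would show: (a) $\Psi(\cdot, 0) = \mathrm{Id}$, which is immediate from $f(0, \cdot) = \mathrm{Id}$; (b) $\Psi$ is a bi-Lipschitz homeomorphism onto its image, by combining the smooth-flow estimates on $V \setminus \partial U$ with the uniform transversality from Lemma \ref{Lieb Lip boundary estimates} to bound $|\nabla \Psi|$ and $|\nabla \Psi^{-1}|$ independently of $\tau$ (and hence of $\eps$). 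The estimate \eqref{below estimate normal der int} is precisely what prevents the Jacobian of $f(\eps, \cdot)$ from degenerating as $\eps \to 0$: the vertical displacement needed to pass from $\{\rho = 0\}$ to $\{\rho = \eps\}$ is $O(\eps)$ uniformly, with derivative bounds controlled by $L$.

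The remaining point is to patch the local estimates into a global one. Since $\partial U$ is compact, cover it by finitely many charts $B(P_j, \delta_j)$ in each of which a good direction $e_n^{(j)}$ is fixed and Lemma \ref{Lieb Lip boundary estimates} applies with a common constant; on the overlaps, the flow map $f(\eps, \cdot)$ is the same global object (it does not depend on the chart), so the local bi-Lipschitz bounds glue to a global bi-Lipschitz bound with constant depending only on $\max_j L_j$, the finite overlap number, and $\eps_0$. Outside $\bigcup_j B(P_j, \delta_j)$, $f(\eps, \cdot)$ equals the identity for $\eps$ small, so there is nothing to check there. For the case $\eps < 0$, one argues identically using $U_{-\eps, \rho} = \{\rho > \eps\}$ and the same transversality estimate, which holds on both sides of $\partial U$ by Lemma \ref{Lieb Lip boundary estimates} (the hypothesis $0 < |\rho(x)| < \delta$ is two-sided).

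I expect the main obstacle to be making precise the claim that the flow-based deformation map inherits \emph{uniform} (in $\eps$) bi-Lipschitz constants from the smooth field $G$ together with the $\rho$-transversality; this requires unwinding the construction in \cite{BallZarnescu} enough to see that the time-$\eps$ flow map has derivative $\mathrm{I} + O(\eps)$ with the error controlled by $\|DG\|_{\infty}$ on $V$ and by the lower bound on $\partial_n \rho$, rather than merely being a homeomorphism. Once that quantitative version of Theorem \ref{Ball}(ii) is in hand — which is essentially what Lemma \ref{Lieb Lip boundary estimates} is designed to provide — the bi-Lipschitz property and its uniformity follow by the chain rule and a compactness/finite-cover argument. (The Sobolev extension Lemma \ref{extension_thm_Sobolev_domain} is not strictly needed here but will be used in the companion statement about the regularity $J^{\partial U}\Psi_\tau \to 1$; for the present theorem the flow estimates suffice.)
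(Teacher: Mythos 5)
Your high-level strategy matches the paper's: differentiate the implicit equation $F(t(\eps,x),\eps,x)=0$ for the flow time, bound $|\nabla F|$ from above using $|\nabla\rho|\le 2$, bound $|\partial_t F|$ from below using the transversality estimate of Lemma \ref{Lieb Lip boundary estimates}, and conclude that $\nabla f(\eps,\cdot)$ is uniformly essentially bounded on $U\setminus U^{3\eps_0,\rho}$. This is exactly the derivative computation in Steps 1--2 of the paper's proof.

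However, you have a concrete gap precisely where you dismiss Lemma \ref{extension_thm_Sobolev_domain} as ``not strictly needed.'' What the derivative bounds give is $f(\eps,\cdot)\in W^{1,\infty}(U;\R^n)$ uniformly in $\eps$. For an arbitrary bounded open set $U$, the inclusion $W^{1,\infty}(U)\subset \Lip(\overline U)$ is \emph{false}; it holds only under a quasiconvexity (or equivalent geometric) hypothesis on $U$, or after extending to $W^{1,\infty}(\R^n)=\Lip(\R^n)$. The paper is explicit about this: it records that the standard Sobolev extension theorems for $W^{1,p}$, $p<\infty$, fail on a mere Lipschitz domain, notes that the gradient bound alone yields only $\Lip_{\rm loc}(U)$, and then invokes Stein's extension operator (Lemma \ref{extension_thm_Sobolev_domain} with $p=\infty$) to obtain $\bar f(\eps,\cdot)\in W^{1,\infty}(\R^n;\R^n)$ and hence global Lipschitz bounds on $\overline U$, uniformly in $\eps$. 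Your ``gluing of local bi-Lipschitz bounds'' is implicitly relying on local quasiconvexity of Lipschitz epigraphs (to pass from a local gradient bound to a local Lipschitz bound on the closure), but you never state this, and you explicitly assert that the flow estimates alone suffice --- they do not; something must bridge $W^{1,\infty}(U)$ and $\Lip(\overline U)$. (Incidentally, your claim that the extension lemma is what establishes $J^{\partial U}\Psi_\tau\to1$ is also misplaced: that statement, Theorem \ref{Lip_reg_def}, is proved via the Ne\v{c}as approximation of Proposition \ref{Nevcas_approx}, not via Stein extension.) A secondary issue: the case $\eps<0$ is \emph{not} handled ``identically.'' In the Ball--Zarnescu construction $f(\eps,\cdot)$ for $\eps<0$ is the composition $g(-\eps,\eps,\cdot)\circ f(-\eps,\cdot)$, and the paper's Steps 4--5 spend substantial effort controlling $\nabla w$ through the auxiliary functions $q,r,s$ and the flow-time function $\theta$; your one-line remark glosses over all of that.
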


\begin{proof}
\, Unless otherwise stated, in this proof, $\nabla$ stands for the gradient
with respect to the spatial variables, denoted by $x, y$, or $z\in \mathbb{R}^n$.
We divide the proof into five steps.

\smallskip
1. We first consider $0 < \ve < \ve_0$, for some $\eps_{0} > 0$ sufficiently small to be assigned.
From Theorem \ref{Ball}, $f(\ve, \cdot): \R^n \to \R^n$ is continuous with continuous inverse $f^{-1}$.
Thus, the restriction:
\begin{equation*}
  f(\ve, \cdot) : \, \overline{U} \to \overline{U}^{\ve, \rho}
\end{equation*}
is also continuous with continuous inverse.
We need to show that $f(\ve, \cdot):  \overline{U} \to \overline{U}^{\ve, \rho}$ is Lipschitz
with the Lipschitz constant uniformly bounded in $\ve\in (0, \eps_0)$.
Following the proof of \cite[Theorem 5.1]{BallZarnescu}, the continuous map $f(\ve, \cdot)$ is defined as
\begin{equation}
\label{asisedefine}
f(\ve,x)=
\begin{cases}
S(t(\ve,x))x  \quad &\mbox{for}\,\, x \in \overline{U}\setminus U^{3 \ve, \rho}, \\
x\quad  &\mbox{for}\,\, x \in U^{3 \ve, \rho},
\end{cases}
\end{equation}
where $t(\ve,x)$ is the unique $t \geq 0$ such that
\begin{equation}
\label{smart}
\rho (S(t(\ve,x))x)=\rho(x) + h(\ve, \rho(x)),
\end{equation}
and $h (\ve, \cdot): \R_{+} \to [0,\ve]$ is smooth with value $\ve$ on $[0,\ve]$
and $0$ on $[\frac{5\ve}{2}, \infty)$ and, for some $\sigma > 0$,
$- 1 + \sigma < \frac{\partial h}{\partial r}(\eps, r) \le 0$
for any $r \ge 0$ and $\eps$ sufficiently small.

\smallskip
2. In view of \eqref{asisedefine}, it suffices to show that $t(\ve,x)$ is Lipschitz, with Lipschitz constant uniformly bounded in $\ve$.
Define
\begin{equation}
\label{completo}
F(t,\ve, x):= \rho (S(t)x) -\rho(x) - h(\ve, \rho(x)).
\end{equation}
Then, from \eqref{smart},
\begin{equation}
\label{takederivative}
 F(t(\ve,x),\ve,x)=0.
\end{equation}
We take derivatives in \eqref{takederivative} with respect to $x_i$
to obtain $\frac{\partial F}{\partial t}\frac{\partial t}{\partial x_i} + \frac{\partial F}{\partial x_i}=0$;
that is, $\frac{\partial t}{\partial x_i}= - \frac{\partial F}{\partial x_i} \big(\frac{\partial F}{\partial t}\big)^{-1}$.
We need to show that $|\frac{\partial F}{\partial x_i}|$ is bounded from above.
By definition \eqref{completo}, it follows that
\begin{equation} \label{partial F partial x}
\frac{\partial F}{\partial x_i}
= \nabla \rho(S(t)x) \cdot \frac{\partial(S(t)x)}{\partial x_{i}} - \frac{\partial \rho}{\partial x_i}
  -\frac{\partial h}{\partial r}(\ve, \rho(x))\frac{\partial \rho}{\partial x_i}.
\end{equation}
Notice that $|\frac{\partial h}{\partial r}(\ve, \rho(x))| < 1$
by the definition of $h$, and $|\nabla (S(t)x)| \le M = M_{\eps_{0}}$ uniformly
on $x \in \overline{U} \setminus U^{3 \eps_{0}, \rho}$ for some $\eps_{0} > 0$,
since it is the flow of the smooth compactly supported vector field $\gamma G$;
see \eqref{flow good directions}.
Thus, it suffices to show that $|\nabla \rho (x)|$ is bounded above.
Relation \eqref{ladefinicion2} implies
\begin{equation}
\label{monica}
\frac{\partial \rho}{\partial x_i}
= \frac{\partial G}{\partial x_i}+ \frac{\partial G}{\partial \tau}\frac{\partial \rho}{\partial x_i} \
\,\,\, \Longrightarrow \,\,\,
\frac{\partial \rho}{\partial x_i} = \frac{ \frac{\partial G}{\partial x_i}} {1 - \frac{\partial G}{\partial \tau}}
\,\,\, \Longrightarrow \,\,\,
\nabla \rho(x) = \frac{ \nabla G} {1 - \frac{\partial G}{\partial \tau}}.
\end{equation}
From \eqref{ladefinicion}, we obtain
\begin{align*}
\nabla G(x, \tau) & = \int_{B(0, 1)} \nabla d(x - \frac{\tau}{2}z ) \eta(z) \, \dr z, \\
\frac{\partial G}{\partial \tau}(x, \tau)
& = - \frac{1}{2}\int_{B(0, 1)} \nabla d (x - \frac{\tau}{2}z) z \eta(z) \, \dr z.
\end{align*}
In turn, it follows
that
\begin{equation} \label{nabla rho estimate}
\left|\nabla G (x,\tau) \right| \leq 1, \quad  \Big|\frac{\partial G}{\partial \tau} (x,\tau) \Big| \leq \frac{1}{2}
\qquad\,\, \mbox{for any $x \in \R^{n}$ and $\tau = \rho(x)$},
\end{equation}
so that
\begin{equation} \label{nabla rho estimate-2}
|\nabla \rho(x)| \leq 2 \qquad \mbox{for any $x\in \R^n$},
\end{equation}
since $|\nabla d| = 1$ $\Leb{n}$--{\it a.e.}
by Lemma \ref{properties distance}.
Therefore, from \eqref{partial F partial x} and \eqref{nabla rho estimate},
we have
\begin{equation*}
|\nabla F| \le 2 (M + 2).
\end{equation*}

It remains to show that $|\frac{\partial F}{\partial t}|$ is bounded away from zero.
From \eqref{completo}, it follows by \cite[Remark 3.1]{BallZarnescu} that
\begin{equation*}
\frac{\partial F}{\partial t}
= \gamma(x(t))\,(\nabla \rho \cdot G)(S(t)x) > 0
\end{equation*}
for any $t$ small enough and $x$ in a suitable neighborhood of $\partial U$.

However, in order to prove the uniformity of this estimate in $x \in \overline{U} \setminus U^{3 \eps_{0}, \rho}$,
independent of $\eps$,
we need to employ the compactness of $\partial U$.
As recalled in \cite[Remark 5.3]{BallZarnescu},
in the case that $\partial U$ is Lipschitz,
Lemma \ref{Lieb Lip boundary estimates} can be applied.
In this way,  since $G(x)$ is a field of good directions,
we use \eqref{below estimate normal der int} to
show that there exists $\delta>0$ such that, for any $x \in U$ with $d(x) < \delta$,
\begin{equation} \label{below estimate rho G}
\nabla \rho(x) \cdot G(x) \ge \frac{2}{3 \sqrt{1 + L^{2}}},
\end{equation}
where $L$ is the maximal Lipschitz constant of the Lipschitz parametrization of $\partial U$.
This implies that there exists $\eps_{0} = \eps_{0}(\delta) > 0$
such that \eqref{below estimate rho G} holds for any $x \in U \setminus U^{3 \eps, \rho}$
and any $\eps \in (0, \eps_{0})$.
Since $\gamma$ can be chosen in such a way that $\gamma \equiv 1$
in $\overline{U} \setminus U^{3 \eps_{0}, \rho}$ (see \cite[\S 4 and Theorem 5.1]{BallZarnescu}),
we obtain
\begin{equation*}
\frac{\partial F}{\partial t} \ge \frac{2}{3 \sqrt{1 + L^{2}}}.
\end{equation*}
Thus, this implies
\begin{equation}
 \label{estimate}
 |\nabla t(\eps, x)| \le 3 (M + 2) \sqrt{1 + L^{2}}
 \qquad \mbox{for any $(\eps, x) \in (0, \eps_{0}) \times (U \setminus U^{3 \eps_{0}, \rho})$}.
 \end{equation}
From the proof of \cite[Theorem 5.1]{BallZarnescu},
we also know that $t \in C^{\infty} ((0, \eps_{0}) \times U) \cap C^{0}( [0, \eps_{0}) \times \overline{U})$
and $t(\eps, x) = 0$ for any $x \in U^{3 \eps, \rho}$.
From \eqref{estimate}, it follows that $f(\eps, \cdot) \in W^{1,\infty}(U; \R^{n})$.
Since $\partial U$ is only Lipschitz, the classical extension theorems
for Sobolev mappings ({\it cf.} \cite[Theorem 1, \S 4.4]{eg} and \cite[Theorem 1, \S 5.4]{eg2})
do not apply,
and we know only that $f(\eps, \cdot) \in \Lip_{\rm loc}(U; \R^{n})$
({\it cf.} \cite[Theorem 5, \S 4.2.3]{eg} and \cite[Theorem 4, \S 5.8.2]{eg2}).
However, Lemma \ref{extension_thm_Sobolev_domain} can be applied  in the case $p = \infty$.

Therefore, there exists a function $\bar{f}(\eps, \cdot) \in W^{1,\infty}(\R^n; \R^{n})$,
uniformly in $\eps \in [0, \eps_{0})$,
such that $f(\eps, \cdot) = \bar{f}(\eps, \cdot)$ on $U$.
Moreover, $f(\eps, \cdot)$ and $\bar{f}(\eps, \cdot)$ also agree on $\partial U$
since, for any $x \in \partial U$, there exists a sequence $\{x_j\} \subset U$ with $x_j \to x$, so that $f(\eps, x)=\bar{f}(\eps, x)$
because both $f(\eps, \cdot)$ and $\bar{f}(\eps, \cdot)$ are continuous in $\overline{U}$.
Clearly, $\bar{f}(\eps, \cdot)$ is Lipschitz in $\R^n$ uniformly in $\eps \in [0, \eps_{0})$
so that $f(\eps, \cdot) \in \Lip(\overline{U}; \R^{n})$ uniformly in $\eps \in [0, \eps_{0})$.

\smallskip
3. Let us now consider the inverse map $f^{-1}(\eps, \cdot) : \overline{U}^{\eps, \rho} \to \overline{U}$.
From the proof of \cite[Theorem 5.1]{BallZarnescu},
we know that it can be defined as
\begin{equation*}
f^{-1}(\eps, z) :=
\begin{cases}
S(\beta(\eps, z))z &\quad \text{if} \ z \in \overline{U}^{\eps, \rho} \setminus U^{3 \eps, \rho}, \\
z &\quad \text{if} \ z \in U^{3 \eps, \rho},
\end{cases}
\end{equation*}
where $\beta(\eps, z)$ is the unique solution to the equation:
\begin{equation*}
g(\eps, z, \beta(\eps, z)) = 0,
\end{equation*}
where
\begin{equation*}
g(\eps, z, \tau) := \rho(z) - \rho(S(\tau)z) - h(\eps, \rho(S(\tau)z)).
\end{equation*}
By the implicit function theorem,
$\beta(\eps, \cdot) \in C^{\infty}(U^{\eps, \rho}) \cap C^{0}(\overline{U}^{\eps, \rho})$.
It is also clear from the definition of $h$
that $\beta(\eps, z) = 0$ for any $z \in U^{3 \eps, \rho}$.
In addition, we have
\begin{equation*}
\nabla \beta(\eps, z) = - \frac{\nabla g (\eps, z, \beta(\eps, z))}{\frac{\partial g}{\partial \tau}(\eps, z, \beta(\eps, z))},
\end{equation*}
and
\begin{align}
\nabla g (\eps, z, \tau) & = \nabla \rho(z) - \nabla (S(\tau)z) \cdot \nabla \rho (S(\tau)z)\nonumber\\
  &\quad - \frac{\partial h}{\partial r}(\eps, \rho(S(\tau)z)) \nabla (S(\tau) z) \cdot \nabla \rho(S(\tau) z),
  \label{nabla beta num} \\
\frac{\partial g}{\partial \tau}(\eps, z, \tau)
& = - \nabla \rho (S(\tau)z) \cdot  (\gamma G)(S(\tau)z)\big( 1 + \frac{\partial h}{\partial r}(\eps, \rho(S(\tau)z))\big).
\label{nabla beta den}
\end{align}
Arguing as above and using \eqref{nabla rho estimate}, we obtain
\begin{equation*}
|\nabla g (\eps, z, \beta(\eps, z))| \le 2 (M_{\eps_{0}} + 2)
\end{equation*}
for any $\eps \in [0, \eps_{0})$ with any fixed $\eps_{0} > 0$,
and $z \in \overline{U}^{\eps, \rho}$.

As for the estimate on the denominator, we can use \eqref{below estimate normal der int} and
the fact that $\displaystyle \frac{\partial h}{\partial r}(\eps, r) \ge - 1 + \sigma > - 1$,
for some $\sigma > 0$ independent on $\eps$,
in order to show that there exists $\eps_{0} = \eps_{0}(\delta) > 0$ such that
\begin{equation*}
\left| \frac{\partial g}{\partial \tau}(\eps, z, \beta(\eps, z)) \right|
> \frac{2}{3 \sqrt{1 + L^{2}}} \sigma
\qquad\mbox{for any $\eps \in [0, \eps_{0})$ and $x \in U^{\eps, \rho} \setminus U^{3 \eps, \rho}$.}
\end{equation*}

Therefore,
for any $\eps \in [0, \eps_{0})$ and $x \in U^{\eps, \rho} \setminus U^{3 \eps, \rho}$,
\begin{equation*}
|\nabla \beta(\eps, z)| \le \frac{3}{\sigma} (M + 2) \sqrt{1 + L^{2}}
\end{equation*}
for some $\eta$ depending on the choice of $h$, $M = M_{\eps_{0}}$ depending on domain $U$,
and $L$ depending on the Lipschitz parametrization of $\partial U$.

This implies that $f^{-1}(\eps, \cdot) \in W^{1, \infty}(U^{\eps, \rho}; \R^{n})$ uniformly in $\eps \in (0, \eps_{0})$.
Thus, arguing as before, there exists an extension $\bar{f}^{-1}(\eps, \cdot) \in W^{1, \infty}(\R^{n}; \R^{n})$
uniformly in $\eps \in (0, \eps_{0})$, which coincides with $f^{-1}(\eps, \cdot)$ on $\overline{U}^{\eps, \rho}$
by the uniform continuity, and it is Lipschitz uniformly in $\eps \in (0, \eps_{0})$.
Thus, we have proved that $f^{-1}(\eps, \cdot) \in \Lip(\overline{U}^{\eps, \rho}; \R^{n})$ uniformly in $\eps \in (0, \eps_{0})$;
that is, the Lipschitz constant of  $f^{-1}(\eps, \cdot)$ on $\overline{U}^{\eps, \rho}$ is uniformly bounded in $\eps \in (0, \eps_0)$.

\smallskip
4. Let now $\eps < 0$. By the proof of \cite[Theorem 5.1]{BallZarnescu}, we know that,
for $\eps \in (- \eps_{0}, 0)$ and $x \in \overline{U}$,  $f$ is defined as
\begin{equation*}
f(\eps, x):= g(- \eps, \eps, f(- \eps, x)),
\end{equation*}
where $g(-\eps, \eps, \cdot)$ is the diffeomorphism introduced in Theorem \ref{Ball} (iii),
and $f$ is the map defined in the first part of the proof;
see also definition (5.16) in the proof of \cite[Theorem 5.1]{BallZarnescu}.

Since
$f(- \eps, \cdot)$ is uniformly Lipschitz as proved in the first part of the proof,
we need to check the same property for $g(- \eps, \eps, \cdot)$.
In order to do so, we need to introduce some auxiliary functions from the proof of \cite[Theorem 5.1]{BallZarnescu}.

Let $\theta = \theta(\tau, y)$ be the unique solution to the equation: $\rho(S(\theta)y) = \tau$.
By \cite[Lemma 4.1]{BallZarnescu}, such a function is well defined for
$|\tau| < 3 \eps_{0}$ and $|\rho(y)| < 3 \eps_{0}$, is smooth for $\tau \neq 0$, and
\begin{equation} \label{gradient theta}
\nabla \theta(\tau, y) = - \frac{\nabla (S(\theta)y) \cdot \nabla \rho(S(\theta)y)}{\nabla \rho(S(\theta)y) \cdot (\gamma G)(S(\theta)y)}(\tau, y)
\end{equation}
by the implicit function theorem,
since $\displaystyle \partial_\theta\big(S(\theta)y\big) = (\gamma G)(S(\theta)y)$.
This implies that, using \eqref{below estimate normal der int} if $0 < |d(S(\theta)y)| < \delta$
for some $\delta > 0$ sufficiently small,
the denominator of $\nabla \theta(\tau, y)$ is bounded away from zero uniformly in $\tau$.
Arguing in a similar way as above, we can show that there exists $\eps_{0} = \eps_{0}(\delta) > 0$ such that
\begin{equation} \label{gradient theta bound}
|\nabla \theta(\tau, y)| \le 3 M_{\eps_{0}} \sqrt{1 + L^{2}}
\end{equation}
for any $\tau \in (- 3 \eps_{0}, 3 \eps_{0})$, $\tau \neq 0$, and $y \in U_{3 \eps_{0}, \rho} \setminus \overline{U}^{3 \eps_{0}, \rho}$,
since $d(S(\theta(\tau, y))y)=0$ if and only if $\rho(S(\theta(\tau, y))y)=0$, which means that $\tau = 0$.
On the other hand, for our purposes, $\tau = 0$ if and only if $\eps = 0$; in such a case, $f(0, x) = x$ for any $x \in \R^{n}$,
which is clearly Lipschitz.

Furthermore, $\theta(\tau, y)$ is increasing in $\tau$ for fixed $y$, and
\begin{equation*}
\frac{\partial \theta}{\partial \tau}(\tau, y) = \frac{1}{(\nabla \rho \cdot (\gamma G))(S(\theta(\tau, y))y)}
> \frac{3}{2} \sqrt{1 + L^{2}},
\end{equation*}
which is uniformly strictly positive in $U_{3 \eps_{0}, \rho} \setminus \overline{U}^{3 \eps_{0}, \rho}$.
Given $\tilde{\eps} \in (0, \eps_{0})$ and $\eps' \in (- \eps_{0}, 0)$, we obtain
\begin{align}
q(\tilde{\eps}, \eps', y)
&:= -\frac{\theta(2 \eps', y)}{\t(2 \tilde{\eps}, y) - \t(2 \eps', y)},\label{p definition} \\
 r(\tilde{\eps}, \eps', y)
&:= \frac{\theta(\tilde{\eps}, y) - \theta(2 \eps', y)}{\t(2 \tilde{\eps}, y) - \t(2 \eps', y)},\label{r definition}\\
 s(\tilde{\eps}, \eps', y)
&:= \frac{\theta(\eps', y) - \theta(2 \eps', y)}{\t(2 \tilde{\eps}, y) - \t(2 \eps', y)}.\label{s definition}
\end{align}
By the monotonicity property of $\t$, it is clear that $r, s \in (0, 1)$.

Let $u : (0, 1) \times (0, 1) \times \R \to \R$ be the smooth function described in \cite[Lemma 5.1]{BallZarnescu},
which particularly satisfies the property that $u(a, b, c) = c$ for any $(a, b) \in (0, 1) \times (0, 1)$ and $c \notin [0, 1]$.

Finally, we define $g$ by
\begin{equation} \label{g definition}
g(\tilde{\eps}, \eps', y) :=
\begin{cases}
S(w(\tilde{\eps}, \eps', y))y &\quad  \text{if} \ \rho(y) \in (3 \eps', 3 \tilde{\eps}), \\
y &\quad \text{otherwise},
\end{cases}
\end{equation}
where
\begin{equation} \label{eta definition}
w(\tilde{\eps}, \eps', y)
:= (\t(2 \tilde{\eps}, y) - \t(2 \eps', y)) u(r(\tilde{\eps}, \eps', y), s(\tilde{\eps}, \eps', y), q(\tilde{\eps}, \eps', y)) + \t(2 \eps', y).
\end{equation}

We observe that, if $y \in U$,
then $\t(2 \eps', y) > 0$ so that $q(\tilde{\eps}, \eps', y) > 0$.
Moreover, if $q(\tilde{\eps}, \eps', y) > 1$,
then $w(\tilde{\eps}, \eps', y) = 0$ by the property of $u$.
Hence, we can restrict ourselves to the case that $0 < q(\tilde{\eps}, \eps', y) \le 1$, without loss of generality.

In our case, we are dealing with $g(- \eps, \eps, y)$ for $\eps < 0$,
and $y = f(- \eps, x) \in U^{- \eps, \rho}$, so that we can select $\tilde{\eps} = - \eps$ and $\eps' = \eps$.

Since $S$ is the flow of the smooth vector field $\gamma G$,
it suffices
to show uniform bounds on the gradient of $w$ for $y \in U \setminus \overline{U}^{3 \eps_{0}, \rho}$.
We have
\begin{align}
&\nabla w(- \eps, \eps, y) \nonumber\\
&=  \nabla \t(2 \eps, y) + \big(\nabla \t(- 2 \eps, y) - \nabla \t(2 \eps, y)\big) u(r(- \eps, \eps, y), s(-\eps, \eps, y), \beta(-\eps, \eps, y))\nonumber \\
&\,\,\,\,+ \big(\t(- 2 \eps, y) - \t(2 \eps, y)\big) \Big ( \frac{\partial u}{\partial r}(r(- \eps, \eps, y), s(- \eps, \eps, y), q(- \eps, \eps, y)) \nabla r(- \eps, \eps, y)\nonumber \\
&\quad \qquad\qquad \qquad\qquad\qquad\, + \frac{\partial u}{\partial s}(r(- \eps, \eps, y), s(- \eps, \eps, y), q(- \eps, \eps, y)) \nabla s(- \eps, \eps, y)\nonumber \\
&\quad \qquad\qquad \qquad\qquad\qquad\, + \frac{\partial u}{\partial q}(r(- \eps, \eps, y), s(- \eps, \eps, y), q(- \eps, \eps, y)) \nabla q(- \eps, \eps, y) \Big ).
\label{nabla eta}
\end{align}

Observe that $h$ is smooth, and $r(- \eps, \eps, y), s(- \eps, \eps, y) \in (0, 1)$ for any $\eps \in (- \eps_{0}, 0)$
and $y \in U \setminus \overline{U}^{3 \eps_{0}, \rho}$ by the properties of $\t$.
In addition, only the intersection with set $\{ (\eps, y) : 0 < q(- \eps, \eps, y) \le 1 \}$ is relevant to us, since $w$ vanishes on the outside of the intersection.
Therefore, $h$ and all its derivatives are uniformly bounded in $U \setminus \overline{U}^{3 \eps_{0}, \rho}$ for any $\eps \in (- \eps_{0}, 0)$.

Moreover, by standard calculations, we have
\begin{align*}
&\big(\t(- 2 \eps, y) - \t(2 \eps, y)\big) \nabla q(- \eps, \eps, y )\nonumber\\
&\quad\, = - \nabla \t(2 \eps, y) - q(- \eps, \eps, y) \big(\nabla \t(- 2 \eps, y) - \nabla \t(2 \eps, y)\big), \\[1.5mm]
&\big(\t(- 2 \eps, y) - \t(2 \eps, y)\big) \nabla r(- \eps, \eps, y )\\
&\quad\, = \nabla \t(-\eps, y) - \nabla \t(2 \eps, y) - r(- \eps, \eps, y) \big( \nabla \t(- 2 \eps, y) - \nabla \t(2 \eps, y)\big), \\[1.5mm]
& \big(\t(- 2 \eps, y) - \t(2 \eps, y)\big) \nabla s(- \eps, \eps, y )\\
&\quad\, = \nabla \t(\eps, y) - \nabla \t(2 \eps, y)
 - s(- \eps, \eps, y) \big( \nabla \t(- 2 \eps, y) - \nabla \t(2 \eps, y)\big).
\end{align*}
From these formulas, the bounds on $(q, r, s)$, and \eqref{gradient theta bound},
we conclude that $\nabla w(- \eps, \eps, \cdot) \in L^{\infty}(U \setminus U^{3 \eps_{0}, \rho}; \R^{n})$
uniformly in $\eps \in (- \eps_{0}, 0)$.

Arguing now as in the previous two cases,
we can extend $f(\eps, \cdot)$ for $\eps < 0$ to a $W^{1, \infty}$--map
on the whole $\R^{n}$, whose restriction on $\overline{U}$ coincides with $f(\eps, \cdot)$;
thus proving that $f(\eps, \cdot) \in \Lip(\overline{U}; \R^{n})$ uniformly in $\eps \in (- \eps_{0}, 0]$.

\smallskip
5. Finally, the inverse map for $\eps < 0$ is given by
\begin{equation} \label{inverse eps < 0}
f^{-1}(\eps, x) := f^{-1}(- \eps, g^{-1}(- \eps, \eps, x))
\qquad \mbox{for $x \in \overline{U}_{\eps, \rho}$ and $\eps \in (- \eps_{0}, 0)$},
\end{equation}
for some $\eps_{0} > 0$
sufficiently small ({\it cf.} definition (5.17) in the proof of \cite[Theorem 5.1]{BallZarnescu}).

The inverse map $g^{-1}(- \eps, \eps, \cdot)$ is defined in a similar way to \eqref{g definition}, by using $h^{-1}(a, b, \cdot)$ instead of $h$;
that is, the inverse function of $h(a, b, \cdot)$.
Since $h^{-1}(a, b, d) = d$ for any $(a, b) \in (0, 1) \times (0, 1)$ and $d \notin [0, 1]$,
then we can argue as before to obtain the uniform essential boundedness of $\nabla g^{-1}(- \eps, \eps, \cdot)$,
which concludes that  $f(\eps, \cdot)$ for $\eps < 0$ is a uniform bi-Lipschitz function.
\end{proof}

\begin{remark} \label{convergence to identity nabla}
\, As a consequence of Theorem {\rm \ref{bi Lip no eps}},
we can show that, if $U$ has Lipschitz boundary,
then $\nabla f(\eps, \cdot) \to {\rm I}_{n}$ in $L^{p}(\R^{n}; \R^{n \times n})$ for any $1 \le p < \infty$.

Indeed, by Theorem {\rm \ref{Ball} (ii)},
$\nabla f(\eps, x) = {\rm I}_{n}$ for any $x$ such that $|\rho(x)| > 3 |\eps|$.
This implies that $\nabla f(\eps, x) \to {\rm I}_{n}$ for any $x \in \R^{n} \setminus \partial U$, and
\begin{equation}\label{8.24a}
\int_{\R^{n}} |\nabla f(\eps, x) - {\rm I}_{n}|^{p} \, \dr x \le C^{p} \Leb{n}(\{ |\rho(x)| \le 3 |\eps| \}),
\end{equation}
where $C$ is a constant depending only on $U$ and $n$,
since the Lipschitz constants of $f(\eps, \cdot)$ are uniformly bounded for $\eps \in (- \eps_{0}, \eps_{0})$,
by Theorem {\rm \ref{bi Lip no eps}}.
This implies the convergence, since $\Leb{n}(\partial U) = 0$.
\end{remark}

As an immediate consequence, we have the following result.

\begin{theorem} \label{Lip is Lip deformable}
The boundary of any Lipschitz domain is Lipschitz deformable in the sense of Definition {\rm \ref{aquiaqui}}.
\end{theorem}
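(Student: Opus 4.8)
The plan is to verify the two conditions of Definition \ref{aquiaqui} for a bounded Lipschitz domain $U$. Condition (i) is literally the definition of a Lipschitz domain, so there is nothing to prove there. For condition (ii), the idea is to build the deformation directly out of the homeomorphisms $f(\eps,\cdot)$ provided by Theorem \ref{Ball}: with $\eps_0=\eps_0(U)>0$ as in that theorem, set
\[
\Psi(x,\tau):=f\!\left(\tfrac{\eps_0}{2}\,\tau,\,x\right)\qquad\text{for }(x,\tau)\in\partial U\times[0,1].
\]
Then $\Psi(\cdot,0)=f(0,\cdot)=\mathrm{Id}$ on $\partial U$; and, for $\tau\in(0,1]$, Theorem \ref{Ball}(ii) gives $\Psi(\partial U\times\{\tau\})=f(\tfrac{\eps_0}{2}\tau,\partial U)=\partial U^{\frac{\eps_0}{2}\tau,\rho}$, which is the smooth boundary of the bounded $C^{\infty}$ domain $U_\tau:=U^{\frac{\eps_0}{2}\tau,\rho}\Subset U$. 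In particular $\Psi$ takes values in $\overline U$ and meets $\partial U$ only at $\tau=0$, exactly as Definition \ref{aquiaqui}(ii) requires. Note that only the maps $f(\eps,\cdot)$ with $\eps>0$ are used, since we deform $\partial U$ \emph{into} $\overline U$; this avoids the more delicate $\eps<0$ part of Theorem \ref{Ball}.

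Next I would check that $\Psi$ is a bi-Lipschitz homeomorphism onto its image. Injectivity is immediate: since $\rho\equiv\tfrac{\eps_0}{2}\tau$ on $\partial U^{\frac{\eps_0}{2}\tau,\rho}$, the parameter $\tau$ is recovered from $\rho(\Psi(x,\tau))$, after which $x$ is recovered because each $f(\eps,\cdot)$ is injective; concretely, $\Psi^{-1}(y)=\big(f^{-1}(\rho(y),y),\,\tfrac{2}{\eps_0}\rho(y)\big)$. For the Lipschitz bounds, Theorem \ref{bi Lip no eps} already supplies that $f(\eps,\cdot)$ and $f^{-1}(\eps,\cdot)$ are Lipschitz with constants uniform in $\eps$, so only the dependence on the parameter $\eps$ remains to be controlled. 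This is delivered by the same implicit‑function estimates used in the proof of Theorem \ref{bi Lip no eps}: differentiating $\rho(S(t(\eps,x))x)=h(\eps,\rho(x))$ and $g(\eps,z,\beta(\eps,z))=0$ in $\eps$, and using that $\partial_\eps h$ is bounded while $\nabla\rho\cdot(\gamma G)$ stays bounded below along the flow near $\partial U$ (Lemma \ref{Lieb Lip boundary estimates}, cf.\ \eqref{below estimate rho G}), one gets $|\partial_\eps t|+|\partial_\eps\beta|\le C$ on the relevant neighborhood of $\partial U$. Since $S$ is the flow of the smooth compactly supported field $\gamma G$ (so $\dot S=\gamma G$ has uniformly bounded speed) and $\rho$ is $1$‑Lipschitz, it follows that $f$ and $f^{-1}$ are jointly Lipschitz in $(\eps,\cdot)$, hence that $\Psi$ and $\Psi^{-1}$ are Lipschitz. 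Thus $\Psi$ is a Lipschitz deformation of $\partial U$ in the sense of Definition \ref{aquiaqui}, which establishes (ii); combined with (i), this proves the theorem. (In fact Remark \ref{convergence to identity nabla} shows in addition that this deformation is \emph{regular}.)

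The one genuine obstacle is precisely this joint Lipschitz continuity in $(\eps,x)$: Theorem \ref{Ball} only asserts joint \emph{continuity} of $f$ and $f^{-1}$, while Theorem \ref{bi Lip no eps} controls only the spatial Lipschitz constant uniformly in $\eps$, so one must reopen the construction and bound the $\eps$‑derivatives of the implicitly defined quantities ($t$ and $\beta$, and for the $\eps<0$ branch also the auxiliary functions $\theta,q,r,s,w$ — not needed here). All of these are governed by the same implicit‑function computations and the same lower bound \eqref{below estimate rho G} already present in the proof of Theorem \ref{bi Lip no eps}, so the step is bookkeeping rather than a new idea; I would present it as a short corollary‑style argument pointing back to that proof.
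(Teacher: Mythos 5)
Your construction is the same one the paper uses: it sets $\Psi(x,\tau) := f(\tau\eps_1, x)$ for a fixed $0 < \eps_1 < \eps_0$ (you take $\eps_1 = \eps_0/2$) and appeals to Theorem \ref{bi Lip no eps} for the uniform spatial bi-Lipschitz bounds. Where you go beyond the paper is in flagging, correctly, that what Theorem \ref{bi Lip no eps} delivers — uniform-in-$\eps$ Lipschitz constants for each fixed slice $f(\eps,\cdot)$ and its inverse — is not literally the same as the statement that $\Psi$ is a bi-Lipschitz homeomorphism of the \emph{product} $\partial U \times [0,1]$ onto its image, which is what Definition \ref{aquiaqui}(ii) actually demands. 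Theorem \ref{Ball}(ii) only gives joint \emph{continuity} of $f$ and $f^{-1}$ in $(\eps,x)$, so joint Lipschitz continuity does require controlling the $\eps$-derivatives. The paper's one-line proof passes over this (``$\Psi(\cdot,\tau)$ is a bi-Lipschitz homeomorphism over its image uniformly in $\tau$'') without separately addressing it.

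Your sketch of the fix is sound and is genuinely a bookkeeping extension of the paper's own machinery. Differentiating $\rho(S(t(\eps,x))x)=\rho(x)+h(\eps,\rho(x))$ in $\eps$ gives $\partial_\eps t = \big(\partial_\eps h\big)/\big(\partial_t F\big)$, and $\partial_t F = (\nabla\rho\cdot(\gamma G))(S(t)x)$ is bounded below via \eqref{below estimate rho G}/Lemma \ref{Lieb Lip boundary estimates} while $\partial_\eps h$ is bounded (indeed $h(\eps,r)=\eps$ for $r\le\eps$ and $h(\eps,r)=0$ for $r\ge \tfrac{5\eps}{2}$ with the self-similar scaling making $\partial_\eps h$ uniformly bounded); then $\partial_\eps f(\eps,x)=(\gamma G)(S(t)x)\,\partial_\eps t$ is bounded since $\gamma G$ is smooth and compactly supported. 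The analogous computation works for $\beta$ and hence for $f^{-1}$. You also correctly note that only the $\eps>0$ branch is needed, so none of $\theta,q,r,s,w$ enters; and that this is where your observation that $\tau$ is recovered from $\rho(\Psi(x,\tau))$ yields the explicit inverse $\Psi^{-1}(y)=\big(f^{-1}(\rho(y),y),\,\tfrac{2}{\eps_0}\rho(y)\big)$. One tiny slip: $\rho$ is not $1$-Lipschitz — the paper shows $|\nabla\rho|\le 2$ (see \eqref{nabla rho estimate-2}) — but that does not affect your conclusion. So your proposal is correct, follows the paper's route, and is in fact somewhat more careful than the paper itself on the joint-Lipschitz point.
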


\begin{proof}
\, Indeed, we can employ Theorem \ref{bi Lip no eps} to construct a Lipschitz deformation $\Psi$ as in Definition \ref{aquiaqui}.
It suffices to set
\begin{equation*}
\Psi(x, \tau) := f( \tau \eps_{1}, x) \qquad \mbox{for any $0 < \eps_{1} < \eps_{0}$},
\end{equation*}
where $f$ is given in Theorem \ref{Ball}.
By the properties of $f(\eps, \cdot)$,
$\Psi(\cdot, \tau)$ is a bi-Lipschitz homeomorphism over its image uniformly
in $\tau\in [0, 1]$ and $\Psi(\cdot, 0) = {\rm Id}$.
\end{proof}

\begin{remark}
\, In fact, Definition {\rm \ref{aquiaqui}} refers to open sets with Lipschitz boundary, while,
thanks to Theorem {\rm \ref{bi Lip no eps}}, we are able to deal with open bounded Lipschitz domains.
However, the connectedness assumption is not relevant,
since one can work separately with each connected component of a bounded open set with Lipschitz boundary
to achieve Theorems {\rm \ref{bi Lip no eps}} and {\rm \ref{Lip is Lip deformable}} for each component.
In a similar way, one can also consider an unbounded open set with Lipschitz boundary $U$,
and then localize the problem by considering, for instance, $U \cap B(0, R)$ for $R > 0$,
which are
open bounded sets with Lipschitz boundary.
It is then clear that Theorems {\rm \ref{bi Lip no eps}} and {\rm \ref{Lip is Lip deformable}}
apply to $U \cap B(0, R)$ for any $R > 0$.
Thus, we can conclude that any open set with Lipschitz boundary has a regular Lipschitz deformable boundary, at least locally.
\end{remark}

An immediate consequence of the existence of such Lipschitz diffeomorphism between $\partial U$ and
$\partial U^{\eps, \rho}$ or $\partial U_{\eps, \rho}$
is that the area formula can be employed in order to consider only integrals on $\partial U$.

\begin{theorem} \label{area formula U eps}
Let $U \Subset \Omega$ be an open set with Lipschitz boundary, let $\FF \in \DM^p(\Omega)$ for $1 \le p \le \infty$,
and let $\phi \in C^{0}(\Omega)$ with $\nabla \phi \in L^{p'}(\Omega; \R^{n})$.
Then there exists a set $\mathcal{N} \subset \R$ with $\mathcal{L}^1(\mathcal{N})=0$ such that,
for every nonnegative sequence $\{\ve_k\}$ satisfying $\ve_k \notin \mathcal{N}$ for any $k$ and $\ve_k \to 0$,
\begin{align} \label{G-G area eps int}
& \int_{U} \phi \, \dd \div \FF + \int_{U} \FF \cdot \nabla \phi \, \dr x\nonumber\\
& = - \lim_{k \to \infty} \int_{\partial U}
\Big ( \phi \FF \cdot \frac{\nabla \rho}{|\nabla \rho|} \Big) (f(\eps_{k}, x)) J^{\partial U} f(\eps_{k}, x) \, \dr \Haus{n - 1},
\end{align}
and
\begin{align} \label{G-G area eps ext}
&\int_{\overline{U}} \phi \, \dd \div \FF + \int_{U} \FF \cdot \nabla \phi \, \dr x\nonumber\\
& = - \lim_{k \to \infty}  \int_{\partial U}
\Big(\phi \FF \cdot \frac{\nabla \rho}{|\nabla \rho|} \Big) (f(- \eps_{k}, x)) J^{\partial U} f(- \eps_{k}, x) \, \dr \Haus{n - 1},
\end{align}
where $f(\pm \eps, \cdot)$ is the bi-Lipschitz diffeomorphism introduced in Theorem {\rm \ref{Ball}}.

In addition, \eqref{G-G area eps int} holds also for any bounded open set $U$ with Lipschitz boundary if $\phi \in L^{\infty}(\Omega)$,
and even for an unbounded open set $U$ with Lipschitz boundary if $\mathrm{supp}(\phi) \cap U^{\delta} \Subset \Omega$ for any $\delta > 0$.
Similarly, \eqref{G-G area eps ext} also holds  for any open set $U$ satisfying $\overline{U} \subset \Omega$,
provided that $\mathrm{supp}(\phi)$ is compact in $\Omega$.
\end{theorem}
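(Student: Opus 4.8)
The plan is to combine the smooth interior and exterior Gauss-Green formulas of Theorems \ref{interior normal trace smooth} and \ref{exterior normal trace smooth} with the bi-Lipschitz change of variables on $\partial U$ provided by Theorem \ref{bi Lip no eps}, via the classical area formula for Lipschitz maps between $(n-1)$-rectifiable sets. First I would recall that, since $U$ has Lipschitz boundary, it is in particular a $C^{0}$ domain, so Theorem \ref{interior normal trace smooth} applies and gives a set $\mathcal{N}_1\subset\R$ with $\Leb{1}(\mathcal{N}_1)=0$ such that, for $\eps_k\notin\mathcal{N}_1$ with $\eps_k\to 0$,
\begin{equation*}
\int_{U} \phi \, \dd \div \FF + \int_{U} \FF \cdot \nabla \phi \, \dr x
= - \lim_{k \to \infty} \int_{\partial U^{\eps_{k}, \rho}} \phi \FF \cdot \frac{\nabla \rho}{|\nabla\rho|} \,\, \dr \Haus{n - 1},
\end{equation*}
where I have used that on the smooth hypersurface $\partial U^{\eps_k,\rho}$ one has $\nu_{U^{\eps_k,\rho}}=\nabla\rho/|\nabla\rho|$, as noted in the proof of Theorem \ref{interior normal trace smooth}. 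Similarly, Theorem \ref{exterior normal trace smooth} provides an analogous identity for $\overline{U}$ with integrals over $\partial U_{\eps_k,\rho}$ and a null set $\mathcal{N}_2$; I would take $\mathcal{N}$ to contain $\mathcal{N}_1\cup\mathcal{N}_2$ together with the exceptional set from Theorem \ref{Ball}, so that for $\eps_k\notin\mathcal{N}$ the maps $f(\pm\eps_k,\cdot)$ are the bi-Lipschitz homeomorphisms carrying $\partial U$ onto $\partial U^{\eps_k,\rho}$ and $\partial U_{\eps_k,\rho}$ respectively.

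Next I would invoke the area formula for Lipschitz maps (see e.g. \cite[\S 3.3]{afp} or \cite[Theorem 3.2.3]{eg}): for the bi-Lipschitz map $f(\eps_k,\cdot):\partial U\to\partial U^{\eps_k,\rho}$ between the $(n-1)$-rectifiable sets $\partial U$ and $\partial U^{\eps_k,\rho}$ (both are locally graphs of Lipschitz functions, hence countably $\Haus{n-1}$-rectifiable with finite measure on bounded pieces), and for any $\Haus{n-1}$-summable $h$ on $\partial U^{\eps_k,\rho}$,
\begin{equation*}
\int_{\partial U^{\eps_k,\rho}} h(y) \, \dr\Haus{n-1}(y)
= \int_{\partial U} h(f(\eps_k,x)) \, J^{\partial U} f(\eps_k,x) \, \dr\Haus{n-1}(x),
\end{equation*}
where $J^{\partial U} f(\eps_k,\cdot)$ is the tangential Jacobian. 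Applying this with $h = \phi\FF\cdot\frac{\nabla\rho}{|\nabla\rho|}$ — which is $\Haus{n-1}$-summable on $\partial U^{\eps_k,\rho}$ since $\FF\in L^p$, $\phi$ and $\nabla\rho/|\nabla\rho|$ are bounded near $\partial U$, and $\partial U^{\eps_k,\rho}$ is a compact smooth hypersurface — transforms the right-hand side of the interior formula into the integral over $\partial U$ appearing in \eqref{G-G area eps int}. The same argument with $f(-\eps_k,\cdot):\partial U\to\partial U_{\eps_k,\rho}$ yields \eqref{G-G area eps ext}. The statements for unbounded $U$ or $U$ merely satisfying $\overline{U}\subset\Omega$ follow immediately, since those extensions are already built into Theorems \ref{interior normal trace smooth} and \ref{exterior normal trace smooth}, and the area formula is purely local along $\partial U$, so the cutoff argument used there (multiplying by $\eta\in C^\infty_c(\Omega)$ equal to $1$ near the relevant support) carries over verbatim.

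The main obstacle, and the point requiring care, is justifying that the area formula in this bi-Lipschitz form applies between $\partial U$ and $\partial U^{\eps_k,\rho}$: one needs that $f(\eps_k,\cdot)$ restricted to $\partial U$ is Lipschitz with a tangential differential defined $\Haus{n-1}$-a.e., that $J^{\partial U}f(\eps_k,\cdot)>0$ a.e. so no degeneration occurs, and that the image $\partial U^{\eps_k,\rho}$ is exactly $f(\eps_k,\partial U)$ covered once — all of which are supplied by Theorem \ref{Ball}(ii) (the $f(\eps_k,\cdot)$ are homeomorphisms of $\R^n$ with $f(\eps_k,\partial U)=\partial U^{\eps_k,\rho}$) together with Theorem \ref{bi Lip no eps} (uniform bi-Lipschitz bounds, hence the inverse is also Lipschitz, forcing $J^{\partial U}f>0$ a.e.). A secondary technical point is measurability and summability of the integrand $\big(\phi\FF\cdot\frac{\nabla\rho}{|\nabla\rho|}\big)(f(\eps_k,x))\,J^{\partial U}f(\eps_k,x)$ on $\partial U$; this follows because $\FF$ has a Borel representative, $f(\eps_k,\cdot)$ is continuous, $\nabla\rho/|\nabla\rho|$ is continuous away from $\partial U$ on the neighborhood where $|\nabla\rho|\neq 0$ (Theorem \ref{nondegeneracy}), and $J^{\partial U}f(\eps_k,\cdot)\in L^\infty(\partial U;\Haus{n-1})$ by the uniform Lipschitz bound — so the whole product is in $L^1(\partial U;\Haus{n-1})$ by Hölder, since $\FF\circ f(\eps_k,\cdot)$ composed with a bi-Lipschitz map stays in $L^p$ of the appropriate surface measure. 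Everything else is a direct assembly of already-established results, so the proof is short.
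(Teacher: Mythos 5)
Your proposal is correct and follows essentially the same route as the paper: invoke the smooth interior and exterior Gauss--Green formulas (Theorems \ref{interior normal trace smooth} and \ref{exterior normal trace smooth}), note that $\nu_{U^{\eps_k,\rho}}=\nabla\rho/|\nabla\rho|$, and then transport the boundary integrals to $\partial U$ via the area formula for the bi-Lipschitz maps $f(\pm\eps_k,\cdot)$ supplied by Theorems \ref{Ball} and \ref{bi Lip no eps}. The paper's proof is terser and leaves the rectifiability/summability/non-degeneracy hypotheses of the area formula implicit, whereas you spell them out; both are the same argument.
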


\begin{proof}
\, We need to apply the area formula to the Lipschitz maps $f(\eps, \cdot) : \partial U \to \partial U^{\ve, \rho}$
in
\eqref{G-G int C0}--\eqref{G-G ext C0}.

We denote by $J^{\partial U} f(\eps, \cdot)$ the $(n - 1)$--dimensional Jacobian of $f(\eps, \cdot)$ on $\partial U$,
and recall that the inner unit normal to $\partial U^{\eps, \rho}$
is given by $\frac{\nabla \rho}{|\nabla \rho|}$ from Theorem \ref{interior normal trace smooth}.
Then
\begin{equation*}
\int_{\partial U^{\ve_{k}, \rho}} \Big( \phi \FF \cdot \frac{\nabla \rho}{|\nabla \rho|}\Big) (x) \, \dr \Haus{n - 1}
= \int_{\partial U} \Big(\phi \FF \cdot \frac{\nabla \rho}{|\nabla \rho|} \Big) (f(\eps_{k}, x)) J^{\partial U} f(\eps_{k}, x) \, \dr \Haus{n - 1}.
\end{equation*}
We can argue analogously with $\partial U_{\eps_{k}, \rho}$.
Therefore, we can rewrite \eqref{G-G int C0}--\eqref{G-G ext C0} as \eqref{G-G area eps int}--\eqref{G-G area eps ext}.
\end{proof}

From this result, we can deduce  some known facts again from the theory of $\DM$--fields.

\begin{corollary} \label{G-G F continuous U Lip}
Let $\FF \in \DM^{p}(\Omega) \cap C^{0}(\Omega; \R^{n})$ for $1 \le p \le \infty$,
let $\phi \in C^{0}(\Omega) \cap L^{\infty}(\Omega)$ with $\nabla \phi \in L^{p'}(\Omega; \R^{n})$,
and let $U \subset \Omega$ be a bounded Lipschitz domain.
Then
\begin{equation} \label{G-G F continuous}
\ban{\FF \cdot \nu, \phi}_{\partial U} =  \int_{U} \phi \, d \div \FF + \int_{U} \FF \cdot \nabla \phi \, \dr x
= - \int_{\partial U} \phi \FF \cdot \nu_{U} \, \dr \Haus{n - 1}.
\end{equation}
If $\FF \in \DM^{\infty}(\Omega)$, then the normal trace functional on $\partial U$ is indeed a Radon measure,
absolutely continuous with respect to $\Haus{n - 1} \res \partial U$,
with essentially bounded density function $-\mathfrak{F}_{\ii} \cdot \nu_{U} \in L^{\infty}(\partial U; \Haus{n - 1})$.
\end{corollary}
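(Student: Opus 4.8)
The plan is to deduce the statement from the smooth-approximation Gauss--Green formula of Theorem~\ref{area formula U eps}. Since $U$ is a bounded Lipschitz domain, $\chi_U\in BV(\R^n)$, so $U$ is a set of finite perimeter, and moreover $\Haus{n-1}(\partial U\setminus\redb U)=0$ with $\nu_U$ coinciding $\Haus{n-1}$--a.e.\ on $\redb U$ with the classical inner unit normal; hence it is enough to establish \eqref{G-G F continuous} with $\redb U$ in place of $\partial U$ on the right-hand side. Our hypotheses on $\phi$ are exactly those allowed in Theorem~\ref{area formula U eps} for a bounded open set with Lipschitz boundary, so there is a negligible $\mathcal N\subset\R$ such that, for every $\eps_k\notin\mathcal N$ with $\eps_k\to0$, \eqref{G-G area eps int} holds; reading the area formula for the bi-Lipschitz map $f(\eps_k,\cdot)\colon\partial U\to\partial U^{\eps_k,\rho}$ backwards, and recalling from Theorem~\ref{interior normal trace smooth} that $\nu_{U^{\eps_k,\rho}}=\nabla\rho/|\nabla\rho|$ on $\partial U^{\eps_k,\rho}$, this is the same as
\[
\ban{\FF\cdot\nu,\phi}_{\partial U}
= -\lim_{k\to\infty}\int_{\partial U^{\eps_k,\rho}}\phi\,\FF\cdot\nu_{U^{\eps_k,\rho}}\,\dr\Haus{n-1},
\]
i.e.\ \eqref{G-G int C0}.

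Next I would pass to the limit by rewriting each boundary integral as the integral of a fixed compactly supported continuous field against a perimeter measure. Choose $\eta\in C^\infty_c(\Omega)$ with $\eta\equiv1$ on a neighbourhood of $\overline U$; since $\overline{U^{\eps_k,\rho}}\subset\overline U$ and $\partial U^{\eps_k,\rho}$ is smooth,
\[
\int_{\partial U^{\eps_k,\rho}}\phi\,\FF\cdot\nu_{U^{\eps_k,\rho}}\,\dr\Haus{n-1}
=\int_{\R^n}\eta\,\phi\,\FF\cdot\dr D\chi_{U^{\eps_k,\rho}},
\]
and $\eta\,\phi\,\FF\in C_c(\Omega;\R^n)$. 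By Theorem~\ref{Ball}(i) the sets $U^{\eps_k,\rho}$ increase to $U$, so $\chi_{U^{\eps_k,\rho}}\to\chi_U$ in $L^1(\R^n)$ and $D\chi_{U^{\eps_k,\rho}}\to D\chi_U$ in the sense of distributions; by Theorem~\ref{bi Lip no eps} the Lipschitz constants of $f(\eps_k,\cdot)$ are bounded uniformly in $k$, whence the area formula yields $\Haus{n-1}(\partial U^{\eps_k,\rho})=\int_{\partial U}J^{\partial U}f(\eps_k,\cdot)\,\dr\Haus{n-1}\le C$ uniformly in $k$ (indeed this tends to $\Haus{n-1}(\partial U)$ by Remark~\ref{convergence to identity nabla}). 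This uniform total-variation bound upgrades the distributional convergence to weak-$*$ convergence $D\chi_{U^{\eps_k,\rho}}\weakto D\chi_U$ in $\mathcal M(\R^n)$, arguing exactly as in Remark~\ref{weak_conv_traces_sets}. Passing to the limit and using $\eta\equiv1$ on $\redb U$,
\[
\ban{\FF\cdot\nu,\phi}_{\partial U}
=-\int_{\R^n}\eta\,\phi\,\FF\cdot\dr D\chi_U
=-\int_{\redb U}\phi\,\FF\cdot\nu_U\,\dr\Haus{n-1}
=-\int_{\partial U}\phi\,\FF\cdot\nu_U\,\dr\Haus{n-1},
\]
which is \eqref{G-G F continuous}. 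Alternatively, the same identity follows from Proposition~\ref{normal_trace_classical_repr_cont} applied to the finite-perimeter set $U$, after extending its class of test functions by mollification and using the bound in Remark~\ref{normal_trace_functional_fract_Sobolev}.

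For the last assertion, suppose $\FF\in\DM^\infty(\Omega)$. Then $\FF\in L^\infty(\Omega;\R^n)$, so $\FF\cdot\nu_U\in L^\infty(\partial U;\Haus{n-1})$ and the identity just proved shows that $\ban{\FF\cdot\nu,\cdot}_{\partial U}=-\FF\cdot\nu_U\,\Haus{n-1}\res\partial U$, a finite Radon measure, absolutely continuous with respect to $\Haus{n-1}\res\partial U$, with density $-\FF\cdot\nu_U$. To write this density as $-\mathfrak F_{\ii}\cdot\nu_U$, I would invoke Proposition~\ref{normal trace p infty} applied to the set of locally finite perimeter $U$ (whose measure-theoretic interior is $U$ up to $\Leb{n}$--null sets, and with $\redb U=\partial U$ up to $\Haus{n-1}$--null sets): there, the interior normal trace of $\FF$ on $\partial U$ is represented by $-\mathfrak F_{\ii}\cdot\nu_U\,\Haus{n-1}\res\redb U$, so comparison with the representation above forces $\mathfrak F_{\ii}\cdot\nu_U=\FF\cdot\nu_U$ $\Haus{n-1}$--a.e.\ on $\partial U$. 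The main obstacle in this argument is the weak-$*$ convergence $D\chi_{U^{\eps_k,\rho}}\weakto D\chi_U$ for the regularized-distance approximation: it rests on the uniform perimeter estimate, which is precisely where the uniform bi-Lipschitz bounds of Theorem~\ref{bi Lip no eps} enter, together with the bookkeeping ensuring that the field tested against the perimeter measures is genuinely compactly supported and continuous rather than merely in $C^0(\Omega)$.
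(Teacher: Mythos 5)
Your proposal is correct and follows essentially the same route as the paper: the paper's own proof also uses the area formula for the bi-Lipschitz deformations $f(\eps,\cdot)$ from Theorem~\ref{Ball} together with Theorem~\ref{bi Lip no eps}, and passes to the limit in the boundary integrals, concluding by density of $C^1_c$ in $C_c$. The main difference is one of bookkeeping: you phrase the passage to the limit as weak-$*$ convergence $D\chi_{U^{\eps_k,\rho}}\weakstarto D\chi_U$, and explicitly justify the required equi-boundedness via the uniform Lipschitz bound $\Haus{n-1}(\partial U^{\eps_k,\rho})=\int_{\partial U}J^{\partial U}f(\eps_k,\cdot)\,\dr\Haus{n-1}\le C$; the paper instead verifies convergence for $\psi\in C^1_c$ and appeals to density in $C_c$, which implicitly relies on the same uniform perimeter bound. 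Your version of this step is actually more careful, since the density upgrade cannot work without that bound. One small imprecision: $\Haus{n-1}(\partial U^{\eps_k,\rho})\to\Haus{n-1}(\partial U)$ does not follow directly from Remark~\ref{convergence to identity nabla} (which gives $\nabla f(\eps,\cdot)\to\mathrm{I}_n$ in $L^p(\R^n;\Leb{n})$, not $J^{\partial U}f(\eps,\cdot)\to 1$ in $L^1(\partial U;\Haus{n-1})$), but this parenthetical claim is not used in your argument — only the uniform bound $\le C$ is needed, and that is correct. For the final assertion, the paper cites \cite[Theorem~2.2]{CF1} via the Lipschitz-deformable structure provided by Theorem~\ref{Lip is Lip deformable}, whereas you identify $\mathfrak F_{\ii}\cdot\nu_U=\FF\cdot\nu_U$ directly from continuity and Proposition~\ref{normal trace p infty}; your route is shorter and, since $\FF$ is assumed continuous throughout the corollary, equally valid.
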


\begin{proof} \, In order to prove the first statement, we consider $\psi \in C^{1}_{c}(\R^{n}; \R^{n})$.

Notice that
\begin{equation*}
\int_{U} \div\,\psi \, \dr x = - \int_{\partial U} \psi \cdot \nu_{U} \, \dr \Haus{n - 1},\quad
\int_{U^{\eps, \rho}} \div\,\psi \, \dr x
= - \int_{\partial U^{\eps, \rho}} \psi \cdot \frac{\nabla \rho}{|\nabla \rho|} \, \dr \Haus{n - 1}.
\end{equation*}
Arguing as in the proof of Theorem \ref{area formula U eps}, we obtain
\begin{equation*}
\int_{\partial U^{\eps, \rho}} \psi \cdot \frac{\nabla \rho}{|\nabla \rho|} \, \dr \Haus{n - 1}
= \int_{\partial U} (\psi \circ f)(\eps, \cdot) \cdot \Big(\frac{\nabla \rho}{|\nabla \rho|} \circ f\Big)(\eps, \cdot) \, J^{\partial U} f(\eps, \cdot) \, \dr \Haus{n - 1}.
\end{equation*}
Since
\begin{equation*}
\lim_{\eps \to 0}  \int_{U^{\eps, \rho}} \div\,\psi \, \dr x = \int_{U} \div\,\psi \, \dr x,
\end{equation*}
we conclude
\begin{equation} \label{conv boundary psi}
\lim_{\eps \to 0}
\int_{\partial U} (\psi \circ f)(\eps, \cdot) \cdot \Big(\frac{\nabla \rho}{|\nabla \rho|} \circ f\Big)(\eps, \cdot) \, J^{\partial U} f(\eps, \cdot) \, \dr \Haus{n - 1}
= \int_{\partial U} \psi \cdot \nu_{U} \, \dr \Haus{n - 1}
\end{equation}
for any $\psi \in C^{1}_{c}(\R^{n}; \R^{n})$.
By the density of $C^{1}_{c}(\R^{n}; \R^{n})$ in $C_{c}(\R^{n}; \R^{n})$ with respect to the supremum norm,
we can deduce that \eqref{conv boundary psi} holds also for any $\psi \in C_{c}(\R^{n}; \R^{n})$.
Thus, by \eqref{G-G area eps int}, we conclude that \eqref{G-G F continuous} holds.

As for the second part of the statement, we can argue as in the proof of \cite[Theorem 2.2]{CF1},
since $U$ has a Lipschitz deformable boundary, by Theorem \ref{Lip is Lip deformable}.
\end{proof}

\begin{remark}
\, Corollary {\rm \ref{G-G F continuous U Lip}} can also be regarded as a consequence of
Proposition {\rm \ref{normal_trace_classical_repr_cont}}, together with the well-known fact that $\Haus{n - 1}(\partial U \setminus \redb U) = 0$
for any open set $U$ with Lipschitz boundary.
In addition, this implies that, in the case that $\partial U$ is Lipschitz regular,
$\redb U$ can be substituted with $\partial U$ in \eqref{first_Green_id_exact}--\eqref{second_Green_id_exact}.
\end{remark}

We end this section by recalling an alternative result concerning the approximation of open bounded sets
with Lipschitz boundary which has been proved by Ne\v{c}as in \cite{nevcas1962}.
For this exposition, we refer mostly to the paper of  Verchota \cite{Verchota_1984},
in which the result in \cite{nevcas1962} is extended and applied.

\begin{definition}
We denote by $Z(P, r)$ the truncated cylinder centered at point $P$ and with basis radius $r$.
Given a Lipschitz domain $U$ and a point $P \in \partial U$,
we say that $Z(P, r)$ is a coordinate cylinder if
\begin{enumerate}
\item[\rm (i)] The bases of $Z(P, r)$ have a positive distance from $\partial U${\rm ;}
\item[\rm (ii)]  There exists a coordinate system $(\hat{x}_{n}, x_{n})$ such that $\{ \hat{x}_{n} = 0 \}$ is the axis
  of $Z(P, r)$, and there exists a Lipschitz function $\varphi = \varphi_{Z} : \R^{n - 1} \to \R$ such that
$$
Z(P, r) \cap U = Z(P, r) \cap \{ (\hat{x}_{n}, x_{n}) : x_{n} > \varphi(\hat{x}_{n}) \};
$$
\item[\rm (iii)]  $P = (0, \varphi(0))$ or, equivalently, $P$ is the origin of the coordinate system and $\varphi(0) = 0$.
\end{enumerate}
The pair $(Z, \varphi)$ is called a coordinate pair.
\end{definition}

\begin{remark}
\, If the Lipschitz domain $U$ is bounded, then $\partial U$ can be covered by a finite number of
coordinate cylinders $\{ Z_{j} \}_{j = 1}^{N}$,
to which corresponds a finite number of coordinate pairs.
In addition, cylinders $Z_{j}$ can be selected
in such a way that some dilation $Z_{j}^{*} = \lambda_{j} Z_{j}$, $\lambda_{j} > 1$,
still gives a coordinate pair $(Z_{j}^{*}, \varphi_{j})$.
We denote by $L$ the maximum of the Lipschitz constants of functions $\varphi_{j}$.
Also we may assume that $\varphi_{j} \in \Lip_{c}(\R^{n - 1})$ without loss of generality.
\end{remark}

\begin{remark}
\, Given $\varphi \in \Lip_{c}(\R^{n})$,
there exists a sequence $\psi_{k} \in C^{\infty}_{c}(\R^{n})$ such that $\psi_{k} \to \psi$ uniformly,
$\|\nabla \psi_{k}\|_{L^{\infty}(\R^{n}; \R^{n})} \le \|\nabla \varphi \|_{L^{\infty}(\R^{n}; \R^{n})}$,
and $\nabla \psi_{k} \to \nabla \varphi$ in $L^{q}(\R^{n}; \R^{n})$ for any $1 \le q < \infty$.
This can be achieved by taking the convolution of $\varphi$ with a smooth mollifier.
\end{remark}

The following approximation results hold, for which we
refer to \cite[Theorem 1.1]{nevcas1962}, \cite[Lemma 1.1]{nevcas1964equations}, \cite[Appendix]{verchota1982layer},
and \cite[Theorem 1.12]{Verchota_1984}; see also the alternative proof given in \cite{doktor1976approximation}.
For the self-containedness,
we also give here a sketch of the proof.

\begin{proposition} \label{Nevcas_approx}
Let $U$ be a bounded Lipschitz domain. Then the following statements hold{\rm :}
\begin{enumerate}
\item[\rm (i)] There exists a sequence of open sets $U_{k}$ satisfying that $\partial U_{k}$ are of class $C^{\infty}$,
     $U_{k} \Subset U_{k + 1} \Subset U$, and $\bigcup_{k} U_{k} = U${\rm ;}
\item[\rm (ii)] There exists a covering of $\partial U$ by coordinate cylinders such that,
  for any coordinate pair $(Z, \varphi)$ with $\varphi \in \Lip_{c}(\R^{n - 1})$, $Z^{*} \cap \partial U_{k}$ for each $k$ is the graph of a function
  $\varphi_{k} \in C^{\infty}_{c}(\R^{n - 1})$ satisfying that $\varphi_{k} \to \varphi$ uniformly,
  $\|\nabla \varphi_{k} \|_{L^{\infty}(\R^{n - 1}; \R^{n - 1})} \le \| \nabla \varphi \|_{L^{\infty}(\R^{n - 1}; \R^{n - 1})}$,
  and $\nabla \varphi_{k} \to \nabla \varphi$ $\Leb{n - 1}$--a.e.
   and in $L^{q}(\R^{n - 1}; \R^{n - 1})$ for any $1 \le q < \infty${\rm ;}
\item[\rm (iii)]  There exists a sequence of Lipschitz diffeomorphisms $f_{k} : \R^{n} \to \R^{n}$ such that
  $f_{k}(\partial U) = \partial U_{k}$, the Lipschitz constants are uniformly bounded in $k$,
  $f_{k} \to {\rm Id}$ uniformly on $\partial U$, and $\nabla f_{k} \to {\rm I}_{n}$ for $\Haus{n - 1}$--{\it a.e.} $x \in \partial U${\rm ;}
\item[\rm (iv)] There exists a sequence of nonnegative functions $\omega_{k} = J^{\partial U} f_{k}$ uniformly bounded
  and bounded away from zero such that $(f_{k}^{-1})_{\#} (\Haus{n - 1} \res \partial U_{k}) = \omega_{k} \Haus{n - 1} \res \partial U${\rm :}
$$
\Haus{n - 1}(\partial U_{k} \cap f_{k}(E)) = \int_{E} \omega_{k} \, \dr \Haus{n - 1}
\qquad \mbox{for any Borel set $E \subset \partial U$},
$$
and that
$\omega_{k} \to 1$ $\Haus{n - 1}$--{\it a.e.} on $\partial U$ and in $L^{q}(\partial U; \Haus{n - 1})$ for any $1 \le q < \infty${\rm ;}
\item[\rm (v)] The normal vector to $U_{k}$ satisfies that $\nu_{U_{k}} \circ f_{k} \to \nu_{U}$ for $\Haus{n - 1}$--{\it a.e.}  $x \in \partial U$
   and in $L^{q}(\partial U; \Haus{n - 1})$ for any $1 \le q < \infty${\rm ,} and an analogous statement holds for the tangent vectors{\rm ;}
\item[\rm (vi)]  There exists a $C^{\infty}$ vector field $H$ in $\R^{n}$ such that
$$
H(f_{k}(P)) \cdot \nu_{U_{k}}(f_{k}(P)) \ge C > 0 \qquad\mbox{for any $P \in \partial U$},
$$
where $C = C(H, L)$, and $L$ is the maximal Lipschitz constant of the parametrization of $\partial U$.
\end{enumerate}
\end{proposition}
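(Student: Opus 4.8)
The plan is to carry out the entire construction locally in the coordinate cylinders and then globalize, following Ne\v{c}as and Verchota. First I would fix a finite covering of $\partial U$ by coordinate cylinders $Z_{j}$, $j = 1, \dots, N$, with coordinate pairs $(Z_{j}, \varphi_{j})$, $\varphi_{j} \in \Lip_{c}(\R^{n-1})$, chosen so that the dilations $Z_{j}^{*}$ are still coordinate cylinders and, in addition, so that the axis directions $e_{n}^{(j)}$ of any two overlapping cylinders are as close as we like (this last refinement will only be used for statement (vi)). I would then mollify each graph in the horizontal variables, $\varphi_{j,k} := \varphi_{j} \ast \eta_{\delta_{k}}$ with $\eta$ a smooth mollifier on $\R^{n-1}$, which immediately gives $\varphi_{j,k} \to \varphi_{j}$ uniformly, $\|\nabla \varphi_{j,k}\|_{L^{\infty}} \le \|\nabla \varphi_{j}\|_{L^{\infty}} \le L$, and $\nabla \varphi_{j,k} \to \nabla \varphi_{j}$ both $\Leb{n-1}$--a.e.\ and in $L^{q}(\R^{n-1};\R^{n-1})$ for every $1 \le q < \infty$. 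Pushing the smoothed graphs strictly into $U$ by adding a cut-off constant $c_{k} \downarrow 0$ and choosing $\delta_{k}, c_{k}$ recursively so that the perturbed graphs are strictly nested and eventually lie inside $U$, I obtain the smooth sets $U_{k}$ with $U_{k} \Subset U_{k+1} \Subset U$ and $\bigcup_{k} U_{k} = U$, which is (i), together with (ii) on each $Z_{j}^{*}$.

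Next I would turn the vertical perturbations into diffeomorphisms: in the coordinates of $Z_{j}^{*}$, set $f_{j,k}(\hat x, x_{n}) := (\hat x,\, x_{n} + \zeta_{j}(\hat x, x_{n})\,(\varphi_{j,k}(\hat x) + c_{k} - \varphi_{j}(\hat x)))$, where $\zeta_{j}$ is a smooth cut-off equal to $1$ on a neighborhood of $\partial U \cap Z_{j}$ and supported in $Z_{j}^{*}$. For $k$ large these are bi-Lipschitz with the Lipschitz constants of $f_{j,k}$ and $f_{j,k}^{-1}$ bounded uniformly in $k$ (the perturbation is $L^{\infty}$--small with uniformly bounded gradient), they are the identity outside $Z_{j}^{*}$, they send the graph of $\varphi_{j}$ to the graph of $\varphi_{j,k}+c_{k}$, and $\nabla f_{j,k} \to \mathrm{I}_{n}$ pointwise wherever $\nabla \varphi_{j,k} \to \nabla \varphi_{j}$. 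Composing these finitely many maps in a fixed order (rather than summing via a partition of unity, which would destroy the graph property) produces a single bi-Lipschitz map $f_{k} : \R^{n} \to \R^{n}$ with $f_{k}(\partial U) = \partial U_{k}$, uniformly bounded Lipschitz constants, $f_{k} \to \mathrm{Id}$ uniformly, and $\nabla f_{k} \to \mathrm{I}_{n}$ both $\Haus{n-1}$--a.e.\ on $\partial U$ and in $L^{q}$ (a.e.\ with respect to $\Leb{n-1}$ in the chart is preserved under pullback, since $\Haus{n-1}\res\partial U$ corresponds to $\sqrt{1+|\nabla\varphi_{j}|^{2}}\,\Leb{n-1}$, and $L^{q}$ convergence follows by dominated convergence from the uniform bounds); this is (iii). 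From here, (iv) and (v) are elementary: $\omega_{k} := J^{\partial U} f_{k}$ is bounded above and away from $0$ uniformly in $k$ because $f_{k}$ is uniformly bi-Lipschitz, the area formula gives $(f_{k}^{-1})_{\#}(\Haus{n-1}\res\partial U_{k}) = \omega_{k}\,\Haus{n-1}\res\partial U$, the tangential Jacobian is a continuous function of the tangential gradient so $\omega_{k} \to 1$ $\Haus{n-1}$--a.e.\ and then in $L^{q}$; likewise, in the chart over $\hat x$ one has $\nu_{U_{k}}(f_{k}(\cdot)) = (-\nabla\varphi_{j,k}(\hat x),1)/\sqrt{1+|\nabla\varphi_{j,k}(\hat x)|^{2}}$, which converges to $\nu_{U}$ $\Haus{n-1}$--a.e.\ and in $L^{q}$, the tangent vectors being treated the same way.

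Finally, for (vi) I would observe that in the chart of $Z_{j}^{*}$ one has $e_{n}^{(j)} \cdot \nu_{U_{k}}(f_{k}(P)) = 1/\sqrt{1+|\nabla\varphi_{j,k}(\hat x)|^{2}} \ge 1/\sqrt{1+L^{2}}$ at \emph{every} boundary point $P$, not merely almost every one, since $\|\nabla\varphi_{j,k}\|_{L^{\infty}} \le L$ uniformly. Taking a partition of unity $\{\chi_{j}\}$ near $\partial U$ subordinate to $\{Z_{j}^{*}\}$, setting $H := \sum_{j} \chi_{j} e_{n}^{(j)}$ (mollified, if one insists on $C^{\infty}$), and using the refinement that overlapping cylinders have nearly parallel axes, one gets $H \cdot \nu_{U_{k}}(f_{k}(P)) \ge 1/\sqrt{1+L^{2}} - \varepsilon \ge C(H,L) > 0$ for all $P \in \partial U$; alternatively one may simply invoke the smooth field of good directions of Ball-Zarnescu, or the continuous transversal field of Hofmann-Mitrea-Taylor cited above, and mollify. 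The main obstacle is precisely the globalization: assembling the local smoothings into one bi-Lipschitz map while simultaneously retaining the \emph{strict} nesting $U_{k} \Subset U_{k+1} \Subset U$ with $\bigcup_{k} U_{k} = U$, keeping the Lipschitz constants of $f_{k}$ and $f_{k}^{-1}$ uniformly bounded, and preserving the $\Haus{n-1}$--a.e.\ and $L^{q}$ convergence of $\nabla f_{k}$, $\omega_{k}$, and $\nu_{U_{k}}\circ f_{k}$; the recursive choice of mollification scales and shifts and the ordered composition of the single-cylinder maps are where the care is needed, the purely local statements being routine properties of mollification.
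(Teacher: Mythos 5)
Your overall strategy mirrors the paper's: mollify the local graphs $\varphi_j$ to build $U_k$ and $\varphi_{j,k}$, then build $f_k$ by vertical shifts in charts, then read off (iv)--(vi) from the explicit form of $\nabla f_k$, citing Ne\v{c}as--Verchota for the heavy lifting. The paper itself only sketches the gluing step (``glue these definitions together with the aid of some cutoff functions''), and you correctly observe that the naive partition-of-unity sum $\sum_j \chi_j f_{j,k}$ need not carry $\partial U$ onto $\partial U_k$: a convex combination of two points lying on the surface $\partial U_k$ need not lie on $\partial U_k$. So far so good.

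However, your proposed fix --- composing $f_{1,k}, \dots, f_{N,k}$ in a fixed order --- fails for exactly the analogous reason. Take $P \in \partial U \cap Z_1 \cap Z_2$ with $\zeta_1 \equiv 1$ near $P$, and set $P' := f_{1,k}(P) \in \partial U_k$. Writing $P'$ in the coordinates of $Z_2$, one has $P'_n = \varphi_{2,k}(\hat P') + c_k$ because $P' \in \partial U_k$. But $f_{2,k}$ was designed to lift points \emph{of $\partial U$}, not of $\partial U_k$; applying it gives $f_{2,k}(P')_n = \varphi_{2,k}(\hat P') + c_k + \big(\varphi_{2,k}(\hat P') + c_k - \varphi_2(\hat P')\big)$, which is off $\partial U_k$ by the amount $\varphi_{2,k} + c_k - \varphi_2 \neq 0$. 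Thus $f_k(\partial U) \neq \partial U_k$, and neither the uniform bi-Lipschitz estimate nor the $\Haus{n-1}$--a.e.\ convergence of $\nabla f_k$ can be inferred from your composition. The clean way out (and what the references actually do) is to build (vi) \emph{before} (iii): take a single smooth transversal field $H$ (your $\sum_j \chi_j e_n^{(j)}$ mollified, or the field from Hofmann--Mitrea--Taylor), and define $f_k(P) := P + t_k(P)\, H(P)$ where $t_k(P)$ is the unique small parameter for which $P + t H(P) \in \partial U_k$. Transversality gives $t_k$ well defined, Lipschitz with uniformly controlled constant, and vanishing uniformly; the block-matrix formula for $\nabla f_k$ (and hence (iv)--(v)) then still holds chart by chart, since in each chart this $f_k$ is a graph of a Lipschitz function over the chart axis. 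So the gap is real but repairable, and the repair essentially inverts the order in which you tried to prove (iii) and (vi).
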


\smallskip
{\em Sketch of Proof}.
Results (i)--(ii) have been proved by Ne\v{c}as in \cite{nevcas1962} (see also \cite[Appendix]{verchota1982layer}); while the others follows from the first two.

Indeed, we can define the homeomorphisms $f_{k}$ in each coordinate cylinder $Z_{j}$ by
$$
f_{k}(x) = (\hat{x}_{n}, x_{n} + \varphi_{k}(\hat{x}_{n}) - \varphi(\hat{x}_{n}))
$$
for the coordinate system $(\hat{x}_{n}, x_{n})$ related to $Z_{j}$,
and then glue these definitions together with the aid of some cutoff functions,
by using the fact that the same coordinate pair can also be used in the larger cylinder $Z_{j}^{*}$.
In this way, the uniform convergence follows immediately.

As for result (iv), we can find that
$$
f_{k}^{-1}(y) = (\hat{y}_{n}, y_{n} - \varphi_{k}(\hat{y}_{n}) + \varphi(\hat{y}_{n})),
$$
where $(\hat{y}_{n}, y_{n})$ is the coordinate system related to some $Z_{j}$.
This also shows that $f_{k}$ is invertible with continuous inverse, so that it is indeed a homeomorphism.
In fact, since $\varphi \in \Lip_{c}(\R^{n-1})$ and $\varphi_{k} \in C^{\infty}_{c}(\R^{n-1})$,
we can conclude that $f_{k}$ is a Lipschitz diffeomorphism, with Lipschitz constants uniformly bounded in $k$,
by using that $\nabla \varphi_{k} \to \nabla \varphi$ for $\Leb{n - 1}$--{\it a.e.} $\hat{x} \in \R^{n - 1}$.

Moreover, by employing the area formula,
it follows that $\omega_{k}$ is exactly the $(n - 1)$--dimensional Jacobian of $f_{k}$
on $\partial U$, $J^{\partial U} f_{k}$.
Notice that
$$
\nabla f_{k}
= \left ( \begin{array}{c|c}
{\rm I}_{n - 1} & 0 \\
\hline\\[-3.5mm]
(\nabla (\varphi_{k} - \varphi))^{\top} & 1
\end{array} \right ),
$$
where ${\rm I}_{n - 1}$ is the $(n - 1) \times (n - 1)$ identity matrix.
Therefore, the convergence of $\nabla \varphi_{k}(x)$ to $\nabla \varphi(x)$ for $\Leb{n - 1} = \Haus{n - 1}$--{\it a.e.} $x \in \partial U$
implies that $\nabla f_{k} \to {\rm I}_{n}$ for $\Haus{n - 1}$--{\it a.e.} $x \in \partial U$,
which in turn implies  that $J^{\partial U} f_{k} \to 1$ for $\Haus{n - 1}$--{\it a.e.} $x \in \partial U$.
Then the $L^{q}$--convergence follows by the Lebesgue theorem and the boundedness properties,
which can be shown by calculating  the Jacobian explicitly.
\qed

\medskip
Proposition \ref{Nevcas_approx} allows us to refine Theorem \ref{Lip is Lip deformable},
by showing that any open bounded set with Lipschitz boundary admits a {\em regular} Lipschitz deformation
in the sense of Definition \ref{aquiaqui}.
Analogously, if the set is unbounded, such a statement should hold locally.

\begin{theorem} \label{Lip_reg_def}
If $U$ is a bounded open set with Lipschitz boundary in $\R^{n}$,
then there exists a regular Lipschitz deformation $\Psi(x, \tau) = \Psi_{\tau}(x)$ of $\partial U$ satisfying
\begin{equation} \label{regular_deformation_cor}
\lim_{\tau \to 0^{+}} J^{\partial U} \Psi_{\tau} = 1 \qquad \text{in $L^{1}(\partial U; \Haus{n - 1})$}.
\end{equation}
\end{theorem}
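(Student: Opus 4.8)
The plan is to combine the two approximation constructions already established: the bi-Lipschitz deformation $f(\eps,\cdot)$ from Ball-Zarnescu's theorem (Theorem~\ref{Ball}) together with its uniform Lipschitz bounds (Theorem~\ref{bi Lip no eps}), and the N\v{e}cas-Verchota approximation (Proposition~\ref{Nevcas_approx}), which provides a sequence of Lipschitz diffeomorphisms $f_k:\R^n\to\R^n$ with $f_k(\partial U)=\partial U_k$, uniformly bounded Lipschitz constants, $f_k\to{\rm Id}$ uniformly on $\partial U$, $\nabla f_k\to{\rm I}_n$ for $\Haus{n-1}$--a.e.\ $x\in\partial U$, and --- crucially --- the tangential Jacobians $\omega_k=J^{\partial U}f_k$ converging to $1$ both $\Haus{n-1}$--a.e.\ and in $L^q(\partial U;\Haus{n-1})$ for every $1\le q<\infty$. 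The point is that Theorem~\ref{bi Lip no eps} alone gives a Lipschitz deformation (Theorem~\ref{Lip is Lip deformable}) but does not immediately control the tangential Jacobian as $\tau\to0^+$; by contrast, Proposition~\ref{Nevcas_approx}(iv) gives exactly the $L^1$ (indeed $L^q$) convergence of Jacobians we need, but along a discrete sequence rather than a continuous family. So the task is to interpolate the discrete N\v{e}cas family into a genuine continuous deformation $\Psi:\partial U\times[0,1]\to\overline U$ satisfying all of Definition~\ref{aquiaqui} plus \eqref{regular_deformation_cor}.

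First I would fix the setup: condition (i) of Definition~\ref{aquiaqui} is automatic since $U$ has Lipschitz boundary. For condition (ii), I will build $\Psi$ by using the N\v{e}cas diffeomorphisms $f_k$ to define $\Psi$ at the dyadic parameter values $\tau_k:=2^{-k}$, setting $\Psi_{\tau_k}:=f_k|_{\partial U}$ (after passing to a subsequence ensuring $U_{k+1}\Subset U_k\Subset U$ and the nesting needed so that $\Psi$ maps into $\overline U$), and then interpolating for $\tau\in(\tau_{k+1},\tau_k)$ by a convex-combination-type formula in the local graph coordinates. Concretely, in each coordinate cylinder $Z_j$ with local parametrization $\varphi$ of $\partial U$ and $\varphi_k$ of $\partial U_k$, one sets $\varphi_{(\tau)}:=\lambda(\tau)\varphi_{k+1}+(1-\lambda(\tau))\varphi_k$ for a continuous affine reparametrization $\lambda:[\tau_{k+1},\tau_k]\to[0,1]$, defines $\Psi$ locally as the vertical shift $x\mapsto(\hat x,x_n+\varphi_{(\tau)}(\hat x)-\varphi(\hat x))$, and glues these together with the same cutoff functions used in the proof of Proposition~\ref{Nevcas_approx}. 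Because all the $\varphi_k$ share the uniform Lipschitz bound $\|\nabla\varphi_k\|_\infty\le\|\nabla\varphi\|_\infty$, the interpolants do too, so the resulting $\Psi_\tau$ are bi-Lipschitz homeomorphisms over their images with Lipschitz constants uniformly bounded in $\tau\in[0,1]$, and $\Psi_0={\rm Id}$; this verifies condition (ii). One also needs $\Psi$ to be jointly (bi-)Lipschitz in $(x,\tau)$, which again follows from the explicit graph formula and the uniform bounds.

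The regularity \eqref{regular_deformation_cor} is then the main point. The tangential Jacobian $J^{\partial U}\Psi_\tau$ is computed, as in the sketch of Proposition~\ref{Nevcas_approx}(iv), from the matrix $\nabla\Psi_\tau$ which in local coordinates has the block form with $(\nabla(\varphi_{(\tau)}-\varphi))^\top$ in the bottom row and ${\rm I}_{n-1}$ elsewhere, so $J^{\partial U}\Psi_\tau$ is an explicit continuous function of $\nabla\varphi_{(\tau)}$. As $\tau\to0^+$ we have $k\to\infty$, and since $\nabla\varphi_{(\tau)}=\lambda(\tau)\nabla\varphi_{k+1}+(1-\lambda(\tau))\nabla\varphi_k$ is a convex combination of $\nabla\varphi_{k}$ and $\nabla\varphi_{k+1}$, both of which converge to $\nabla\varphi$ in $L^q(\R^{n-1};\R^{n-1})$ for every $1\le q<\infty$ and are uniformly bounded, it follows that $\nabla\varphi_{(\tau)}\to\nabla\varphi$ in every $L^q$, uniformly in the choice of $\lambda(\tau)\in[0,1]$. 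Hence $J^{\partial U}\Psi_\tau\to J^{\partial U}{\rm Id}=1$ in $L^q(\partial U;\Haus{n-1})$, in particular in $L^1$, giving \eqref{regular_deformation_cor}. The last gluing detail --- that the partition-of-unity patching does not spoil either the uniform Lipschitz bounds or the $L^1$ convergence of Jacobians --- is routine because the cutoff functions and their gradients are fixed (independent of $\tau$ and $k$), so differentiating the glued map produces only fixed bounded multipliers times the already-controlled local data.

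The hard part will be the interpolation bookkeeping: one must simultaneously (a) keep the images $\Psi(\partial U\times\{\tau\})$ nested and contained in $\overline U$ so that the associated open sets $\Omega_\tau$ of Definition~\ref{aquiaqui} are well defined, (b) ensure $\Psi$ is injective on $\partial U\times[0,1]$ and bi-Lipschitz onto its image (not merely each slice bi-Lipschitz), and (c) verify that the convex-combination interpolation between $\varphi_k$ and $\varphi_{k+1}$ stays a valid graph over the \emph{same} coordinate patch for $\tau\in[\tau_{k+1},\tau_k]$, which may force a further subsequence extraction so that consecutive $\partial U_k$ lie in a common coordinate cylinder. None of these is conceptually deep --- each reduces to the uniform Lipschitz estimates and uniform closeness $f_k\to{\rm Id}$ supplied by Proposition~\ref{Nevcas_approx} --- but organizing them cleanly, and confirming that the limit in \eqref{regular_deformation_cor} is genuinely as $\tau\to0^+$ through all real values and not just along $\tau_k$, is where the care is required. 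As a sanity check one notes that the alternative construction via $\Psi(x,\tau):=f(\tau\eps_1,x)$ from Theorem~\ref{Lip is Lip deformable} already gives a Lipschitz deformation and, by Remark~\ref{convergence to identity nabla}, $\nabla f(\eps,\cdot)\to{\rm I}_n$ in $L^p(\R^n;\R^{n\times n})$; combined with $\Haus{n-1}(\partial U\setminus\redb U)=0$ and a trace-type argument one can alternatively extract $L^1(\partial U;\Haus{n-1})$ convergence of the tangential Jacobians, so the N\v{e}cas route and the Ball-Zarnescu route are consistent and either may be used to close the proof.
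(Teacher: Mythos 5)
Your main construction is essentially the paper's own proof: the paper interpolates directly by setting $\Psi(x,\tau):=(k+1-k(k+1)\tau)f_{k+1}(x)+(k(k+1)\tau-k)f_k(x)$ on $\tau\in(\tfrac{1}{k+1},\tfrac1k]$, which, because each $f_k$ is the vertical graph shift $x\mapsto(\hat x,x_n+\varphi_k(\hat x)-\varphi(\hat x))$ in local coordinates, is identically your convex combination of the graph functions $\varphi_k$, and the $L^q$ convergence of Jacobians then follows from Proposition~\ref{Nevcas_approx}(iv) exactly as you argue. One caveat on your closing ``sanity check'': $L^p(\R^n)$ convergence of $\nabla f(\eps,\cdot)$ from Remark~\ref{convergence to identity nabla} does not by itself yield $L^1(\partial U;\Haus{n-1})$ convergence of tangential Jacobians, since $\nabla f(\eps,\cdot)$ is merely $L^\infty$ and has no continuous trace on $\partial U$ --- this is precisely why the paper switches to the Ne\v{c}as--Verchota family (whose graph-coordinate structure gives pointwise a.e.\ and $L^q$ control on the boundary directly) to obtain the \emph{regular} deformation, rather than continuing with the Ball--Zarnescu map $f(\eps,\cdot)$ used in Theorem~\ref{Lip is Lip deformable}.
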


\begin{proof}
\, Set
\begin{equation*}
\Psi(x, \tau) := \big(k + 1 - k(k + 1) \tau\big) f_{k + 1}(x) + \big(k (k + 1)\tau - k\big) f_{k}(x) \quad
\text{if  $ \tau \in (\frac{1}{k + 1}, \frac{1}{k}]$},
\end{equation*}
where functions $f_{k}$ are given by Proposition \ref{Nevcas_approx}.
It is clear that $\Psi(\cdot, \tau)$ is a bi-Lipschitz diffeomorphism from $U$ over its image,
by Proposition \ref{Nevcas_approx}(iii), with Lipschitz constants uniformly bounded in $\tau > 0$.
Since $J^{\partial U} f_{k} \to 1$ in $L^{q}(\partial U; \Haus{n - 1})$
for any $1 \le q < \infty$,
by Proposition \ref{Nevcas_approx} (iv), and
\begin{align*}
0 \le k (k + 1) \tau - k  \le 1, \quad 0 \le k + 1 - k (k + 1)\tau  \le 1 \qquad\,\,\mbox{for $\tau \in \Big ( \frac{1}{k + 1}, \frac{1}{k} \Big ]$,}
\end{align*}
we conclude that $J^{\partial U} \Psi(x, \tau) \to 1$ in $L^{q}(\partial U; \Haus{n - 1})$ for $1 \le q < \infty$,
which implies \eqref{regular_deformation_cor}.
\end{proof}

\begin{remark}
\, Hofmann-Mitrea-Taylor in \cite[Proposition 4.19]{HofmannMitreaTaylor}
worked with a strongly Lipschitz domain $U$ in $\R^{n}$ such that there exists a $C^{1}$--vector field $h$ satisfying
\begin{equation*}
|h(x)| = 1, \quad h(x) \cdot \nu_{U}(x) \ge \kappa \qquad\,\, \text{for} \ \Haus{n - 1}\text{--{\it a.e.}} \ x \in \partial U
\end{equation*}
for some $\kappa \in (0, 1)$.
In the literature, a domain is said to be strongly Lipschitz if the Lipschitz constants of the parametrization
of $\partial U$ are uniformly bounded, so that any open bounded set $U$ with Lipschitz boundary is a strongly Lipschitz domain,
by compactness. For a more detailed exposition, we refer to \cite[Appendix B]{auscher2005hardy}.
Then, if $U_{\tau} := \{ x - \tau h(x) : x \in U \}$, there exists $\tau_{0} > 0$ such that,
for any $\tau \in (0, \tau_{0})$, $U_{\tau}$ is a strongly Lipschitz domain satisfying $\overline{U_{\tau}} \subset U$
and $\partial U_{\tau} = \{ x - \tau h(x) : x \in \partial U \}$.
In addition, the results of Proposition {\rm \ref{Nevcas_approx}(ii)--(vi)} hold with Lipschitz regularity,
instead of $C^{\infty}$.
However, it is clear that more regularity on the vector field $h$ would imply more regularity of $\partial U_{\tau}$.

Moreover, a similar approximation holds from the exterior of $U$,
if we consider $U_{- \tau}$ for $\tau \in (- \tau_{1}, 0]$, for some $\tau_{1} > 0$.

Following Theorem {\rm \ref{Lip_reg_def}},
we see that the assumptions of Hofmann-Mitrea-Taylor \cite[Proposition 4.19]{HofmannMitreaTaylor}
are not strictly necessary{\rm ;} however, they allow to have this particular representation of the approximating sets.
\end{remark}

\section{\, Cauchy Fluxes and Divergence-Measure Fields}

In Continuum Physics, the fundamental principle of balance law
can be stated in the most general terms ({\it cf.} Dafermos \cite{dafermos2010hyperbolic} and Lax \cite{Lax1973}):
{\it A balance law in an open set $\Omega$ of $\R^n$ postulates that
the production of a vector-valued ``extensive" quantity in any bounded open
subset $U\Subset \Omega$ is balanced by the Cauchy flux
of this quantity through the boundary of $U$.}

For smooth continuum media,
the physical principle of balance law can be formulated in the classical form:
\begin{equation}
\int_{U} b(y)\, \dr y = \int_{\partial U} f(y)\, \dr \Haus{n-1}(y)
\label{balance}
\end{equation}
for any given open set $U$ that is of smooth boundary,
where $f$ is a density function of the Cauchy flux, and $b$ is a production density function.
In
mechanics, $f$ represents the surface force per unit area on
$\partial U$, while  $f$ gives the heat flow per
unit area across the boundary $\partial E$ in thermodynamics.

In 1823, Cauchy \cite{Cauchy1} (also see \cite{Cauchy2}) established
the {\it stress theorem},
which states that, if $f(y):=f(y,\nu(y))$, defined for each $y$
in an open region $\Omega$ and every unit vector $\nu$, is
continuous in $y$, and $b(y)$ is uniformly bounded on $\Omega$, and
if (\ref{balance}) is satisfied for every smooth region
$U\Subset\Omega$, then $f(y,\nu)$ must be linear in $\nu$; that is,
there exists a vector field $\FF$
such that
$$
f(y,\nu)=\FF(y) \cdot \nu.
$$
The
{\it Cauchy postulate} states that the density flux $f$ through a
surface depends on the surface solely through the normal at that
point.
Since the time of Cauchy's stress result \cite{Cauchy1,Cauchy2},
many efforts have been made to generalize his ideas and remove some
of his hypotheses. The first results in this direction were obtained
by Noll \cite{Noll} in 1959, who set up a basis for an axiomatic
foundation for continuum thermodynamics. In particular, Noll
\cite{Noll} showed that the Cauchy postulate may directly follow
from the balance law. In \cite{gm}, Gurtin-Martins introduced the
concept of Cauchy flux and removed the continuity assumption on $f$.
They represented the Cauchy flux as an additive mapping $\mathcal{F}$
on surfaces $S$ such that there exists a constant $C>0$ so that
\begin{equation}
\label{original}
|\mathcal{F}(S)| \leq C \Haus{n - 1}(S), \quad |\mathcal{F}(\partial B)| \leq C \Leb{n}(B)
\end{equation}
for any surface $S$ and subbody $B$.

In 1983, Ziemer \cite{Ziemer1} proved Noll's theorem in the context of
geometric measure theory, in which the Cauchy fluxes were first
formulated via employing sets $E$ of finite perimeter to represent the bodies
and $\partial^*E$ to represent the surfaces.
His formulation of the balance law for
the flux function yields the existence of a vector field $\FF\in L^\infty$
with $\div \FF\in L^\infty$.
The papers by \v{S}ilhav\'y \cite{S2,S3}
extended definition \eqref{original} by requiring
\begin{equation}
\label{original2}
|\mathcal{F}(S)| \leq \int_{S}h \,\dr \Haus{n - 1}, \quad |\mathcal{F}(\partial B)| \leq  \int_{B} g \, d x
\end{equation}
for suitable functions $g$ and $h$ in $L^p$ for $p \geq 1$ and almost every surface $S$.
The vector fields obtained under these conditions  have distributional divergences
that are integrable; that is, $\FF \in L^1$ and $\div \FF \in L^{1}$.
However, all the previous formulations of Cauchy fluxes
do not allow the presence of ``shock waves'' since $\div \FF$ is absolutely continuous with respect to $\mathcal{L}^n$.

Degiovanni-Marzocchi-Musesti \cite{degiovanni1999cauchy}
further generalized conditions \eqref{original} and considered the Cauchy fluxes defined on {\it almost} every surface and satisfying
\begin{equation}
\label{original3}
  |\mathcal{F}(S)|  \leq  \int_{S} h \, \dr \Haus{n - 1}, \quad |\mathcal{F}(\partial B)| \leq \sigma(B)
\end{equation}
for a suitable function $h \in L^1_{\rm loc}$ and a nonnegative Radon measure $\sigma$.
This definition of Cauchy fluxes induced the existence of a vector field $\FF \in \DM^{1}_{\rm loc}$.
Schuricht \cite{Sch} studied an alternative formulation to \eqref{original},
which consists in considering the contact interactions $f$ as maps on pairs of disjoint subbodies (instead of surfaces).
Thus, $f(B,A)$ is the resultant force exerted on $B$ by $A$.
The function $f$ is assumed to be countable additive in the first argument ({\it i.e.}, a measure)
and finitely additive with respect to the second argument.
This alternative formulation also implies the existence of $\FF \in \DM^{1}_{\rm loc}$,
depending on $A$, such that $\div \FF = f(\cdot, A)$.
The Gauss-Green formulas obtained in \cite{degiovanni1999cauchy} and \cite{Sch}
are valid for $\FF \in \DM^{p}(\Omega)$ for any $p \ge 1$, but only on the sets of finite perimeter,
$E \subset \Omega$, which lie in a suitable subalgebra related to the particular representative of $\FF$.
In other words, these Gauss-Green formulas are valid only on {\it almost every} set,
thus missing  the exceptional surfaces or ``shock waves''.
In order to recover the flux on {\it every surface}, it is necessary to develop
a theory of normal traces for divergence-measure fields.

In Chen-Torres-Ziemer \cite{ctz}, such a theory of normal traces
on reduced boundaries of sets of finite perimeter has been established
for $\DM^\infty(\Omega)$--fields.
The method in  \cite{ctz} consists in constructing the normal trace as the limit
of the classical normal traces over smooth approximations of the set of finite perimeter.
This approach requires a new approximation theorem of sets of perimeter
that can distinguish between the measure-theoretic interior and exterior
of the set.
The Cauchy flux introduced in  \cite{ctz} is defined on {\it every} set
of finite perimeter, $E\Subset \Omega$, and on {\it every} $\Haus{n - 1}$--rectifiable
surface $S \subset \redb E$ (so that $S$ is oriented with the normal to the set).
The conditions that there exists a nonnegative Radon measure $\sigma$ such that
\begin{equation} \label{Cauchy_flux_cond_CTZ}
|\mathcal{F}(\partial^* E)|\le \sigma(E^1), \qquad\,\, |\mathcal{F}(S)|\le C \Haus{n-1}(S)
\end{equation}
imply the existence of a $\DM^\infty$--field $\FF$ so that the Cauchy flux
over every surface can be recovered through the normal traces of $\FF$ on the oriented
surface (see Remark \ref{Special-Case}(ii) below).

The Cauchy fluxes in the sense of Chen-Torres-Ziemer  \cite{ctz} allow the presence of
exceptional surfaces ({\it i.e.} shock waves)
in the formulation of the axioms.
In this setting, ${\rm div} \FF=g\sigma$ for some function $g$ and Radon measure $\sigma$,
and hence ${\rm div} \FF$ is not in general
absolutely continuous with respect to $\mathcal{L}^n$.
The measure $\sigma$ does not vanish on the exceptional surfaces
and the Cauchy flux $\mathcal{F}$ has a discontinuity since
$\mathcal{F}(S)\ne -\mathcal{F}(-S)$.
Formulations \eqref{original}--\eqref{original2} deal with the particular case $\sigma=\mathcal{L}^n$,
and hence the measure vanishes on
any $\Haus{n-1}$--dimensional surface, excluding the shock waves.

\begin{example} \label{Cauchy_flux_ex_1}
Let
$$
\FF(x_1, x_2) = f(x_2) g(x_1) (1, 0) \,\qquad \mbox{for some $f \in L^{\infty}(\R)$ and $g \in C^{1}_{c}(\R)$},
$$
so that $\FF \in \DM^{\infty}(\R^{2})$,
and let $E$ be as in Remark {\rm \ref{rem:3.22}}.
Then, by \eqref{divergence_chi_E_F_example},
$\GG = \chi_{E} \FF$ is also in $\DM^{\infty}(\R^{2})$,
while $\GG \notin BV_{\rm loc}(\R^{2}; \R^{2})$, since $E$ is not a set of locally finite perimeter.

If $U:= (0, 2)^{2} = U^{1}$, then $E \cap U = E$ and $\chi_{U} \GG = \chi_{E} \FF$.
Hence, applying \eqref{divergence_chi_E_F_example} again, we obtain
\begin{align} \label{normal_trace_ex_1_eq}
&\ban{\GG \cdot \nu, \cdot}_{\partial U}\nonumber\\
& = \chi_{U} \div\, \GG - \div(\chi_{U} \GG) = (\chi_{U^{1}} - 1) \div\, \GG
 = (\chi_{U^{1}} - 1) f(x_{2}) g(x_{1}) D_{x_{1}} \chi_{E}\nonumber \\
& = - f(x_{2}) g(x_{1}) \Haus{1} \res \big(\{0\} \times (0, 1)\big)
 + f(x_{2}) g(x_{1}) \Haus{1} \res \big(\{2\} \times (\frac{1}{2}, \frac{3}{4})\big).
\end{align}
On the other hand, if $L := U \cup \redb U$ instead, we still obtain that
$\chi_{L} \GG = \chi_{E} \FF = \GG$ and
\begin{align} \label{normal_trace_ex_2_eq}
\ban{\GG \cdot \nu, \cdot}_{\partial L}
& = \chi_{L} \div\, \GG - \div(\chi_{L} \GG) = (\chi_{L} - 1) \div\, \GG \nonumber\\
& = (\chi_{U^{1} \cup \redb U}(x_{1}, x_{2}) - 1) f(x_{2}) g(x_{1}) D_{x_{1}} \chi_{E} = 0,
\end{align}
since $\mathrm{supp}(|D_{x_{1}} \chi_{E}|) \subset U^{1} \cup \redb U$.
Hence, it follows that
\begin{equation} \label{jump_normal_trace_ex_eq}
\ban{\GG \cdot \nu, \cdot}_{\partial U} \neq \ban{\GG \cdot \nu, \cdot}_{\partial(U \cup \redb U)}
\end{equation}
in general.
This condition is satisfied, for instance, if $g(0) < 0$, $g(2) = 0$, and $f > 0$ in $(0, 1)$,
since $\ban{\GG \cdot \nu, \cdot}_{\partial U}$ is a nontrivial nonnegative Radon measure in this case.

Thanks to the fact that $\GG \in \DM^{\infty}(\R^{2})$, we can define an associated Cauchy flux $\mathcal{F}$
by using the theory developed in \cite{ctz}.
Given a bounded set of finite perimeter $M$, there exist the interior and exterior normal
traces of $\GG$: $(\mathfrak{G}_{\ii} \cdot \nu_{M})$ and $(\mathfrak{G}_{\ee} \cdot \nu_{M}) \in L^{\infty}(\redb M; \Haus{1})$
{\rm (}see Proposition {\rm \ref{normal trace p infty}}{\rm )}.
Then we define
\begin{equation} \label{flux_ex_def_1}
\mathcal{F}(S) := - \int_{S} \mathfrak{G}_{\ii} \cdot \nu_{M} \, \dr \Haus{1},
\end{equation}
if $S$ is an $\Haus{1}$--rectifiable surface such that $S \subset \redb M$ which is oriented by $\nu_{M}$,
for some bounded set of finite perimeter $M${\rm ;} and
\begin{equation} \label{flux_ex_def_2}
\mathcal{F}(S) := - \int_{S} \mathfrak{G}_{\ee} \cdot \nu_{M} \, \dr \Haus{1},
\end{equation}
if $S$ is an $\Haus{1}$--rectifiable surface such that $S \subset \redb M$ which is oriented by $- \nu_{M}$,
for some bounded set of finite perimeter $M$.

It is not difficult to check that $\mathcal{F}$ is a Cauchy flux in the sense of \cite{ctz}.
Indeed, by definition, $\mathcal{F}$ is a finitely additive functional on disjoint surfaces.
Since the normal traces are essentially bounded, from \eqref{flux_ex_def_1}--\eqref{flux_ex_def_2},
we obtain
\begin{equation*}
|\mathcal{F}(S)| \le C \Haus{1}(S).
\end{equation*}
Then  $|\mathcal{F}(\redb M)| \le \sigma(M^{1})$ for any bounded set $M$ of finite perimeter,
if $\sigma = |\div \,\GG|$ is chosen.
Indeed, we need just to employ the Gauss-Green formulas and the fact that $\GG$ has compact support.

In particular, if we apply \eqref{G-G phi Sobolev int} to $\GG$,
a bounded set of finite perimeter $M$ and $\phi \in \Lip_{c}(\R^{2})$ with $\phi \equiv 1$ on $\overline{M}$,
then
\begin{equation*}
\mathcal{F}(\redb M) = - \int_{\redb M} \mathfrak{G}_{\ii} \cdot \nu_{M} \, {\rm d} \Haus{1} = \div\,\GG (M^{1}).
\end{equation*}
Arguing analogously, from \eqref{G-G phi Sobolev ext}, we have
\begin{equation*} \label{eq:GG_flux_2}
\mathcal{F}(- \redb M) = - \int_{\redb M} \mathfrak{G}_{\ee} \cdot \nu_{M} \, {\rm d} \Haus{1} = \div\,\GG (M^{1} \cup \redb M).
\end{equation*}
Since $\GG$ has compact support in $\R^{2}$, by \cite[Lemma 3.1]{comi2017locally},
we obtain
\begin{equation*}
0 = \div\,\GG (\R^{2}) = \div\,\GG (M^{1} \cup \redb M) + \div\,\GG ((\R^{2} \setminus M)^{1}),
\end{equation*}
from which it follows that
\begin{equation*}
\mathcal{F}(\redb (\R^{2} \setminus M)) = \mathcal{F}(- \redb M) = - \div\,\GG ((\R^{2} \setminus M)^{1}).
\end{equation*}
Thus, \eqref{Cauchy_flux_cond_CTZ} is satisfied. Then
we have proved that $\mathcal{F}$ is a Cauchy flux in the sense of \cite{ctz}.

Choose $S := \redb U$ oriented by $\nu_{U}$, by \eqref{flux_ex_def_2},
Proposition {\rm \ref{normal trace p infty}},  and \eqref{normal_trace_ex_1_eq},
we have
\begin{equation*}
\mathcal{F}(\redb U) = \ban{\GG \cdot \nu, \phi}_{\partial U} = - \int_{\redb U} \phi(x_{1}, x_{2}) f(x_{2}) g(x_{1}) \, {\rm d} D_{x_{1}} \chi_{E}
\end{equation*}
for any $\phi \in \Lip_{c}(\R^{2})$ with $\phi \equiv 1$ on $\overline{U}$.
Arguing analogously, by \eqref{flux_ex_def_2}, Proposition {\rm \ref{normal trace p infty}}, and \eqref{normal_trace_ex_2_eq},
we have
\begin{equation*}
\mathcal{F}(- \redb U) = \ban{\GG \cdot \nu, \phi}_{\partial(U \cup \redb U)} = 0,
\end{equation*}
for any $\phi \in \Lip_{c}(\R^{2})$ with $\phi \equiv 1$ on $\overline{U}$.
Then, by \eqref{jump_normal_trace_ex_eq}, it follows that, in general,
\begin{equation*}
\mathcal{F}(\redb U) \neq  - \mathcal{F}( - \redb U),
\end{equation*}
which shows that $\mathcal{F}$ may have a discontinuity on the rectifiable surface $\redb U$.
\end{example}

In this paper above, we have developed a more general theory of normal traces
for unbounded $\DM^p$ fields.
In particular, we have shown that the normal trace can be represented as the
limit of the classical normal traces on smooth approximations or deformations.
Hence, we can now give a more general definition of Cauchy fluxes over general open sets
(not necessarily of finite perimeter).

\begin{definition}[Side surfaces] \label{surface}
A side surface in $\Omega$ is a pair $(S, U)$ so that $S\Subset \Omega$ is a Borel set and $U\Subset \Omega$ is a open set
such that $S\subset \partial U$. The side surface $(S, U)$ is often written as $S$ for simplicity,
when no confusion arises from the context.
\end{definition}

\begin{definition}[Cauchy fluxes] \label{general}
\label{def5}
Let $\Omega$ be a bounded open set.
A Cauchy flux is a functional $\mathcal{F}$ defined on the side surfaces $(S, U)$
such that the following properties hold{\rm :}
\begin{enumerate}
\item[\rm (i)] $\Fc(S_1 \cup S_2) = \Fc(S_1)+ \Fc(S_2)$ for any pair of
      disjoint side surfaces $S_1$ and $S_2$ in $\partial U$, for some $U \Subset \Omega${\rm ;}

\item[\rm (ii)]\label{tres}  There exists a nonnegative Radon measure $\sigma$ in $\Omega$
such that
$$
|\Fc(\partial U)| \leq \sigma (U) \qquad\,\,\mbox{for every open set  $U \Subset\Omega$};
$$

\item[\rm (iii)]  There exists a nonnegative Borel function $h \in L^{1}_{\rm loc}(\Omega)$ such that
      $$
      |\Fc(S)| \leq \int_{S} h \, \dr \Haus{n - 1}
      $$
for any side surface $S \subset \partial U$ and any open set $U \Subset \Omega$ {\rm (}the integral
could be $\infty$, in which case the axiom is also true{\rm )}.
   \label{dos}
\end{enumerate}
For simplicity, the Cauchy flux is often written as $\mathcal{F}(S)$ as (i)--(iii) above,
when no confusion arises from the context.
\end{definition}

We state now our main result on the representation of general Cauchy fluxes.

\begin{theorem}\label{Cauchy_flux_p}
Let $\F$ be a  Cauchy flux in $\Omega$ with $h \in L^{1}_{\rm loc}(\Omega)$
as in Definition {\rm \ref{general}}.
Then there exists a unique
$\FF \in \mathcal{DM}^{1}_{\rm loc}(\Omega)$ such that, for every open set $U \Subset \Omega$,

\begin{enumerate}
\item[\rm (i)] For any $\phi \in C^{1}_{c}(\Omega)$ such that $\phi \equiv 1$ on a neighborhood of $\partial U$,
\begin{equation} \label{main_representation_flux}
\Fc (\partial U) = \ban{\FF\cdot\nu,\, \phi}_{\partial U},
\end{equation}
and there exists an interior smooth approximation $U^{\eps}$ of $U$ as
in Theorem {\rm \ref{interior trace smooth general}} such that, for a suitable subsequence $\eps_{k} \to 0$ as $k\to\infty$,
\begin{equation*}
\Fc (\partial U) = - \lim_{\epsilon_{k} \to 0} \int_{\partial U^{\eps_{k}}}\FF \cdot \nu_{U^{\eps_{k}}} \, \dr \Haus{n - 1}
\end{equation*}
where $\FF \cdot \nu_{U_{\eps_{k}}}$ denotes the classical dot product{\rm ;}

\smallskip
\item[\rm (ii)]
If $\chi_{U} \FF \in \DM^{1}_{\rm loc}(\Omega)$, then there exists $\mu_b \in \mathcal{M}(\partial U)$ such that
\begin{equation*}
\Fc (\partial U) =  \int_{\partial U }  \dr \mu_b;
\end{equation*}

\item[\rm (iii)] If $U$ is a $C^0$ domain, then there exists a sequence of smooth set $U^{\eps, \rho}$
as in Theorem {\rm \ref{interior normal trace smooth}}, which can be represented as
a deformation generated by $f(\eps,x)$ {\rm (}defined in Theorem {\rm \ref{Ball}}{\rm )}
that is $C^\infty$ in $x$ when $\eps>0$ and $C^0$ in $x$ when $\eps=0$,
such that, for a suitable subsequence $\eps_{k} \to 0$,
\begin{equation}
\nonumber
\Fc (\partial U)= - \lim_{\eps_{k} \to 0} \int_{\partial U^{\eps_{k}, \rho}} \FF \cdot \nu_{U^{\eps_{k}, \rho}} \, \dr \Haus{n - 1} ;
\end{equation}

\item[\rm (iv)] If $U$ has a Lipschitz boundary, then there exists a regular
Lipschitz deformation $\Psi(x,\eps)=:\Psi_\eps(x)$ of $\partial U$
such that, for a suitable subsequence $\eps_{k} \to 0$ as $k\to\infty$,
\begin{equation*}
\Fc (\partial U)
= - \lim_{\eps_{k} \to 0} \int_{\partial U }\FF(\Psi_{\eps_{k}}(x))\cdot
\nu_{U} (\Psi_{\eps_{k}}(x))J\Psi_{\eps_{k}}(x) \dr \Haus{n - 1} (x).
\end{equation*}
\end{enumerate}
\end{theorem}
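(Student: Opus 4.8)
The plan is to adapt the arguments of Degiovanni--Marzocchi--Musesti \cite{degiovanni1999cauchy} and Schuricht \cite{Sch} (and of Chen--Torres--Ziemer \cite{ctz} for the case $p=\infty$) to construct the field $\FF$, and then to obtain the representation formulas (i)--(iv) as consequences of the Gauss--Green theorems of \S\S 5--8. First I would construct $\FF$ componentwise from the flux through coordinate slices. Fix a direction $e_i$; axiom (i) shows that, for $\Leb{1}$--{\it a.e.} $t$, the assignment $R \mapsto \F\big(\{x_i = t\}\cap R\big)$ on coordinate rectangles $R\Subset\Omega$ is finitely additive, while axiom (iii) bounds it by $\int_{\{x_i=t\}\cap R} h\,\dr\Haus{n-1}$; hence it is represented on the slice by a locally integrable density $F_i(\cdot,t)$, and a Fubini-type argument based on $h\in L^1_{\rm loc}(\Omega)$ shows that the resulting $F_i$ lies in $L^1_{\rm loc}(\Omega)$. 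Setting $\FF:=(F_1,\dots,F_n)$, one obtains, for $\Leb{n}$--{\it a.e.} coordinate rectangle $R\Subset\Omega$, the identity $\F(\partial R)=-\int_{\partial R}\FF\cdot\nu_R\,\dr\Haus{n-1}$, with $\nu_R$ the inner unit normal.

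Next I would show $\FF\in\DM^1_{\rm loc}(\Omega)$. For $\psi\in C^1_c(\Omega)$, decomposing $\mathrm{supp}\,\psi$ into a fine grid of coordinate cubes and using the slice identities together with axiom (i) to telescope the flux through common faces, the distribution $\psi\mapsto-\int_\Omega\FF\cdot\nabla\psi\,\dr x$ is computed, up to a vanishing error, as a weighted sum of the terms $\F(\partial Q)$ over cubes $Q$; axiom (ii) then yields $\big|\int_\Omega\FF\cdot\nabla\psi\,\dr x\big|\le\|\psi\|_{\infty}\,\sigma(\Omega')$ for any open $\Omega'\Subset\Omega$ with $\mathrm{supp}\,\psi\subset\Omega'$, so that $\div\FF$ extends to a Radon measure on $\Omega$ dominated by $\sigma$; hence $\FF\in\DM^1_{\rm loc}(\Omega)$. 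A further limiting argument, approximating a general open set $U\Subset\Omega$ from the interior by finite unions of cubes and controlling the errors near $\partial U$ by axioms (ii)--(iii), shows that $\F(\partial U)=\div(\phi\FF)(U)=\ban{\FF\cdot\nu,\,\phi}_{\partial U}$ for every $\phi\in C^1_c(\Omega)$ with $\phi\equiv 1$ near $\partial U$, the right-hand side being independent of the choice of such $\phi$ since by Proposition \ref{support trace} the normal-trace functional is supported on $\partial U$; this is assertion (i), first part. Uniqueness of $\FF$ follows because the slice densities $F_i(\cdot,t)$ are pinned down by $\F$ for $\Leb{1}$--{\it a.e.} $t$, and these determine $\FF$ $\Leb{n}$--{\it a.e.}

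It then remains to read off the approximation formulas. For the second identity in (i), Theorem \ref{interior trace smooth general} provides a smooth interior approximation $\{U^{\eps_k}\}$ with $\ban{\FF\cdot\nu,\,\phi}_{\partial U}=-\lim_k\int_{\partial U^{\eps_k}}\phi\FF\cdot\nu_{U^{\eps_k}}\,\dr\Haus{n-1}$; since $\partial U^{\eps_k}\to\partial U$ and $\phi\equiv1$ on a neighbourhood of $\partial U$, one has $\phi\equiv1$ on $\partial U^{\eps_k}$ for $k$ large, whence the stated formula. For (ii), if $\chi_U\FF\in\DM^1_{\rm loc}(\Omega)$, Theorem \ref{equivalence trace prod} shows that $\ban{\FF\cdot\nu,\,\cdot}_{\partial U}$ is a Radon measure $\mu_b\in\mathcal{M}(\partial U)$ supported on $\partial U$, so that $\F(\partial U)=\ban{\FF\cdot\nu,\,\phi}_{\partial U}=\int_{\partial U}\phi\,\dr\mu_b=\int_{\partial U}\dr\mu_b$. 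For (iii), apply Theorem \ref{interior normal trace smooth} to replace $U^{\eps_k}$ by the smooth sets $U^{\eps_k,\rho}$ built from the regularized distance $\rho$, which by Theorem \ref{Ball} are deformations of $U$ generated by $f(\eps,\cdot)$, smooth in $x$ for $\eps>0$ and continuous for $\eps=0$. For (iv), when $\partial U$ is Lipschitz, combine Theorem \ref{Lip_reg_def} (existence of a regular Lipschitz deformation $\Psi_\tau$) with the area-formula representation in Theorem \ref{area formula U eps} and Corollary \ref{G-G F continuous U Lip}.

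The principal obstacle is the first stage: assembling the slicewise Radon--Nikodym densities into a measurable, locally integrable vector field $\FF$ and verifying that its distributional divergence is dominated by $\sigma$. This is precisely where axioms (ii) and (iii) enter in an essential Fubini/covering argument, in the spirit of \cite{degiovanni1999cauchy,Sch}, and where care is needed to pass from identities holding only on \emph{almost every} surface to the statement valid on \emph{every} open set $U\Subset\Omega$ --- a passage made possible by the normal-trace theory of \S\S 4--5, which gives meaning to $\ban{\FF\cdot\nu,\,\cdot}_{\partial U}$ even when $\chi_U\FF\notin\DM^p(\Omega)$.
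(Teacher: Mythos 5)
Your construction of $\FF$ (Step 1) and the uniqueness argument (Step 2) follow the same line as the paper, though the paper avoids the joint-measurability subtlety in your slice-by-slice Radon--Nikodym step by first integrating $\F(I_{j,s})$ in $s$ to define an additive set function $\mu^j(I):=\int_{a_j}^{b_j}\F(I_{j,s})\,\dr s$ on cubes and then invoking Fuglede's theorem to produce $f_j\in L^1_{\rm loc}(\Omega)$ with $\mu^j(I)=\int_I f_j$; the slice identity is then recovered by differentiating in $s$. Your reading off of parts (ii)--(iv) from Theorems \ref{equivalence trace prod}, \ref{interior trace smooth general}, \ref{interior normal trace smooth}, \ref{area formula U eps}, and \ref{Lip_reg_def} is correct, and your observation that the choice of $\phi$ is immaterial by Proposition~\ref{support trace} is a clean way to see the well-posedness of \eqref{main_representation_flux}.

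The genuine gap is in the passage from the cube identity $\F(\partial I)=-\int_{\partial I}\FF\cdot\nu_I\,\dr\Haus{n-1}$ (valid for almost every cube) to the identity $\F(\partial U)=\ban{\FF\cdot\nu,\phi}_{\partial U}$ for a general open $U$. You propose to ``approximate $U$ from the interior by finite unions of cubes $Q_i\uparrow U$ and control the errors near $\partial U$.'' This cannot work directly: the only way to compare $\F(\partial Q_i)$ with $\F(\partial U)$ via axiom (i) is through the shell $U\setminus\overline{Q_i}$, but as a side surface of that shell the piece $\partial Q_i$ carries the \emph{opposite} orientation from $\partial Q_i$ viewed as the boundary of $Q_i$, and there is no axiom (and, as Example~\ref{Cauchy_flux_ex_1} shows, no true statement) of the form $\F(S;U_1)=-\F(S;U_2)$ for open sets $U_1,U_2$ on opposite sides of $S$. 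The paper instead takes a \emph{shrinking collar} $J_i$ of $\partial U$ --- finite unions of closed cubes with $J_{i+1}\subset J_i$ and $\bigcap_i J_i=\partial U$ --- and works with the open sets $\mathring{J}_i\cap U$. Their boundary decomposes into the two disjoint pieces $\partial U$ and $U\cap\partial J_i$, \emph{both oriented consistently with the normal-trace and cube-flux computations}: $\partial U$ sees $\mathring{J}_i\cap U$ on the same side as $U$, while $U\cap\partial J_i$ sees it on the same side as $\mathring{J}_i$. Axiom (i) then gives $\F(\partial(\mathring{J}_i\cap U))=\F(\partial U)+\F(U\cap\partial J_i)$, axiom (ii) shows $\F(\partial(\mathring{J}_i\cap U))\to 0$ since $\sigma(\mathring{J}_i\cap U)\to\sigma(\partial U\cap U)=0$, and a parallel decomposition of $\ban{\FF\cdot\nu,\cdot}_{\partial(\mathring{J}_i\cap U)}$ together with the choice of $J_i$ so that $\ban{\FF\cdot\nu,\cdot}_{\partial\mathring{J}_i}=-\FF\cdot\nu_{J_i}\,\Haus{n-1}\res\partial J_i$ shows that $\int_{U\cap\partial J_i}\FF\cdot\nu_{J_i}\,\dr\Haus{n-1}$ converges simultaneously to $-\F(\partial U)$ and to $-\ban{\FF\cdot\nu,\phi}_{\partial U}$. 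Your sketch does not capture this collar construction, which is what makes the orientations compatible and hence is the crux of the proof.
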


\begin{proof} \, We divide the proof into four steps.

\smallskip
1. We first show the existence of such an $\FF\in \DM^{1}_{\rm loc}(\Omega)$.
Let $\mathcal{I}_{\Omega}$ be the collection of all closed cubes in $\R^n$ of the form:
$$
I=[a_1,b_1]\times \cdots \times [a_n, b_n],
$$
such that $I \Subset \Omega$.
For almost every $s\in [a_j,b_j]$, define
$$
I_{j,s}:=\{y\in I \, :\, y_j=s\}.
$$

Let $\{e_1, \dots, e_n\}$ be the canonical basis of $\R^n$. We fix $j\in \{1, \dots, n\}$.
For every cube $I \in \mathcal{I}_{\Omega}$, define
$$
\mu^j(I):=\int_{a_j}^{b_j}\mathcal{F}(I_{j,s}) \, \dr s.
$$
From Definition \ref{general}(iii), we have
\begin{align}
|\mu^j(I)|
\le \int_{a_j}^{b_j} |\mathcal{F}(I_{j,s})|\, \dr s
\le \int_{a_j}^{b_j} \int_{I_{j,s}} h\, d\Haus{n-1} \dr s
\le \int_{I} |h|\, \dr x
=\|h\|_{L^1(I)}, \nonumber
\end{align}
where the Fubini theorem has been used.
Thus, for any finite collection of disjoint cubes $I_1, \dots, I_K$, we have
\begin{equation}\label{7.10a}
\sum_{i=1}^K|\mu^j(I_i)|\le \sum_{i=1}^K  \|h\|_{L^1(I_i)}\le  \|h\|_{L^1(\cup_{i=1}^K I_i)}.
\end{equation}

Since $h\in L^1_{\rm loc}(\Omega)$, then, for every $\ep>0$, there exists $\delta>0$ such that
$$
\Leb{n}
(A)<\delta \quad \Longrightarrow \quad \int_A|h|\, \dr x <\ep.
$$
Hence, if $\{I_i\}_{i=1}^K$ is a finite collection of disjoint cubes $I_1, \dots, I_k$,
satisfying
$\sum_{i=1}^K\mathcal{L}^n(I_i)=\mathcal{L}^n(\cup_{i=1}^K I_i)<\delta$, then
\begin{align}
\sum_{i=1}^K|\mu^j(I_i)|
\le \|h\|_{L^1(\cup_{i=1}^K I_i)}
<\ep.  \label{7.10b}
\end{align}
Hence, $\mu^j$ is an additive set function defined on $\mathcal{I}_{\Omega}$.  We can now apply a generalization
of Riesz's theorem, due to Fuglede \cite{Fuglede} (see also \cite[Theorem 9.5]{ctz}), to conclude
that there exists $f_j\in L^1_{\rm loc}(\Omega)$ such that
$$
\mu^j(I)=\int_I f_j\, \dr x \qquad \mbox{for every $I \in \mathcal{I}_{\Omega}$}.
$$
We take sequences $\alpha_{k,j} \uparrow s$ and $\beta_{k,j}\downarrow s$ as $k\to \infty$. We have
$$
\frac{1}{\beta_{k,j}-\alpha_{k,j}}\int_{\alpha_{k,j}}^{\beta_{k,j}}\mathcal{F}(I_{j,s})\, \dr s
=\frac{1}{\beta_{k,j}-\alpha_{k,j}}\int_{\alpha_{k,j}}^{\beta_{k,j}}\int_{I_{j,s}} f_{j} \, \dr x \dr s.
$$
Letting $k\to \infty$ yields
$$
\mathcal{F}(I_{j,s})=\int_{I_{j,s}} f_{j} \, \dr\Haus{n-1} \qquad \mbox{for $\Leb{1}$--{\it a.e.} $s$}.
$$
Define
$$
\FF:=(f_{1}, \dots, f_{n}).
$$
We obtain that, for every $j\in \{1, \dots, n\}$,
\begin{equation}\label{7.10c}
\mathcal{F} (I_{j,s})=-\int_{I_{j,s}}\FF(y)\cdot e_j\, \dr\Haus{n-1}(y)
\qquad \mbox{for $\Leb{1}$--{\it a.e.} $s$}.
\end{equation}
From this point on, we say that a statement holds for almost every cube if it holds for all cubes
whose side intervals
with endpoints in $\R \setminus \mathcal{N}$, for some $\Leb{1}$ negligible set $\mathcal{N}$.

From \eqref{7.10c}, it follows that, for almost every cube $I \in \mathcal{I}_{\Omega}$,
\begin{equation} \label{flux_intervals}
\mathcal{F}(\partial I)
= - \int_{\partial I} \FF(y)\cdot \nu_{I}(y)\, \dr\Haus{n-1}(y), \end{equation}
which, by Definition \ref{general}(ii), implies
\begin{equation}\label{7.10d}
\Big|\int_{\partial I} \FF(y)\cdot \nu_{I}(y)\, \dr \Haus{n-1}(y)\Big| = |\mathcal{F}(\partial I)|
\le \sigma(\mathring{I})\le \sigma(I),
\end{equation}
where $\mathring{I}$ denotes the open cube.

Using \eqref{7.10d}, we can now proceed as in \cite[Lemma 9.6]{ctz}, or use \cite[Theorem 5.3]{degiovanni1999cauchy},
to conclude that $\FF$ is a vector field with divergence measure
satisfying $|{\rm div} \FF|\le \sigma$, which means that $\FF \in \DM^{1}_{\rm loc}(\Omega)$.

\medskip
2. {\it Uniqueness of the $\DM^1$--field $\FF$}.
\smallskip
Assume now that there exists another vector field $\mathbf{G}=(g_1, \cdots, g_n)$ such that \eqref{7.10c} holds.
For fixed $j\in \{1, \cdots, n\}$, we obtain that, for any cube $I \in \mathcal{I}_{\Omega}$,
\begin{align*}
\int_I f_j \dr x =\int_{a_j}^{b_j}\int_{I_{s,j}}f_j \, d\Haus{n-1}(y) \dr s
=\int_{a_j}^{b_j}\int_{I_{s,j}}g_j \, \dr \Haus{n-1}(y) ds=\int_I g_j \dr x.
\end{align*}
Hence,
$f_j(x)=g_j(x)$ for $\mathcal{L}^n$--{\it a.e.} $x$.

\smallskip
3. In order to prove (i), we approximate $\partial U$ with closed cubes in such a way that
$$
\partial U=\bigcap_{i=1}^\infty J_i,
$$
where each $J_i$ is a finite union of closed cubes in $\mathcal{I}_{\Omega}$, which can be chosen so that
\eqref{flux_intervals} holds, and $J_{i+1}\subset J_i$. In addition, we can also choose $\{J_{i}\}$ in such a way that
\begin{equation} \label{normal_trace_J_i}
\ban{\FF \cdot \nu, \cdot}_{\partial \mathring{J}_i} = - \FF\cdot\nu_{J_i}\Haus{n-1}\res \partial J_i.
\end{equation}
This follows for instance from \cite[Theorem 7.2]{degiovanni1999cauchy},
which states that, for almost every closed cube $I \in \mathcal{I}_{\Omega}$, we have
\begin{equation} \label{Degiovanni_interval_div}
\div \FF (I) = - \int_{\partial I} \FF \cdot \nu_{I} \, \dr \Haus{n - 1}.
\end{equation}
On the other hand, for any $\varphi \in \Lip_{c}(\Omega)$, $\varphi \FF \in \DM^{1}(\Omega)$
by Proposition \ref{product rule p}. It is clear that, by \eqref{product rule},
\begin{equation*}
\div(\varphi \FF)(\mathring{I}) = \div(\varphi \FF)(I) - \int_{\partial I} \varphi \, \dd \div \FF.
\end{equation*}
By Remark \ref{normal_trace_functional_fract_Sobolev} and \eqref{Degiovanni_interval_div}, it follows that
\begin{equation*}
\ban{\FF \cdot \nu, \varphi}_{\partial \mathring{I}} = \div(\varphi \FF)(\mathring{I})
= - \int_{\partial I} \varphi \, \dd \div \FF - \int_{\partial I} \varphi \FF \cdot \nu_{I} \, \dr \Haus{n - 1}.
\end{equation*}
Then, arguing as in \cite[Example 1.63]{afp}, we obtain that $|\div \FF|(\partial I) = 0$ for almost every
$I \in \mathcal{I}_{\Omega}$, since $\div \FF$ is a Radon measure.
All in all, we conclude \eqref{normal_trace_J_i} for almost
every finite union $J$  of closed cubes; then the sequence $\{J_{i}\}$ is chosen from these finite unions $\{J\}$
of closed cubes.

Now, from Definition \ref{general}(ii), we have
$$
|\mathcal{F}(\partial (\mathring{J}_i\cap U))|\le \sigma (\mathring{J}_i\cap U).
$$
Standard measure theory arguments imply that
\begin{equation}\label{7.7a}
\lim_{i\to \infty} \sigma(\mathring{J}_i\cap U)=\sigma((\cap_{i} J_i)\cap U)
=\sigma(\partial U \cap U)=0,
\end{equation}
so that
\begin{equation}\label{7.8a}
\lim_{i\to \infty} \mathcal{F}(\partial(\mathring{J}_i \cap U)) =0.
\end{equation}

Now, we consider $\ban{\FF\cdot\nu, \phi}_{\partial (\mathring{J}_i\cap U)}$ for some $\phi \in C^{1}_{c}(\Omega)$.
We notice by Theorem \ref{interior normal trace} that, for any $\phi\in C_c^1(\Omega)$,
\begin{equation} \label{7.9a}
\ban{\FF\cdot\nu, \phi}_{\partial U} = - \lim_{k\to \infty} \int_{\redb U^{\eps_k}}\phi \FF\cdot\nu_{U^{\eps_k}}\, \dr\Haus{n-1}.
\end{equation}
In the same way, we have
\begin{equation}
\ban{\FF\cdot\nu, \phi}_{\partial(\mathring{J}_i\cap U)}
=-\lim_{k\to \infty}
\int_{\partial^* W^{\eps_k}}\phi \FF\cdot\nu_{W^{\eps_k}}\, \dr\Haus{n-1}
\qquad \mbox{for any $\phi\in C_c^1(\Omega)$},
\label{7.10ab}
\end{equation}
where $\partial W^{\eps_k}$ is defined as the superlevel set of
the signed distance function associated to $W = \mathring{J}_i\cap U$, as in \eqref{U^eps}.

We now choose a test function $\phi\in C_c^1(\Omega)$ such
that $\phi\ne 0$ in a neighborhood of $\partial U$,
but $\phi\equiv 0$ on $\Omega \setminus J_i$.
Since $J_i$ is closed, then $\phi=0$ in a neighborhood of $\partial J_i\cap U$.
With this choice of $\phi$, \eqref{7.10ab} reduces to
\begin{align}
\ban{\FF\cdot\nu, \phi}_{\partial (\mathring{J}_i\cap U)}
=-\lim_{k\to \infty}
\int_{\partial^* U^{\eps_k}} \chi_{\mathring{J}_i} \phi \FF\cdot\nu_{U^{\eps_k}}\, \dr\Haus{n-1}
\end{align}
and, from \eqref{7.9a} and the fact that $\mathring{J}_{i} \cap \redb U^{\eps_{k}} = \redb U^{\eps_{k}}$
for $\eps_{k}$ small enough and $i$ fixed, we obtain
\begin{equation} \label{normal_trace_res_partial_U}
\ban{\FF\cdot\nu, \phi}_{\partial(\mathring{J}_i\cap U)}
=\ban{\FF\cdot\nu, \phi}_{\partial U}
\qquad
\mbox{for any such $\phi$}.
\end{equation}
Therefore, the distribution
$\ban{\FF\cdot\nu, \cdot}_{\partial(\mathring{J}_i\cap U)}$ coincides
with $\ban{\FF\cdot\nu, \cdot}_{\partial U}$ on $\partial U$.

Arguing similarly, we can show
\begin{equation} \label{normal_trace_res_partial_J_i}
\ban{\FF\cdot\nu,\, \cdot}_{\partial(\mathring{J}_i\cap U)}
=\ban{\FF\cdot\nu,\, \cdot}_{\partial \mathring{J}_i}
\qquad \mbox{on $U\cap \partial J_i$}.
\end{equation}
Therefore, we conclude
\begin{align} \label{decomposition_flux_U_J}
\ban{\FF\cdot\nu, \cdot}_{\partial(\mathring{J}_i\cap U)}
&=\ban{\chi_{J_i}\FF\cdot\nu,\, \cdot}_{\partial U}
   +\ban{\chi_{U}\FF\cdot\nu,\, \cdot}_{\partial \mathring{J}_i}\\
&=\ban{\FF\cdot\nu,\, \cdot}_{\partial U}
   +\ban{\chi_{U}\FF\cdot\nu,\, \cdot}_{\partial \mathring{J}_i}, \nonumber
\end{align}
since $J_{i} \supset \partial U$. By \eqref{trace_div_meas_repr}, we have
\begin{equation} \label{trace_div_repr_U_J}
\langle \FF\cdot\nu, \cdot\rangle_{\partial(\mathring{J}_i\cap U)}
= \chi_{\mathring{J}_i\cap U} \div \FF - \div ( \chi_{\mathring{J}_i\cap U} \FF).
\end{equation}
If we now choose $\phi \in C^{1}_{c}(\Omega)$ such that $\phi \equiv 1$ on a neighborhood of $\partial U$,
then $\phi \equiv 1$ on $\partial U \cup \partial J_{i}$ for any $i$ large enough.
Then, from \eqref{normal_trace_J_i} and \eqref{normal_trace_res_partial_U}--\eqref{trace_div_repr_U_J},
we obtain
\begin{align*}
\ban{\FF\cdot\nu,\, \phi}_{\partial U} - \int_{U \cap \partial J_{i}} \FF \cdot \nu_{J_{i}} \, \dr \Haus{n - 1}
& = \ban{\FF\cdot\nu,\, \phi}_{\partial U} - \int_{\partial J_{i}} \phi \chi_{U} \FF \cdot \nu_{J_{i}} \, \dr \Haus{n - 1} \\
& = \ban{\FF\cdot\nu,\, \phi}_{\partial U} + \ban{\chi_{U}\FF\cdot\nu,\, \phi}_{\partial \mathring{J}_i} \\
& =  \ban{\FF\cdot\nu, \phi}_{\partial(\mathring{J}_i\cap U)} \\
& = \int_{\mathring{J}_i\cap U}\phi \, \dr \div \FF - \int_{\Omega} \phi \, \dr \div ( \chi_{\mathring{J}_i\cap U} \FF) \\
& = \int_{\mathring{J}_i\cap U}\phi \, \dr \div \FF  + \int_{J_{i} \cap U} \FF \cdot \nabla \phi \, \dr x \to 0
\end{align*}
as $i \to \infty$, since $|J_{i} \cap U| \to 0$ and $|\div \FF|(\mathring{J}_{i} \cap U) \to 0$. This implies
\begin{equation}\label{7.13a}
\ban{\FF\cdot\nu,\, \phi}_{\partial U} = \lim_{i\to \infty}  \int_{U\cap \partial J_i}\FF(y)\cdot \nu_{J_{i}}(y) \dr\Haus{n-1}(y).
\end{equation}

On the other hand, by Definition \ref{general}(i), we have
$$
\mathcal{F}(\partial(\mathring{J}_i\cap U))
=\mathcal{F}(\partial U)
+\mathcal{F} (U\cap \partial J_i),
$$
so that, from \eqref{flux_intervals} and \eqref{7.8a}, we obtain
\begin{equation}\label{7.12a}
\mathcal{F}(\partial U) = \lim_{i\to \infty} \int_{U\cap \partial J_i}\FF(y)\cdot \nu_{J_{i}} (y) \dr\Haus{n-1}(y).
\end{equation}
Finally, from \eqref{7.13a}--\eqref{7.12a}, we conclude \eqref{main_representation_flux}.
Then Theorem \ref{interior trace smooth general} implies
the second part of point (i)
(see also Theorem \ref{interior normal trace}).

\smallskip
4. For (ii), we notice that $\chi_{U} \FF \in \DM^{1}(\Omega)$, since $U \Subset \Omega$.
Hence, Theorem \ref{equivalence trace prod}
implies the existence of a finite Radon measure $\mu_b$ on $\partial U$ such that
$$
\ban{\FF\cdot\nu, \phi}_{\partial U}=\int_{\partial U} \phi \, \dr \mu_b
\qquad \mbox{for any $\phi \in C^{1}_{c}(\Omega)$}.
$$
Thus, if we take $\phi \equiv 1$ on a neighborhood of $\partial U$, result (i) immediately implies (ii).
Cases (iii) and (iv) follow analogously from (i): for (iii), we need to apply Theorem \ref{interior normal trace smooth};
while (iv) is obtained by employing the bi-Lipschitz regular deformation $\Psi_{\eps}(x)$ from
Theorem \ref{Lip_reg_def} as in Theorem \ref{area formula U eps}.
\end{proof}

\begin{remark}\label{Special-Case}
\, In particular, the following assertions also hold{\rm :}
\begin{enumerate}
\item[\rm (i)] If $U$ is an open set of finite perimeter, $\sigma (\partial U) =0$,
and $\int_{\partial*U} h \, \dr \Haus{n-1} < \infty$,  then
\begin{equation}
\nonumber
\Fc (\partial U) =  - \int_{\partial ^* U} \FF \cdot \nu_{U} \, \dr \Haus{n-1},
\end{equation}
where $\FF \cdot \nu_{U} $ is the classical dot product.

To see this, we use the assumption that $\sigma(\partial U)=0$ and $\int_{\partial^* U} h\, \dr\Haus{n-1}<\infty$
to apply \cite[Theorem 5.4]{degiovanni1999cauchy},
which shows that $\langle\FF\cdot\nu, \cdot\rangle_{\partial U}$ is
represented by the classical dot product between $\FF$ and  $\nu_U$.
Hence, from Theorem {\rm \ref{Cauchy_flux_p}(i)},
we have
$$
\mathcal{F}(\partial U) =-\int_{\partial^*U}
\FF(x)\cdot\nu_{U}(x)\, \dr\Haus{n-1}(x).
$$

\smallskip
\item[\rm (ii)]
If the Borel function $h$ in Definition {\rm \ref{general}(iii)} is constant and $U$ is an open set of finite perimeter,
then $\FF \in \DM^{\infty}_{\rm loc}(\Omega)$, and
$$
\Fc (\partial U) =  - \int_{\partial ^* U} (\mathfrak{F}_{\ii} \cdot \nu_{U})  \, \dr\Haus{n-1},
$$
where $\mathfrak{F}_{\ii}\cdot\nu_U\in L^\infty(\partial^* U; \Haus{n-1})$ is the interior normal trace of $\FF$ on $\redb U$.

This corresponds to the case already treated in \cite[Theorem 9.4]{ctz},
since $U \Subset \Omega$ is a set of finite perimeter and $h$ is constant.
Hence, we obtain $\FF \in \DM^{\infty}_{\rm loc}(\Omega)$,
and the normal trace $\ban{\FF\cdot\nu, \cdot}_{\partial U}$ is represented by the measure
$-(\mathfrak{F}_{\ii} \cdot\nu_{U}) \, \Haus{n - 1} \res \partial^*U$, for some $(\mathfrak{F}_{\ii} \cdot\nu_{U}) \in L^\infty(\partial^*U; \Haus{n-1})$;
see also Proposition {\rm \ref{normal trace p infty}}.
\end{enumerate}
\end{remark}

\begin{remark}
\, The importance of Theorem {\rm \ref{Cauchy_flux_p}} and Remark {\rm \ref{Special-Case}} is that the flux can be recovered on every open set $U$.
\end{remark}

\begin{remark}
\, It has been discussed above that Schuricht \cite{Sch} considered an alternative formulation for the axioms,
representing the contact interactions as maps $f(B,A)$ on pairs of disjoint subbodies {\rm (}instead of surfaces{\rm )}.
In this formulation, our results on normal traces for $\DM^p$--fields improve those in
\cite[Theorem 5.20, equation (5.21)]{Sch},
since $f(B,A)$ can be written as the limit of the classical normal traces on the approximations of $B$,
versus an integral average.
\end{remark}

\begin{acknowledgements}
	\quad
	The research of
Gui-Qiang G. Chen was supported in part by
the UK
Engineering and Physical Sciences Research Council Award
EP/E035027/1 and
EP/L015811/1, and the Royal Society--Wolfson Research Merit Award (UK).
The research of Giovanni E. Comi was
supported in part by the PRIN2015 MIUR Project ``Calcolo delle Variazioni".
The research of Monica Torres was supported in part by
the Simons Foundation Award No. 524190 and by the National
Science Foundation Award No. 1813695.
\end{acknowledgements}

\bigskip

\noindent {\bf Compliance with ethical standards}


\vspace*{2mm}
\noindent {\bf Conflict of interest}\,  The authors declare that they have no conflict of interest.


\begin{thebibliography}{}

\bibitem{ACM}
Ambrosio, L., Crippa, G., Maniglia, S.:
\newblock Traces and fine properties of a {$BD$} class of vector fields and
 applications.
\newblock {\it Ann. Fac. Sci. Toulouse Math.} (6), 14(4):527--561, 2005.
\, http://dx.doi.org/10.5802/afst.1102

\bibitem{afp}
Ambrosio, L., Fusco, N., Pallara, D.:
\newblock {\it Functions of Bounded Variation and Free Discontinuity Problems}.
\newblock
The Clarendon Press, Oxford University Press: New York, 2000.
\, https://doi.org/10.1017/S0024609301309281

\bibitem{Ambrosio_Tilli}
Ambrosio, L., Tilli, P.:
\newblock {\it Topics on Analysis in Metric Spaces}.
\newblock {Oxford University Press}: Oxford, 2004.
\, ISBN: 9780198529385


\bibitem{Anzellotti_1983}
Anzellotti, G.:
\newblock{Pairings between measures and bounded functions and compensated compactness}.
\newblock{\it Annali di Matematica Pura ed Applicata}, 135(1):293--318, 1983.
\, https://doi.org/10.1007/bf01781073


\bibitem{anzellotti1983traces}
Anzellotti, G.:
\newblock{Traces of bounded vector-fields and the divergence theorem},
\newblock{\it Preprint}, 1983.


\bibitem{auscher2005hardy}
Auscher, P., Russ, E., Tchamitchian, P.:
\newblock{Hardy {S}obolev spaces on strongly {L}ipschitz domains of $\mathbb{R}^{n}$}.
\newblock{\it J. Funct. Anal.} 218(1):54--109, 2005. \,
https://doi.org/10.1016/j.jfa.2004.06.005


\bibitem{BallZarnescu}
Ball, J. M., Zarnescu, A.:
{Partial-regularity and smooth topology preserving approximations of rough domains},
\newblock{\it Calc. Var. \&  PDEs.} 56(1), Art. 13, 32pp., 2017. \,
 https://doi.org/10.1007/s00526-016-1092-6


\bibitem{BM2} Burago, Y.~D., Maz'ya, V.~G.:
\newblock {\it Potential Theory and Function Theory for Irregular Regions}.
\newblock {Translated from Russian. Seminars in Mathematics, V. A. Steklov
  Mathematical Institute, Leningrad}, Vol. 3. Consultants Bureau: New York,
  1969.
\, ASIN: B01KK6TXLK


 \bibitem{Cauchy1}
 A.~L. Cauchy.
 \newblock Recherches sur l'\'{e}quilibre et le mouvement int\'{e}rieur des
   corps solides ou fluides, \'{e}lastiques or non \'{e}lastiques.
 \newblock {\it Bull. Soc. Philomathique}, 10(2):9--13, 1823. \,
https://doi.org/10.1017/CBO9780511702518.038


 \bibitem{Cauchy2}
 A.~L. Cauchy.
 \newblock Da la pression ou tension dans un corps solide.
 \newblock {\it Exercises de Mateh\'{e}matiques}, 2(2):42--56, 1827.


\bibitem{CF1}
Chen, G.-Q., Frid, H.:
\newblock Divergence-measure fields and hyperbolic conservation laws.
\newblock {\it Arch. Ration. Mech. Anal.} 147(2):89--118, 1999.
\, https://doi.org/10.1007/s002050050146


\bibitem{CF2}
Chen, G.-Q., Frid, H.:
\newblock Extended divergence-measure fields and the {E}uler equations for gas
  dynamics.
\newblock {\it Commun. Math. Phys.} 236(2):251--280, 2003.
\, https://doi.org/10.1007/s00220-003-0823-7


\bibitem{ChenTorres}
Chen, G.-Q., Torres, M.:
\newblock Divergence-measure fields, sets of finite perimeter, and conservation
  laws.
\newblock {\it Arch. Rational Mech. Anal.} 175(2):245--267, 2005.
\, https://doi.org/10.1007/s00205-004-0346-1


\bibitem{ctz}
Chen, G.-Q., Torres, M., Ziemer, W. P.:
\newblock Gauss-Green theorem for weakly differentiable vector fields, sets
of finite perimeter, and balance laws.
\newblock {\it Comm. Pure Appl. Math.} 62(2):242--304, 2009.
\, https://doi.org/10.1002/cpa.20262


\bibitem{comimagnani}
Comi, G. E., Magnani, V.:
\newblock{The Gauss--Green theorem in stratified groups}.
\newblock{\it Preprint arXiv:1806.04011}, 2018.




\bibitem{comi2017locally} Comi, G. E., Payne, K. R.:
 \newblock{On locally essentially bounded divergence measure fields and sets of locally finite perimeter}.
\newblock{ \it Adv. Calc. Var.}  2018 (to appear).
\, https://doi.org/10.1515/acv-2017-0001


\bibitem{ComiTorres}
Comi, G. E., Torres, M.:
\newblock One sided approximations of sets of finite perimeter.
\newblock {\it Atti Accad. Naz. Lincei Rend. Lincei Mat. Appl.}  28(1):181--190, 2017.
\, https://doi.org/10.4171/RLM/757


\bibitem{crasta2017anzellotti}
Crasta, G., De Cicco, V.:
\newblock{Anzellotti's pairing theory and the Gauss--Green theorem}.
\newblock{\it Preprint arXiv:1708.00792}, 2017.

\bibitem{crastadecicco2}
Crasta, G., De Cicco, V.:
\newblock{An extension of the pairing theory between divergence-measure fields and BV functions}.
\newblock{\it J. Funct. Anal.} 2018 (to appear).
\, https://doi.org/10.1016/j.jfa.2018.06.007



\bibitem{dafermos2010hyperbolic} Dafermos, C. M.:
\newblock{\it Hyperbolic Conservation Laws in Continuum Physics},
4th Ed., Springer-Verlag: Berlin, 2016.
\, https://doi.org/10.1007/978-3-662-49451-6


\bibitem{Daners_2008} Daners, D.:
\newblock{Domain perturbation for linear and semilinear boundary value problems}.
In: {\it Handbook of Differential
Equations - Stationary Partial Differential Equations}, pp. 1--81. Elsevier, 2008.
\, https://doi.org/10.1016/S1874-5733(08)80018-6


\bibitem{degiorgi1961complementi}
De Giorgi, E.:
\newblock{Su una teoria generale della misura $(r-1)$--dimensionale in uno spazio ad $r$ dimensioni}.
\newblock{\it Ann. Mat. Pura Appl.} 36.1:191--213, 1954.
\, https://doi.org/10.1007/BF02412838


\bibitem{degiorgi1961frontiere}
De Giorgi, E.:
\newblock Frontiere orientate di misura minima.
\newblock {\it Seminario di Matematica della Scuola Normale Superiore di Pisa},
  1960--61. Editrice Tecnico Scientifica, Pisa, 1961.



\bibitem{Camillo}
De Lellis, C., Otto, F., Westdickenberg, M.:
\newblock Structure of entropy solutions for multi-dimensional scalar
conservation laws.
\newblock {\it Arch. Ration. Mech. Anal.} 170(2):137--184, 2003.
\, https://doi.org/10.1007/s00205-003-0270-9


\bibitem{degiovanni1999cauchy}
Degiovanni, M., Marzocchi, A., Musesti, A.:
\newblock{Cauchy fluxes associated with tensor fields having divergence measure}.
\newblock{\it Arch. Ration. Mech. Anal.} 147(3):197--223, 1999.
\, https://doi.org/10.1007/s002050050149


\bibitem{doktor1976approximation}
Doktor, P.:
\newblock{Approximation of domains with Lipschitzian boundary}.
\newblock{\it {\v{C}}asopis pro P{\v{e}}stov{\'a}ni Matematiky},
101(3):237--255, 1976.
\, ISSN: 0528-2195


\bibitem{eg}
Evans, L. C., Gariepy, R. F.:
\newblock \textit{Measure Theory and Fine Properties of Functions}.
\newblock CRC Press: Boca Raton, FL, 1992.
\, ISBN: 9780849371578


\bibitem{eg2} Evans, L. C.:
 \newblock \textit{Partial Differential Equations}.
\newblock
AMS: Providence, RI,
2010.
\, https://doi.org/10.1090/gsm/019


\bibitem{Federer1}
Federer, H.:
\newblock The Gauss-Green theorem.
\newblock {\it Trans. Amer. Math. Soc.} 58:44--76, 1945.
\, https://doi.org/10.2307/1990234

\bibitem{Federer2}
Federer, H.:
\newblock A note on the {G}auss-{G}reen theorem.
\newblock {\it Proc. Amer. Math. Soc.} 9:447--451, 1958.
\, https://doi.org/10.2307/2033002

\bibitem{Frid1}
Frid, H.:
\newblock Remarks on the theory of the divergence-measure fields.
\newblock  {\it Quart. Appl. Math.} 70(3):579--596, 2012.
\, https://doi.org/10.1090/S0033-569X-2012-01311-5


\bibitem{Frid2}
Frid, H.:
\newblock Divergence-measure fields on domain with Lipschitz boundary.
\newblock  In: {\it Hyperbolic Conservation Laws and
Related Analysis with Applications},
pp. 207--225,
G.-Q. Chen, H. Holden \& K. Karlsen (Eds.),
Springer: Heidelberg, 2014.
\, https://doi.org/10.1007/978-3-642-39007-4{\_}10


\bibitem{Fuglede}
Fuglede, B.:
\newblock On a theorem of F. Riesz.
\newblock  {\it Math. Scand.} 3: 283--302, 1955.
\, https://doi.org/10.7146/math.scand.a-10448

\bibitem{gilbarg2015elliptic}
Gilbarg, D., Trudinger, N. S.:
\newblock{\it Elliptic Partial Differential Equations of Second Order}. Springer-Verlag, Berlin, 1983.
\, https://doi.org/10.1007/978-3-642-61798-0

\bibitem{giusti1984minimal} Giusti, E.:
\newblock{\it Minimal Surfaces and Functions of Bounded Variation}. Birkh\"{a}user-Verlag, Basel, 1984.
\, https://doi.org/10.1007/978-1-4684-9486-0

\bibitem{gm}
Gurtin, M.~E., Martins, L.~C.:
 \newblock Cauchy's theorem in classical physics.
 \newblock {\it Arch. Ration. Mech. Anal.} 60(4):305--324, 1975/76.
\, https://doi.org/10.1007/BF00248882

\bibitem{HofmannMitreaTaylor}
Hofmann, S., Mitrea, M., Taylor, M.:
\newblock Geometric and transformational properties of Lipschitz domains,
Semmes-Kenig-Toro domains, and other classes of finite perimeter domains.
\newblock {\it J. Geometric Anal.}  17:593--647, 2007.
\, https://doi.org/10.1007/BF02937431

\bibitem{kawohl2007dirichlet}
Kawohl, B., Schuricht, F.:
\newblock{Dirichlet problems for the 1-Laplace operator, including the eigenvalue problem}.
\newblock{\it Commun. Contemp. Math.} 9(4):515--543, 2007.
\, https://doi.org/10.1142/S0219199707002514


\bibitem{Lax1973}
Lax, P.~D.:
{\it Hyperbolic Systems of Conservation Laws and the
Mathematical Theory of Shock Waves},
CBMS-RCSM, SIAM: Philiadelphia, 1973.

\bibitem{LeoSaracco}
Leonardi, G. P., Saracco G.:
\newblock{The prescribed mean curvature equation in weakly regular domains}.
\newblock{\it NoDEA Nonlinear Diff. Eqs. Appl.} 25:9, 29pp., 2018.
\, https://doi.org/10.1007/s00030-018-0500-3


\bibitem{L}
{Lieberman, G. M.:}
Regularized distance and its applications.
{\it Pacific J. Math.} 117(2):329--352, 1985.
\, ISSN: 0030-8730

\bibitem{L2}
{Lieberman, G. M.:}
The conormal derivative problem for equations of variational type in nonsmooth domains.
{\it Trans. Amer. Math. Soc.} 330(1): 41--67, 1992.
\, https://doi.org/10.2307/2154153

\bibitem{Maggi}
Maggi, F.:
\newblock{\it Sets of Finite Perimeter and Geometric Variational Problems}.
Cambridge University Press: Cambridge, 2012.
\, https://doi.org/10.1017/CBO9781139108133

\bibitem{maz2013sobolev}
Maz'ya, V. G.:
\newblock{\it Sobolev Spaces with Applications to Elliptic Partial Differential
   Equations}. Springer-Verlag: Berlin-Heidelberg, 2011.
\, https://doi.org/10.1007/978-3-642-15564-2

\bibitem{nevcas1962}
Ne{\v{c}}as, J.:
\newblock{On domains of type $\mathfrak{N}$ (Russian)}.
\newblock{\it Czech. Math. J.}  12(2):274--287, 1962.
\, ISSN: 0011-4642

\bibitem{nevcas1964equations}
Ne{\v{c}}as, J.:
\newblock{Sur les {\'e}quations diff{\'e}rentielles aux d{\'e}riv{\'e}es partielles du type elliptique du deuxi{\`e}me ordre}.
\newblock{\it Czech. Math. J.} 14(1):125--146, 1964.
\, ISSN: 0011-4642


 \bibitem{Noll}
 W. Noll.
 \newblock The foundations of classical mechanics in the light of recent
   advances in continuum mechanics.
 \newblock In: {\it The Axiomatic Method. With Special Reference to Geometry and
   Physics},
   Eds.  L. Henkin, P. Suppes, and A. Tarski,
   Studies in Logic and the Foundations of Mathematics,
   266--281pp.,  North-Holland Publishing Co.: Amsterdam, 1959.


\bibitem{phuc2008characterizations}
Phuc, N. C., Torres, M.:
\newblock{Characterizations of the existence and removable singularities of divergence-measure vector fields}.
\newblock{\it Indiana Univ. Math. J.} 57(4):1573--1597, 2008.
\, https://doi.org/10.1512/iumj.2008.57.3312

\bibitem{PhTorres}
Phuc, N. C., Torres, M.:
\newblock{Characterizations of signed measures in the dual of $BV$ and related isometric isomorphisms}.
\newblock{\it  Ann. Sc. Norm. Super. Pisa Cl. Sci.} (5), 17(1): 385--417, 2017.
\, ISSN: 0391-173X


\bibitem{scheven2016bv}
Scheven, C., Schmidt, T.:
\newblock{${B}{V}$ supersolutions to equations of 1-Laplace and minimal surface type}.
\newblock{\it J. Diff. Eqs.} 261(3):1904--1932, 2016.
\, https://doi.org/10.1016/j.jde.2016.04.015

\bibitem{scheven2017anzellotti}
Scheven, C., Schmidt, T.:
\newblock{An Anzellotti type pairing for divergence-measure fields
and a notion of weakly super-$1$-harmonic functions}.
\newblock{\it Preprint arXiv:1701.02656}, 2017.


\bibitem{scheven2016dual}
Scheven, C., Schmidt, T.:
\newblock{On the dual formulation of obstacle problems for the total variation and the area functional}.
\newblock{\it Ann. Inst. Henri Poincar\'{e}, Anal. Non Lineaire},  2018 (to appear).
\, https://doi.org/10.1016/j.anihpc.2017.10.003


\bibitem{Sch}
Schuricht, F.:
\newblock A new mathematical foundation for contact interactions in continuum physics.
\newblock {\it Arch. Ration. Mech. Anal.}  184: 495--551, 2007.
\, https://doi.org/10.1007/s00205-006-0032-6

\bibitem{S2}
M.~{\v{S}}ilhav{\'y}.
\newblock The existence of the flux vector and the divergence theorem for general Cauchy fluxes.
\newblock {\it Arch. Ration. Mech. Anal.} 90(3):195--212, 1985.
\, https://doi.org/10.1007/BF00251730

\bibitem{S3}
M.~{\v{S}}ilhav{\'y}.
\newblock Cauchy's stress theorem and tensor fields with divergences in {$L\sp
  p$}.
\newblock {\it Arch. Ration. Mech. Anal.} 116(3):223--255, 1991.
\, https://doi.org/10.1007/BF00375122


\bibitem{Silhavy1}
\v{S}ilhav\'y, M.:
\newblock Divergence-measure fields and Cauchy's stress theorem.
\newblock  {\it Rend. Sem. Mat Padova}, 113:15--45, 2005.
\, ISSN: 0041-8994

\bibitem{Silhavy2}
\v{S}ilhav\'y, M.:
\newblock The divergence theorem for divergence measure vectorfields on sets with fractal boundaries.
\newblock  {\it Math. Mech. Solids}, 14:445--455, 2009.
\, https://doi.org/10.1177/1081286507081960


\bibitem{Stein}
Stein, E. M.:
\newblock{\it Singular Integrals and Differentiability Properties of Functions}.
\newblock{Princeton University Press:} Princeton,  1970.


\bibitem{verchota1982layer}
Verchota, G. C.:
\newblock{\it Layer Potentials and Boundary Value Problems for the Laplace Equation in Lipschitz Domains}.
PhD thesis, 1982.


\bibitem{Verchota_1984}
Verchota, G. C.:
\newblock{Layer potentials and regularity for the Dirichlet problem for Laplace's equation in Lipschitz domains}.
\newblock{\it J. Funct. Anal.} 59(3):572--611, 1984.
\, https://doi.org/10.1016/0022-1236(84)90066-1

\bibitem{Verchota_2005} Verchota, G. C.:
\newblock{The biharmonic Neumann problem in Lipschitz domains}.
\newblock{\it Acta Math.} 194(2):217--279, 2005.
\, https://doi.org/10.1007/BF02393222

\bibitem{Volpert}
Vol'pert, A. I.:
\newblock Spaces $BV$ and quasilinear equations (Russian).
\newblock {\it Mat. Sb. {\rm (}N.S.{\rm )}}, 73 (115):255--302, 1967.


\bibitem{VH}
Vol'pert, A. I., Hudjaev, S. I.:
\newblock{\it Analysis in Classes of Discontinuous Functions and Equation of Mathematical Physics}.
\newblock { Martinus Nijhoff Publishers: Dordrecht}, 1985.
\, ISBN: 90-247-3109-7

\bibitem{vonKoch}
von Koch, H.:
\newblock Sur une courbe continue sans tangente, obtenue par une construction g\'{e}om\'{e}trique \'{e}l\'{e}mentaire.
\newblock {\it Archiv f\"{o}r Matemat., Astron. och Fys.}  1:681--702, 1904.



\bibitem{Ziemer1}
 W.~P. Ziemer.
 \newblock Cauchy flux and sets of finite perimeter.
 \newblock {\it Arch. Ration. Mech. Anal.} 84(3):189--201, 1983.
 \, https://doi.org/10.1007/BF00281518

\end{thebibliography}

\end{document}